\theoremstyle{definition}
\newtheorem{theorem}{Theorem}[section]
\newtheorem{lemma}[theorem]{Lemma}
\newtheorem{corollary}[theorem]{Corollary}
\newtheorem{proposition}[theorem]{Proposition}
\newtheorem{definition}[theorem]{Definition}
\newtheorem{remark}[theorem]{Remark}
\newtheorem{conjecture}[theorem]{Conjecture}
\newtheorem*{theorem*}{Theorem}
\newtheorem*{proposition*}{Proposition}
\numberwithin{equation}{section}
\begin{document}

%not sure what the parameter in [] is about 
\title[]{Affine Pavings of Hessenberg Ideal Fibers}

\author{Ke Xue}  
\address{Ke Xue, graduated PhD from University of Maryland College Park}
\email{xueke.kent@gmail.com}

\begin{abstract}
We define certain closed subvarieties of the flag variety, Hessenberg ideal fibers, and prove that they are paved by affines. Hessenberg ideal fibers are a natural generalization of Springer fibers. In type $G_2$, we give explicit descriptions of all Hessenberg ideal fibers, study some of their geometric properties and use them to completely classify Tymoczko's dot actions of the Weyl group on the cohomology of regular semisimple Hessenberg varieties. 
\end{abstract}

\maketitle
\setcounter{tocdepth}{1}
\tableofcontents

%section counter
\setcounter{section}{0}

\section{Introduction} \label{introduction}
Let $G$ be a connected reductive algebraic group over $\mathbb{C}$, $B$ a Borel subgroup, $U$ the unipotent radical of $B$, and let $\mathfrak{g}$, $\mathfrak{b}$ and $\mathfrak{u}$ denote their respective Lie algebras. A Hessenberg ideal is an $\operatorname{ad}(\mathfrak{b})$-stable subspace $I$ of $\mathfrak{u}$. Fix a nilpotent element $N \in \mathfrak{g}$ and a Hessenberg ideal $I$. The Hesseberg ideal fiber $\pi_{I}^{-1}(N)$ is defined to be the fiber over $N$ of the following map,
\begin{displaymath} 
\begin{array}{llll}
\pi_I: & G \times^B I & \longrightarrow & \mathfrak{g} \\
 & (g, X) &  \longmapsto & g \cdot X
\end{array}
\end{displaymath}
where $g \cdot X$ denotes the adjoint action $\operatorname{Ad}(g)(X)$. From this definition, it can be deduced that $\pi_{I}^{-1}(N)$ is a closed subvariety of the flag variety $G/B$ when it is not empty, and that 
\begin{equation} \label{hif1}
\pi_{I}^{-1}(N) = \{\  gB \  | \  g^{-1} \cdot N \in I \} \subset G/B . 
\end{equation}
When the Hessenberg ideal $I$ is the biggest possible option $\mathfrak{u}$, by Equation~\ref{hif1}, $ \pi_{\mathfrak{u}}^{-1}(N)=\{\ gB  \  | \  g^{-1} \cdot N \in \mathfrak{u} \}= \{\text{Borel subalgebras of } \mathfrak{g} \text{ that contain } N \}$. In this case, our Hessenberg ideal fiber $\pi_{\mathfrak{u}}^{-1}(N)$ is exactly the Springer fiber $\mathcal{B}_N$ as in \cite{de1988homology}. In general, $\pi_{I}^{-1}(N)$ is a closed subvariety of $\mathcal{B}_N$.

The main result of this paper can be roughly stated as the following. 

\begin{theorem*} 
Let $G$ be a connected reductive algebraic group over $\mathbb{C}$ whose Lie algebra has no simple component of type $E_7$ or $E_8$. For any Hessenberg ideal $I \subset \mathfrak{u}$ and any nilpotent element $N \in \mathfrak{g}$, the Hessenberg ideal fiber $\pi_{I}^{-1}(N)$ is paved by affines whenever it is not empty. That is, we can decompose $\pi_{I}^{-1}(N)$ into a finite disjoint union of locally closed subvarieties each of which is an affine space.  
\end{theorem*}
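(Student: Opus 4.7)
The plan is to produce the affine paving by intersecting $\pi_I^{-1}(N)$ with Schubert cells of $G/B$, following the paradigm used by Tymoczko and Precup for other classes of Hessenberg varieties. Let $T\subset B$ be a maximal torus. After conjugating $N$ to a standard representative with associated Jacobson--Morozov cocharacter $\phi_N:\mathbb{C}^\times\to T$ scaling $N$ by $t^2$, the $\mathbb{C}^\times$-action $t\cdot gB=\phi_N(t)gB$ on $G/B$ preserves $\pi_I^{-1}(N)$, because $(\phi_N(t)g)^{-1}\cdot N=t^{-2}(g^{-1}\cdot N)$ and $I$ is a linear subspace. Pairing $\phi_N$ with a generic cocharacter of $Z_G(N)^\circ\cap T$ produces a $\mathbb{C}^\times$-action on $\pi_I^{-1}(N)$ whose fixed points are the isolated $T$-fixed points $wB$ with $w^{-1}\cdot N\in I$. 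The Bialynicki--Birula cells take the form $C_w=(U_w\cdot wB)\cap\pi_I^{-1}(N)$, where $U_w$ is the unipotent subgroup corresponding to the roots in $\Phi^+\cap w\Phi^-$.

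The main technical step is to show each nonempty $C_w$ is an affine space. Parametrize $U_w$ by affine coordinates $x=(x_\alpha)$ and write a typical point as $u(x)\cdot wB$. The condition $(u(x)w)^{-1}\cdot N\in I$ unfolds into a system of polynomial equations $P_\beta(x)=0$ indexed by the positive roots $\beta\notin I$, each recording that the coefficient of the root vector $E_\beta$ in $(u(x)w)^{-1}\cdot N$ must vanish. The aim is to show this system is \emph{triangular} in the sense that there exists a total order on the coordinates under which each equation $P_\beta=0$ expresses a single $x_\alpha$ as a polynomial in earlier ones. If such an ordering can be produced, $C_w$ is cut out from affine space by a triangular polynomial system and is itself an affine space. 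The ordering is to be constructed from the root poset in conjunction with the Bala--Carter datum of $N$ and the description of $I$ as an order ideal of $\Phi^+$.

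The main obstacle is establishing this triangulation uniformly across all Lie types and all nilpotent orbits. In classical types and in the exceptional types $G_2$, $F_4$, and $E_6$, the comparatively tractable family of nilpotent orbits and the structure of the associated weighted Dynkin diagrams permit the triangulation either by a uniform root-poset argument or by a finite case analysis along the Bala--Carter classification. In types $E_7$ and $E_8$, the richer collection of exceptional nilpotent orbits can produce polynomial systems for which no such triangular order is visible by the methods above, and this is the natural combinatorial source of the hypothesis of the theorem. Consequently, I would expect the proof to reduce to a family of triangulation lemmas --- one for each admissible pair $(N,I)$ --- verified uniformly in the classical types and by explicit case analysis for each exceptional orbit in $G_2$, $F_4$, and $E_6$, with the classification-heavy bookkeeping being the bulk of the technical work.
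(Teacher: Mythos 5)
Your plan---pave $\pi_{I}^{-1}(N)$ by intersecting it with Schubert (or Bia{\l}ynicki--Birula) cells of $G/B$ attached to a torus built from the Jacobson--Morozov cocharacter, and then show each intersection is an affine space cut out by a triangular polynomial system---fails at exactly its central step, and the failure is visible in the paper's own computations. First, $\pi_{I}^{-1}(N)$ is in general singular, so you cannot invoke Bia{\l}ynicki--Birula on the fiber itself to assert that its cells are $(U_w\cdot wB)\cap\pi_{I}^{-1}(N)$; at best you can intersect with the cells of $G/B$ to get a paving, and then the affineness of the pieces is precisely what is in doubt. Second, no triangular ordering exists even in type $G_2$: for $N$ in the subregular orbit $G_2(a_1)$ and $I=I_{\beta+\alpha}$ the defining condition inside the single Schubert cell $X_s$ is the quadratic $1+c'z^2=0$, giving two isolated points (so $C_w$ is not an affine space, and no equation expresses one coordinate as a polynomial in the others). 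Worse, in the $F_4$ and $E_6$ analyses of the relevant fixed-point varieties one meets conditions such as $az_3^2+bz_1z_3+1=0$, whose zero locus is $\mathbb{C}^{\times}$, and $1+az_1z_2+bz_1z_3+cz_2z_3=0$, a smooth affine quadric surface; a $\mathbb{C}^{\times}$ piece is not even a finite disjoint union of affine spaces, so no refinement of the Bruhat-cell stratification of this kind can yield the paving. The affine cells of the correct paving genuinely do not respect Schubert coordinates (e.g.\ closures like $\overline{X_{\beta}\dot t\dot s B}\cong\mathbb{P}^1$ spread across several Schubert cells, and the quadric surface is repaved using its blowup structure).

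The paper's actual route is different in all the places your outline is silent. It paves by the coarser $P$-orbits $\mathcal{O}_v$ for the parabolic $P$ associated to $N$; each piece $\pi_{I}^{-1}(N)\cap\mathcal{O}_v$ is smooth because it is the variety of a prehomogeneous quintuple (Lemmas~\ref{prehsp} and~\ref{dist1}), and the $\lambda$-flow retracts it onto the fixed-point variety $\pi_{I}^{-1}(N)\cap\mathcal{O}_v^{\lambda}\cong(L,L\cap B,\mathfrak{g}(2),\mathfrak{g}(2)\cap\dot v\cdot I,N)$; the Bia{\l}ynicki--Birula fibration together with the Bass--Haboush linearization theorem (Lemma~\ref{3rdtech}) then transports an affine paving of the fixed-point variety up to the piece, replacing any coordinate triangulation by a vector-bundle argument that needs smoothness of both the piece and its fixed locus. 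The bottom-level varieties are then handled by de Concini--Lusztig--Procesi in the classical types and, for the distinguished orbits of $G_2$, $F_4$, $E_6$, by identifying them with very small varieties (finite sets, $\mathbb{P}^1$'s and their products, smooth rational surfaces) and repaving those by hand, e.g.\ via Lemma~\ref{surfacelem}; the exclusion of $E_7$ and $E_8$ comes from Theorem~\ref{dclpthm} (the fixed-point sets $\mathcal{B}_N^D$ for distinguished $N$ are not known to be paved by affines there), not from a failure of a triangulation heuristic. Finally, for undistinguished $N$ the paper needs the Bala--Carter reduction of subsection~\ref{undist} (a second torus $S$, the Levi $M=C_G(S)$, and $DS$-fixed points) to reduce to a distinguished element of a smaller group; your proposal does not address this reduction at all.
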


In \cite{de1988homology}, de Concini, Lusztig and Procesi showed that Springer fibers for classical groups are paved by affines. The main theorem of this paper is a direct generalization of their result, and its proof is broadly inspired by their arguments. Before this paper, Hessenberg ideal fibers (under different names) have already been considered by other authors, e.g. \cite{fresse2016existence}, \cite{ji2019hessenberg} and \cite{sommers2006equivalence}. Sommers \cite{sommers2006equivalence} pointed out that the image of $\pi_I$ is the closure of a single nilpotent orbit, and the Hessenberg ideal fiber over an element of this orbit is a disjoint union of irreducible smooth varieties. Ji and Precup \cite{ji2019hessenberg} proved that Hessenberg ideal fibers for type $A$ are paved by affines and gave a combinatorial formula for their cohomology groups. %As a side note, Ji and Precup referred to Hessenberg ideal fibers as Hessenberg varieties associated to ad-nilpotent ideals in their paper \cite{ji2019hessenberg}. 
Fresse \cite{fresse2016existence} generalized Springer fibers to certain closed subvarieties of any partial flag variety $G/P$ and proved that they are paved by affines for the classical groups. In the case of the full flag variety $G/B$, \cite{fresse2016existence}[Theorem 1] implies that Hessenberg ideal fibers for the classical groups are paved by affines. Fresse's proof uses an explicit description of $G/P$ as variety of partial flags and a type-by-type inspection for the classical groups. The proof of this paper is more conceptual and works for the exceptional types $G_2$, $F_4$ and $E_6$ as well. In addition, it naturally leads to a way of computing the cell structures of Hessenberg ideal fibers for low-rank $G$ (see section \ref{g2hif}).
After this paper, de Concini and Maffei \cite{de2022paving} used its result (Lemma \ref{surfacelem}) to prove that Springer fibers, a subset of Hessenberg ideal fibers, are paved by affines for type $E_7$.

The major motivation for studying Hessenberg ideal fibers is that knowledge of them can be used to classify Tymoczko's dot actions of the Weyl group $W$ of $G$ on the cohomology of regular semisimple Hessenberg varieties. Hessenberg ideal has a natural ``dual'' notion of Hessenberg subspace. A Hessenberg subspace is an $\operatorname{ad}(\mathfrak{b})$-stable subspace $M$ of $\mathfrak{g}$ containing $\mathfrak{b}$. Let $y \in \mathfrak{g}$ be a regular semisimple element. The (regular semisimple) Hessenberg variety $\operatorname{Hess}(M, y)$ is defined to be the fiber over $y$ of the following map
\begin{equation*}
\begin{array}{llll}
\pi_M: & G \times^B M & \longrightarrow & \mathfrak{g} \\
 & (g, x) &  \longmapsto & g \cdot x
\end{array}
\end{equation*}
where $g \cdot x$ denotes the adjoint action $\operatorname{Ad}(g)(x)$. The ordinary cohomology of $\operatorname{Hess}(M, y)$ with coefficient $\mathbb{C}$, $H^*(\operatorname{Hess}(M, y))$, is independent of the choice of the regular semisimple element $y$, and Tymoczko \cite{tymoczko2007permutation} defined the dot action of $W$ on $H^*(\operatorname{Hess}(M, y))$. The decomposition of $H^*(\operatorname{Hess}(M, y))$ into irreducible $W$ representations is an interesting question in itself and a crucial ingredient of both the Shareshian-Wachs and the Stanley-Stembridge Conjectures. There is a very useful connection between the decomposition of $H^*(\operatorname{Hess}(M, y))$ and the knowledge of Hessenberg ideal fibers, which we briefly explain in the next paragraph. Readers are referred to section \ref{dotactiong2} for more details.

Firstly, there exists a natural one-one correspondence between Hessenberg subspaces and Hessenberg ideals (see section \ref{dotactiong2}). Consider the maps $\pi_M: G \times^B M \longrightarrow \mathfrak{g}$ and $\pi_I: G \times^B I \longrightarrow \mathfrak{g}$ for a pair of Hessenberg subspace $M$ and Hessenberg ideal $I$ corresponding to each other. Let $d$ and $d^{\vee}$ denote the complex dimensions of $G \times^B M$ and $G \times^B I$ respectively and $\underline{\mathbb{C}}$ denote the constant sheaves on both spaces. We thus have two direct push-forward complexes $R\pi_{M*}\underline{\mathbb{C}}[d]$ and $R\pi_{I*}\underline{\mathbb{C}}[d^{\vee}]$. Let $G \times \mathbb{G}_m$ act on $\mathfrak{g}$ with $G$ acting via the adjoint action and $\mathbb{G}_m$ acting by scaling. Then, by fixing a Killing form, we get an autoequivalence $F$ (the Fourier-Sato transform) from the category $\operatorname{Perv}_{G \times \mathbb{G}_m} (\mathfrak{g})$ of $G \times \mathbb{G}_m$-equivariant perverse sheaves on $\mathfrak{g}$ to itself. We know that $F(R\pi_{M*}\underline{\mathbb{C}}[d])=R\pi_{I*}\underline{\mathbb{C}}[d^{\vee}]$ and that $F$ maps simple summands of $R\pi_{M*}\underline{\mathbb{C}}[d]$ to those of $R\pi_{I*}\underline{\mathbb{C}}[d^{\vee}]$ bijectively. On the one hand, picking a regular semisimple element $y \in \mathfrak{g}$, we have $H^*(R\pi_{M*}\underline{\mathbb{C}}|_y) \cong H^*(\operatorname{Hess}(M, y))$ and that the decomposition of $R\pi_{M*}\underline{\mathbb{C}}[d]$ into simple summands leads directly to the decomposition of $H^*(\operatorname{Hess}(M, y))$ into irreducible $W$ representations. On the other hand, picking any nilpotent element $N \in \mathfrak{g}$, we have $H^*(R\pi_{I*}\underline{\mathbb{C}}|_N) \cong H^*(\pi_{I}^{-1}(N))$. Therefore, the knowledge of Hessenberg ideal fibers can help us determine the decomposition of $R\pi_{I*}\underline{\mathbb{C}}[d^{\vee}]$ into simple summands, which in turn leads to the decompositions of $R\pi_{M*}\underline{\mathbb{C}}[d]$ and of $H^*(\operatorname{Hess}(M, y))$. The detailed process for the ideas just outlined is carried out in section \ref{dotactiong2} in the case when $G$ is of type $G_2$.
In particular, section \ref{dotactiong2} contains a proof of Brosnan's Conjecture (\ref{Brconj}) for type $G_2$. The conjecture was later prove for type $A$ by B\u{a}libanu and Crooks \cite{bualibanu2022perverse} and in all types by Precup and Sommers \cite{precup2022perverse}.

The remainder of the paper is organized as follows. Section \ref{preliminaries} covers preliminary results used in the following sections. In section \ref{maintheorempf} and \ref{F4E6pf}, we prove the complete version of the main theorem stated above (Theorem~\ref{mymainthm}). In section \ref{g2hif}, we explicitly compute the cell structures of all Hessenberg ideal fibers for type $G_2$ and show that one of them has disconnected, un-equidimensional irreducible components (Theorem~\ref{aninterestinghif}). In section \ref{dotactiong2}, we use the results of section \ref{g2hif} to classify Tymoczko's dot actions on the cohomology of regular semisimple Hessenberg varieties for type $G_2$ (Theorem~\ref{thmBX}).

The author would like to thank his advisor, Dr. Patrick Brosnan, for suggesting this project and for his invaluable support, Dr. Xuhua He for asking a question that resulted in Theorem~\ref{aninterestinghif}, and Dr. Jeffrey Adams for very helpful discussions on pseudo-Levi subalgebras. 

\section{Preliminaries} \label{preliminaries}
We state definitions and results that will be used later. In this section, except for Theorem~\ref{dclpthm}, $G$ is assumed to be a connected reductive algebraic group over $\mathbb{C}$, without any restriction on its Lie algebra. $B$, $U$, $\mathfrak{g}$, $\mathfrak{b}$ and $\mathfrak{u}$ are the same as in the previous section. 

\subsection{Notation} \label{not}
Let $B$ be the fixed Borel subgroup of $G$. Let $T \subset B$ be a fixed maximal torus with Lie algebra $\mathfrak{t}$ and denote by $W$ the Weyl group of $G$ associated to $T$. Choose a representative $\dot{w} \in N_G(T)$ for each Weyl group element $w \in W=N_G(T)/T$. Let $\Phi^+$, $\Phi^-$ and $\Delta$ denote the positive, negative and simple roots associated to $T$ and $B$. Let $\mathfrak{g}_{\alpha}$ denote the root space corresponding to $\alpha \in \Phi$. Write $U$ for the unipotent radical of $B$, $U^-$ for its opposite subgroup, $\mathfrak{u}$ and $\mathfrak{u}^-$ for their respective Lie algebras.

Given a standard parabolic subgroup $P$ of $G$, we choose for it a specific Levi decomposition $P=L U_P$. $U_P$ is the unipotent radical of $P$. The Levi factor $L$ is determined in the following way. $P$ corresponds to a unique subset $I \subset \Delta$ such that $P=BW_IB$, where $W_I$ is the subgroup of $W$ generated by the set of simple reflections $ \{\  s_{\alpha} \  | \  \alpha \in I \}$. Let $Z=(\bigcap_{\alpha \in I}\operatorname{Ker}(\alpha))^{\circ}$ and define $L=C_G(Z)$. The Lie algebra $\mathfrak{l}$ of $L$ has a root space decomposition $\mathfrak{l}=\mathfrak{t} \oplus (\bigoplus_{\alpha \in \Psi}\mathfrak{g}_{\alpha})$, in which $\Psi$ is the subsystem of $\Phi$ spanned by $I$. The Weyl group $W_L$ of $L$ can be naturally identified with the subgroup $W_I$ of $W$. We denote the Lie algebras of $P$ and $U_P$ by $\mathfrak{p}$ and $\mathfrak{u}_P$ respectively. $B \cap L$ is the Borel subgroup of $L$ with Lie algebra $\mathfrak{b} \cap \mathfrak{l}$. Denote by $\Phi(\mathfrak{u}_P)$ and $\Phi(L)$ the subsets of roots so that 
\[ \mathfrak{u}_P=\bigoplus_{\alpha \in \Phi(\mathfrak{u}_P)} \mathfrak{g}_{\alpha} \quad \operatorname{and} \quad \mathfrak{l}=\mathfrak{t} \oplus (\bigoplus_{\alpha \in \Phi(L)} \mathfrak{g}_{\alpha}).  \]
In particular, $\mathfrak{l}$ has triangular decomposition $\mathfrak{l}=\mathfrak{u}_L^- \oplus \mathfrak{t} \oplus \mathfrak{u}_L$ where
\[ \mathfrak{u}_L=\bigoplus_{\alpha \in \Phi^+(L)} \mathfrak{g}_{\alpha} \quad \operatorname{and} \quad \mathfrak{u}_L^-=\bigoplus_{\alpha \in \Phi^-(L)} \mathfrak{g}_{\alpha},   \]
with $\Phi^{\pm}(L)=\Phi(L) \cap \Phi^{\pm}$. Let $U_L$ denote the unipotent subgroup of $G$ with Lie algebra $\mathfrak{u}_L$. Then, $U_L$ is the unipotent radical of $B \cap L$, and $\mathfrak{u}=\mathfrak{u}_L \oplus \mathfrak{u}_P$.

Depending on context, we may use either $G/B$ or $\mathcal{B}$ to denote the flag variety. $\mathcal{B}$ is viewed as the set of Borel subgroups of $G$ (or equivalently, the set of Borel subalgebras of $\mathfrak{g}$). $G/B$ is viewed as the set of left $B$-cosets. $G$ acts naturally on the flag variety $\mathcal{B}=G/B$. The action on $\mathcal{B}$ is conjugation on Borel subgroups (or adjoint action on Borel subalgebras), and the action on $G/B$ is left multiplication on left $B$-cosets. They are different presentations of the same action. $g \cdot B \in \mathcal{B}$ stands for the Borel subgroup $gBg^{-1}$, $g \cdot \mathfrak{b} \in \mathcal{B}$ stands for the Borel subalgebra $\operatorname{Ad}(g)(\mathfrak{b})$, and $gB \in G/B$ stands for the left $B$-coset. They are the same point in the flag variety. These notational conventions are kept throughout the paper. 

\subsection{Hessenberg ideals}
In what follows, we restate the definition of Hessenberg ideal and give a simple yet useful lemma about it. 

\begin{definition}
A subspace $I \subset \mathfrak{u}$ is a Hessenberg ideal if it is stable under the adjoint action by $\mathfrak{b}$.
\end{definition}

It follows easily from the definition that a Hessenberg ideal $I$ is also stable under the adjoint action by $B$. We define two sets:

$\mathscr{I}=  \{\  \text{subspaces } I \text{ of } \mathfrak{u} \  | \   I \text{ is } \operatorname{ad}(\mathfrak{b})\text{-stable} \  \}$,

$\mathscr{S}= \{\  \text{subsets } S \subset \Phi^+ \  | \  \text{if } \beta \in S, \, \alpha \in \Phi^+ \text{ and } \beta+\alpha \in \Phi^+, \text{ then } \beta+\alpha \in S  \  \}$.

\begin{lemma} \label{hi1}
There is a one-to-one correspondence between $\mathscr{I}$ and $\mathscr{S}$ given by

\[ I=\bigoplus_{\alpha \in S} \mathfrak{g}_{\alpha}. \]

\end{lemma}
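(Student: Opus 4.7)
\textbf{Proof plan for Lemma~\ref{hi1}.}

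The plan is to exhibit inverse maps $\Phi : \mathscr{S} \to \mathscr{I}$ and $\Psi : \mathscr{I} \to \mathscr{S}$, where $\Phi(S) = \bigoplus_{\alpha \in S} \mathfrak{g}_\alpha$ and $\Psi(I) = \{\alpha \in \Phi^+ : \mathfrak{g}_\alpha \subset I\}$. The nontrivial content is (a) well-definedness of each map and (b) the fact that $\Psi \circ \Phi = \mathrm{id}$ and $\Phi \circ \Psi = \mathrm{id}$.

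For $\Phi$: given $S \in \mathscr{S}$, I would verify that $I := \bigoplus_{\alpha \in S}\mathfrak{g}_\alpha$ is $\mathrm{ad}(\mathfrak{b})$-stable by decomposing $\mathfrak{b} = \mathfrak{t} \oplus \bigoplus_{\gamma \in \Phi^+}\mathfrak{g}_\gamma$. The torus $\mathfrak{t}$ preserves each root space, so $[\mathfrak{t}, I] \subset I$ trivially. For $\gamma \in \Phi^+$ and $\beta \in S$, the bracket $[\mathfrak{g}_\gamma, \mathfrak{g}_\beta]$ lies in $\mathfrak{g}_{\gamma+\beta}$ if $\gamma+\beta$ is a root and is zero otherwise. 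If $\gamma+\beta$ is a root, it is automatically positive (sum of two positive roots), hence lies in $S$ by the closure hypothesis, so the bracket lands in $I$.

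For $\Psi$: given $I \in \mathscr{I}$, the first step is to show $I = \bigoplus_{\alpha \in \Psi(I)} \mathfrak{g}_\alpha$. Here I would use that $I$ is $\mathrm{ad}(\mathfrak{t})$-stable and that $\mathfrak{u}$ decomposes as a direct sum of the \emph{one-dimensional} weight spaces $\mathfrak{g}_\alpha$ under distinct $\mathfrak{t}$-weights. A standard argument (project onto the weight-$\alpha$ summand via the action of generic $h \in \mathfrak{t}$, or use simultaneous diagonalization) then forces any $\mathfrak{t}$-stable subspace of $\mathfrak{u}$ to be a sum of some of the $\mathfrak{g}_\alpha$'s. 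Thus $I = \bigoplus_{\alpha \in \Psi(I)} \mathfrak{g}_\alpha$. To check $\Psi(I) \in \mathscr{S}$, take $\beta \in \Psi(I)$ and $\alpha \in \Phi^+$ with $\alpha + \beta \in \Phi^+$. Pick a nonzero $y \in \mathfrak{g}_\beta \subset I$ and a nonzero $x \in \mathfrak{g}_\alpha \subset \mathfrak{u} \subset \mathfrak{b}$; by the standard fact $[\mathfrak{g}_\alpha, \mathfrak{g}_\beta] = \mathfrak{g}_{\alpha+\beta}$ whenever $\alpha+\beta$ is a root, one obtains $\mathfrak{g}_{\alpha+\beta} \subset I$, hence $\alpha+\beta \in \Psi(I)$.

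Bijectivity is then immediate from the formulas. There is no real obstacle here: the whole argument reduces to two well-known structural facts about reductive Lie algebras, namely that $\mathfrak{t}$-stable subspaces of $\mathfrak{u}$ are sums of root spaces (using one-dimensionality of each $\mathfrak{g}_\alpha$) and that $[\mathfrak{g}_\alpha, \mathfrak{g}_\beta] = \mathfrak{g}_{\alpha+\beta}$ whenever $\alpha+\beta \in \Phi$. If anything deserves the name \emph{main point}, it is this second fact, which guarantees that the closure condition in $\mathscr{S}$ exactly matches the $\mathrm{ad}(\mathfrak{u})$-stability condition in $\mathscr{I}$.
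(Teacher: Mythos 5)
Your proposal is correct and fills in exactly the "straightforward" argument the paper omits: the forward map is well-defined because a sum of positive roots that is a root is itself positive and hence in $S$ by closure, and the inverse map is well-defined because $\mathrm{ad}(\mathfrak{t})$-stability plus the one-dimensionality of the $\mathfrak{g}_\alpha$'s forces any $I \in \mathscr{I}$ to be a sum of root spaces, with $[\mathfrak{g}_\alpha,\mathfrak{g}_\beta]=\mathfrak{g}_{\alpha+\beta}$ giving the closure condition on $\Psi(I)$. There is essentially one route here and you have taken it; no gaps.
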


\begin{proof}
Straightforward.
\end{proof}

If an ideal $I$ corresponds to a set $S$ as above, for any root $\alpha \in S$, we say that $I$ has a root $\alpha$ and $\alpha$ is a root of $I$ (or $\alpha$ belongs to $I$).

\subsection{Affine pavings}
\begin{definition}
A finite partition of a variety $X$ into subsets is said to be a paving if the subsets in the partition can be indexed $X_1, \ldots, X_n$ in such a way that $X_1 \amalg X_2 \amalg \cdots \amalg X_i$ is closed in $X$ for $i=1, \ldots, n$. A paving is affine if each $X_i$ is a finite disjoint union of affine spaces. In this case, we can alternatively say that $X$ is paved by affines. 
\end{definition}

\subsection{A brief roadmap} \label{roadmap}
We sketch in this subsection a brief roadmap of the proof of the main theorem. For a Hessenberg ideal fiber $\pi_{I}^{-1}(N)$, we obtain a paving by intersecting it with a nice paving of the flag variety $\mathcal{B}$. For each piece in the paving of $\pi_{I}^{-1}(N)$, we consider its fixed-point subvariety by a certain one-dimensional torus. If the fixed-point subvariety is paved by affines, we are done. Otherwise we continue to decompose and take fixed-point sets until we reach something paved by affines. This process is accomplished by combining Lemma~\ref{1st}, Lemma~\ref{3rdtech} and Lemma~\ref{3rdtechaffinepaving}.

\begin{lemma} \label{1st}
If a variety $X$ has a paving $\left\{X_1, X_2, \ldots, X_n \right\}$ such that each $X_i$ $\left(i=1, 2, \ldots, n \right)$ is paved by affines, the same is true of $X$. 
\end{lemma}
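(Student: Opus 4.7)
\medskip

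\noindent\textbf{Proof plan.} The idea is to simply concatenate the affine pavings of the pieces $X_i$ in the given order. Concretely, for each $i$, fix an affine paving $\{Y_{i,1}, Y_{i,2}, \ldots, Y_{i,m_i}\}$ of $X_i$, listed so that $Y_{i,1} \amalg \cdots \amalg Y_{i,j}$ is closed in $X_i$ for every $j$. I will then check that the concatenated sequence
\[
Y_{1,1}, \ldots, Y_{1,m_1}, \, Y_{2,1}, \ldots, Y_{2,m_2}, \, \ldots, \, Y_{n,1}, \ldots, Y_{n,m_n}
\]
is an affine paving of $X$.

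\medskip

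\noindent\textbf{Key steps.} First, the $Y_{i,k}$ are pairwise disjoint and cover $X$: the $X_i$ partition $X$ by hypothesis, and for each $i$ the $Y_{i,k}$ partition $X_i$. Second, each $Y_{i,k}$ is already a finite disjoint union of affine spaces by hypothesis, so the affine condition is automatic once the partial closure condition is verified. The only real point to check is therefore that each partial union
\[
Z_{i,j} \;:=\; X_1 \cup \cdots \cup X_{i-1} \cup \bigl(Y_{i,1} \cup \cdots \cup Y_{i,j}\bigr)
\]
is closed in $X$. For this I would argue by passing to the open complement $U_{i-1} := X \setminus (X_1 \cup \cdots \cup X_{i-1})$, which is open in $X$ because $\{X_1,\ldots,X_n\}$ is a paving. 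Inside $U_{i-1}$, the subset $X_i = (X_1 \cup \cdots \cup X_i) \cap U_{i-1}$ is closed, so any subset of $X_i$ which is closed in $X_i$ is also closed in $U_{i-1}$. Applying this to $Y_{i,1} \cup \cdots \cup Y_{i,j}$, its complement in $U_{i-1}$ is open in $U_{i-1}$, hence open in $X$. But this complement is exactly $X \setminus Z_{i,j}$, so $Z_{i,j}$ is closed in $X$.

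\medskip

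\noindent\textbf{Main obstacle.} There is essentially no obstacle: the lemma is a bookkeeping exercise about pavings, and the only substantive point is the standard two-line transitivity argument for closedness (closed-in-closed-in-open is closed-in-open) used in the previous paragraph. The argument is characteristic-free and does not use anything about affine spaces beyond the fact that a disjoint union of finite disjoint unions of affine spaces is again a finite disjoint union of affine spaces.
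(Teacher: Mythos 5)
Your proof is correct and is exactly the standard bookkeeping argument the paper has in mind — the paper itself only writes "Straightforward" for this lemma. Concatenating the pavings and verifying closedness of the partial unions via the complement $U_{i-1}$ (closed-in-closed-in-open) is precisely the intended reasoning, and your handling of the "finite disjoint union of affine spaces" condition matches the paper's definition of affine paving.
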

\begin{proof}
Straightforward.
\end{proof}

\begin{lemma} \label{3rdtech}
Let $E$ be a connected smooth variety over $\mathbb{C}$ with an algebraic $\mathbb{C}^{\times}$-action. Assume that $E$ can be covered by $\mathbb{C}^{\times}$-stable quasi-affine open subschemes. Let $E^{\mathbb{C}^{\times}}$ denote the fixed-point subvariety and assume that it is nonempty, connected and smooth. Moreover, assume that $\operatorname{lim}_{t \rightarrow 0} t \cdot x \in E^{\mathbb{C}^{\times}}$ for every point $x \in E$, where $t \cdot x$ denotes the $\mathbb{C}^{\times}$-action. Let $Z \subset E$ be a $\mathbb{C}^{\times}$-stable smooth closed subvariety so that $Z^{\mathbb{C}^{\times}}$ is also smooth. Then if $Z^{\mathbb{C}^{\times}}$ is paved by affines, the same is true of $Z$.
%Let $p: E \longrightarrow Y$ be a vector bundle over a smooth variety $Y$, with a fiber preserving linear $\mathbb{C}^{\times}$-action on $E$ with strictly positive weights. Let $Z \subset E$ be a $\mathbb{C}^{\times}$-stable smooth closed subvariety so that the image $p(Z)$ is also smooth. Then if $p(Z)$ is paved by affines, so is $Z$.
\end{lemma}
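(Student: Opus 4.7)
The strategy is to apply a Bialynicki-Birula type decomposition to $Z$ and reduce the affine paving problem on $Z$ to that on $Z^{\mathbb{C}^{\times}}$ via the attracting map. First, I would define the map $\pi_Z: Z \to Z^{\mathbb{C}^{\times}}$ by $\pi_Z(z) = \lim_{t \to 0} t \cdot z$. This is well-defined as a set-theoretic map: the limit exists in $E^{\mathbb{C}^{\times}}$ by hypothesis, and since $Z$ is closed and $\mathbb{C}^{\times}$-stable, this limit must lie in $Z \cap E^{\mathbb{C}^{\times}} = Z^{\mathbb{C}^{\times}}$. Regularity of $\pi_Z$ is a local question, and $Z$ inherits from $E$ a cover by $\mathbb{C}^{\times}$-stable quasi-affine open subschemes (obtained by intersecting the given cover of $E$ with $Z$); on each such piece the $\mathbb{C}^{\times}$-action can be equivariantly linearized (Sumihiro), and the limit map becomes a morphism in the standard way.

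Next, I would invoke the Bialynicki-Birula decomposition for $Z$: for each connected component $F$ of $Z^{\mathbb{C}^{\times}}$, the attracting set $Z_F := \pi_Z^{-1}(F)$ is locally closed in $Z$, and the restriction $\pi_Z|_{Z_F}: Z_F \to F$ realizes $Z_F$ as a Zariski-locally trivial vector bundle whose rank equals the dimension of the positive-weight subspace of the tangent representation at any point of $F$. Ordering the components by the standard Bialynicki-Birula closure order (or equivalently by cell dimension) produces a paving $Z = \coprod_F Z_F$ by locally closed subvarieties.

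Finally, combining with the hypothesis that $Z^{\mathbb{C}^{\times}}$ is paved by affines, I restrict the affine paving to each component $F$ and pull back through $\pi_Z|_{Z_F}$. Each affine cell $\mathbb{A}^m \subset F$ pulls back to a vector bundle over $\mathbb{A}^m$, which is trivial by the Quillen-Suslin theorem, hence isomorphic to an affine space. Assembling these pullbacks over all components $F$ and applying Lemma~\ref{1st} yields the desired affine paving of $Z$. The main obstacle is the non-projective setting: the classical Bialynicki-Birula theorem is typically stated for smooth projective varieties, so I would need to appeal to the variants (due to Bialynicki-Birula and Konarski) that replace projectivity with the existence of a $\mathbb{C}^{\times}$-stable quasi-affine open cover together with the condition that every attracting limit exists --- precisely the hypotheses placed on $E$ and thereby inherited by $Z$.
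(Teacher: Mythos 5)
Your overall strategy is the same as the paper's: use the attracting map $\pi_Z(z)=\lim_{t\to 0}t\cdot z$ to $Z^{\mathbb{C}^{\times}}$, pull back an affine paving of the fixed locus, and finish with Quillen--Suslin. But there is a genuine gap at the central step. You assert that the Bia{\l}ynicki-Birula theorem makes each attracting piece $Z_F \to F$ a Zariski-locally trivial \emph{vector bundle}. What \cite{bialynicki1973some} actually gives (under the smoothness and quasi-affine-cover hypotheses) is a $\mathbb{C}^{\times}$-equivariant affine fibration, i.e.\ a Zariski-locally trivial fibration whose fibers are affine spaces; it does not provide a linear structure. The transition functions of such a fibration are $\mathbb{C}^{\times}$-equivariant automorphisms of $U\times\mathbb{A}^r$ over $U$, and when the weights on the fiber are not all equal these need not be linear (e.g.\ with fiber coordinates of weights $1$ and $2$, the map $(x,y)\mapsto(x,\,y+x^{2})$ is equivariant but not linear). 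Consequently, the pullback over an affine cell $\mathbb{A}^m\subset F$ is a priori only an $\mathbb{A}^r$-fibration, and Quillen--Suslin does not apply to it directly; the conclusion that this pullback is an affine space is exactly what still has to be proved.

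This missing linearization is where the paper spends most of its effort: it restricts $\pi$ to an affine cell $W_i\cong\mathbb{A}^l$, uses the fact that $\pi$ is an affine morphism to see that $Z_i=\pi^{-1}(W_i)$ is a smooth affine scheme, verifies that $W_i$ is a local complete intersection in $Z_i$ (via regular sequences in the regular local rings), and then invokes the Bass--Haboush theorem (Theorem~\ref{bhthm}) to endow $Z_i\to W_i$ with a $\mathbb{C}^{\times}$-vector bundle structure, after which Quillen--Suslin trivializes it. To repair your argument you must either carry out this equivariant linearization, or replace it by a nonequivariant substitute such as the Bass--Connell--Wright theorem (a Zariski-locally trivial $\mathbb{A}^r$-fibration over an affine base is the symmetric algebra of a projective module, hence a vector bundle), applied over each affine cell; as written, the claim that the bundle structure comes for free from the Bia{\l}ynicki-Birula decomposition is not justified. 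The remaining points in your sketch (well-definedness and regularity of $\pi_Z$, the decomposition of $Z$ by components of $Z^{\mathbb{C}^{\times}}$, and the final assembly via Lemma~\ref{1st}) are consistent with the paper's argument.
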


The key to the proof of this lemma is \cite{bass1985linearizing}[Theorem 9.1], which is stated below for readers' convenience.

\begin{theorem}[{\cite{bass1985linearizing}}, Theorem 9.1] \label{bhthm}
Let $G$ be a reductive group over $\mathbb{C}$ acting on the affine scheme $X=\operatorname{Spec}(A)$. Let $X_0=\operatorname{Spec}(A/I)$ be a closed subscheme of $X$. Assume:
\begin{itemize}
\item[(1)] $X_0$ is $G$-stable and contains all closed orbits.
\item[(2)] There is a $G$-equivariant retraction $\pi: X \longrightarrow X_0$.
\item[(3)] $X_0$ is a local complete intersection in $X$.
\end{itemize}
Then $\pi: X \longrightarrow X_0$ admits the structure of a $G$-vector bundle over $X_0$.
\end{theorem}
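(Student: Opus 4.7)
My plan is to exhibit the limit map $p: E \to E^{\mathbb{C}^\times}$, $p(x) := \lim_{t \to 0} t \cdot x$, as a $\mathbb{C}^\times$-equivariant algebraic vector bundle, then to restrict this structure to $Z$ and invoke the Quillen–Suslin theorem on the cells of the given paving of $Z^{\mathbb{C}^\times}$. First I would promote $p$ from a set-theoretic map to a morphism. On each $\mathbb{C}^\times$-stable quasi-affine open $U \subset E$, embed $U$ equivariantly into an affine $\mathbb{C}^\times$-variety $\mathrm{Spec}(A)$; the limit hypothesis forces the weight decomposition $A = \bigoplus_{n \geq 0} A_n$ to have only non-negative weights, and then the limit map is algebraic because it corresponds to the algebra surjection $A \twoheadrightarrow A_0$ whose kernel cuts out the fixed locus of $\mathrm{Spec}(A)$. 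These local morphisms glue canonically over the given quasi-affine cover to produce the global $p$.

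Next I would apply Theorem~\ref{bhthm} with $G = \mathbb{C}^\times$ on each affine chart. Hypothesis (1) holds because closed $\mathbb{C}^\times$-orbits in a non-negatively graded affine variety are exactly the fixed points; hypothesis (2) is provided by $p$ itself; and hypothesis (3) -- that $E^{\mathbb{C}^\times}$ is a local complete intersection in $E$ -- follows from the joint smoothness of $E$ and $E^{\mathbb{C}^\times}$. Bass–Haboush then endows $p$ with a $\mathbb{C}^\times$-equivariant algebraic vector bundle structure on each chart, and uniqueness of such structures forces them to glue to a global equivariant vector bundle $p: E \to E^{\mathbb{C}^\times}$. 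Rerunning the same argument for $Z$ -- which is closed, $\mathbb{C}^\times$-stable, smooth, inherits a quasi-affine cover and the limit property from $E$, and has smooth fixed locus by assumption -- shows that $p|_Z: Z \to Z^{\mathbb{C}^\times}$ is also a $\mathbb{C}^\times$-equivariant vector bundle.

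Finally, writing the given paving as $Z^{\mathbb{C}^\times} = V_1 \sqcup \cdots \sqcup V_n$ with each $V_i \cong \mathbb{A}^{k_i}$ (subdividing further if needed so each piece is a single affine space), continuity of $p|_Z$ makes the preimages $p|_Z^{-1}(V_1 \sqcup \cdots \sqcup V_i)$ an ascending chain of closed subvarieties of $Z$, so $Z = \bigsqcup_i p|_Z^{-1}(V_i)$ is a paving. Each cell $p|_Z^{-1}(V_i)$ is an algebraic vector bundle over $\mathbb{A}^{k_i}$, hence trivial by Quillen–Suslin, hence itself an affine space. The hard part will be the Bass–Haboush step: verifying the three hypotheses cleanly on each chart -- especially the local complete intersection condition, which is exactly why both smoothness assumptions are imposed -- and gluing the locally constructed bundle structures to a global one. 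The quasi-affine cover hypothesis is precisely what reduces this global geometric question to a collection of affine ones where Theorem~\ref{bhthm} can be invoked.
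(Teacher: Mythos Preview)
Your proposal does not prove the stated theorem. Theorem~\ref{bhthm} is the Bass--Haboush result itself, quoted from \cite{bass1985linearizing} and not proved in the paper; it is the black box you are \emph{invoking}, not the statement you are meant to establish. What you have actually written is a proof sketch for Lemma~\ref{3rdtech} (the lemma about $E$, $Z$, and a $\mathbb{C}^\times$-action), which the paper proves immediately after stating Theorem~\ref{bhthm} and Definition~\ref{lcidfn}.

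Setting aside the mismatch and reading your text as an attempt at Lemma~\ref{3rdtech}: your overall architecture---limit map, Bass--Haboush, Quillen--Suslin on cells---matches the paper's, but the implementation diverges in a way that creates a real gap. The paper does not try to make $p:E\to E^{\mathbb{C}^\times}$ a global vector bundle by gluing chartwise applications of Bass--Haboush. Instead it invokes Bia\l{}ynicki-Birula's theorem \cite[Theorem~4.1]{bialynicki1973some} to conclude that $p$ (and hence its restriction $\pi:Z\to Z^{\mathbb{C}^\times}$) is an \emph{affine fibration}, in particular an affine morphism. Then, for each affine cell $W_i\cong\mathbb{A}^l$ of the given paving of $Z^{\mathbb{C}^\times}$, the preimage $Z_i=\pi^{-1}(W_i)$ is automatically an affine scheme, and Bass--Haboush is applied \emph{once} to the pair $(Z_i,W_i)$. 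Your route instead applies Bass--Haboush on $\mathbb{C}^\times$-stable quasi-affine charts of $E$ (or of $Z$) and then glues. Two problems: first, Theorem~\ref{bhthm} requires $X$ to be affine, and your charts are only quasi-affine---embedding them equivariantly in something affine does not let you apply the theorem to the chart itself; second, the sentence ``uniqueness of such structures forces them to glue'' is doing unearned work, since a $\mathbb{C}^\times$-vector-bundle structure on a retraction is not canonically unique in a way that guarantees compatibility on overlaps. The paper sidesteps both issues by never needing a global bundle structure: the affine-fibration step guarantees that the only place Bass--Haboush is needed is over a single affine cell at a time, where its hypotheses (affineness of $X$, and the local complete intersection condition via joint smoothness) are checked directly.
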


The definition of local complete intersection can be found in \cite{bass1985linearizing}[section 8].

Fix an affine scheme $X=\operatorname{Spec}(A)$ and an ideal $I \subset A$. Define $A_0= A/I$ and the closed subscheme $X_0=\operatorname{Spec}(A_0)$ of $X$. Consider the $A_0$-module $N=I/I^2$. Then the graded $A_0$-algebra $\operatorname{gr}_I(A)=\bigoplus_{n \ge 0} I^n/I^{n+1}$ is generated by $N$ in degree 1, and there is a canonical surjection of graded $A_0$-algebras $\phi: \operatorname{Sym}_{A_0}(N) \longrightarrow \operatorname{gr}_I(A)$. 

\begin{definition} \label{lcidfn}
We say that $X_0$ is a local complete intersection in $X$ if the following conditions are satisfied:
\begin{itemize}
\item[(1)] The $A_0$-module $N=I/I^2$ is projective.
\item[(2)] $\phi: \operatorname{Sym}_{A_0}(N) \longrightarrow \operatorname{gr}_I(A)$ is an isomorphism.
\end{itemize}
\end{definition}
Now we can prove Lemma~\ref{3rdtech}.

\begin{proof}
Since $\operatorname{lim}_{t \rightarrow 0} t \cdot x \in E^{\mathbb{C}^{\times}}$ for every $x \in E$, we can define a set-theoretic map $p: E \longrightarrow E^{\mathbb{C}^{\times}}$ by $p(x)=\operatorname{lim}_{t \rightarrow 0} t \cdot x$. It is clearly a $\mathbb{C}^{\times}$-equivariant retraction of the inclusion $E^{\mathbb{C}^{\times}} \hookrightarrow E$. Since $E$ is smooth and covered by $\mathbb{C}^{\times}$-stable quasi-affine open subschemes, we can apply \cite{bialynicki1973some}[Theorem 4.1] and deduce that $p: E \longrightarrow E^{\mathbb{C}^{\times}}$ is an affine fibration. Therefore, for every point $y \in E^{\mathbb{C}^{\times}}$, the fiber $p^{-1}(y)$ is an affine space with a $\mathbb{C}^{\times}$-action. $y$ is the only fixed point within $p^{-1}(y)$ and every other point $x \in p^{-1}(y)$ is ``flowed'' to $y$ by the $\mathbb{C}^{\times}$-action. Let $\pi: Z \longrightarrow Z^{\mathbb{C}^{\times}}$ denote the restriction of $p$ to $Z$. %It is well-defined because $Z$ is a closed subvariety of $E$.
%Identify $Y$ with the zero section of $p: E \longrightarrow Y$. Since $\mathbb{C}^{\times}$ acts linearly on the fiber of $p$ with strictly positive weights, $E^{\mathbb{C}^{\times}}=p(E)=Y$. Similarly, $Z^{\mathbb{C}^{\times}}=p(Z)$ and $p(Z)$ is therefore a closed subvariety of $Z$. For the sake of clarity, let $\pi: Z \longrightarrow p(Z)$ denote the restriction of $p$ to $Z$. $\pi$ is a $\mathbb{C}^{\times}$-equivariant retraction of the inclusion $p(Z)=Z^{\mathbb{C}^{\times}} \hookrightarrow Z$.

Assume $Z^{\mathbb{C}^{\times}}$ is paved by affines, we want to show the same is true of $Z$. Let $\left\{V_1, V_2, \ldots, V_m \right\}$ be an affine paving of $Z^{\mathbb{C}^{\times}}$. It suffices to show that $\left\{ \pi^{-1}(V_1), \pi^{-1}(V_2), \ldots, \pi^{-1}(V_m)\right\}$ is an affine paving of $Z$. Let $W_i \cong \mathbb{A}^l$ be any affine piece lying in some $V_j$. Then it is enough to show that $Z_i\stackrel{\textup{\tiny def}}{=} \pi^{-1}(W_i)$ is an affine space as well. For this purpose we apply Theorem~\ref{bhthm} with $G=\mathbb{C}^{\times}$, $X=Z_i$ and $X_0=W_i$. Next we show that all assumptions of Theorem~\ref{bhthm} are satisfied.

Firstly, $G=\mathbb{C}^{\times}$ is reductive. $X_0=W_i \cong \mathbb{A}^l$ is an affine space, hence certainly an affine scheme. Because $Z$ is a $\mathbb{C}^{\times}$-stable closed subvariety of $E$, it is also covered by $\mathbb{C}^{\times}$-stable quasi-affine open subschemes. $Z$ is also smooth, so we can apply \cite{bialynicki1973some}[Theorem 4.1] again and deduce that $\pi: Z \longrightarrow Z^{\mathbb{C}^{\times}}$ is an affine fibration. In particular, $\pi$ is an affine morphism and so is its restriction $\pi |_{Z_i}: Z_i \longrightarrow W_i$. Now that $W_i \cong \mathbb{A}^l$ is an affine scheme, so is $Z_i$. Because $Z_i$ is $\mathbb{C}^{\times}$-stable and $W_i=Z_i^{\mathbb{C}^{\times}}$, $W_i$ is a closed subscheme of $Z_i$. Using the notation of Theorem~\ref{bhthm}, let $Z_i=\operatorname{Spec}(A)$ and $W_i=\operatorname{Spec}(A/I)$ for some affine $\mathbb{C}$-algebra $A$ and ideal $I \subset A$. 

%Since $p: E \longrightarrow E^{\mathbb{C}^{\times}}$ is an affine fibration, it is an affine morphism in particular.  Therefore the pullback $p: p^{-1}(W_i) \longrightarrow W_i$ is also an affine morphism. Since $W_i \cong \mathbb{A}^l$ is an affine scheme, so is $p^{-1}(W_i)$. Because $Z$ is a closed subvariety of $E$, $Z_i=p^{-1}(W_i) \cap Z$ is a closed subscheme of $p^{-1}(W_i)$, hence also an affine scheme. Because $Z_i$ is $\mathbb{C}^{\times}$-stable and $W_i=Z_i^{\mathbb{C}^{\times}}$, $W_i$ is a closed subscheme of $Z_i$. 
%by the Quillen-Suslin theorem (that finitely generated projective modules over polynomial rings are free; see \cite{lang2002algebra}[p.~850, Theorem 3.7]), 
%$p^{-1}(W_i)$ is also an affine space. Because $Z$ is a closed subvariety of $E$, $Z_i=p^{-1}(W_i) \cap Z$ is a closed subvariety of the affine space $p^{-1}(W_i)$. Therefore, $X=Z_i$ is an affine scheme. Since $Z_i$ is $\mathbb{C}^{\times}$-stable and $W_i=Z_i^{\mathbb{C}^{\times}}$, $W_i$ is a closed subscheme of $Z_i$. 
%Using the notation of Theorem~\ref{bhthm}, let $Z_i=\operatorname{Spec}(A)$ and $W_i=\operatorname{Spec}(A/I)$ for some affine $\mathbb{C}$-algebra $A$ and ideal $I \subset A$. 

Secondly, assumption (1) and (2) are clearly both satisfied.

%Thirdly, for assumption (2), let $\pi_i: Z_i \longrightarrow W_i$ denote the restriction of $\pi$ to $Z_i$. Then $\pi_i$ is a $\mathbb{C}^{\times}$-equivariant retraction for the inclusion $W_i \hookrightarrow Z_i$.

Thirdly, for assumption (3), we need to show that both condition (1) and (2) of Definition~\ref{lcidfn} are satisfied. Since $\pi$ is an affine fibration, it is a smooth morphism and so is $\pi |_{Z_i}: Z_i \longrightarrow W_i$. Because $W_i \cong \mathbb{A}^l$ is a smooth variety, so is $Z_i$.
%Since $p: E \longrightarrow E^{\mathbb{C}^{\times}}$ is an affine fibration, it is a smooth morphism. $\pi |_{Z_i}: Z_i \longrightarrow W_i$ is a pullback of $p$, hence a smooth morphism as well. Because $W_i \cong \mathbb{A}^l$ is a smooth variety, so is $Z_i$.
%Note that $\pi: Z \longrightarrow p(Z)$ is a surjective morphism with both $Z$ and $p(Z)$ being smooth varieties. For any $z \in Z$, the differential $d\pi: T_zZ \longrightarrow T_{\pi(z)}p(Z)$ is surjective (see \cite{bialynicki1973some}[Theorem 4.1]). By \cite{hartshorne1997algebraic}[Chapter III, Proposition 10.4], $\pi$ is a smooth morphism. Because $\pi_i: Z_i \longrightarrow W_i$ is a pull-back of $\pi: Z \longrightarrow p(Z)$, $\pi_i$ is also smooth. Since $W_i \cong \mathbb{A}^l$ is a smooth variety, so is $Z_i$.

Now we know that $Z_i=\operatorname{Spec}(A)$ and $W_i=\operatorname{Spec}(A/I)$ are both smooth affine varieties and $W_i$ is a closed subvariety of $Z_i$ corresponding to the ideal $I$. By \cite{hartshorne1997algebraic}[Chapter II, Theorem 8.17], $N=I/I^2$ is a locally free sheaf over $W_i=\operatorname{Spec}(A/I)$. Therefore $N=I/I^2$ is a projective $A_0$-module ($A_0=A/I$). Thus condition (1) of Definition~\ref{lcidfn} is satisfied.

For condition (2), we must show that $\phi: \operatorname{Sym}_{A_0}(N) \longrightarrow \operatorname{gr}_I(A)$ is an isomorphism. Because both sides of $\phi$ are graded $A_0$-modules, it suffices to show that the localization $\phi_{\mathfrak{m}}$ is an isomorphism for each maximal ideal $\mathfrak{m} \supset I$. Without loss of generality, we may assume that $(A, \mathfrak{m})$ is a Noetherian local ring. Let $d$ be the dimension of $Z_i$ and $k$ be the codimension of $W_i$ in $Z_i$. Since $Z_i$ and $W_i$ are smooth varieties, both $(A, \mathfrak{m})$ and $(A/I, \mathfrak{m}/I)$ are regular local rings. By \cite{matsumura1980commutative}[p.~121, Theorem 36], there exists a regular system of parameters $\left\{a_1, a_2, \ldots, a_d\right\}$ of $\mathfrak{m}$ so that $a_1, a_2, \ldots, a_d$ is an $A$-regular sequence and $I=(a_1, a_2, \ldots, a_k)$. Therefore, the ideal $I$ is generated by an $A$-regular sequence $a_1, a_2, \ldots, a_k$. By \cite{matsumura1980commutative}[p.~98, Theorem 27], $a_1, a_2, \ldots, a_k$ is also an $A$-quasiregular sequence. By the definition of quasiregular sequence \cite{matsumura1980commutative}[p.~98], $\phi$ is an isomorphism.

Now that all assumptions of Theorem~\ref{bhthm} are satisfied, we deduce that $\pi_i: Z_i \longrightarrow W_i$ admits the structure of a vector bundle. By the Quillen-Suslin theorem (that finitely generated projective modules over polynomial rings are free; see \cite{lang2002algebra}[p.~850, Theorem 3.7]), $Z_i$ is an affine space and we have finished the proof.
\end{proof} 

The next paragraph is the outcome of several results by Bia{\l}ynicki-Birula and Iversen, stated in a way that fits the proof of the main theorem in section \ref{maintheorempf}. Refer to \cite{brosnan2005motivic}[Theorem 3.2] for an alternative formulation and a short history of the results.

Let $X$ be a smooth projective variety over $\mathbb{C}$ with an algebraic $\mathbb{C}^{\times}$-action. The fixed-point set $X^{\mathbb{C}^{\times}}$ is smooth (\cite{iversen1972fixed}). For each connected component $Y$ of $X^{\mathbb{C}^{\times}}$, set $F_Y= \{\  x \in X \  | \  \operatorname{lim}_{t \rightarrow 0} t \cdot x \in Y \}$ and define the map $\pi_Y: F_Y \longrightarrow Y$ by $\pi_Y(x)=\operatorname{lim}_{t \rightarrow 0} t \cdot x$. Then each $F_Y$ is a locally closed $\mathbb{C}^{\times}$-stable smooth subvariety of $X$ and $\pi_Y: F_Y \longrightarrow Y$ is a $\mathbb{C}^{\times}$-equivariant affine fibration (\cite{bialynicki1973some}). The partition of $X$ into the subsets $F_Y$ is a paving (\cite{bialynickibirula1976some}).

In addition, we have the following lemma about affine pavings.
\begin{lemma} \label{3rdtechaffinepaving}
In the settings above, if $Y$ is paved by affines, so is $F_Y$. As a consequence, if $X^{\mathbb{C}^{\times}}$ is paved by affines, the same is true of $X$.
\end{lemma}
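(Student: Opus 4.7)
My plan is to derive this directly from Lemma~\ref{3rdtech}, taking $E = Z = F_Y$. The bulk of the argument is then a verification of the hypotheses of that lemma in the present set-up, together with a clean way to pass an affine paving of $X^{\mathbb{C}^{\times}}$ down to each connected component $Y$.

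First I would check each hypothesis of Lemma~\ref{3rdtech} for $E = Z = F_Y$. Connectedness and smoothness of $F_Y$ follow from the Bia{\l}ynicki-Birula results recalled immediately before the lemma, since $\pi_Y : F_Y \longrightarrow Y$ is a $\mathbb{C}^{\times}$-equivariant affine fibration over a connected smooth base. The fixed-point set $(F_Y)^{\mathbb{C}^{\times}}$ equals $Y$, which is a connected component of $X^{\mathbb{C}^{\times}}$ and hence nonempty, connected, and smooth by Iversen's theorem. The limit condition $\lim_{t \to 0} t \cdot x \in (F_Y)^{\mathbb{C}^{\times}}$ is built into the definition of $F_Y$. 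The only nontrivial point is the existence of a cover of $F_Y$ by $\mathbb{C}^{\times}$-stable quasi-affine open subschemes; I would produce this by applying Sumihiro's equivariant completion theorem to the smooth (hence normal) projective $\mathbb{C}^{\times}$-variety $X$ to cover $X$ by $\mathbb{C}^{\times}$-stable affine open subsets, and then intersecting each such open subset with the locally closed, $\mathbb{C}^{\times}$-stable subvariety $F_Y$. With all hypotheses verified, Lemma~\ref{3rdtech} applied to $Z = E = F_Y$ yields the first statement: an affine paving of $Y = (F_Y)^{\mathbb{C}^{\times}}$ lifts to an affine paving of $F_Y$.

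For the consequence, I would use that the family $\{F_Y\}$ indexed by the connected components $Y$ of $X^{\mathbb{C}^{\times}}$ is a paving of $X$ (Bia{\l}ynicki-Birula, stated just before the lemma). Given an affine paving of $X^{\mathbb{C}^{\times}}$, each affine cell in the paving is connected and therefore contained in a single component $Y$, so each $Y$ inherits an affine paving by restriction. The first part then gives an affine paving of each $F_Y$, and Lemma~\ref{1st} assembles these into an affine paving of $X$. The genuine technical content is already packaged inside Lemma~\ref{3rdtech}; the only real work specific to this statement is the Sumihiro-based construction of the $\mathbb{C}^{\times}$-stable quasi-affine cover of $F_Y$, which I expect to be the main point to justify carefully.
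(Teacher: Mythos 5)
Your proposal is correct and follows essentially the same route as the paper: the paper also applies Lemma~\ref{3rdtech} with $E = Z = F_Y$ and $E^{\mathbb{C}^{\times}} = Z^{\mathbb{C}^{\times}} = Y$, citing Bia{\l}ynicki-Birula's original paper directly for the existence of the $\mathbb{C}^{\times}$-stable quasi-affine cover rather than invoking Sumihiro's theorem (but these amount to the same fact). The passage from an affine paving of $X^{\mathbb{C}^{\times}}$ to one of each component $Y$, and the final assembly via Lemma~\ref{1st}, are exactly as you describe.
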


\begin{proof}
When $X$ is smooth projective with a $\mathbb{C}^{\times}$-action, it follows in particular that $X$ is covered by $\mathbb{C}^{\times}$-stable quasi-affine open subschemes (see \cite{bialynicki1973some}[section 4]). Therefore, we can apply \cite{bialynicki1973some}[Theorem 4.1] to $X$ and deduce that $\pi_Y:  F_Y \longrightarrow Y$ is a $\mathbb{C}^{\times}$-equivariant affine fibration. Now using Lemma~\ref{3rdtech} with $E=Z=F_Y$ and $E^{\mathbb{C}^{\times}}=Z^{\mathbb{C}^{\times}}=Y$, this lemma follows immediately.
\end{proof}

\subsection{A paving of the flag variety} \label{apotfv}
As mentioned in the previous subsection, we can obtain a paving of any Hessenberg ideal fiber $\pi_{I}^{-1}(N)$ by intersecting it with a nice paving of the flag variety $\mathcal{B}$. Here we elaborate on this paving of $\mathcal{B}$ and other related results. 

Firstly, there is a well-known affine paving of the flag variety given by Schubert cells.

$G$ has the Bruhat decomposition $G=\coprod_{w \in W} B \dot{w}B$. By abuse of notation, we also say that $G/B=\coprod_{w \in W} B \dot{w} B$. The latter equation is viewed as a partition of the flag variety $G/B$, in which $\dot{w}B$ is considered as a point in $G/B$ and $B \dot{w}B$ the $B$-orbit of $\dot{w}B \in G/B$ (In the rest of this paper, a coset notation like $\dot{w}B$ always represents a point in $G/B$). The $B$-orbit $B\dot{w}B$ is a Schubert cell, and we denote it by $X_w$. In addition, we have the Schubert variety $\overline{X_w}=\coprod_{w' \le w}X_{w'}$, where $\le$ denotes the (strong) Bruhat order on the Weyl group $W$ (see \cite{bernstein1973schubert}). 

For each $w \in W$, define $U^w=U \cap \dot{w}U^-\dot{w}^{-1}$. Its Lie algebra is $\mathfrak{u}^w=\bigoplus_{\alpha \in \Phi_w} \mathfrak{g}_{\alpha}$ where $\Phi_w= \{\  \gamma \in \Phi^+ \  | \  w^{-1}(\gamma) \in \Phi^- \}$. $\mathfrak{u}^w$ is naturally isomorphic to $U^w$ by the $G$-equivariant exponential map. By \cite{humphreys1981linear}[section 28.4], $X_w$ has a normal form $U^w \dot{w}B$. That is, $U^w$ is isomorphic to $X_w$ via the map $u \mapsto u \dot{w}B$. Therefore, we have natural isomorphisms $X_w \cong U^w \cong \mathfrak{u}^w$ and we know that $\operatorname{dim}X_w=\operatorname{dim}U^w=\operatorname{dim}\mathfrak{u}^w=|\Phi_w|=l(w)$, where $l(w)$ is the length of $w$ in the Coxeter group $W$.

Secondly, let $P$ be a standard parabolic of $G$. The finite set of $P$-orbits on $G/B$, after reordering, makes a paving of the flag variety. This is the paving with which we intersect the Hessenberg ideal fiber $\pi_{I}^{-1}(N)$. Next we elaborate on the properties of these $P$-orbits.

Let $P=LU_P$ be the Levi decomposition. $W_L$ denotes the Weyl group of $L$. Define $W^L= \{\  v \in W \  | \  \Phi_v \subset \Phi(\mathfrak{u}_P) \}$. The elements of $W^L$ form a set of minimal representatives for $W_L\backslash W$ in the following sense. 

\begin{lemma} \label{wl1}
Each $w \in W$ can be written uniquely as $w=yv$ with $y \in W_L$ and $v \in W^L$ such that $l(w)=l(y)+l(v)$. 
\end{lemma}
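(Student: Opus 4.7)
The plan is to identify $W^L$ with the classical set of minimum-length representatives for the left cosets in $W_L\backslash W$, and then invoke the standard decomposition result for parabolic subgroups of Coxeter groups. The first step is to translate the defining condition $\Phi_v \subset \Phi(\mathfrak{u}_P)$. Using the disjoint decomposition $\Phi^+ = \Phi^+(L) \sqcup \Phi(\mathfrak{u}_P)$, the inclusion $\Phi_v \subset \Phi(\mathfrak{u}_P)$ is equivalent to $v^{-1}$ sending every element of $\Phi^+(L)$ into $\Phi^+$, which (since each positive root of $L$ is a non-negative integer combination of the simple roots $\alpha \in I$) is in turn equivalent to $v^{-1}(\alpha) \in \Phi^+$ for every $\alpha \in I$. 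This is the familiar characterization of the minimum-length element in each coset $W_L w$.

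For existence, given $w \in W$, I would take $v$ to be an element of minimal length in $W_L w$. If some $\alpha \in I$ had $v^{-1}(\alpha) \in \Phi^-$, the exchange condition would give $l(s_\alpha v) = l(v) - 1$, contradicting minimality. Hence $v \in W^L$, and $y := w v^{-1}$ lies in $W_L$. For the length identity $l(yv) = l(y) + l(v)$, I would compute $\Phi_{yv}$ by partitioning $\Phi^+ = \Phi^+(L) \sqcup \Phi(\mathfrak{u}_P)$. Since $W_L$ is generated by simple reflections indexed by $I$, it stabilizes $\Phi(L)$ setwise and also preserves $\Phi(\mathfrak{u}_P)$ (because any root in $\Phi(\mathfrak{u}_P)$ involves at least one simple root outside $I$ with positive coefficient, a property preserved by each generator $s_\alpha$ with $\alpha \in I$). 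Consequently, for $y \in W_L$ one has $\Phi_y \subset \Phi^+(L)$; combined with $v \in W^L$, a short case analysis on whether $\gamma \in \Phi^+(L)$ or $\gamma \in \Phi(\mathfrak{u}_P)$ shows that $\Phi_{yv}$ meets $\Phi^+(L)$ in exactly $\Phi_y$ and meets $\Phi(\mathfrak{u}_P)$ in a set of size $|\Phi_v|$, giving the desired additivity.

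Uniqueness then follows formally: if $w = y_1 v_1 = y_2 v_2$ with $v_i \in W^L$ and $y_i \in W_L$, then $v_1 = (y_1^{-1} y_2) v_2$, and applying the length formula both to this expression and to its inverse would force $l(y_1^{-1} y_2) = 0$, hence $y_1 = y_2$ and $v_1 = v_2$. I do not anticipate a serious obstacle in any step; the main point requiring care is the case analysis for the length formula, in particular verifying that $W_L$ acts on $\Phi(\mathfrak{u}_P)$ while preserving positivity, so that an element $y \in W_L$ contributes no inversions coming from roots outside $\Phi^+(L)$.
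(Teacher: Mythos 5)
The paper does not prove this lemma at all: immediately after stating Lemma~\ref{wl1} and Lemma~\ref{wl2}, it says ``The two lemmas above can be found in \cite{precup2013affine}[section 3]'' and moves on. So there is no proof in the paper to compare against. Your proposal supplies a complete, self-contained proof and it is the standard one for minimal coset representatives of parabolic subgroups of Coxeter groups (cf.\ Humphreys, Bj\"orner--Brenti). The key steps are all correct: the translation $\Phi_v\subset\Phi(\mathfrak{u}_P)\Longleftrightarrow v^{-1}(\alpha)\in\Phi^+$ for all $\alpha\in I$ is valid because roots have all coefficients of one sign in the simple-root basis; the existence argument via minimal-length elements of $W_Lw$ and the exchange condition is standard; and the length additivity computation by partitioning $\Phi^+=\Phi^+(L)\sqcup\Phi(\mathfrak{u}_P)$ is exactly the right calculation and in fact simultaneously proves Lemma~\ref{wl2}, namely $\Phi_{yv}=y(\Phi_v)\amalg\Phi_y$.

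Two small points worth noting. First, the fact that $W_L$ preserves $\Phi(\mathfrak{u}_P)$ is correct, and the cleanest way to see it is that each generator $s_\alpha$ with $\alpha\in I$ only alters the coefficient of $\alpha$ in the simple-root expansion, hence never changes the coefficient of any simple root outside $I$; this is exactly your argument. Second, the uniqueness argument is not circular even though it invokes the length formula: the length formula was established for arbitrary pairs $(y,v)\in W_L\times W^L$, not just for the pair produced by the existence construction, so it may legitimately be applied to $v_1=(y_1^{-1}y_2)v_2$ and to $v_2=(y_2^{-1}y_1)v_1$. Summing the two identities yields $l(y_1^{-1}y_2)=0$, as you indicate.
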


\begin{lemma} \label{wl2}
Let $w=yv$ be the decomposition of $w \in W$ given above. Then $\Phi_w=y(\Phi_v) \amalg \Phi_y$.
\end{lemma}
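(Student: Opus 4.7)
The plan is to verify the decomposition $\Phi_{yv} = y(\Phi_v) \amalg \Phi_y$ by first establishing the set-theoretic inclusion $y(\Phi_v) \cup \Phi_y \subset \Phi_{yv}$ together with the disjointness $y(\Phi_v) \cap \Phi_y = \emptyset$, and then upgrading the inclusion to equality via a cardinality count using the length-additivity hypothesis $l(yv) = l(y) + l(v)$ and the standard identity $|\Phi_w| = l(w)$. The root-system inputs I would rely on are: (i) $W_L$ preserves both $\Phi(L)$ and $\Phi(\mathfrak{u}_P)$ setwise (because $U_P$ is normal in $P$, so $\mathfrak{u}_P$ is $L$-stable); (ii) the hypothesis $v \in W^L$, i.e.\ $\Phi_v \subset \Phi(\mathfrak{u}_P)$, is equivalent to $v^{-1}\bigl(\Phi^+(L)\bigr) \subset \Phi^+$; (iii) for $y \in W_L$, any inversion $\gamma \in \Phi_y$ necessarily lies in $\Phi^+(L)$, since otherwise $\gamma \in \Phi(\mathfrak{u}_P)$ would force $y^{-1}\gamma \in \Phi(\mathfrak{u}_P) \subset \Phi^+$ by (i), contradicting $\gamma \in \Phi_y$.

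With these preparations the main computation splits into three short checks. For $y(\Phi_v) \subset \Phi_{yv}$: take $\delta \in \Phi_v$, so $\delta \in \Phi(\mathfrak{u}_P)$ and hence $y\delta \in \Phi(\mathfrak{u}_P) \subset \Phi^+$ by (i); since $v^{-1}\delta \in \Phi^-$, one has $(yv)^{-1}(y\delta) = v^{-1}\delta \in \Phi^-$, so $y\delta \in \Phi_{yv}$. For $\Phi_y \subset \Phi_{yv}$: take $\gamma \in \Phi_y$; by (iii), $y^{-1}\gamma \in \Phi^-(L)$, and then (ii) gives $v^{-1}y^{-1}\gamma \in \Phi^-$, so $\gamma \in \Phi_{yv}$. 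For disjointness: any element of $y(\Phi_v)$ lies in $y\bigl(\Phi(\mathfrak{u}_P)\bigr) = \Phi(\mathfrak{u}_P)$ by (i), while by (iii) any element of $\Phi_y$ lies in $\Phi^+(L) \subset \Phi(L)$, and the subsets $\Phi(L)$ and $\Phi(\mathfrak{u}_P)$ are disjoint.

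Counting finally gives $|y(\Phi_v)| + |\Phi_y| = l(v) + l(y) = l(yv) = |\Phi_{yv}|$, so the established inclusion must be an equality, yielding the desired disjoint-union formula. The only genuine obstacle is the bookkeeping of which roots lie in $\Phi(L)$ versus $\Phi(\mathfrak{u}_P)$, and this is handled cleanly once one exploits the fact that $W_L$ acts separately on the pieces of the decomposition $\Phi = \Phi(L) \sqcup \Phi(\mathfrak{u}_P) \sqcup \bigl(-\Phi(\mathfrak{u}_P)\bigr)$; from that point the proof is a short inversion-set manipulation with no hidden difficulties.
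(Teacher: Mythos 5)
Your proof is correct, and the inclusion-plus-cardinality argument with the three preparatory observations about how $W_L$ acts on $\Phi(L)$ and $\Phi(\mathfrak{u}_P)$ is exactly the standard argument. The paper itself does not prove this lemma (it cites \cite{precup2013affine}[section 3]), and your route is the same direct inversion-set manipulation one would expect to find there.
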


The two lemmas above can be found in \cite{precup2013affine}[section 3].

Let $I$ be the unique subset of $\Delta$ corresponding to $P$. Note that there is a natural identification $W_L=W_I$, and that $P=BW_IB=BW_LB$. For any $y \in W_L$ and any $v \in W^L$, combining the equation $l(yv)=l(y)+l(v)$ with \cite{humphreys1981linear}[section 29.3, Lemma A] and arguing with a reduced word of $y$, it is not hard to show that $B\dot{y}B \dot{v}B=B \dot{y} \dot{v}B$. Equipped with this identity, we can give a better description of the $P$-orbits on $G/B$.

\begin{lemma} \label{porbit1}
There is a one-one correspondence between $W^L$ and the set of $P$-orbits on $G/B$. For each $v \in W^L$, the corresponding $P$-orbit is $P\dot{v}B$. Moreover, $P\dot{v}B$ is the disjoint union of certain Schubert cells:
\[ P\dot{v}B=\coprod_{y \in W_L} X_{yv}=\coprod_{y \in W_L} U^{yv} \dot{y}\dot{v}B.  \]
\end{lemma}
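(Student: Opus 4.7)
The plan is to build the $P$-orbit decomposition out of the Bruhat decomposition together with Lemma~\ref{wl1}, exploiting the identity $B\dot{y}B\dot{v}B=B\dot{y}\dot{v}B$ that is recorded in the paragraph just before the lemma. Since $P\supset B$, every $P$-orbit on $G/B$ is a union of $B$-orbits, i.e.\ a union of Schubert cells $X_w=B\dot{w}B$, so the whole question is to understand which Schubert cells are glued together by left-multiplication by $P$.

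First I would write $P=BW_LB=\coprod_{y\in W_L}B\dot{y}B$ and compute, for each $v\in W^L$,
\[
P\dot{v}B \;=\; \bigcup_{y\in W_L} B\dot{y}B\dot{v}B \;=\; \bigcup_{y\in W_L} B\dot{y}\dot{v}B \;=\; \bigcup_{y\in W_L} X_{yv},
\]
where the middle equality uses $l(yv)=l(y)+l(v)$ from Lemma~\ref{wl1} together with the identity $B\dot{y}B\dot{v}B=B\dot{y}\dot{v}B$ highlighted in the text. By Lemma~\ref{wl1}, the map $W_L\times W^L\to W$, $(y,v)\mapsto yv$, is a bijection, so as $y$ ranges over $W_L$ the Schubert cells $X_{yv}$ are pairwise distinct; hence this union is in fact a disjoint union
\[
P\dot{v}B \;=\; \coprod_{y\in W_L}X_{yv}.
\]
Substituting the normal form $X_{yv}=U^{yv}\dot{y}\dot{v}B$ from the previous subsection yields the second equality in the lemma.

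Next I would verify that distinct $v,v'\in W^L$ give distinct (hence disjoint) $P$-orbits: any overlap would force $yv=y'v'$ for some $y,y'\in W_L$, contradicting the uniqueness clause of Lemma~\ref{wl1}. Finally, using Lemma~\ref{wl1} once more, every $w\in W$ factors uniquely as $yv$ with $y\in W_L$ and $v\in W^L$, so
\[
G/B \;=\; \coprod_{w\in W} X_w \;=\; \coprod_{v\in W^L}\coprod_{y\in W_L} X_{yv} \;=\; \coprod_{v\in W^L} P\dot{v}B,
\]
which simultaneously shows that the $P\dot{v}B$ exhaust $G/B$ and that $v\mapsto P\dot{v}B$ is the desired bijection between $W^L$ and the set of $P$-orbits on $G/B$.

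There is no genuine obstacle here; the only subtlety is making sure the identity $B\dot{y}B\dot{v}B=B\dot{y}\dot{v}B$ is applied only when $l(yv)=l(y)+l(v)$, which is precisely what the decomposition in Lemma~\ref{wl1} guarantees for $y\in W_L$, $v\in W^L$. Lemma~\ref{wl2} is not needed for this particular statement (it will be used later, when intersecting these $P$-orbits with Hessenberg ideal fibers), so the argument reduces to a clean bookkeeping exercise combining Bruhat decomposition, the minimal-coset-representative property, and the Schubert-cell normal form.
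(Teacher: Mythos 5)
Your argument is correct and follows essentially the same route as the paper: decompose $P=BW_LB$ via Bruhat, apply the identity $B\dot{y}B\dot{v}B=B\dot{y}\dot{v}B$ (valid because $l(yv)=l(y)+l(v)$), and invoke the uniqueness of the $W=W_LW^L$ factorization from Lemma~\ref{wl1} to get both disjointness and exhaustion. The only difference is cosmetic---you spell out explicitly why the union of Schubert cells is disjoint and why different $v$ give disjoint orbits, steps the paper leaves implicit in its use of the $\coprod$ notation.
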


\begin{proof}
Since $P=BW_LB$, we have
\[ P\dot{v}B=BW_LB\dot{v}B=\coprod_{y \in W_L}B\dot{y}B\dot{v}B=\coprod_{y \in W_L}B \dot{y}\dot{v}B=\coprod_{y \in W_L} X_{yv}=\coprod_{y \in W_L} U^{yv} \dot{y}\dot{v}B.  \]
By Lemma~\ref{wl1}, $W=W_LW^L=\{\ yv\ |\ y \in W_L,\  v \in W^L\}$. Taking the disjoint union of $P\dot{v}B$ over all $v \in W^L$, we have
\[ \coprod_{v \in W^L}P\dot{v}B=\coprod_{v \in W^L}(\coprod_{y \in W_L}X_{yv} )=\coprod_{w \in W}X_w=G/B. \]
Therefore, $v \mapsto P\dot{v}B$ is a one-one correspondence between $W^L$ and the set of $P$-orbits on $G/B$.
\end{proof}

For simplicity of notation, we will use $\mathcal{O}_v$ to denote $P\dot{v}B$ and $\mathcal{O}$ an arbitrary $P$-orbit.

Next we investigate the fixed-point sets $\mathcal{B}^Z$ and $\mathcal{O}^Z$ where $Z$ is the connected center of $L$.

From subsection \ref{not}, we know that $Z=(\bigcap_{\alpha \in I}\operatorname{Ker}(\alpha))^{\circ}$ is the connected center of $L$ and $L=C_G(Z)$. There exists a one-parameter subgroup $\lambda: \mathbb{C}^{\times}\longrightarrow Z$ so that the $\lambda$-fixed-point set $\mathcal{B}^{\lambda}$ and $Z$-fixed-point set $\mathcal{B}^Z$ coincide. Every one-parameter subgroup in $T$ is $W$-conjugate to a dominant one-parameter subgroup, so without loss of generality, we may assume
\[ \langle \lambda, \alpha\rangle=0 \  \forall \alpha \in \Phi(L) \text{ and } \langle \lambda, \gamma \rangle>0 \  \forall \gamma \in \Phi(\mathfrak{u}_P), \]
where $\langle \lambda, \alpha\rangle$ is the natural pairing $Y(T) \times X(T) \longrightarrow \mathbb{Z}$ between the cocharacter and character group of $T$. Clearly $\mathcal{O}^Z=\mathcal{O}^{\lambda}$ for each $P$-orbit $\mathcal{O}$ and $X_w^Z=X_w^{\lambda}$ for each Schubert cell $X_w$.

For each Schubert cell $X_w=U^w\dot{w}B$, $\lambda$ acts on it by left multiplication. Hence, according to the natural isomorphisms $X_w \cong U^w \cong \mathfrak{u}^w$, $\lambda$ acts on $U^w$ by conjugation and on $\mathfrak{u}^w$ by the adjoint action. Let $w=yv$ be the decomposition as in Lemma~\ref{wl2}, we have
\[\mathfrak{u}^w=\bigoplus_{\alpha \in y(\Phi_v)}\mathfrak{g}_{\alpha} \oplus \bigoplus_{\alpha \in \Phi_y}\mathfrak{g}_{\alpha}=\dot{y} \cdot \mathfrak{u}^v \oplus \mathfrak{u}^y.\]
Since $\dot{y} \cdot \mathfrak{u}^v \subset \mathfrak{u}_P$ and $\mathfrak{u}^y \subset \mathfrak{l}$, $\lambda$ yields a $\mathbb{C}^{\times}$-action on $\mathfrak{u}^w$ which has strictly positive weights on $\dot{y} \cdot \mathfrak{u}^v$ and fixes $\mathfrak{u}^y$. Therefore, 
\[ X_w^{\lambda} \cong (\mathfrak{u}^w)^{\lambda}=\mathfrak{u}^y \cong U^y.  \]
Now
\begin{equation} \label{orbitfixedpoint}
\mathcal{O}_v^{\lambda}=(\coprod_{y \in W_L}X_{yv})^{\lambda} = \coprod_{y \in W_L}(U^{yv}\dot{y}\dot{v}B)^{\lambda} = \coprod_{y \in W_L}U^y\dot{y}\dot{v}B \cong \coprod_{y \in W_L}U^y \dot{y}B_L=\mathcal{B}(L),  
\end{equation}
where $\mathcal{B}(L)$ is the flag variety of $L$ and $B_L=L \cap \dot{v} \cdot B=L \cap \dot{v}B \dot{v}^{-1}$ is a Borel of $L$. Extrinsically, the isomorphism $\mathcal{O}_v^{\lambda} \cong \mathcal{B}(L)$ takes $u \dot{y} \dot{v}B$ to $u \dot{y}B_L$ ($u \in U^y$); intrinsically, the isomorphism takes any Borel $B_0 \in \mathcal{O}_v^{\lambda}$ to $B_0 \cap L \in \mathcal{B}(L)$. If we assemble all these $\mathcal{O}_v$ into $\mathcal{B}$, we get the following result.

\begin{proposition}
Each connected component of $\mathcal{B}^Z=\mathcal{B}^{\lambda}$ takes the form of $\mathcal{O}_v^{\lambda}$ for some $v \in W^L$. $\mathcal{O}_v^{\lambda}$ is isomorphic to the flag variety $\mathcal{B}(L)$ of $L$ and the isomorphism sends $B_0 \in \mathcal{B}^Z$ to $B_0 \cap L$ in $\mathcal{B}(L)$.
\end{proposition}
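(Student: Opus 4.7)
The plan is to combine the $P$-orbit decomposition of $\mathcal{B}$ from Lemma~\ref{porbit1} with the Schubert-cell-level computation of $\lambda$-fixed points already carried out in the paragraph leading to Equation~\eqref{orbitfixedpoint}, and then verify connectedness via a Bia{\l}ynicki-Birula-type argument. First, since $\lambda\colon \mathbb{C}^{\times}\to Z\subset L\subset P$, each $P$-orbit $\mathcal{O}_v$ is $\lambda$-stable, so $\mathcal{B}^{\lambda}=\coprod_{v\in W^L}\mathcal{O}_v^{\lambda}$ as a set. Equation~\eqref{orbitfixedpoint} already identifies each $\mathcal{O}_v^{\lambda}$ with $\coprod_{y\in W_L}U^y\dot{y}B_L$ extrinsically; the latter is exactly the Bruhat decomposition of the flag variety $\mathcal{B}(L)$ of $L$ with respect to the Borel $B_L=L\cap \dot{v}B\dot{v}^{-1}$, so the bijection in \eqref{orbitfixedpoint} is an isomorphism of varieties.

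Next I would reinterpret this isomorphism intrinsically as $B_0\mapsto B_0\cap L$. Any $B_0\in\mathcal{B}^{\lambda}$ is $\lambda$-stable, and since its image of $\lambda$ is a one-dimensional torus whose centralizer in $G$ contains $L=C_G(Z)$, one checks that $B_0\cap L$ is a Borel subgroup of $L$; conversely, given a Borel of $L$, one recovers $B_0$ by adding back $\mathfrak{u}_P$ (or the appropriate conjugate thereof inside $\mathcal{O}_v$). A direct check on the representative $u\dot{y}\dot{v}B$ (where $u\in U^y\subset L$) shows that this intrinsic map agrees with the extrinsic one $u\dot{y}\dot{v}B\mapsto u\dot{y}B_L$ up to the obvious identification.

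Finally, I would show each $\mathcal{O}_v^{\lambda}$ is a connected component of $\mathcal{B}^{\lambda}$. Connectedness is immediate since $\mathcal{B}(L)$ is connected. To see that distinct $\mathcal{O}_v^{\lambda}$'s lie in distinct components, I would use the computation preceding \eqref{orbitfixedpoint}: on each Schubert cell $X_{yv}\subset\mathcal{O}_v$, the $\mathbb{C}^{\times}$-weights of $\lambda$ on $\mathfrak{u}^{yv}=\dot{y}\cdot\mathfrak{u}^v\oplus\mathfrak{u}^y$ are strictly positive on $\dot{y}\cdot\mathfrak{u}^v$ and zero on $\mathfrak{u}^y$, so $\lim_{t\to 0}\lambda(t)\cdot x$ exists for every $x\in X_{yv}$ and lies in $X_{yv}^{\lambda}\subset\mathcal{O}_v^{\lambda}$. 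Consequently, every Bia{\l}ynicki-Birula attracting cell $F_Y$ that meets $\mathcal{O}_v$ is entirely contained in $\mathcal{O}_v$ and retracts to $Y\subset\mathcal{O}_v^{\lambda}$. This forces each $\mathcal{O}_v^{\lambda}$ to be a union of connected components of $\mathcal{B}^{\lambda}$, and connectedness then pins it down as a single component.

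The main obstacle, modulo the preceding computation, is only the bookkeeping in the last step: ensuring that the explicit limit computation on a Schubert cell $X_{yv}$ really does confine BB-cells inside a single $\mathcal{O}_v$, so that the partition by $\mathcal{O}_v^{\lambda}$'s refines (hence equals) the partition into connected components. Everything else reduces to reading off the already-derived formula \eqref{orbitfixedpoint} and matching it with the Bruhat decomposition of $L$.
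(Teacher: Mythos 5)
Your proposal follows the same route as the paper: the paper's justification is exactly the Schubert-cell computation leading to Equation~\eqref{orbitfixedpoint}, which you reproduce faithfully, together with the extrinsic/intrinsic identification of the isomorphism $\mathcal{O}_v^{\lambda}\cong\mathcal{B}(L)$. The first two paragraphs of your proposal are correct and match the paper's (unwritten) argument.

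However, the final step --- showing that each $\mathcal{O}_v^{\lambda}$ is a connected component of $\mathcal{B}^{\lambda}$ --- contains a circularity. You claim that ``every Bia{\l}ynicki-Birula attracting cell $F_Y$ that meets $\mathcal{O}_v$ is entirely contained in $\mathcal{O}_v$.'' To deduce this, you would need $Y\subset\mathcal{O}_v^{\lambda}$; but a priori a connected component $Y$ of $\mathcal{B}^{\lambda}$ might intersect two different $\mathcal{O}_u^{\lambda}$'s, and ruling that out is exactly what you are trying to prove. From ``the limit of $x\in\mathcal{O}_v$ lands in $\mathcal{O}_v^{\lambda}$'' you can only conclude $F_Y\cap\mathcal{O}_v\ne\emptyset\Rightarrow Y\cap\mathcal{O}_v^{\lambda}\ne\emptyset$, not $F_Y\subset\mathcal{O}_v$. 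The whole Bia{\l}ynicki-Birula detour is in any case unnecessary: each $\mathcal{O}_v^{\lambda}\cong\mathcal{B}(L)$ is a connected projective variety, hence closed in $\mathcal{B}$ and so in $\mathcal{B}^{\lambda}$; since finitely many of these disjoint closed sets cover $\mathcal{B}^{\lambda}$, each is also open, i.e.\ clopen, and being connected it is a connected component. Replace your last paragraph with this two-line topological observation and the proof is complete.
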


Moreover, we can show that each $\mathcal{O}_v$, considered as a variety with the $\mathbb{C}^{\times}$-action from the left multiplication by $\lambda$, satisfies all the requirements for $E$ as in Lemma~\ref{3rdtech}. To be precise:

%We know that $X_w \cong \dot{y} \cdot \mathfrak{u}^v \oplus \mathfrak{u}^y \cong \dot{y} \cdot \mathfrak{u}^v \times X_w^{\lambda}$. By \cite{precup2013affine}[Remark 3.4], the projection onto the second factor $\operatorname{pr}_2: X_w \longrightarrow X_w^{\lambda}$ gives a trivial vector bundle structure and $\lambda$ induces a linear $\mathbb{C}^{\times}$-action on the fiber of $\operatorname{pr}_2$ with strictly positive weights. Assembling all these projections $\operatorname{pr}_2$ over each piece $X_{yv}$ of $\mathcal{O}_v$, we get a set-theoretic map $p: \mathcal{O}_v \longrightarrow \mathcal{O}_v^{\lambda}$. The action of $\lambda$ on $\mathcal{O}_v$ induces a linear $\mathbb{C}^{\times}$-action on each fiber of $p$ and has strictly positive weights. In fact, from \cite{de1988homology}[section 1.11] we know that:

\begin{proposition} \label{favb}
$\mathcal{O}_v$ is a connected smooth variety with a $\mathbb{C}^{\times}$-action from the left multiplication by $\lambda$. $\mathcal{O}_v$ can be covered by $\mathbb{C}^{\times}$-stable quasi-affine open subschemes. Moreover, for every point $x \in \mathcal{O}_v$, $\operatorname{lim}_{t \rightarrow 0} \lambda(t) \cdot x \in \mathcal{O}_v^{\lambda}$, where $\lambda(t) \cdot x$ denotes the $\mathbb{C}^{\times}$-action via the left multiplication by $\lambda$.
%$p: \mathcal{O}_v \longrightarrow \mathcal{O}_v^{\lambda}$ is a vector bundle with a fiber-preserving linear $\mathbb{C}^{\times}$-action that has strictly positive weights on each fiber. The $\mathbb{C}^{\times}$-action is induced by the natural action of $\lambda$ on $\mathcal{B}$. The map $p$ can be described by $p(x)=\operatorname{lim}_{t \rightarrow 0} \lambda(t) \cdot x$.
\end{proposition}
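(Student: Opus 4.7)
The claim decomposes into four facts: smoothness, connectedness, existence of a $\mathbb{C}^{\times}$-stable quasi-affine open cover, and convergence of the one-parameter flow into the fixed-point set $\mathcal{O}_v^{\lambda}$. The plan is to dispose of the first three by standard orbit-theoretic arguments and then verify the limit claim by an explicit computation using the Schubert decomposition of $\mathcal{O}_v$ provided by Lemma~\ref{porbit1}.

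First, I would observe that $\mathcal{O}_v = P\dot{v}B$ is a single orbit of the connected algebraic group $P$ acting on the smooth variety $G/B$, so it is automatically a smooth connected locally closed subvariety. Since $\lambda$ factors through $Z \subset L \subset P$, left multiplication by $\lambda(t)$ preserves $P\dot{v}B$, yielding the desired $\mathbb{C}^{\times}$-action on $\mathcal{O}_v$.

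For the quasi-affine cover, I would pull back the standard affine open cover of $G/B$ by the translated big cells $\dot{w}U^- B/B \cong \mathbb{A}^{\dim G/B}$ for $w \in W$. Each of these opens is $T$-stable: for $t \in T$, $u \in U^-$, one has $t\dot{w}uB = \dot{w}(\dot{w}^{-1}t\dot{w})u B$, and conjugating $u$ by the element $\dot{w}^{-1}t\dot{w} \in T$ keeps $u$ inside $U^-$ because $T$ normalizes $U^-$. Since $\lambda$ factors through $T$, these opens are $\lambda$-stable, and their intersections with $\mathcal{O}_v$ form a $\lambda$-stable cover of $\mathcal{O}_v$ by locally closed subsets of affine spaces, which are in particular quasi-affine.

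The only step requiring an explicit calculation is the limit property, though even this is largely bookkeeping. By Lemma~\ref{porbit1}, $\mathcal{O}_v = \coprod_{y \in W_L} U^{yv}\dot{y}\dot{v}B$, so any $x \in \mathcal{O}_v$ has the form $x = u\dot{y}\dot{v}B$ with $u \in U^{yv}$. Since $\dot{v}^{-1}\dot{y}^{-1}\lambda(t)\dot{y}\dot{v} \in T \subset B$, the point $\dot{y}\dot{v}B$ is fixed by $\lambda$, and one computes
\[ \lambda(t) \cdot x = \bigl(\lambda(t)u\lambda(t)^{-1}\bigr)\lambda(t)\dot{y}\dot{v}B = \bigl(\lambda(t)u\lambda(t)^{-1}\bigr)\dot{y}\dot{v}B. \]
Under the identification $U^{yv} \cong \mathfrak{u}^{yv}$ and the decomposition $\mathfrak{u}^{yv} = \dot{y}\cdot \mathfrak{u}^v \oplus \mathfrak{u}^y$ recorded just before Equation~\ref{orbitfixedpoint}, the adjoint action of $\lambda(t)$ scales $\dot{y}\cdot \mathfrak{u}^v$ with strictly positive weights (the root set $y(\Phi_v)$ lies in $\Phi(\mathfrak{u}_P)$, on which $\lambda$ is strictly positive) and fixes $\mathfrak{u}^y$ pointwise (roots in $\Phi_y \subset \Phi(L)$ pair to zero with $\lambda$). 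Hence $\lim_{t \to 0} \lambda(t)u\lambda(t)^{-1}$ exists in $U^y$, and therefore $\lim_{t \to 0}\lambda(t)\cdot x$ exists and lies in $U^y\dot{y}\dot{v}B \subset \mathcal{O}_v^{\lambda}$ by Equation~\ref{orbitfixedpoint}. There is no genuine obstacle here; the main care required is in correctly tracking the root-space weights in the final step, and the result positions $\mathcal{O}_v$ as an input for Lemma~\ref{3rdtech} as envisioned in the roadmap of subsection~\ref{roadmap}.
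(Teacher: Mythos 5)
Your proof is correct, and two of its three ingredients (smoothness/connectedness of the $P$-orbit, and the limit computation via $X_{yv}\cong\mathfrak{u}^{yv}=\dot{y}\cdot\mathfrak{u}^v\oplus\mathfrak{u}^y$ with positive weights on the first factor and trivial action on the second) coincide with the paper's argument. Where you genuinely diverge is the quasi-affine cover. The paper builds a cover intrinsically inside $\mathcal{O}_v$: it takes the $W_L$-translates $\dot{y}\dot{y}_0^{-1}X_{y_0v}$ of the top-dimensional Schubert cell of $\mathcal{O}_v$ (with $y_0$ the longest element of $W_L$), checks $\mathbb{C}^{\times}$-stability by conjugating $\lambda(t)$ through $\dot{y}\dot{y}_0^{-1}$ back into $T$, and proves the covering property $\dot{y}\dot{y}_0^{-1}X_{y_0v}\supset X_{yv}$ by reducing to the Lie-algebra containment $\dot{y}_0^{-1}\cdot\mathfrak{u}^{y_0v}\supset\dot{y}^{-1}\cdot\mathfrak{u}^{yv}$, which follows from $\dot{y}_0^{-1}\cdot\mathfrak{u}^{y_0}=\mathfrak{u}_L^-$. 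This yields a cover by $\mathbb{C}^{\times}$-stable open \emph{affine} pieces. You instead restrict to $\mathcal{O}_v$ the $T$-stable translated big cells $\dot{w}U^-B/B$ of the ambient $G/B$; since $\mathcal{O}_v$ is locally closed and $\lambda$-stable, the intersections are $\mathbb{C}^{\times}$-stable open subschemes of $\mathcal{O}_v$ that are locally closed in affine space, hence quasi-affine. This is simpler (no normal-form bookkeeping inside the orbit) and fully sufficient, because Lemma~\ref{3rdtech} and the Bia{\l}ynicki-Birula theorem only require quasi-affine invariant opens; the price is that your pieces are merely quasi-affine rather than affine, and the argument leans on the ambient flag variety rather than producing structure internal to $\mathcal{O}_v$. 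Either route is acceptable here.
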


\begin{proof}
Recall that $\mathcal{O}_v$ is the $P$-orbit of $\dot{v}B$ in $G/B$, so it is a connected smooth variety. Since $\mathcal{O}_v=P \dot{v}B$ and $\lambda \subset P$, the left multiplication of $\lambda$ on $\mathcal{O}_v$ clearly stabilizes it. 

Since $\mathcal{O}_v=\amalg_{y \in W_L}X_{yv}$, each point $x \in \mathcal{O}_v$ lies in some Schubert cell $X_{yv}$. By the preceding analysis in this subsection, we know that $X_{yv} \cong \mathfrak{u}^{yv} = \dot{y} \cdot \mathfrak{u}^v \oplus \mathfrak{u}^y \cong \dot{y} \cdot \mathfrak{u}^v \times X_{yv}^{\lambda}$. The projection onto the second factor $\operatorname{pr}_2: X_{yv} \longrightarrow X_{yv}^{\lambda}$ gives a trivial vector bundle structure over $X_{yv}^{\lambda}$. Since $\dot{y} \cdot \mathfrak{u}^v \subset \mathfrak{u}_P$, the left multiplication by $\lambda$ on $X_{yv}$ yields a linear $\mathbb{C}^{\times}$-action on the fiber of $\operatorname{pr}_2$ with strictly positive weights. Therefore, $\operatorname{lim}_{t \rightarrow 0} \lambda(t) \cdot x$ exists and lies in $X_{yv}^{\lambda} \subset \mathcal{O}_v^{\lambda}$.

Now it only remains to prove that $\mathcal{O}_v$ can be covered by $\mathbb{C}^{\times}$-stable quasi-affine open subschemes. Each Schubert cell $X_{yv}$ is clearly $\mathbb{C}^{\times}$-stable and affine, but they are not open subschemes except for the highest dimensional one. However, we can write down an explicit $\mathbb{C}^{\times}$-stable open affine cover of $\mathcal{O}_v$ by using $W_L$-translates of the highest dimensional cell of $\mathcal{O}_v$. Next we elaborate on this claim.

Let $y_0 \in W_L$ be the unique element of maximal length. That is to say, $l(y_0)=\operatorname{dim}(L / L \cap B)=\operatorname{dim} \mathfrak{u}_L$. As a result, $\mathfrak{u}^{y_0}=\mathfrak{u}_L$ and $\dot{y}_0^{-1} \cdot \mathfrak{u}^{y_0}= \mathfrak{u}_L^{-}$. $X_{y_0 v}$ is the Schubert cell of the highest dimension in $\mathcal{O}_v$, hence an open affine subscheme. It is clearly $\mathbb{C}^{\times}$-stable. We claim that the set of $W_L$-translates of $X_{y_0 v}$, $\{\  \dot{y} \dot{y}_0^{-1} X_{y_0 v} \ |\ y \in W_L \}$, is a $\mathbb{C}^{\times}$-stable open affine cover of $\mathcal{O}_v$.

For $\mathbb{C}^{\times}$-stableness, consider the action of $\lambda$ on $\dot{y} \dot{y}_0^{-1} X_{y_0 v}$ by left multiplication. Since $y y_0^{-1} \in W_L$, for every $t \in \mathbb{C}^{\times}$, $\dot{y}_0 \dot{y}^{-1} \lambda(t)\dot{y} \dot{y}_0^{-1} \in T$. Moreover, $X_{y_0 v}=B \dot{y}_0 \dot{v}B$ is clearly stable under left multiplication by $T$. Therefore, $\lambda(t) \dot{y} \dot{y}_0^{-1} X_{y_0 v}=\dot{y} \dot{y}_0^{-1} (\dot{y}_0 \dot{y}^{-1} \lambda(t)\dot{y} \dot{y}_0^{-1}) X_{y_0 v}=\dot{y} \dot{y}_0^{-1} X_{y_0 v}$. Then $\dot{y} \dot{y}_0^{-1} X_{y_0 v}$ is $\mathbb{C}^{\times}$-stable.

For the covering part, it suffices to show that $\dot{y} \dot{y}_0^{-1} X_{y_0 v} \supset X_{yv}$ for every $y \in W_L$. Recall that $X_w=U^w \dot{w} B$, so we need to prove $\dot{y} \dot{y}_0^{-1} U^{y_0 v} \dot{y}_0 \dot{v}B \supset U^{y v} \dot{y} \dot{v} B$. The preceding inclusion relationship follows if we can show that $\dot{y} \dot{y}_0^{-1} U^{y_0 v} \dot{y}_0 \supset U^{y v} \dot{y}$. Multiplying both sides by $\dot{y}^{-1}$, it remains to show the inclusion relationship of two unipotent subgroups of $G$: $\dot{y}_0^{-1} U^{y_0 v} \dot{y}_0 \supset \dot{y}^{-1} U^{y v} \dot{y}$. As both are closed subgroups, the problem can be reduced to the level of Lie algebras: $\dot{y}_0^{-1} \cdot \mathfrak{u}^{y_0 v} \supset \dot{y}^{-1} \cdot \mathfrak{u}^{y v}$.

Now recall that $\mathfrak{u}^{yv}=\dot{y} \cdot \mathfrak{u}^v \oplus \mathfrak{u}^y$ for every $y \in W_L$. Therefore, $\dot{y}_0^{-1} \cdot \mathfrak{u}^{y_0 v} = \mathfrak{u}^{v} \oplus \dot{y}_0^{-1} \cdot \mathfrak{u}^{y_0}$ and $\dot{y}^{-1} \cdot \mathfrak{u}^{y v} = \mathfrak{u}^{v} \oplus \dot{y}^{-1} \cdot \mathfrak{u}^{y}$. Since $\dot{y}_0^{-1} \cdot \mathfrak{u}^{y_0} = \mathfrak{u}_L^{-} \supset \dot{y}^{-1} \cdot \mathfrak{u}^{y}$, the inclusion $\dot{y}_0^{-1} \cdot \mathfrak{u}^{y_0 v} \supset \dot{y}^{-1} \cdot \mathfrak{u}^{y v}$ is clearly true and we have finished the proof.
\end{proof}

\subsection{Associated parabolics} \label{assparab}
As stated in the previous subsection, we will intersect the Hessenberg ideal fiber $\pi_{I}^{-1}(N)$ with the $P$-orbit paving of $G/B$ for some parabolic $P$. In fact, this $P$ is always the associated parabolic of the nilpotent element $N$. 

Let $N \in \mathfrak{g}$ be a nilpotent element. By the Jacobson-Morozov theorem, there exists a homomorphism of algebraic groups $\varphi: \operatorname{SL}_2(\mathbb{C}) \longrightarrow G$ such that $d\varphi\left(\begin{smallmatrix}0&1\\0&0\end{smallmatrix}\right)=N$. Define a one-parameter subgroup $\lambda: \mathbb{C}^{\times} \longrightarrow G$ such that $\lambda(z)=\varphi\left(\begin{smallmatrix}z&0\\0&z^{-1}\end{smallmatrix}\right)$ for all $z \in \mathbb{C}^{\times}$. $\lambda$ decomposes $\mathfrak{g}$ into a direct sum of weight spaces
\[ \mathfrak{g}(i)=\{\ X \in \mathfrak{g}\ |\ \lambda(z) \cdot X=z^i X \ \forall z \in \mathbb{C}^{\times}\}.  \]
We know that $N \in \mathfrak{g}(2)$, and that $\mathfrak{g}=\bigoplus_{i \in \mathbb{Z}}\mathfrak{g}(i)$ where $\left[\mathfrak{g}(i), \mathfrak{g}(j)\right] \subset \mathfrak{g}(i+j)$ for all $i, j \in \mathbb{Z}$. Let $L$ and $P$ denote the connected algebraic subgroups of $G$ whose Lie algebras are $\mathfrak{l}=\mathfrak{g}(0)$ and $\mathfrak{p}=\bigoplus_{i \geq 0}\mathfrak{g}(i)$. It is known that:
\begin{itemize}
\item[(1)] $P$ is a parabolic subgroup depending only on $N$ (not on the choice of $\varphi$).
\item[(2)] $P=LU_P$ is a Levi decomposition, and its unipotent radical $U_P$ has Lie algebra $\mathfrak{u}_P=\bigoplus_{i > 0} \mathfrak{g}(i)$.
\item[(3)] The $P$-orbit of $N$ in $\mathfrak{u}_P^{\geq 2}=\bigoplus_{i \geq 2}\mathfrak{g}(i)$ is dense.
\item[(4)] The $L$-orbit of $N$ in $\mathfrak{g}(2)$ is dense. 
\item[(5)] If $N$ is distinguished, in the sense that it is not contained in any Levi subalgebra of a proper parabolic subalgebra of $\mathfrak{g}$, then $\mathfrak{g}(i)=0$ for all odd $i$ (see \cite{bala1976classes}).
\end{itemize}

In the rest of this paper, for each nilpotent element $N$, the $P$ and $\lambda$ thus obtained are referred to as the associated parabolic of $N$ and an associated one-parameter subgroup of $N$. The image of $\lambda$ in $G$ is usually denoted by $D$. Note that $P$ as a subgroup of $G$ is uniquely determined by $N$ (see \cite{carter1985finite}[p.~163, Proposition 5.7.1]) while $\lambda$ depends of the choice of $\varphi$. Different choices of $\varphi$ are conjugate by an element of $C_G(N)$, hence so are the $\lambda$'s.

It is worth pointing out that such an associated one-parameter subgroup $\lambda$ is the same as the one mentioned in subsection~\ref{apotfv}, below Lemma~\ref{porbit1}. To be precise, we need to show that the $\lambda$ associated to $N$ via the Jacobson-Morozov theorem satisfies the following two requirements:
\begin{itemize}
\item[(1)] $\langle \lambda, \alpha\rangle=0 \  \forall \alpha \in \Phi(L) \text{ and } \langle \lambda, \gamma \rangle>0 \  \forall \gamma \in \Phi(\mathfrak{u}_P)$.
\item[(2)] $\mathcal{B}^Z=\mathcal{B}^{\lambda}$ where $Z$ is the connected center of $L$.
\end{itemize}

For (1), since $\lambda$ decomposes $\mathfrak{g}$ into weight spaces and $\mathfrak{l}=\mathfrak{g}(0)$, $\mathfrak{u}_P=\bigoplus_{i > 0} \mathfrak{g}(i)$, the requirement is clearly satisfied.

For (2), note that $L=C_G(Z)=C_G(\lambda)$. By \cite{humphreys1981linear}[p.~141, section 22.4], both $\mathcal{B}^Z$ and $\mathcal{B}^{\lambda}$ are identical to the set $\{\ B_0 \in \mathcal{B} \ |\ L \cap B_0 \text{ is a Borel subgroup of } L \}$. Hence the requirement is satisfied.

As a consequence, all the results of subsection~\ref{apotfv} can be applied to the $\lambda$, $L$ and $P$ that are associated to $N$ via the Jacobson-Morozov theorem.

\subsection{Prehomogeneous vector spaces} \label{prehomogeneous}
Let $M$ be a connected algebraic group over $\mathbb{C}$ and $V$ be a finite-dimensional vector space over $\mathbb{C}$ with a rational $M$-action. $V$ is said to be prehomogeneous if $V$ contains a dense $M$-orbit $V^0$. Pick an element $v \in V^0$.

Given a closed subgroup $H$ in $M$ and an $H$-stable vector subspace $U$ of $V$, we construct a closed subvariety $X_U \subset M/H$ as follows: Set
\[ M_U=\{\ g \in M\ |\ g^{-1} \cdot v \in U\}.  \]
Then $M_U$ is stable under right multiplication by $H$ and we set $X_U=M_U/H$. Clearly,
\[ X_U=\{\ gH\ |\ g \in M \text{ and } g^{-1} \cdot v \in U\}.  \]

The following result can be found in \cite{de1988homology}[Lemma 2.2].

\begin{lemma} \label{prehsp}
\hspace{1cm} 
\begin{itemize}
\item[(1)] When $X_U$ is not empty, it is smooth and $\operatorname{dim}(X_U)=\operatorname{dim}(M/H)-\operatorname{dim}(V/U)$.
\item[(2)] The connected components of $X_U$ are isomorphic, and $M_v$ acts transitively on the set of them, where $M_v$ is the stabilizer of $v$.
\end{itemize}
\end{lemma}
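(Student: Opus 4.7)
The plan is to realize $M_U$ as the preimage of an open subset of $U$ under the orbit map of $v$, and then exploit the resulting principal $M_v$-bundle structure to obtain both parts of the lemma.

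For part (1), I would first invoke the fact that in a prehomogeneous representation the dense orbit is automatically open (orbits are locally closed, and the closure of $V^0$ equals $V$), so $V^0$ is open in $V$ and $U \cap V^0$ is a smooth open subvariety of $U$ of pure dimension $\dim U$. The orbit map $\beta\colon M \to V^0$, $g \mapsto g\cdot v$, identifies $V^0$ with $M/M_v$ and is therefore a smooth morphism of relative dimension $\dim M - \dim V$. The map $\alpha\colon M \to V^0$ defined by $\alpha(g) = g^{-1}\cdot v$ is $\beta$ composed with the inversion automorphism of the variety $M$, so it is also smooth of relative dimension $\dim M - \dim V$. Then $M_U = \alpha^{-1}(U \cap V^0)$ is smooth of dimension $\dim M - \dim V + \dim U$ by smooth base change, and since $H$ acts freely on $M$ (and hence on $M_U$) by right multiplication, the geometric quotient $X_U = M_U/H$ is smooth of dimension $\dim(M/H) - \dim(V/U)$.

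For part (2), I would first observe that left multiplication by $M_v$ preserves $M_U$: for $m \in M_v$ and $g \in M_U$ one has $(mg)^{-1}\cdot v = g^{-1}m^{-1}\cdot v = g^{-1}\cdot v \in U$. This action commutes with the right $H$-action and so descends to an action of $M_v$ on $X_U$. The base $U \cap V^0$ is irreducible, hence connected, being a non-empty open subset of the irreducible $U$. The restricted map $\alpha\colon M_U \to U \cap V^0$ is a principal $M_v$-bundle, and passing to the quotient by the free $M_v^{\circ}$-action yields a finite étale cover $M_U/M_v^{\circ} \to U \cap V^0$ whose deck group $\pi_0(M_v) = M_v/M_v^{\circ}$ acts simply transitively on every fiber. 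Since every connected component of this cover is itself finite étale over the connected base, it surjects onto $U \cap V^0$, so each fiber meets each component; hence $\pi_0(M_v)$ permutes $\pi_0(M_U/M_v^{\circ}) = \pi_0(M_U)$ transitively, and after descending through the $M_v$-equivariant $H$-quotient so does $\pi_0(X_U)$. Because $M_v$ acts on $X_U$ by algebraic automorphisms, all connected components are mutually isomorphic.

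The main technical step I expect to be the obstacle is precisely the transitivity claim in part (2): one must argue carefully that the deck group $\pi_0(M_v)$ of the finite étale cover $M_U/M_v^{\circ} \to U \cap V^0$ acts transitively on the set of connected components of the total space. The crucial input is that any connected component of a finite étale cover of a connected base surjects onto the base, which follows because finite étale maps are both open and closed. Once this is in hand, everything else reduces to routine facts: openness of the dense orbit of a prehomogeneous representation, smoothness of the quotient morphism $M \to M/M_v$ in characteristic zero, smoothness under smooth base change, and smoothness of geometric quotients by free algebraic group actions.
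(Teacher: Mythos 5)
Your proof is correct. The paper itself gives no argument for this lemma---it simply cites \cite{de1988homology}[Lemma 2.2]---so there is no in-text proof to compare against, and your write-up is a sound self-contained reconstruction of the standard argument. For part (1): openness of the dense orbit $V^0$, smoothness of the orbit map over $\mathbb{C}$, and base change along $\alpha(g)=g^{-1}\cdot v$ give that $M_U=\alpha^{-1}(U\cap V^0)$ is smooth with the expected dimension, and dividing by the free right $H$-action (which preserves $M_U$ because $U$ is $H$-stable) yields $X_U$. For part (2): your central observation is that the fibers of $\alpha\colon M_U\to U\cap V^0$ are precisely the left $M_v$-cosets, that $U\cap V^0$ is connected (a nonempty open subset of the irreducible linear space $U$), and that quotienting by $M_v^{\circ}$ produces a finite Galois \'etale cover of this connected base with deck group $\pi_0(M_v)$; any component of such a cover surjects onto the base (finite \'etale maps being open and closed), so the deck group acts transitively on $\pi_0$, and transitivity descends through the $H$-quotient because the left $M_v$-action commutes with the right $H$-action. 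Two steps you use implicitly are worth one line each if this were written out in full: the equality $\pi_0(M_v^{\circ}\backslash M_U)=\pi_0(M_U)$ rests on the quotient by the connected group $M_v^{\circ}$ being open with connected fibers, and $M_v^{\circ}\backslash M_U$ exists as a variety because it is a closed $M_v^{\circ}$-stable subset of the homogeneous space $M_v^{\circ}\backslash M$. Neither is a gap.
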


\begin{remark}
We use the quintuple notation $(M, H, V, U, v)$ to denote all the information necessary to construct $X_U$. By abuse of notation, we also use the quintuple to denote the variety $X_U$ itself. Equality such as $Y \cong (M, H, V, U, v)$ means the variety $Y$ is isomorphic to the variety $X_U$ constructed from the quintuple. 
\end{remark}

Prehomogeneous vector space is the technical core of this paper. It is important for both the proof in section \ref{maintheorempf} and the explicit computation in section \ref{g2hif} and \ref{F4E6pf}. In particular, we can use it to describe small pieces of the Hessenberg ideal fiber and their respective fixed-point subsets. Next we elaborate on this statement.

For a Hessenberg ideal $I$, let $N$ and $N'$ be conjugate nilpotent elements in the image of $\pi_I$. Then $\pi_{I}^{-1}(N)$ and $\pi_{I}^{-1}(N')$ are isomorphic while the associated parabolics $P$ (of $N$) and $P'$ (of $N'$) are conjugate. Therefore, we may assume that the associated parabolic $P$ of the nilpotent element $N$ contains the pre-selected Borel subgroup $B$. This makes it very convenient to intersect $\pi_{I}^{-1}(N)=\{\  gB \  | \  g^{-1} \cdot N \in I \}$ with various $P$-orbits on $G/B$.

Let $L$ be the Levi factor of $P$ and decompose the Weyl group $W=W_LW^L$. Let $\lambda$ be an associated one-parameter subgroup of $N$. By Lemma~\ref{porbit1}, $\{\ \mathcal{O}_v\ |\ v \in W^L \}$ is the set of all $P$-orbits on $G/B$, and they can be ordered into a paving of the flag variety. Since $\pi_{I}^{-1}(N)$ is a closed subvariety of $G/B$, $\{\ \pi_{I}^{-1}(N) \cap \mathcal{O}_v\ |\ v \in W^L\}$ is a paving of $\pi_{I}^{-1}(N)$. In fact, each piece of the paving $\pi_{I}^{-1}(N) \cap \mathcal{O}_{v}$ is a variety constructed from some quintuple. So is its $\lambda$-fixed-point subset $\pi_{I}^{-1}(N) \cap \mathcal{O}_{v}^{\lambda}$.

\begin{lemma} \label{dist1}
$\pi_{I}^{-1}(N) \cap \mathcal{O}_v=(P, P\cap \dot{v}\cdot B, \mathfrak{u}_P^{\geq 2}, \mathfrak{u}_P^{\geq 2} \cap \dot{v} \cdot I,N)$. 
\end{lemma}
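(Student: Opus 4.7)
The plan is to identify the $P$-orbit $\mathcal{O}_v = P\dot{v}B$ with the homogeneous space $P/H$ for $H = P \cap \dot{v}\cdot B$, then translate the Hessenberg-ideal-fiber condition into the form required by the quintuple construction and apply Lemma~\ref{prehsp}.

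First, I would observe that the stabilizer of the point $\dot{v}B \in G/B$ under the left $P$-action is exactly $P \cap \dot{v}B\dot{v}^{-1} = P \cap \dot{v}\cdot B = H$, so the orbit map $g \mapsto g\dot{v}B$ induces an isomorphism $P/H \xrightarrow{\sim} \mathcal{O}_v$. Under this identification, a point $g\dot{v}B$ lies in $\pi_{I}^{-1}(N)$ if and only if $(g\dot{v})^{-1}\cdot N \in I$, which rearranges to
\[ g^{-1}\cdot N \in \operatorname{Ad}(\dot{v})(I) = \dot{v}\cdot I. \]
Next, since $N \in \mathfrak{g}(2) \subset \mathfrak{u}_P^{\geq 2}$ and $\mathfrak{u}_P^{\geq 2}$ is a $P$-submodule of $\mathfrak{g}$, we automatically have $g^{-1}\cdot N \in \mathfrak{u}_P^{\geq 2}$ for every $g \in P$. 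Thus the membership condition is equivalent to $g^{-1}\cdot N \in \mathfrak{u}_P^{\geq 2} \cap \dot{v}\cdot I$, which is precisely the condition defining $M_U$ in the quintuple $(P, H, \mathfrak{u}_P^{\geq 2}, \mathfrak{u}_P^{\geq 2} \cap \dot{v}\cdot I, N)$.

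To finish, I would verify the two hypotheses needed to apply the quintuple construction. For prehomogeneity of $V = \mathfrak{u}_P^{\geq 2}$ under $P$ with basepoint $N$, this is exactly property (3) of the associated parabolic recalled in subsection~\ref{assparab}: the $P$-orbit of $N$ in $\mathfrak{u}_P^{\geq 2}$ is dense, so $N$ lies in $V^0$. For $H$-stability of $U = \mathfrak{u}_P^{\geq 2} \cap \dot{v}\cdot I$: the Hessenberg ideal $I$ is $\operatorname{ad}(\mathfrak{b})$-stable, hence $B$-stable (as remarked after the definition in subsection~2.2), so $\dot{v}\cdot I$ is stable under $\dot{v}B\dot{v}^{-1} = \dot{v}\cdot B$, and in particular under $H = P \cap \dot{v}\cdot B$; while $\mathfrak{u}_P^{\geq 2}$ is $P$-stable, hence $H$-stable; so their intersection $U$ is $H$-stable.

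I do not expect any genuine obstacle here: the statement is essentially a bookkeeping translation between two descriptions of the same set. The only point requiring a little care is the automatic containment $g^{-1}\cdot N \in \mathfrak{u}_P^{\geq 2}$ for $g \in P$, which is what allows us to replace the condition $g^{-1}\cdot N \in \dot{v}\cdot I$ by the apparently stronger $g^{-1}\cdot N \in \mathfrak{u}_P^{\geq 2} \cap \dot{v}\cdot I$ needed to match the quintuple format.
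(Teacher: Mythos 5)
Your proposal is correct and follows essentially the same route as the paper: identify $\mathcal{O}_v$ with $P/(P\cap \dot{v}\cdot B)$ via the orbit map, rewrite the condition $g\dot{v}B\in\pi_I^{-1}(N)$ as $g^{-1}\cdot N\in\dot{v}\cdot I$, use $N\in\mathfrak{g}(2)$ together with the $P$-stability of $\mathfrak{u}_P^{\geq 2}$ to upgrade this to $g^{-1}\cdot N\in\mathfrak{u}_P^{\geq 2}\cap\dot{v}\cdot I$, and invoke property (3) of the associated parabolic for prehomogeneity. The only divergence is in checking that $\mathfrak{u}_P^{\geq 2}\cap\dot{v}\cdot I$ is $(P\cap\dot{v}\cdot B)$-stable: the paper verifies this root by root using the $\lambda$-weights, whereas you argue directly that $I$ is $B$-stable, hence $\dot{v}\cdot I$ is $\dot{v}\cdot B$-stable, and $\mathfrak{u}_P^{\geq 2}$ is $P$-stable (by the grading $[\mathfrak{g}(i),\mathfrak{g}(j)]\subset\mathfrak{g}(i+j)$ and connectedness of $P$), so the intersection is stable under $P\cap\dot{v}\cdot B$ --- a cleaner and equally valid verification.
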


\begin{proof}
First we examine the validity of the quintuple. By subsection \ref{assparab} (3), the $P$-orbit of $N$ in $\mathfrak{u}_P^{\geq 2}$ is dense. Then all we need to check is that $\mathfrak{u}_P^{\geq 2} \cap \dot{v} \cdot I$ is $(P\cap \dot{v}\cdot B)$-stable. Since the adjoint action of $\dot{v}$ permutes the root spaces of $\mathfrak{g}$, $\mathfrak{u}_P^{\geq2} \cap \dot{v} \cdot I$ still has a root space decomposition. To prove $\mathfrak{u}_P^{\geq 2}\cap \dot{v} \cdot I$ is $(P\cap \dot{v}\cdot B)$-stable, it is enough to show the following:

Let $\gamma$ be a root of $\mathfrak{u}_P^{\geq 2}\cap \dot{v} \cdot I$ and $\delta$ be a root of $\mathfrak{p} \cap \dot{v} \cdot \mathfrak{b}$. If $\gamma +\delta$ is still a root, then it must be a root of $\mathfrak{u}_P^{\geq 2} \cap \dot{v} \cdot I$.

Now we prove the claim above. Let $\gamma=v(\alpha)$ so that $\alpha$ is a root of $I$ and $v(\alpha)$ is a root of $\mathfrak{u}_P^{\geq 2}$. Let $\delta=v(\beta)$ so that $\beta$ is a root of $\mathfrak{b}$ and $v(\beta)$ is a root of $\mathfrak{p}$. If $\gamma+\delta=v(\alpha)+v(\beta)=v(\alpha+\beta)$ is a root, then so is $\alpha+\beta$. Since $\beta$ is a root of $\mathfrak{b}$ hence positive, and $I$ is an ideal, $\alpha+\beta$ is a root of $I$ as well. Recall that $\lambda$ is the associated one-parameter subgroup of $N$ which we have chosen to begin with. Then $\langle \lambda, \gamma+\delta\rangle=\langle \lambda, \gamma\rangle+\langle \lambda, \delta\rangle \geq 2$, because $\gamma$ is a root of $\mathfrak{u}_P^{\geq 2}$ and $\delta$ is a root of $\mathfrak{p}$. Therefore $\gamma+\delta=v(\alpha+\beta)$ is a root of $\mathfrak{u}_P^{\geq 2} \cap \dot{v} \cdot I$.

Next we prove the equality. We know that $\mathcal{O}_v=P \dot{v}B$. The stabilizer of $\dot{v}B$ in $P$ is $H\stackrel{\textup{\tiny def}}{=} P \cap \dot{v} \cdot B$. For any $p \in P$, the whole coset $pH$, when acting on the point $\dot{v}B$, gives the same result $p\dot{v}B$. If $p\dot{v}B \in \pi_{I}^{-1}(N)$, by definition, $\dot{v}^{-1} \cdot p^{-1} \cdot N \in I \iff p^{-1} \cdot N \in \dot{v} \cdot I$. Since $p^{-1} \in P$ and $N \in \mathfrak{g}(2)$, we must have $p^{-1} \cdot N \in \mathfrak{u}_P^{\geq 2}$. Therefore, $p^{-1} \cdot N \in \dot{v} \cdot I \iff p^{-1} \cdot N \in \mathfrak{u}_P^{\geq 2} \cap \dot{v} \cdot I$. In summary, $\pi_{I}^{-1}(N) \cap \mathcal{O}_v=\{\ pH\ |\ p^{-1} \cdot N \in  \mathfrak{u}_P^{\ge 2} \cap \dot{v} \cdot I \}$ and this is exactly $(P, P\cap \dot{v}\cdot B, \mathfrak{u}_P^{\geq 2}, \mathfrak{u}_P^{\geq 2}\cap \dot{v} \cdot I , N)$.
\end{proof}

Recall that $\lambda$ acts on $G/B$ by left multiplication. Since $N \in \mathfrak{g}(2)$, $\lambda$ stabilizes $\pi_{I}^{-1}(N)$. Taking the $\lambda$-fixed-point subset of $\pi_{I}^{-1}(N) \cap \mathcal{O}_v$, we get $(\pi_{I}^{-1}(N) \cap \mathcal{O}_v)^{\lambda}=\pi_{I}^{-1}(N) \cap \mathcal{O}_v^{\lambda}$.

\begin{lemma} \label{dist2}
$\pi_{I}^{-1}(N) \cap \mathcal{O}_v^{\lambda} \cong (L, L \cap \dot{v} \cdot B, \mathfrak{g}(2), \mathfrak{g}(2) \cap \dot{v} \cdot I , N)$.
\end{lemma}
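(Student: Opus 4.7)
The plan is to verify that the quintuple on the right-hand side defines a legitimate variety in the sense of subsection~\ref{prehomogeneous}, then identify $\mathcal{O}_v^{\lambda}$ with the homogeneous space $L/(L \cap \dot{v}\cdot B)$ and match the defining conditions of the two subsets point by point. This will parallel the proof of Lemma~\ref{dist1}, but with $P$ replaced by its Levi $L$ and $\mathfrak{u}_P^{\geq 2}$ replaced by $\mathfrak{g}(2)$, the change being driven by the fact that $\lambda$-fixed points in $\mathcal{O}_v$ come exactly from the $L$-part of $P\dot{v}B$.

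First I would check validity of the quintuple. The subspace $\mathfrak{g}(2)$ is an $L$-submodule of $\mathfrak{g}$ because $[\mathfrak{l},\mathfrak{g}(2)]\subset \mathfrak{g}(2)$, and by item~(4) of subsection~\ref{assparab} the $L$-orbit of $N$ is dense in $\mathfrak{g}(2)$, so $(L,\mathfrak{g}(2))$ is prehomogeneous with distinguished point $N$. To see that $\mathfrak{g}(2)\cap \dot{v}\cdot I$ is stable under $H:=L\cap \dot{v}\cdot B$, note that both $\mathfrak{g}(2)$ and $\dot{v}\cdot I$ are sums of $T$-root spaces, hence so is the intersection; then, exactly as in Lemma~\ref{dist1}, if $\gamma=v(\alpha)$ is a root of $\mathfrak{g}(2)\cap \dot{v}\cdot I$ and $\delta=v(\beta)$ is a root of $\mathfrak{l}\cap \dot{v}\cdot \mathfrak{b}$ with $\gamma+\delta$ a root, then $\alpha+\beta$ is a positive root and $\alpha\in S_I$ forces $\alpha+\beta\in S_I$, while $\langle \lambda,\gamma+\delta\rangle = 2+0 = 2$ puts $v(\alpha+\beta)$ into $\mathfrak{g}(2)$.

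Next I would identify $\mathcal{O}_v^{\lambda}$ with $L/H$. From equation~(\ref{orbitfixedpoint}) together with the discussion of subsection~\ref{assparab}, every $\lambda$-fixed point of $\mathcal{O}_v$ has the form $l\dot{v}B$ for some $l\in L$, and conversely every such point is $\lambda$-fixed because $\lambda$ is central in $L$; the stabilizer of $\dot{v}B$ in $L$ is precisely $H=L\cap \dot{v}\cdot B$, giving the $L$-equivariant isomorphism $\mathcal{O}_v^{\lambda}\cong L/H$ sending $lH$ to $l\dot{v}B$.

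Finally I would translate the defining condition of $\pi_I^{-1}(N)$ through this isomorphism. For $l\in L$, the point $l\dot{v}B$ lies in $\pi_I^{-1}(N)$ iff $(l\dot{v})^{-1}\cdot N\in I$, i.e.\ iff $l^{-1}\cdot N\in \dot{v}\cdot I$. Because $l\in L=C_G(\lambda)$ preserves each weight space of $\lambda$ and $N\in\mathfrak{g}(2)$, we always have $l^{-1}\cdot N\in \mathfrak{g}(2)$, so this last condition is equivalent to $l^{-1}\cdot N\in \mathfrak{g}(2)\cap \dot{v}\cdot I$. This is exactly the condition defining the variety attached to the quintuple $(L, H, \mathfrak{g}(2), \mathfrak{g}(2)\cap \dot{v}\cdot I, N)$, so the isomorphism follows. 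The only nontrivial step is the root-system verification of $H$-stability of $\mathfrak{g}(2)\cap \dot{v}\cdot I$, but this is essentially a direct transcription of the corresponding argument already carried out for $\mathfrak{u}_P^{\geq 2}\cap \dot{v}\cdot I$ in Lemma~\ref{dist1}, so I do not anticipate a serious obstacle.
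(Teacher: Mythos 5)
Your proof is correct and follows essentially the same route as the paper's: verify the quintuple is valid (density of the $L$-orbit from subsection~\ref{assparab}~(4), plus the root-space argument for $(L\cap\dot{v}\cdot B)$-stability of $\mathfrak{g}(2)\cap\dot{v}\cdot I$), then identify $\mathcal{O}_v^{\lambda}$ with $L/(L\cap\dot{v}\cdot B)$ via Equation~(\ref{orbitfixedpoint}) and translate the membership condition $(l\dot{v})^{-1}\cdot N\in I$ into $l^{-1}\cdot N\in\mathfrak{g}(2)\cap\dot{v}\cdot I$ using $N\in\mathfrak{g}(2)$ and $L=C_G(\lambda)$. The only cosmetic difference is that you work with a general $l\in L$ where the paper writes the specific $u\dot{y}$ with $u\in U^y$, $y\in W_L$; the substance is identical.
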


\begin{proof}
Similar to the previous lemma, we begin by checking the validity of the quintuple. By subsection \ref{assparab} (4), the $L$-orbit of $N$ in $\mathfrak{g}(2)$ is dense. Then all we need to check is that $\mathfrak{g}(2) \cap \dot{v} \cdot I$ is $(L \cap \dot{v} \cdot B)$-stable. Still because of the root space decomposition of $\mathfrak{g}(2)\cap \dot{v} \cdot I$ and $\mathfrak{l} \cap \dot{v} \cdot \mathfrak{b}$, it is enough to show that: 

Let $\gamma$ be a root of $\mathfrak{g}(2) \cap \dot{v} \cdot I$ and $\beta$ be a root of $\mathfrak{l} \cap \dot{v} \cdot \mathfrak{b}$. If $\gamma+\beta$ is still a root, it must be a root of $\mathfrak{g}(2)\cap \dot{v} \cdot I$. 

The proof is almost verbatim the same as in the previous lemma.

Next we prove the equality. By Equation~\ref{orbitfixedpoint}, we know that 
\[\mathcal{O}_v^{\lambda}=\coprod_{y \in W_L} U^y \dot{y}\dot{v}B \cong \coprod_{y \in W_L}U^y \dot{y}(L \cap \dot{v} \cdot B) = \mathcal{B}(L) \cong L/L\cap \dot{v} \cdot B,\] 
and that the isomorphism in the middle takes $u \dot{y}\dot{v}B \in \mathcal{O}_v^{\lambda}$ to $u \dot{y} (L \cap \dot{v} \cdot B) \in \mathcal{B}(L)$ ($u \in U^y$). If a point $u \dot{y}\dot{v}B \in \mathcal{O}_v^{\lambda}$ is also in $\pi_{I}^{-1}(N)$, by definition, $\dot{v}^{-1} \cdot \dot{y}^{-1} \cdot u^{-1} \cdot N \in I \iff \dot{y}^{-1} \cdot u^{-1} \cdot N \in \dot{v} \cdot I$. Since $\dot{y} \in W_L$ and $u \in U^y \subset L$ and $N \in \mathfrak{g}(2)$, we must have $ \dot{y}^{-1} \cdot u^{-1} \cdot N \in \mathfrak{g}(2)$. Therefore, $\dot{y}^{-1} \cdot u^{-1} \cdot N \in \dot{v} \cdot I \iff \dot{y}^{-1} \cdot u^{-1} \cdot N \in \mathfrak{g}(2)\cap \dot{v} \cdot I$. In summary, $\pi_{I}^{-1}(N) \cap \mathcal{O}_v^{\lambda}=\{\ u \dot{y}\dot{v}B \ |\ \dot{y}^{-1} \cdot u^{-1} \cdot N \in \mathfrak{g}(2)\cap \dot{v} \cdot I \}$. Mapped isomorphically into $\mathcal{B}(L)$, the previous set becomes $\{\ u \dot{y}(L \cap \dot{v} \cdot B)\ |\ \dot{y}^{-1} \cdot u^{-1} \cdot N \in \mathfrak{g}(2)\cap \dot{v} \cdot I \}$, which is exactly $(L, L \cap \dot{v} \cdot B, \mathfrak{g}(2), \mathfrak{g}(2)\cap \dot{v} \cdot I, N)$ by definition. 
\end{proof}

Combining the two lemmas above with Lemma~\ref{3rdtech}, we can make an important step towards the proof of the main theorem. The following proposition is hinted at in the beginning of subsection \ref{roadmap}.

\begin{proposition} \label{reductionstep}
If $\pi_{I}^{-1}(N) \cap \mathcal{O}_{v}^{\lambda}$ is paved by affines, so is $\pi_{I}^{-1}(N) \cap \mathcal{O}_{v}$.
\end{proposition}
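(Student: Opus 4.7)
The plan is to apply Lemma~\ref{3rdtech} with $E = \mathcal{O}_v$ (carrying the $\mathbb{C}^{\times}$-action from left multiplication by $\lambda$) and $Z = \pi_I^{-1}(N) \cap \mathcal{O}_v$, so that $E^{\mathbb{C}^{\times}} = \mathcal{O}_v^{\lambda}$ and $Z^{\mathbb{C}^{\times}} = \pi_I^{-1}(N) \cap \mathcal{O}_v^{\lambda}$. The hypothesis of the proposition is then precisely that $Z^{\mathbb{C}^{\times}}$ is paved by affines, and the conclusion of Lemma~\ref{3rdtech} yields the desired paving of $Z$. The empty cases are trivial, and if $Z$ is nonempty then the limit condition supplied by Proposition~\ref{favb} forces $Z^{\mathbb{C}^{\times}}$ to be nonempty as well, so I may assume nonemptiness throughout and focus on verifying the hypotheses of Lemma~\ref{3rdtech}.

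For the ambient space $E = \mathcal{O}_v$, Proposition~\ref{favb} already packages exactly what is needed: connectedness and smoothness, the existence of a $\mathbb{C}^{\times}$-stable quasi-affine open cover, and the fact that $\lim_{t \to 0} \lambda(t) \cdot x \in \mathcal{O}_v^{\lambda}$ for every $x \in \mathcal{O}_v$. The fixed locus $E^{\mathbb{C}^{\times}} = \mathcal{O}_v^{\lambda}$ is, by the proposition preceding Proposition~\ref{favb}, isomorphic to the flag variety $\mathcal{B}(L)$ of the Levi factor $L$, hence nonempty, connected and smooth.

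It remains to check the hypotheses on $Z = \pi_I^{-1}(N) \cap \mathcal{O}_v$. Closedness in $E = \mathcal{O}_v$ is immediate since $\pi_I^{-1}(N)$ is closed in $G/B$. For $\mathbb{C}^{\times}$-stability, I would observe that $N \in \mathfrak{g}(2)$ gives $\lambda(t) \cdot N = t^2 N$, so if $g^{-1} \cdot N \in I$ then $(\lambda(t) g)^{-1} \cdot N = t^{-2}\, g^{-1} \cdot N \in I$, using that $I$ is a linear subspace. The crucial ingredient is smoothness of $Z$ and of $Z^{\mathbb{C}^{\times}}$: Lemma~\ref{dist1} identifies $Z$ with the variety attached to the quintuple $(P, P \cap \dot{v} \cdot B, \mathfrak{u}_P^{\geq 2}, \mathfrak{u}_P^{\geq 2} \cap \dot{v} \cdot I, N)$, and Lemma~\ref{dist2} identifies $Z^{\mathbb{C}^{\times}}$ with $(L, L \cap \dot{v} \cdot B, \mathfrak{g}(2), \mathfrak{g}(2) \cap \dot{v} \cdot I, N)$; in each case Lemma~\ref{prehsp}(1) supplies the required smoothness.

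The main obstacle is conceptual rather than computational: one has to recognize both $Z$ and $Z^{\mathbb{C}^{\times}}$ as varieties arising from prehomogeneous vector space data, so that Lemma~\ref{prehsp} delivers the nontrivial smoothness statements. Once this is in place, the proof is just an exercise in matching hypotheses, with Proposition~\ref{favb}, the quintuple descriptions of Lemmas~\ref{dist1} and \ref{dist2}, and Lemma~\ref{prehsp} supplying everything needed to invoke Lemma~\ref{3rdtech}.
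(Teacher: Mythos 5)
Your proposal is correct and follows essentially the same route as the paper: it invokes Lemma~\ref{3rdtech} with $E=\mathcal{O}_v$, $Z=\pi_I^{-1}(N)\cap\mathcal{O}_v$, using Proposition~\ref{favb} for the requirements on $E$ and Lemmas~\ref{dist1}, \ref{dist2} together with Lemma~\ref{prehsp} for the smoothness of $Z$ and $Z^{\mathbb{C}^{\times}}$. The extra details you supply (closedness, $\mathbb{C}^{\times}$-stability via $N\in\mathfrak{g}(2)$, nonemptiness of the fixed locus) are correct and merely make explicit what the paper leaves implicit.
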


\begin{proof}
By Proposition~\ref{favb}, $\mathcal{O}_v$ satisfies all the requirements for $E$ as in Lemma~\ref{3rdtech}. By Lemma~\ref{prehsp}, Lemma~\ref{dist1} and Lemma~\ref{dist2}, $\pi_{I}^{-1}(N) \cap \mathcal{O}_v$ satisfies all the requirements for $Z$ as in Lemma~\ref{3rdtech}. Now apply Lemma~\ref{3rdtech} with $E=\mathcal{O}_v$, $E^{\mathbb{C}^{\times}}=\mathcal{O}_v^{\lambda}$, $Z=\pi_{I}^{-1}(N) \cap \mathcal{O}_v$ and $Z^{\mathbb{C}^{\times}}=\pi_{I}^{-1}(N) \cap \mathcal{O}_v^{\lambda}$. We deduce that $\pi_{I}^{-1}(N) \cap \mathcal{O}_v$ is paved by affines if $\pi_{I}^{-1}(N) \cap \mathcal{O}_{v}^{\lambda}$ is.
\end{proof}

From the proof of Lemma~\ref{3rdtech}, it is not hard to deduce the following corollary, which is useful in the computation in section \ref{g2hif}.

\begin{corollary} \label{dimcor}
Let $q: \pi_{I}^{-1}(N) \cap \mathcal{O}_{v} \longrightarrow \pi_{I}^{-1}(N) \cap \mathcal{O}_{v}^{\lambda}$ be the restriction of $p: \mathcal{O}_v \longrightarrow \mathcal{O}_v^{\lambda}$ and $r=\operatorname{dim}(\pi_{I}^{-1}(N) \cap \mathcal{O}_{v})-\operatorname{dim}(\pi_{I}^{-1}(N) \cap \mathcal{O}_{v}^{\lambda})$ be the relative dimension. We then have the following results:
\begin{itemize}
\item[(1)] If $W \cong \mathbb{A}^l$ is an affine cell of $\pi_{I}^{-1}(N) \cap \mathcal{O}_{v}^{\lambda}$, then $q^{-1}(W)$ admits the structure of a rank $r$ vector bundle over $W$. In particular, $q^{-1}(W) \cong \mathbb{A}^{l+r}$ and it is an affine cell of $\pi_{I}^{-1}(N) \cap \mathcal{O}_{v}$.
\item[(2)] If $r=0$, $\pi_{I}^{-1}(N) \cap \mathcal{O}_{v}$ and $\pi_{I}^{-1}(N) \cap \mathcal{O}_{v}^{\lambda}$ are the same subset of $\pi_{I}^{-1}(N)$.
\end{itemize}
\end{corollary}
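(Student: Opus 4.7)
The plan is to extract the statement directly from the proof of Lemma~\ref{3rdtech}, which has already done almost all the work. In that proof one applies Lemma~\ref{3rdtech} with $E=\mathcal{O}_v$, $E^{\mathbb{C}^{\times}}=\mathcal{O}_v^{\lambda}$, $Z=\pi_I^{-1}(N)\cap\mathcal{O}_v$ and $Z^{\mathbb{C}^{\times}}=\pi_I^{-1}(N)\cap\mathcal{O}_v^{\lambda}$ (this is exactly the setup of Proposition~\ref{reductionstep}, whose hypotheses are verified via Proposition~\ref{favb}, Lemma~\ref{dist1} and Lemma~\ref{dist2}). A key intermediate output of that proof — stronger than what Lemma~\ref{3rdtech} formally records — is that each preimage $\pi^{-1}(W_i)$ of an affine cell is a $\mathbb{C}^{\times}$-equivariant vector bundle over $W_i$ (via Theorem~\ref{bhthm}), which becomes trivial by Quillen--Suslin because the base is affine space.

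For part (1), I would take the affine cell $W\cong\mathbb{A}^l$ inside $\pi_I^{-1}(N)\cap\mathcal{O}_v^{\lambda}$ and run the Lemma~\ref{3rdtech} argument verbatim on $q^{-1}(W)\to W$. Concretely, because $\pi_I^{-1}(N)\cap\mathcal{O}_v$ is a $\mathbb{C}^{\times}$-stable smooth closed subvariety of $\mathcal{O}_v$ and $\mathcal{O}_v$ is covered by $\mathbb{C}^{\times}$-stable quasi-affine opens, Bia\l ynicki-Birula shows $q$ is a $\mathbb{C}^{\times}$-equivariant affine fibration; hence $q^{-1}(W)$ is smooth affine, $W\subset q^{-1}(W)$ is a closed $\mathbb{C}^{\times}$-stable subscheme containing all closed orbits, $q$ gives a $\mathbb{C}^{\times}$-equivariant retraction, and the local-complete-intersection condition at each point follows from the regular-parameter argument used in Lemma~\ref{3rdtech}. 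Theorem~\ref{bhthm} then endows $q^{-1}(W)$ with the structure of a $\mathbb{C}^{\times}$-vector bundle over $W$, whose rank must agree with the relative dimension $r$. Quillen--Suslin gives triviality, so $q^{-1}(W)\cong\mathbb{A}^{l+r}$.

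For part (2), I would observe that $r=0$ forces every fiber of the affine fibration $q$ to be a zero-dimensional affine space, i.e.\ a single point. Since $q$ is a retraction of the inclusion $\pi_I^{-1}(N)\cap\mathcal{O}_v^{\lambda}\hookrightarrow\pi_I^{-1}(N)\cap\mathcal{O}_v$, it must be the identity, so the two subsets of $\pi_I^{-1}(N)$ coincide set-theoretically.

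There is essentially no new obstacle here: the only mildly delicate point is confirming that the hypotheses of Lemma~\ref{3rdtech} (smoothness, quasi-affine $\mathbb{C}^{\times}$-stable cover, $\lim_{t\to 0}t\cdot x$ landing in the fixed locus) are inherited by the restriction to $q^{-1}(W)$, but this is immediate because all of these properties restrict to $\mathbb{C}^{\times}$-stable locally closed subvarieties and $W$ is already $\mathbb{C}^{\times}$-fixed. The corollary is thus a direct by-product of the vector-bundle identification already produced inside the proof of Lemma~\ref{3rdtech}.
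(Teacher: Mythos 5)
Your proposal is correct and follows the paper's intended route: the paper itself does not give an explicit proof of this corollary but states that it is ``not hard to deduce'' from the proof of Lemma~\ref{3rdtech}, which is exactly what you do --- you re-run the argument of that proof (Bia\l{}ynicki--Birula to get the affine fibration $q$, Theorem~\ref{bhthm} to get the vector-bundle structure over each affine cell, Quillen--Suslin to trivialize it) and then read off parts (1) and (2) from the resulting vector-bundle identification. The only point worth making explicit, which you compress into ``whose rank must agree with $r$'', is that the equality of the bundle rank with $r=\dim(\pi_I^{-1}(N)\cap\mathcal{O}_v)-\dim(\pi_I^{-1}(N)\cap\mathcal{O}_v^\lambda)$ uses the fact that both source and target of $q$ are smooth and equidimensional (from Lemma~\ref{prehsp} via Lemmas~\ref{dist1} and~\ref{dist2}), so that the fiber dimension of the affine fibration is a well-defined global constant rather than something that could vary over components of the fixed locus.
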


The following is the fulcrum of the proof of the main theorem.

\begin{theorem} \label{dclpthm}
Let $G$ be a connected reductive algebraic group over $\mathbb{C}$ whose Lie algebra has no simple component of type $E_7$ or $E_8$. Let $N \in \mathfrak{g}$ be a distinguished nilpotent element and $P$ be the associated parabolic of $N$. We may assume that $P$ contains the pre-selected Borel subgroup $B$. Following the notation in subsection \ref{assparab}, let $P=LU_P$ be the Levi decomposition of $P$ so that $\mathfrak{l}=\mathfrak{g}(0)$ and $\mathfrak{u}_P=\bigoplus_{i > 0}\mathfrak{g}(i)$. $L \cap B$ is a Borel subgroup of $L$. Then for any $(L \cap B)$-stable subspace $U \subset \mathfrak{g}(2)$, the variety $X_U=(L, L \cap B, \mathfrak{g}(2), U, N)$ is paved by affines whenever it is not empty.
\end{theorem}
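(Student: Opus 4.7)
The plan is to pave $X_U$ by intersecting it with the Schubert cells of $L/(L\cap B)$ coming from the Bruhat decomposition of $L$.  Writing $L = \coprod_{y\in W_L}(L\cap B)\dot{y}(L\cap B)$ and $Y_y = (L\cap B)\dot{y}(L\cap B)/(L\cap B)\cong \mathfrak{u}_L^{y}$, we obtain a paving $X_U = \coprod_{y\in W_L}(X_U\cap Y_y)$, and by Lemma~\ref{1st} it suffices to show each piece $X_U \cap Y_y$ is paved by affines.

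In coordinates $X\in\mathfrak{u}_L^{y}$, the piece $X_U \cap Y_y$ is cut out by the polynomial condition $\dot{y}^{-1}\cdot\exp(-\operatorname{ad}(X))(N)\in U$; equivalently it is the zero locus of the composition
$$ \mathfrak{u}_L^{y} \xrightarrow{\ X\,\mapsto\,\exp(-\operatorname{ad}(X))(N)\ } \mathfrak{g}(2) \xrightarrow{\ \dot{y}^{-1}\ } \mathfrak{g}(2) \twoheadrightarrow \mathfrak{g}(2)/U. $$
Because $N$ is distinguished, $\mathfrak{g}(i)=0$ for all odd $i$ and $\operatorname{ad}(N)\colon \mathfrak{g}(0)\to \mathfrak{g}(2)$ is surjective, which sharply restricts the shape of this polynomial map. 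The core step is to find a total ordering of the root coordinates on $\mathfrak{u}_L^{y}$ in which the component equations become triangular, so that the zero set can be solved iteratively and identified with an affine subspace. This is the direct analogue of the cell-by-cell analysis used for Springer fibers in classical types in \cite{de1988homology} and for Hessenberg ideal fibers in type $A$ in \cite{ji2019hessenberg}. When direct triangularization is not transparent in a given case, the fallback is to take fixed points under a further one-parameter subgroup inside $L$ and apply Lemma~\ref{3rdtech} iteratively, reducing the problem to a smaller reductive group.

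With this strategy in hand, the classical types follow from a straightforward extension of \cite{de1988homology} indexed by the partitions attached to distinguished nilpotents. For the three exceptional types $G_2$, $F_4$, and $E_6$, the Bala--Carter classification produces only two, four, and three distinguished orbits respectively, so a case-by-case check of each $(L,\mathfrak{g}(2))$ together with all $(L\cap B)$-stable subspaces $U$ completes the argument; the explicit description of $\mathfrak{g}(2)$ as an $L$-representation can be read off directly from the weighted Dynkin diagram of $N$. The main obstacle, and the reason that $E_7$ and $E_8$ must be excluded, is ensuring that the Bruhat pieces $X_U \cap Y_y$ admit affine cell structures uniformly across \emph{all} choices of $U$ and $y$ simultaneously; in $E_7$ and $E_8$ certain distinguished orbits yield $\mathfrak{g}(2)$-representations whose combinatorics is rich enough that no such triangularization is available, and a strictly finer paving than the Bruhat one would be required.
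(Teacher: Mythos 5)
Your decomposition $X_U = \coprod_{y\in W_L}(X_U\cap Y_y)$ is a valid paving, but the central step --- that a triangular ordering of the root coordinates makes each piece $X_U \cap Y_y$ an affine space --- is false. The paper's own computations in section~\ref{F4E6pf} furnish explicit counterexamples. For the orbit $F_4(a_2)$ with $\Phi(U)=\Phi(\mathfrak{g}(2))\setminus\{\alpha_2\}$, the intersection of $X_U$ with the top Bruhat cell of $L/(L\cap B)\cong\mathbb{P}^1\times\mathbb{P}^1$ is $\{(z_1,z_3)\in\mathbb{C}^2\mid az_3^2+bz_1z_3+1=0\}\cong\mathbb{C}^{\times}$, not an affine space. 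For $E_6(a_3)$ with $\Phi(U)=\Phi(\mathfrak{g}(2))\setminus\{\alpha_3\}$, the intersection with the top cell of $\mathbb{P}^1\times\mathbb{P}^1\times\mathbb{P}^1$ is the smooth affine quadric $\{1+az_1z_2+bz_1z_3+cz_2z_3=0\}$, again not an affine space, and the full $X_U$ is a nontrivial smooth projective rational surface. Surjectivity of $\operatorname{ad}(N)\colon\mathfrak{g}(0)\to\mathfrak{g}(2)$ (which is indeed a consequence of $N$ being distinguished) does not force any triangular structure, so the argument stops precisely at the point where the theorem's difficulty lies. Your fallback --- taking fixed points of a further one-parameter subgroup inside $L$ and applying Lemma~\ref{3rdtech} --- is not developed enough to close the gap: the Bruhat pieces $X_U\cap Y_y$ are not projective, the hypotheses of Lemma~\ref{3rdtech} (smoothness, limits landing in the fixed locus, coverage by stable quasi-affine opens) would all have to be verified, and even then the fixed-point loci would still need to be shown paved by affines.

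The paper's route is genuinely different and bypasses the cell-by-cell analysis of $L/(L\cap B)$ altogether. It invokes \cite{de1988homology}[section~3.7] to identify $X_U\cong\mathcal{B}_{N,\mathcal{O}}^{D}$, the $D$-fixed locus of the intersection of the Springer fiber $\mathcal{B}_N$ with a single $P$-orbit $\mathcal{O}$ on the flag variety, where $D$ is the image of an associated cocharacter. Since $\mathcal{B}_N^D=\coprod_{\mathcal{O}}\mathcal{B}_{N,\mathcal{O}}^D$ with each piece smooth projective and pairwise disjoint, it suffices that $\mathcal{B}_N^D$ itself be paved by affines. After factoring over the simple components of $\mathfrak{g}$, the classical factors are handled directly by \cite{de1988homology}[Theorem~3.9], and the $G_2$, $F_4$, $E_6$ factors are handled by exhibiting each nonempty quintuple variety as one of a short list of manifestly paved varieties (finite sets, $\mathbb{P}^1$ and disjoint unions thereof, products of $\mathbb{P}^1$'s, or smooth projective rational surfaces via Lemma~\ref{surfacelem}). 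Your explanation for excluding $E_7$ and $E_8$ --- that triangularization becomes unavailable --- is therefore beside the point; the actual constraint is that \cite{de1988homology}[Theorem~3.9] is a classical-type statement and the paper's explicit case analysis covers only $G_2$, $F_4$, $E_6$.
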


\begin{proof}
Note that $N \in \mathfrak{g}(2)$ and the $L$-orbit of $N$ is dense in $\mathfrak{g}(2)$, so the $L$-module $\mathfrak{g}(2)$ is indeed prehomogeneous and the quintuple $(L, L \cap B, \mathfrak{g}(2), U, N)$ is valid.

Let $D$ be the image in $G$ of an associated one-parameter subgroup $\lambda$ of $N$. By \cite{de1988homology}[section 3.7], since $N$ is distinguished, there exists a $P$-orbit $\mathcal{O}$ on $\mathcal{B}$ such that $X_U \cong \mathcal{B}_{N, \mathcal{O}}^D$, where $\mathcal{B}_{N, \mathcal{O}}^D$ is the $D$-fixed-point set of the intersection $\mathcal{B}_N \cap \mathcal{O}$. Therefore, as long as we can prove that $\mathcal{B}_{N, \mathcal{O}}^D$ is paved by affines, we are done.

By \cite{de1988homology}[section 3.6], we know that $\mathcal{B}_N^D=\coprod_{\mathcal{O}}\mathcal{B}_{N, \mathcal{O}}^D$ and each piece is smooth projective and they do not meet each other. Therefore, if $\mathcal{B}_N^D$ is paved by affines, so is each piece $\mathcal{B}_{N, \mathcal{O}}^D$.

Next we turn to look at $\mathcal{B}_{N}^D$. In this proof, for an arbitrary reductive Lie algebra $\mathfrak{g}'$, let $\mathcal{B}(\mathfrak{g}')$ denote the flag variety of a connected reductive group whose Lie algebra is $\mathfrak{g}'$. Let $\mathcal{B}_{N'}(\mathfrak{g}')$ denote the Springer fiber for a nilpotent element $N' \in \mathfrak{g}'$. That is, $\mathcal{B}_{N'}(\mathfrak{g}')=\{\ \mathfrak{b}' \in \mathcal{B}(\mathfrak{g}') \ |\ N' \in \mathfrak{b}', \; \mathfrak{b}' \text{ is a Borel subalgebra of } \mathfrak{g}'\}$. Now come back to the group $G$ and Lie algebra $\mathfrak{g}$ we started with, and let $\mathfrak{g} \cong \mathfrak{z} \oplus (\bigoplus_{i=1}^m \mathfrak{g}_i)$ be the decomposition of $\mathfrak{g}$ into a direct sum of its center $\mathfrak{z}$ and simple components $\mathfrak{g}_1, \mathfrak{g}_2, \ldots, \mathfrak{g}_m$. Let $N_i$ be the projection of $N$ onto $\mathfrak{g}_i$ for $i=1, 2, \ldots, m$. Each $N_i$ is a nilpotent element of $\mathfrak{g}_i$ and $N$ is distinguished in $\mathfrak{g}$ if and only if each $N_i$ is distinguished in $\mathfrak{g}_i$. It is not hard to show that $\mathcal{B}_N=\mathcal{B}_N(\mathfrak{g}) \cong \prod_{i=1}^m \mathcal{B}_{N_i}(\mathfrak{g}_i)$ (see \cite{spaltenstein1982classes}[Chapter II, section 1.1]). Because each component $\mathfrak{g}_i$ is an ideal of $\mathfrak{g}$, the adjoint action of $D$ stabilizes $\mathfrak{g}_i$ and induces an action on $\mathcal{B}(\mathfrak{g}_i)$. Since $D$ stabilizes $\mathbb{C}N$, it also stabilizes each $\mathbb{C}N_i$. Then the adjoint action of $D$ on $\mathfrak{g}$ induces actions on $\mathcal{B}_N, \mathcal{B}_{N_1}(\mathfrak{g}_1), \mathcal{B}_{N_2}(\mathfrak{g}_2), \ldots, \mathcal{B}_{N_m}(\mathfrak{g}_m)$. Therefore, the isomorphism $\mathcal{B}_N \cong \prod_{i=1}^m \mathcal{B}_{N_i}(\mathfrak{g}_i)$ is $D$-equivariant and we have $\mathcal{B}_N^D \cong \prod_{i=1}^m \mathcal{B}_{N_i}^D(\mathfrak{g}_i)$.

Now it suffices to show that each $\mathcal{B}_{N_i}^D(\mathfrak{g}_i)$ is paved by affines. Since $\mathfrak{g}$ is assumed to have no simple component of type $E_7$ or $E_8$, each $\mathfrak{g}_i$ is either classical or of type $G_2$, $F_4$ or $E_6$. In the classical case, let $s \in D$ be a semisimple element such that $\mathcal{B}_{N_i}^D(\mathfrak{g}_i)=\mathcal{B}_{N_i}^s(\mathfrak{g}_i)$. By \cite{de1988homology}[Theorem 3.9], $\mathcal{B}_{N_i}^s(\mathfrak{g}_i)$ is paved by affines and so is $\mathcal{B}_{N_i}^D(\mathfrak{g}_i)$. In the other three cases, note that $\mathcal{B}_{N_i}^D(\mathfrak{g}_i)=\coprod_{\mathcal{O}} (\mathcal{B}_{N_i}^D(\mathfrak{g}_i) \cap \mathcal{O})$, and  for each $P_i$-orbit $\mathcal{O}$, $\mathcal{B}_{N_i}^D(\mathfrak{g}_i) \cap \mathcal{O}$ is constructed from some quintuple $(L_i, L_i \cap B_i, \mathfrak{g}_i(2), U_i, N_i)$, where $L_i$ is the Levi factor of the associated parabolic $P_i$ of $N_i$ (in $\mathfrak{g}_i$). Therefore, it suffices to prove that $(L_i, L_i \cap B_i, \mathfrak{g}_i(2), U_i, N_i)$ is paved by affines for a simple Lie algebra $\mathfrak{g}_i$ of type $G_2$, $F_4$ or $E_6$. For type $G_2$, the semisimple rank of $L_i$ is at most 1 (see section \ref{g2hif}), hence each nonempty $(L_i, L_i \cap B_i, \mathfrak{g}_i(2), U_i, N_i)$ is either a finite set of points or $\mathbb{P}^1$, both of which are paved by affines. For type $F_4$, only two of its four distinguished nilpotent orbits need to be carefully inspected. One of them has already been done in \cite{de1988homology}[section 4.2]. For the other orbit, a nonempty $(L_i, L_i \cap B_i, \mathfrak{g}_i(2), U_i, N_i)$ (with $N_i$ from this nilpotent orbit) is one of the following: $\mathbb{P}^1 \times \mathbb{P}^1$, $\mathbb{P}^1$ (or disjoint union thereof), a finite set of points. For type $E_6$, only one orbit needs inspection, for which a nonempty $(L_i, L_i \cap B_i, \mathfrak{g}_i(2), U_i, N_i)$ is one of: $\mathbb{P}^1 \times \mathbb{P}^1 \times \mathbb{P}^1$, a smooth rational surface, $\mathbb{P}^1$ (or disjoint union thereof), a finite set of points. Then knowledge of these very special varieties concludes the proof. The details for the $F_4$ and $E_6$ cases are given in section \ref{F4E6pf}.
\end{proof}

\subsection{Undistinguished nilpotent elements} \label{undist}
The proof of the main theorem when $N$ is distinguished can be done by combining various results from subsection \ref{prehomogeneous}. When $N$ is undistinguished, the classification of nilpotent orbits in \cite{bala1976classes,bala1976classesii} provides us a good way of reducing to the distinguished case. More specifically, we need the following result.

\begin{proposition*}[{\cite{carter1985finite}[p.~172, Proposition 5.9.4]}]
There exists a mininal Levi subalgebra $\mathfrak{m}$ of $\mathfrak{g}$ containing $N$. $N$ is a distinguished nilpotent element of $\mathfrak{m}$.
\end{proposition*}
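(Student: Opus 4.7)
The plan is to prove the proposition by a minimality-plus-contradiction argument. For existence, observe that the collection $\mathcal{L}$ of Levi subalgebras of $\mathfrak{g}$ containing $N$ is nonempty, since $\mathfrak{g}$ is itself a (trivial) Levi subalgebra of $\mathfrak{g}$. Since every Levi subalgebra has dimension between $\operatorname{rank}(\mathfrak{g})$ and $\dim \mathfrak{g}$, I may pick any $\mathfrak{m} \in \mathcal{L}$ of minimal dimension.

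To show that $N$ is distinguished in $\mathfrak{m}$, I argue by contradiction: suppose $N$ lies in a Levi subalgebra $\mathfrak{m}'$ of some proper parabolic subalgebra of $\mathfrak{m}$, so in particular $\mathfrak{m}' \subsetneq \mathfrak{m}$. The crucial reduction is the standard transitivity fact that a Levi subalgebra of a Levi subalgebra of $\mathfrak{g}$ is again a Levi subalgebra of $\mathfrak{g}$. Granting this, $\mathfrak{m}'$ would be an element of $\mathcal{L}$ with $\dim \mathfrak{m}' < \dim \mathfrak{m}$, contradicting the minimality of $\mathfrak{m}$.

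The main obstacle is the transitivity claim for Levi subalgebras, which I would verify as follows. Recall that every Levi subalgebra of $\mathfrak{g}$ is the centralizer in $\mathfrak{g}$ of the Lie algebra of the identity component of its own center. Write $\mathfrak{m} = C_{\mathfrak{g}}(\mathfrak{s})$ with $\mathfrak{s} = \operatorname{Lie} Z(M)^{\circ}$, and $\mathfrak{m}' = C_{\mathfrak{m}}(\mathfrak{s}')$ with $\mathfrak{s}' = \operatorname{Lie} Z(M')^{\circ}$, where $M$ and $M'$ are the connected subgroups of $G$ corresponding to $\mathfrak{m}$ and $\mathfrak{m}'$. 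Since every Levi subalgebra of $\mathfrak{m}$ contains the center of $\mathfrak{m}$, we have $\mathfrak{s} \subset \mathfrak{m}'$; thus $\mathfrak{s}$ commutes with $\mathfrak{s}'$ and $\mathfrak{s}+\mathfrak{s}'$ is a toral subalgebra of $\mathfrak{g}$. A direct computation with root space decompositions then gives $\mathfrak{m}' = C_{\mathfrak{g}}(\mathfrak{s}+\mathfrak{s}')$, exhibiting $\mathfrak{m}'$ as a Levi subalgebra of $\mathfrak{g}$ and completing the reduction. The resulting proposition is exactly the input needed in the sequel to pass from a general nilpotent $N$ to the distinguished case treated in Theorem~\ref{dclpthm}.
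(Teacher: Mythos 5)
Your argument is correct and gives a clean, self-contained proof of the cited proposition. Note that the paper itself does not prove this statement: it quotes Carter \cite{carter1985finite}[Prop.~5.9.4] and, in the paragraph that follows in subsection~\ref{undist}, reproduces Carter's explicit construction of $\mathfrak{m}$ as $C_{\mathfrak{g}}(\mathfrak{s})$, where $\mathfrak{s}$ is the Lie algebra of a maximal torus $S$ of $C=C_F(D)$, the reductive part of the centralizer $F=C^{\circ}_G(N)$. Your route is genuinely different: it is a soft existence-by-minimality argument whose only nontrivial input is the transitivity of Levi subalgebras, which you justify via the identity $C_{\mathfrak{m}}(\mathfrak{s}')=C_{\mathfrak{g}}(\mathfrak{s})\cap C_{\mathfrak{g}}(\mathfrak{s}')=C_{\mathfrak{g}}(\mathfrak{s}+\mathfrak{s}')$ together with the standard fact that the centralizer of a toral subalgebra is a Levi subalgebra. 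This is shorter and avoids the Jacobson--Morozov machinery entirely. The trade-off is that your proof is purely existential, whereas Carter's construction (and this is why the paper reproduces it rather than merely citing existence) yields the specific torus $S$, the Levi subgroup $M=C_G(S)$, and the one-parameter subgroup $\mu$ that the proof of Theorem~\ref{mymainthm} for undistinguished $N$ uses explicitly. So your argument establishes the stated proposition but would not by itself furnish the data the paper needs downstream. As a proof of the proposition as stated, it is correct and complete; if you want to be fully rigorous you should also note that $\mathfrak{s}+\mathfrak{s}'$ consists of semisimple elements (commuting semisimples sum to a semisimple), so that the cited fact about centralizers of toral subalgebras applies.
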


A Levi subalgebra $\mathfrak{m}$ of $\mathfrak{g}$ is the Lie algebra of the Levi factor of a parabolic subgroup of $G$. Miminal Levi subalgebra is minimal with respect to inclusion. 

Next we describe how to find a minimal Levi subalgebra (and its corresponding Levi subgroup) that contains $N$. The following results are taken from \cite{carter1985finite}[p.~156, Proposition 5.5.9; p.~172, Proposition 5.9.4].

For any nilpotent element $N$, let $\lambda$ be an associated one-parameter subgroup (see subsection \ref{assparab}) and $D$ be the image of $\lambda$ in $G$. Let $F=C_G^{\circ}(N)$ be the connected centralizer of $N$. Let $R$ be the unipotent radical of $F$ and $C=C_F(D)$. We know that: 
\begin{itemize}
\item[(1)] $F=RC$ and $R \cap C=1$.
\item[(2)] $C$ is a connected reductive group.
\end{itemize}
Let $S$ be a maximal torus of $C$ and let $\mathfrak{s}$ be the Lie algebra of $S$. Set $\mathfrak{m}=C_{\mathfrak{g}}(\mathfrak{s})$ and $M=C_G(S)$. Then $\mathfrak{m}$ is a minimal Levi subalgebra that contains $N$. $M$ is the corresponding Levi subgroup of $\mathfrak{m}$ and $N$ is a distinguished nilpotent element of $\mathfrak{m}$. In particular, when $N$ is distinguished, $S$ is the connected center of $G$ and the minimal Levi subalgebra that contains $N$ is $\mathfrak{g}$ itself.

Note that $S \subset C_F(D)$, so $S$ and $D$ commute with each other. $D$ acts on $N$ with weight 2 while $S$ centralizes $N$. Therefore $S$ and $D$ have at most finite intersection. Choose a maximal torus $T$ that contains both $S$ and $D$ and pick a Borel $B \supset T$ so that the associated parabolic $P$ of $N$ is standard. Since $B \supset T \supset S$, $M \cap B$ is a Borel subgroup of $M$ (\cite{humphreys1981linear}[section 22.4]). Let $P=LU_P$ be the Levi decomposition and let $\mu: \mathbb{C}^{\times} \longrightarrow S$ be a one-parameter subgroup so that the $\mu$-fixed points and $S$-fixed points on $\mathcal{B}$ coincide. 

The groups $\lambda$, $D$, $S$, $M$, $T$, $B$, $P$, $L$ and $\mu$ described above will be used in the proof of the main theorem when $N$ is an undistinguished nilpotent element.

\section{Proof of the Main Theorem} \label{maintheorempf}
In this section we prove the main theorem using various results from section \ref{preliminaries}. 

\begin{theorem} \label{mymainthm}
Let $G$ be a connected reductive algebraic group over $\mathbb{C}$ whose Lie algebra has no simple component of type $E_7$ or $E_8$. For any Hessenberg ideal $I \subset \mathfrak{u}$ and any nilpotent element $N \in \mathfrak{g}$, the Hessenberg ideal fiber $\pi_{I}^{-1}(N)$ is paved by affines whenever it is not empty.
\end{theorem}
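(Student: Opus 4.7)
The plan is to combine the $P$-orbit paving of the flag variety (subsection~\ref{apotfv}) with the fixed-point machinery of Lemma~\ref{3rdtech} and Proposition~\ref{reductionstep} to reduce the theorem to quintuple varieties of the form handled by Theorem~\ref{dclpthm}. First, by replacing $N$ with a $G$-conjugate, I may assume the associated parabolic $P$ of $N$ is standard. Then the $P$-orbit paving $\mathcal{B} = \coprod_{v \in W^L} \mathcal{O}_v$ of Lemma~\ref{porbit1} induces a paving $\{\pi_I^{-1}(N) \cap \mathcal{O}_v\}$ of $\pi_I^{-1}(N)$. By Lemma~\ref{1st} and Proposition~\ref{reductionstep}, it will suffice to prove that each $\lambda$-fixed subvariety $\pi_I^{-1}(N) \cap \mathcal{O}_v^{\lambda}$ is paved by affines, and by Lemma~\ref{dist2} this subvariety is the quintuple variety
\[
X_v \;=\; (L,\; L \cap \dot v \cdot B,\; \mathfrak{g}(2),\; \mathfrak{g}(2) \cap \dot v \cdot I,\; N)
\]
inside $\mathcal{B}(L)$.

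When $N$ is distinguished in $\mathfrak{g}$, $X_v$ falls into the scope of Theorem~\ref{dclpthm} up to a change of Borel within $L$. Since all Borels of $L$ are $L$-conjugate and $L$ stabilizes $\mathfrak{g}(2)$, I would pick $l \in L$ with $l(L \cap B)l^{-1} = L \cap \dot v \cdot B$ and translate by $l^{-1}$ to realize $X_v$ as the quintuple $(L,\, L \cap B,\, \mathfrak{g}(2),\, l^{-1}\!\cdot(\mathfrak{g}(2) \cap \dot v \cdot I),\, l^{-1}\!\cdot N)$, now with $(L \cap B)$-stable subspace and still-distinguished nilpotent. Theorem~\ref{dclpthm} then finishes the distinguished case.

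For undistinguished $N$, I plan to invoke the minimal-Levi setup of subsection~\ref{undist}: a minimal Levi $\mathfrak{m} \ni N$ in which $N$ is distinguished, its Levi subgroup $M = C_G(S)$, and the one-parameter subgroup $\mu : \mathbb{C}^\times \to S$ with $\mathcal{B}^\mu = \mathcal{B}^S$. Since $S$ commutes with the image $D$ of $\lambda$, we have $S \subset L$, so $\mu$ acts on $\mathcal{B}(L)$ and preserves $X_v$ (because $\mu$ centralizes $N$). Combining Lemma~\ref{3rdtechaffinepaving} on the smooth projective $\mathcal{B}(L)$ with its $\mu$-action and then Lemma~\ref{3rdtech} on each stratum reduces the paving question for $X_v$ to the same question for $X_v^\mu$. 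A root-space calculation in the spirit of Lemma~\ref{dist2} — using that $\lambda$ and $\mu$ commute, that $L \cap M = C_L(S)$ is the Levi of the associated parabolic of $N$ inside $M$, and that the weight-$2$ space of $\lambda$ in $\mathfrak{m}$ equals $\mathfrak{g}(2) \cap \mathfrak{m}$ — will identify each connected component of $X_v^\mu$ with a quintuple variety
\[
(L \cap M,\; L \cap M \cap \dot w \cdot B,\; \mathfrak{g}(2) \cap \mathfrak{m},\; \mathfrak{g}(2) \cap \mathfrak{m} \cap \dot w \cdot I,\; N)
\]
inside $\mathcal{B}(L \cap M)$, indexed by appropriate Weyl elements $w$. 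Because $N$ is distinguished in $\mathfrak{m}$ and $M$ inherits the no-$E_7$-$E_8$ hypothesis (Levi subsystems of a non-$E_7$-$E_8$ root system contain no $E_7$ or $E_8$ component), the distinguished case just established, applied now inside $M$, concludes the argument.

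The hardest part will be the bookkeeping in the undistinguished case: carrying out the root-space identification of $X_v^\mu$ as a union of quintuple varieties over $M$, and verifying that the $\mu$-limits $\lim_{t \to 0} \mu(t) \cdot x$ stay inside $\mathcal{O}_v^\lambda$ so that Lemma~\ref{3rdtech} applies cleanly to $X_v$. The conceptual core of the theorem, however, is Theorem~\ref{dclpthm}, into which all the type-by-type content has already been packaged.
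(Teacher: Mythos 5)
Your proposal is correct and is essentially the paper's own proof: the same reduction via the $P$-orbit paving, Proposition~\ref{reductionstep} and Lemma~\ref{dist2}, then Theorem~\ref{dclpthm} for distinguished $N$, and for undistinguished $N$ the $S$-action (via $\mu$) together with Lemma~\ref{3rdtechaffinepaving} and Theorem~\ref{dclpthm} applied to the Levi subgroup $M$. The two points you flag as extra work are dispatched more directly in the paper: Lemma~\ref{dimlem} gives $L \cap \dot{v} \cdot B = L \cap B$ for $v \in W^L$ (so no conjugation inside $L$ is needed), and the per-orbit identification of $X_v^{\mu}$ is bypassed by observing that the connected components of all the $(\pi_{I}^{-1}(N) \cap \mathcal{O}_v^{\lambda})^{\mu}$ taken together are exactly the connected components of $\pi_{I}^{-1}(N)^{DS}$, which one then re-decomposes by the orbits of the standard parabolic $Q$ with Levi $L \cap M = C_G(DS)$ and identifies, by the same computation as in Lemma~\ref{dist2}, with quintuple varieties over $L \cap M$ to which Theorem~\ref{dclpthm} applies.
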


\begin{proof}
For any nilpotent element $N$, let $P$ be the associated parabolic and $\lambda$ be an associated one-parameter subgroup of $N$. $P=LU_P$ is the Levi decomposition and $W=W_LW^L$. Because $\{\ \pi_{I}^{-1}(N) \cap \mathcal{O}_{v}\ |\ v \in W^L\}$ is a paving of $\pi_{I}^{-1}(N)$, by Lemma~\ref{1st}, it is enough to show that each nonempty piece $\pi_{I}^{-1}(N) \cap \mathcal{O}_{v}$ is paved by affines. By Proposition~\ref{reductionstep}, it suffices to show that each nonempty $\pi_{I}^{-1}(N) \cap \mathcal{O}_{v}^{\lambda}$ is paved by affines. We accomplish this task in two different ways, depending on whether $N$ is distinguished or undistinguished.

For any nilpotent $N$, by Lemma~\ref{dist2}, $\pi_{I}^{-1}(N) \cap \mathcal{O}_v^{\lambda} \cong (L, L \cap \dot{v} \cdot B, \mathfrak{g}(2), \mathfrak{g}(2) \cap \dot{v} \cdot I, N)$. Because $v \in W^L$, $L \cap \dot{v} \cdot B=L \cap B$ (see Lemma~\ref{dimlem}). Then $\pi_{I}^{-1}(N) \cap \mathcal{O}_v^{\lambda} \cong (L, L \cap B, \mathfrak{g}(2),  \mathfrak{g}(2)\cap \dot{v} \cdot I, N)$.

If $N$ is distinguished, by Theorem~\ref{dclpthm}, $\pi_{I}^{-1}(N) \cap \mathcal{O}_v^{\lambda} \cong (L, L \cap B, \mathfrak{g}(2),  \mathfrak{g}(2)\cap \dot{v} \cdot I, N)$ is paved by affines, and we are done.

If $N$ is undistinguished, take all the groups $\lambda$, $D$, $S$, $M$, $T$, $B$, $P$, $L$ and $\mu$ associated to $N$ as in subsection \ref{undist}. For simplicity of notation, we drop the subscript $v$ and use $\mathcal{O}$ to denote any $P$-orbit on $G/B$. Now we show that $\pi_{I}^{-1}(N) \cap \mathcal{O}^{\lambda}$ is paved by affines by making use of the $S$-action on it.

Since $S$ and $D$ act on the flag variety $G/B$ by left multiplication, the two actions commute with each other. Because $S \subset C_G^{\circ }(N)$, it stabilizes $\pi_{I}^{-1}(N)$; because $S$ lies in $T$ and commutes with $D$, it stabilizes $\mathcal{O}^{\lambda}$. Therefore, $S$ stabilizes $\pi_{I}^{-1}(N) \cap \mathcal{O}^{\lambda}$ and $(\pi_{I}^{-1}(N) \cap \mathcal{O}^{\lambda})^S=(\pi_{I}^{-1}(N) \cap \mathcal{O}^{\lambda})^{\mu}$. Lemma~\ref{dist2} and Lemma~\ref{prehsp} imply that $\pi_{I}^{-1}(N) \cap \mathcal{O}^{\lambda}$ is smooth projective. Then we can apply the Lemma~\ref{3rdtechaffinepaving} to $\pi_{I}^{-1}(N) \cap \mathcal{O}^{\lambda}$ with the $\mu$-action. In particular, we deduce that:

The fixed-point set $(\pi_{I}^{-1}(N) \cap \mathcal{O}^{\lambda})^{\mu}$ is the disjoint union of its connected components, each of which is smooth projective. Moreover, if every connected component of $(\pi_{I}^{-1}(N) \cap \mathcal{O}^{\lambda})^{\mu}$ is paved by affines, the same is true of $\pi_{I}^{-1}(N) \cap \mathcal{O}^{\lambda}$. 

Let $\mathscr{P}(D, S)$ be the set of connnected components of $(\pi_{I}^{-1}(N) \cap \mathcal{O}^{\lambda})^{\mu}$ when $\mathcal{O}$ ranges over all $P$-orbits on $G/B$. We want to show that every variety in $\mathscr{P}(D,S)$ is paved by affines. The key lies in viewing the set $\mathscr{P}(D, S)$ in a different way. 

Consider the fixed-point variety $\pi_{I}^{-1}(N)^{D, S}$ by both $D$ and $S$, and let $\mathscr{R}$ be the set of connected components of $\pi_{I}^{-1}(N)^{D, S}$. For each $\pi_{I}^{-1}(N) \cap \mathcal{O}^{\lambda}$, it is a smooth projective closed subvariety of $G/B$ . The intersection of two different pieces $(\pi_{I}^{-1}(N) \cap \mathcal{O}_{v}^{\lambda}) \bigcap (\pi_{I}^{-1}(N) \cap \mathcal{O}_{u}^{\lambda})$ is empty, because the two $P$-orbits $\mathcal{O}_v$ and $\mathcal{O}_u$ do not meet each other. Therefore, the elements of $\mathscr{P}(D, S)$ have to be exactly all connnect components of $\pi_{I}^{-1}(N)^{D, S}$. That is, $\mathscr{P}(D, S)=\mathscr{R}$.

Now we consider $\pi_{I}^{-1}(N)^{D, S}$ in a different manner. 

Let $DS$ be the product group of $D$ and $S$. It is a toral subgroup of $T$. Then $\pi_{I}^{-1}(N)^{D, S}=\pi_{I}^{-1}(N)^{DS}$. Since $L \cap M = C_G(D) \cap C_G(S)=C_G(DS)$, $L \cap M$ is a Levi subgroup of $G$. Let $Q$ be the subgroup of $G$ generated by $L \cap M$ and $B$, then $Q$ is the standard parabolic of which $L \cap M$ is a Levi factor. On the other hand, $L \cap M= C_G(D) \cap M=C_M(D)$. Because $D \subset M$, $L \cap M$ is a Levi subgroup of $M$. By the definition $M=C_G(S)$ and \cite{collingwood1993nilpotent}[Lemma 3.4.4], it is easy to show that the homomorphism $\varphi: \operatorname{SL}_2(\mathbb{C}) \longrightarrow G$ associated to $N$ by the Jacobson-Morozov theorem factors through the subgroup $M$ of $G$. Therefore, $L \cap M=C_M(D)$ is exactly the Levi subgroup of the unique parabolic subgroup of $M$ associated to $N \in \mathfrak{m}$. Then we know that $\mathfrak{l} \cap \mathfrak{m}=\mathfrak{g}_M(0)$, $N \in \mathfrak{g}_M(2)$ and that the $(L \cap M)$-orbit of $N$ in $\mathfrak{g}_M(2)$ is dense. Here $\mathfrak{m}=\bigoplus_{i \in \mathbb{Z}}\mathfrak{g}_M(i)$ is the weight space decomposition of $\mathfrak{m}$ with respect to $D$.

Since $L \cap M$ is the Levi factor of the standard parabolic subgroup $Q$ of $G$, we can decompose the Weyl group $W=W_{L \cap M}W^{L \cap M}$. For each $v \in W^{L \cap M}$, let $\mathscr{O}_v$ be the corresponding $Q$-orbit on $G/B$. Then
\[ \pi_{I}^{-1}(N) \cap \mathscr{O}_v^{DS}=\pi_{I}^{-1}(N) \cap (\coprod_{y \in W_{L \cap M}}U^{yv} \dot{y} \dot{v}B)^{DS}=\pi_{I}^{-1}(N) \cap (\coprod_{y \in W_{L \cap M}}U^{y} \dot{y} \dot{v}B).  \]
For any $u \dot{y}\dot{v}B \in U^y \dot{y}\dot{v}B$, it lies in $\pi_{I}^{-1}(N)$ whenever $\dot{v}^{-1} \cdot \dot{y}^{-1} \cdot u^{-1} \cdot N \in I \iff \dot{y}^{-1} \cdot u^{-1} \cdot N \in \dot{v} \cdot I \iff \dot{y}^{-1} \cdot u^{-1} \cdot N \in \mathfrak{g}_M(2) \cap \dot{v} \cdot I$. The last equivalent condition is due to the fact that $u \dot{y} \in L \cap M=C_M(D)$ and $N \in \mathfrak{g}_M(2)$. Let $B_{L \cap M}=(L \cap M) \cap \dot{v} \cdot B$. It is a Borel subgroup of $L \cap M$. The natural isomorphism from $(\coprod_{y \in W_{L \cap M}}U^{y} \dot{y} \dot{v}B)$ to the flag variety $\mathcal{B}(L \cap M)$ takes $u \dot{y}\dot{v}B$ to $u \dot{y} B_{L \cap M}$. Under this isomorphism, $\pi_{I}^{-1}(N) \cap \mathscr{O}_v^{DS}$ can be identified with $\{\ u \dot{y} B_{L \cap M}\ |\ \dot{y}^{-1} \cdot u^{-1} \cdot N \in \mathfrak{g}_M(2) \cap \dot{v} \cdot I \}$, which is exactly the quintuple $(L \cap M, B_{L \cap M}, \mathfrak{g}_M(2), \mathfrak{g}_M(2) \cap \dot{v} \cdot I,N)$. Because $v \in W^{L \cap M}$, we have $B_{L \cap M}=(L \cap M) \cap (M \cap B)$ (see Lemma~\ref{dimlem}). Because $M=C_G(S)$ is a Levi subgroup of $G$, its Dynkin diagram is obtained from that of $G$ by removing certain nodes. Therefore, $M$ is also a connected reductive algebraic group whose Lie algebra has no simple component of type $E_7$ or $E_8$. Now apply Theorem~\ref{dclpthm} to the group $M$, its Borel subgroup $M \cap B$, the distinguished nilpotent element $N \in \mathfrak{m}$ and the quintuple $(L \cap M, B_{L \cap M}, \mathfrak{g}_M(2), \mathfrak{g}_M(2) \cap \dot{v} \cdot I,N)$, and we deduce that $\pi_{I}^{-1}(N) \cap \mathscr{O}_v^{DS} \cong (L \cap M, B_{L \cap M}, \mathfrak{g}_M(2), \mathfrak{g}_M(2)\cap \dot{v} \cdot I,N)$ is paved by affines. By Lemma~\ref{dist2} and Lemma~\ref{prehsp}, $\pi_{I}^{-1}(N) \cap \mathscr{O}_v^{DS}$ is smooth projective. Therefore it is a finite disjoint union of connected components from $\mathscr{R}=\mathscr{P}(D,S)$. Since $\pi_{I}^{-1}(N) \cap \mathscr{O}_v^{DS}$ is paved by affines, so is every one of its connected components. Because the collection of $\pi_{I}^{-1}(N) \cap \mathscr{O}_v^{DS}$ for different orbits $\mathscr{O}_v$ cover $\pi_{I}^{-1}(N)^{DS}$, every connected component from $\mathscr{R}=\mathscr{P}(D, S)$ belongs to some $\pi_{I}^{-1}(N) \cap \mathscr{O}_v^{DS}$, hence has to be paved by affines as well. Then we have finished the proof.
\end{proof}

\section{Type \texorpdfstring{$G_2$}{}} \label{g2hif}
Throughout this section, let $G$ be a connected algebraic group over $\mathbb{C}$ of type $G_2$. For every Hessenberg ideal fiber $\pi_{I}^{-1}(N)$, we explicitly describe each cell of $\pi_{I}^{-1}(N)$ as an affine subspace of some Schubert cell of $G/B$. Various geometric properties of Hessenberg ideal fibers can be deduced from these explicit cell structures. In particular, Theorem~\ref{aninterestinghif} shows that the irreducible components of $\pi_{I}^{-1}(N)$ are not always of the same dimension. 

\subsection{Some structures of $G_2$} \label{strofg2}
First we collect some well-known results about $G_2$ that are relevent to our computation.

Fix a Borel subgroup $B$ of $G$ and choose a maximal torus $T \subset B$. Let $\alpha$ and $\beta$ be the short and long simple roots respectively. The root system of $G_2$ is shown in Figure~\ref{g2rsall}. The labeled arrows correspond to all the positive roots. We see that all the roots of $G_2$ have only two different lengths, and we call them short roots and long roots respectively. 

\begin{figure}[h]
\centering 
\includegraphics[scale=.4]{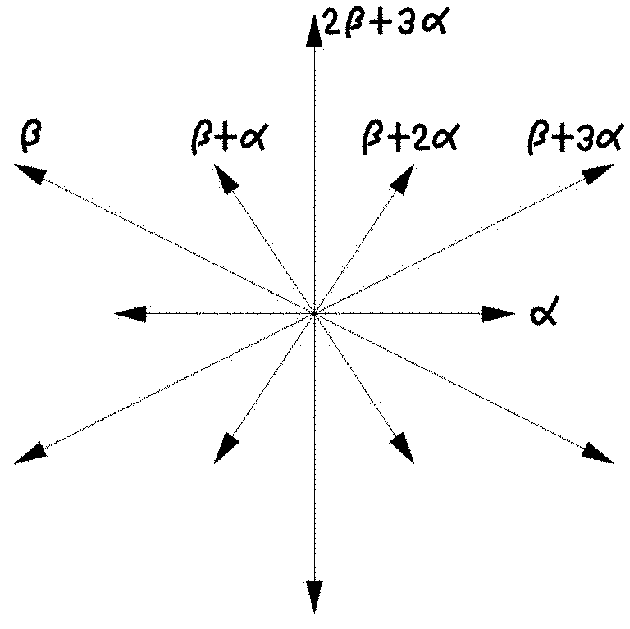}
\caption{Root system of $G_2$}
\label{g2rsall} 
\end{figure}

There is a natural partial order on the set of positive roots: $\gamma \le \delta$ if $\delta-\gamma$ is a linear combination of $\alpha$ and $\beta$ with nonnegative coefficients.

Define $I_{\gamma}=\bigoplus_{\delta \ge \gamma} \mathfrak{g}_{\delta}$ for every $\gamma \in \Phi^{+}$, $I_{\alpha, \beta}=\mathfrak{u}$ and $I_{\emptyset}=(0)$. It is clear that these are all the Hessenberg ideals of $G_2$. There is a natural partial order on the set of Hessenberg ideals by inclusion.

Let $E \cong \mathbb{R}^2$ be the real vector space that the root system of $G_2$ spans. We view $E$ as the plane that contains all the arrows in Figure~\ref{g2rsall}. Let $s=s_{\alpha}$ and $t=s_{\beta}$ be the reflections of $E$ associated to the simple roots $\alpha$ and $\beta$ respectively. Note that $s$ is the reflection about the line through the arrow $2\beta+3\alpha$ and $t$ is the reflection about the line through the arrow $\beta+2\alpha$. Let $r=st$ and it is a rotation of $E$ in the counterclockwise direction for 60 degrees. Let $W$ be the Weyl group of $G_2$ and we have the following presentation of $W$:
\[ W= \langle s, \, t | \, s^2=t^2=(st)^6=e \rangle, \]
where $e$ is the unit element of $W$. Clearly, $W \cong D_6$.

The rest of this subsection consists of several results that can be easily proved by knowledge of the Chevalley groups. They are true for all connected simple algebraic groups over $\mathbb{C}$.

For any root $\gamma \in \Phi$, let $X_{\gamma}$ be the 1-dimensional unipotent subgroup of $G$ whose Lie algebra is the root space $\mathfrak{g}_{\gamma}$. Let $x_{\gamma}: \mathbb{C} \longrightarrow X_{\gamma}$ be the group isomorphism defined via the exponential map and the choice of a Chevalley basis element in $\mathfrak{g}_{\gamma}$. For each positive root $\gamma$, choose a nonzero vector $E_{\gamma} \in \mathfrak{g}_{\gamma}$. Let $E_{-\gamma}$ be the unique vector in $\mathfrak{g}_{-\gamma}$ so that $\left\{E_{\gamma}, \left[E_{\gamma}, E_{-\gamma}\right], E_{-\gamma}\right\}$ is an $\mathfrak{sl}_2$-triple. Let $H_{\gamma}= \left[E_{\gamma}, E_{-\gamma}\right]$.

\begin{lemma} \label{flowlem}
For any two roots $\gamma$ and $\delta$, there exist nonzero complex numbers $c_1, c_2, \ldots, c_q$ depending only on $\gamma$, $\delta$ and $G$ such that 
\[ x_{\gamma}(z) \cdot E_{\delta}=E_{\delta}+\sum_{n=1}^q c_n z^n E_{\delta+n\gamma} \text{  for any } z \in \mathbb{C}.  \]
$x_{\gamma}(z) \cdot E_{\delta}$ is the adjoint action and  $\delta+q\gamma$ is the last root in the $\gamma$-string that goes through $\delta$.
\end{lemma}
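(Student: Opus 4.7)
The plan is to derive the formula from the exponential description of the root subgroup and then use the finiteness of the $\gamma$-string through $\delta$ to truncate the resulting power series.

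First, I would fix the Chevalley basis element $e_{\gamma}\in \mathfrak{g}_{\gamma}$ that determines $x_{\gamma}$, so that $x_{\gamma}(z)=\exp(z\,e_{\gamma})$ in $G$. Using the standard identity $\operatorname{Ad}(\exp X)=\exp(\operatorname{ad}X)$ for the adjoint representation, I can write
\[
x_{\gamma}(z)\cdot E_{\delta}=\operatorname{Ad}(x_{\gamma}(z))(E_{\delta})
=\sum_{n\ge 0}\frac{z^{n}}{n!}\bigl(\operatorname{ad}(e_{\gamma})\bigr)^{n}(E_{\delta}),
\]
which is a formal power series that I need to show is in fact a polynomial of degree $q$ with the advertised shape.

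Next, I would observe that $\bigl(\operatorname{ad}(e_{\gamma})\bigr)^{n}(E_{\delta})\in \mathfrak{g}_{\delta+n\gamma}$ for each $n\ge 0$ by the grading of $\mathfrak{g}$ by the root lattice. Since we are working in a reduced root system of an exceptional or classical type (so that the only integer multiples of a root which are again roots are $\pm\gamma$), the $\gamma$-string through $\delta$ going upwards has the form $\delta,\delta+\gamma,\dots,\delta+q\gamma$, with $\delta+(q+1)\gamma$ no longer a root. Hence $\mathfrak{g}_{\delta+n\gamma}=0$ for $n>q$, truncating the series after the degree $q$ term. For $1\le n\le q$ the element $\bigl(\operatorname{ad}(e_{\gamma})\bigr)^{n}(E_{\delta})$ lies in the one-dimensional space $\mathfrak{g}_{\delta+n\gamma}$ and is therefore a scalar multiple $c_{n}'\,E_{\delta+n\gamma}$ for a constant depending only on $\gamma$, $\delta$ and the structure constants of $G$; setting $c_{n}=c_{n}'/n!$ yields the stated formula.

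The one thing that still requires care is showing that each $c_{n}$ is \emph{nonzero}. For this I would invoke $\mathfrak{sl}_{2}$-representation theory: the triple $\{e_{\gamma},H_{\gamma},E_{-\gamma}\}$ acts on the span of the $\gamma$-string $\bigoplus_{k}\mathfrak{g}_{\delta+k\gamma}$ as an irreducible $\mathfrak{sl}_{2}$-module, and the raising operator $\operatorname{ad}(e_{\gamma})$ is injective on all weight spaces except the highest. So $\bigl(\operatorname{ad}(e_{\gamma})\bigr)^{n}(E_{\delta})$ is a nonzero multiple of $E_{\delta+n\gamma}$ for each $n=1,\dots,q$, giving $c_{n}\ne 0$. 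The only subtle case is when $\delta=-\gamma$, so that the upward string passes through the zero weight; this case is implicitly excluded because the formula as stated has no $\mathfrak{t}$-component, and in the applications we only need the lemma for pairs $(\gamma,\delta)$ that avoid this configuration. This $\mathfrak{sl}_{2}$ step is the only nontrivial ingredient; everything else is unwinding the exponential definition.
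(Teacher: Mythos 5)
Your proof is correct. Note that the paper offers no argument for this lemma at all: it is listed among facts that ``can be easily proved by knowledge of the Chevalley groups,'' so there is no authorial proof to compare against, and your write-up via $\operatorname{Ad}(\exp X)=\exp(\operatorname{ad}X)$, the root-lattice grading (which truncates the series at $n=q$), and the $\mathfrak{sl}_2$-theory of the $\gamma$-string through $\delta$ is precisely the standard argument being invoked implicitly. The one genuine difference from the usual Chevalley-group treatment is the source of nonvanishing: in Chevalley's formula the coefficients are computed explicitly as $\pm\binom{p+n}{n}$, where $p$ is the depth of the string below $\delta$, so $c_n\neq 0$ is immediate, whereas you deduce it from injectivity of the raising operator $\operatorname{ad}(e_\gamma)$ on the non-top weight spaces of the irreducible string module $\bigoplus_k\mathfrak{g}_{\delta+k\gamma}$; both routes work, and yours avoids any appeal to explicit structure constants. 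You are also right to flag $\delta=-\gamma$: as literally stated (``any two roots'') the lemma fails there, since $x_\gamma(z)\cdot E_{-\gamma}$ acquires a component along $H_\gamma$ (and $E_{\delta+\gamma}$ would not even be defined), so the statement tacitly assumes $\delta\neq-\gamma$; this configuration indeed never arises in the paper's applications, and your exclusion of it, together with the trivial case $\delta=\gamma$ where $q=0$, is consistent with the intended meaning.
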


\begin{lemma} \label{commlem}
For any two roots $\gamma$ and $\delta$ such that $\gamma+\delta$ is not in $\Phi \cup \left\{0\right\}$, 
\[ x_{\gamma}(z)x_{\delta}(z')=x_{\delta}(z')x_{\gamma}(z) \text{  for any } z, \, z' \in \mathbb{C}.  \]
That is, the two groups $X_{\gamma}$ and $X_{\delta}$ commute with each other.
\end{lemma}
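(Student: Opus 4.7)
The plan is to deduce the commutativity of the one-parameter subgroups $X_{\gamma}$ and $X_{\delta}$ directly from Lemma~\ref{flowlem}, by showing that conjugation by $x_{\gamma}(z)$ fixes the root vector $E_{\delta}$, and then exponentiating. The whole argument rests on a single observation about root strings together with the fact that $x_{\delta}(z')$ is, by construction, the exponential of $z' E_{\delta}$.

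First I would verify that the hypothesis $\gamma + \delta \notin \Phi \cup \{0\}$ forces $\delta + n\gamma \notin \Phi$ for every integer $n \geq 1$. This is a consequence of the unbroken-string property of root strings: if the $\gamma$-string through $\delta$ is $\delta - p\gamma, \ldots, \delta, \ldots, \delta + q\gamma$, then $q \geq 1$ if and only if $\delta + \gamma \in \Phi$. Since $\delta + \gamma \notin \Phi$ by hypothesis, we obtain $q = 0$, and no positive multiple $\delta + n\gamma$ lies in $\Phi$. (The case $\gamma = -\delta$ is already excluded since it would give $\gamma + \delta = 0$.)

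Applying Lemma~\ref{flowlem} with this value $q = 0$ makes the sum on the right-hand side empty, so $x_{\gamma}(z) \cdot E_{\delta} = E_{\delta}$ for all $z \in \mathbb{C}$; that is, $\mathrm{Ad}(x_{\gamma}(z))$ fixes $E_{\delta}$. Since $x_{\delta}(z') = \exp(z' E_{\delta})$ by construction, and conjugation in $G$ intertwines with the exponential map via the adjoint action, we get
\[ x_{\gamma}(z)\, x_{\delta}(z')\, x_{\gamma}(z)^{-1} = \exp\bigl(z'\, \mathrm{Ad}(x_{\gamma}(z))(E_{\delta})\bigr) = \exp(z' E_{\delta}) = x_{\delta}(z'), \]
which upon rearrangement yields the desired identity $x_{\gamma}(z) x_{\delta}(z') = x_{\delta}(z') x_{\gamma}(z)$. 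There is no genuine obstacle here; the hypothesis $\gamma + \delta \notin \Phi \cup \{0\}$ has been engineered precisely to force the one potentially obstructing term in Lemma~\ref{flowlem} to vanish, and after that step everything is formal.
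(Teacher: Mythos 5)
Your proof is correct, and it nicely exploits the lemma list in the paper rather than reaching for the full Chevalley commutator formula. The paper itself gives no proof (it merely says these results ``can be easily proved by knowledge of the Chevalley groups''), so the implicit reference is to the commutator formula, which expresses $[x_\gamma(z), x_\delta(z')]$ as an ordered product of $x_{i\gamma+j\delta}(c_{ij}z^i(z')^j)$ over pairs $(i,j)$ of positive integers with $i\gamma+j\delta\in\Phi$; to conclude from that one must further argue that $\gamma+\delta\notin\Phi$ forces $i\gamma+j\delta\notin\Phi$ for all $i,j\ge 1$. Your route is cleaner: the hypothesis $\gamma+\delta\notin\Phi\cup\{0\}$ combined with the unbroken-string property gives $q=0$ in Lemma~\ref{flowlem} directly, so $\operatorname{Ad}(x_\gamma(z))$ fixes the line $\mathfrak{g}_\delta$, and then the standard identity $g\exp(X)g^{-1}=\exp(\operatorname{Ad}(g)X)$ finishes it. One small point worth stating explicitly: the paper's $x_\delta$ is built from the exponential of a chosen Chevalley basis vector in $\mathfrak{g}_\delta$, which need not literally equal $E_\delta$, but since $\mathfrak{g}_\delta$ is one-dimensional the two differ only by a nonzero scalar, and $\operatorname{Ad}(x_\gamma(z))$ fixing $E_\delta$ is equivalent to it fixing that basis vector, so the argument is unaffected. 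Also note the hypothesis $\gamma+\delta\neq 0$ is exactly what is needed for Lemma~\ref{flowlem} to be applicable (otherwise $[E_\gamma,E_\delta]$ lands in $\mathfrak{t}$ and the formula would not make sense), which you observe in passing.
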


\begin{lemma} \label{permlem}
For any $w \in W$ and $\gamma \in \Phi$, there exists a nonzero complex number $c$ such that
\[ \dot{w}x_{\gamma}(z)\dot{w}^{-1}=x_{w(\gamma)}(cz) \text{  for any } z \in \mathbb{C}. \]
\end{lemma}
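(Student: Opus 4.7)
The plan is to show that conjugation by $\dot{w}$ carries the one-parameter subgroup $X_{\gamma}$ isomorphically onto $X_{w(\gamma)}$ by first analyzing what the differential, i.e.\ $\operatorname{Ad}(\dot{w})$, does on the root space $\mathfrak{g}_{\gamma}$, and then transferring the resulting identity from the Lie algebra to the group via the exponential map.

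First I would verify, purely from the weight space definition of $\mathfrak{g}_{\gamma}$, that $\operatorname{Ad}(\dot{w})(\mathfrak{g}_{\gamma})=\mathfrak{g}_{w(\gamma)}$. Since $\dot{w}$ normalizes $T$, for every $h \in T$ and every $X \in \mathfrak{g}_{\gamma}$ one computes
\[
\operatorname{Ad}(h)\operatorname{Ad}(\dot{w})(X)=\operatorname{Ad}(\dot{w})\operatorname{Ad}(\dot{w}^{-1}h\dot{w})(X)=\gamma(\dot{w}^{-1}h\dot{w})\operatorname{Ad}(\dot{w})(X)=(w(\gamma))(h)\,\operatorname{Ad}(\dot{w})(X),
\]
using in the last step the standard identification of the $W$-action on $X(T)$ with conjugation on $T$. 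Because $\mathfrak{g}_{\gamma}$ and $\mathfrak{g}_{w(\gamma)}$ are both one-dimensional and $\operatorname{Ad}(\dot{w})$ is invertible, this forces $\operatorname{Ad}(\dot{w})(E_{\gamma})=c\,E_{w(\gamma)}$ for a unique nonzero scalar $c \in \mathbb{C}$, depending on $w$, $\gamma$, and the chosen Chevalley basis, but independent of the parameter that will appear below.

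Next I would invoke naturality of the exponential map. Writing $x_{\gamma}(z)=\exp(zE_{\gamma})$ as in the definition in the excerpt, and using that conjugation by $\dot{w}$ is a group automorphism of $G$ whose derivative at the identity is $\operatorname{Ad}(\dot{w})$, I get
\[
\dot{w}\,x_{\gamma}(z)\,\dot{w}^{-1}=\exp\!\bigl(z\operatorname{Ad}(\dot{w})(E_{\gamma})\bigr)=\exp(zc\,E_{w(\gamma)})=x_{w(\gamma)}(cz),
\]
which is exactly the desired identity. There is no real obstacle to this argument; the lemma is essentially a restatement of the $T$-equivariance of the root space decomposition combined with functoriality of $\exp$. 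The only point requiring mild care is noting that the scalar $c$ is independent of $z$, which is automatic because it is extracted from comparing basis vectors $E_{\gamma}$ and $E_{w(\gamma)}$ before the parameter $z$ ever enters the computation.
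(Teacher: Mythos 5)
Your argument is correct and complete. The paper states this lemma (together with Lemmas~\ref{flowlem}--\ref{prodlem}) without proof, remarking only that they follow from standard facts about Chevalley groups, and your proof is exactly the standard argument those words allude to: the $T$-weight computation shows $\operatorname{Ad}(\dot{w})$ carries $\mathfrak{g}_{\gamma}$ to $\mathfrak{g}_{w(\gamma)}$, one-dimensionality pins down the scalar $c$, and functoriality of $\exp$ transports the identity to the unipotent root subgroups.
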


\begin{lemma} \label{prodlem}
For any $w \in W$, the following equality of sets is also an isomorphism of algebraic varieties.
\[ U^w=\prod_{\gamma \in \Phi_w}X_{\gamma},  \]
in which factors on the right hand side are multiplied with respect to a fixed order of roots in $\Phi_w$. Moreover, if $\gamma_1+\gamma_2$ is not in $\Phi \cup \left\{0\right\} $ for any two roots $\gamma_1, \, \gamma_2 \in \Phi_w$, factors can be interchanged freely without changeing their product.
\end{lemma}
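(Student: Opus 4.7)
The plan is to reduce to standard structure theory of unipotent subgroups of reductive groups, then apply the results about $\Phi_w$ already recorded in the excerpt.

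First I would observe the key combinatorial fact that $\Phi_w$ is a closed subset of $\Phi^+$, meaning: if $\gamma_1, \gamma_2 \in \Phi_w$ and $\gamma_1+\gamma_2 \in \Phi^+$, then $\gamma_1+\gamma_2 \in \Phi_w$. This follows immediately from the definition $\Phi_w=\{\gamma\in\Phi^+\mid w^{-1}(\gamma)\in\Phi^-\}$: if $w^{-1}(\gamma_i)\in\Phi^-$ for $i=1,2$ and $w^{-1}(\gamma_1+\gamma_2)$ is a root, it is a sum of two negative roots, hence is itself negative. This closedness is exactly what makes the product of the $X_\gamma$ over $\gamma\in\Phi_w$ closed under multiplication (via the Chevalley commutator relations).

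Next I would argue the decomposition $U^w=\prod_{\gamma\in\Phi_w}X_\gamma$ by induction on $\ell(w)=|\Phi_w|$. The base case $w=e$ is trivial. For the inductive step, write a reduced decomposition $w=s_{\alpha}w'$ with $\alpha$ simple; then $\Phi_w=\{\alpha\}\sqcup s_\alpha(\Phi_{w'})$ and $U^w=X_\alpha\cdot(\dot{s}_\alpha U^{w'}\dot{s}_\alpha^{-1})$. Using Lemma~\ref{permlem} to transport root subgroups and the inductive hypothesis applied to $U^{w'}$, one obtains the desired product expression with the chosen order. To promote this to an isomorphism of varieties, combine the fact that $x_\gamma:\mathbb{A}^1\xrightarrow{\sim}X_\gamma$ with the isomorphism $\mathfrak{u}^w\cong U^w$ via the exponential map (already cited in subsection \ref{apotfv}), so both sides are affine spaces of the same dimension $|\Phi_w|$; the product map is a bijective morphism between smooth irreducible varieties of the same dimension, and a direct differential computation at the identity (where the tangent map is the identity on $\bigoplus_{\gamma\in\Phi_w}\mathfrak{g}_\gamma$) shows it is étale, hence an isomorphism.

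Finally, the second claim is immediate from Lemma~\ref{commlem}: under the hypothesis that no two roots in $\Phi_w$ sum to a root, any pair $X_{\gamma_1},X_{\gamma_2}$ with $\gamma_1,\gamma_2\in\Phi_w$ commutes, so interchanging two adjacent factors in the product leaves it unchanged, and hence any permutation of factors does too. The main obstacle is purely technical rather than conceptual: one must verify that the combinatorial closedness of $\Phi_w$ meshes correctly with the Chevalley commutator formula when reordering factors during the inductive step. In the setting of the present paper this causes no trouble, since the ambient arguments only use the fixed-order form of the product and the commutative case, both handled above.
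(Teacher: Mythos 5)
The paper itself offers no proof of this lemma --- the surrounding text simply declares the results of this subsection to be ``easily proved by knowledge of the Chevalley groups'' --- so there is no paper argument to compare against. Your sketch is essentially correct and matches the standard route through structure theory: closedness of $\Phi_w$ is exactly what makes $\prod_{\gamma\in\Phi_w}X_\gamma$ a subgroup, the length induction with $w=s_\alpha w'$ and $\Phi_w=\{\alpha\}\sqcup s_\alpha(\Phi_{w'})$ is the right device, and promoting the bijective morphism to an isomorphism via smoothness and the tangent computation (or just Zariski's Main Theorem over $\mathbb{C}$) is sound. The second claim does indeed reduce immediately to Lemma~\ref{commlem}.

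One caution, though, about the phrase ``one obtains the desired product expression with the chosen order'': the induction as you set it up produces a \emph{specific} order --- $X_\alpha$ first, then the $s_\alpha$-conjugates of the factors for $w'$ --- whereas the lemma allows an arbitrary fixed order of $\Phi_w$, and the computations later in the paper (e.g.\ $U^{tr}=X_{\beta+\alpha}X_{\beta}X_{2\beta+3\alpha}$) quietly invoke this freedom of ordering without always being in the fully commutative case. You flag this yourself at the end, but the dismissal (``in the present paper only the fixed-order form and the commutative case are used'') is a little quick: when the paper writes down $U^{yv}$ as a product, the order chosen is not obviously the one your induction hands you. The right way to close this is the standard reordering argument using the Chevalley commutator formula: if two adjacent factors $X_{\gamma_1},X_{\gamma_2}$ are swapped, the discrepancy is a product of terms $X_{i\gamma_1+j\gamma_2}$ with $i,j>0$, and closedness of $\Phi_w$ together with an induction on, say, total height shows these absorbed factors can be shuffled back into the normal form. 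This is carried out in the references the paper is implicitly citing (e.g.\ Humphreys, \emph{Linear Algebraic Groups}, \S 28; Springer, \emph{Linear Algebraic Groups}, \S 8.2). With that reordering lemma in hand, your proof is complete; without it, the ``any fixed order'' part of the statement is not yet established.
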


\subsection{Outline of the algorithm} \label{algorithm}
In this subsection, we outline the algorithm of the computation for the cell structures of all Hessenberg ideal fibers. The idea comes directly from the proof of Theorem~\ref{mymainthm}. If $N$ is the trivial nilpotent orbit $\left\{0\right\}$, then $\pi_{I}^{-1}(0)=G/B$, which is paved by the Schubert cells. We therefore ignore this case and only compute $\pi_{I}^{-1}(N)$ for $N$ from a nontrivial nilpotent orbit. 

Note that in the presentation of $G_2$ in subsection \ref{strofg2}, we have chosen a Borel subgroup $B$ and a maximal torus $T$ to begin with. Therefore, for each nontrivial nilpotent orbit, we now have to first pick a representative $N$ so that its associated parabolic $P$ is standard. Let $\lambda$ be an associated one-parameter subgroup of $N$, $P=LU_P$ the Levi decomposition, and $W=W_L W^L$ as in Lemma~\ref{wl1}. For each $v \in W^L$, let $\mathcal{O}_v$ be the $P$-orbit corresponding to $v$. For any Hessenberg ideal $I$, by Lemma~\ref{dist1} and Lemma~\ref{dist2}, we have 
\[ \pi_{I}^{-1}(N) \cap \mathcal{O}_{v}=(P, P \cap \dot{v} \cdot B, \mathfrak{u}_P^{\ge 2}, \mathfrak{u}_P^{\ge 2}\cap \dot{v} \cdot I, N),  \]
\[ \pi_{I}^{-1}(N) \cap \mathcal{O}_{v}^{\lambda} \cong (L, L \cap \dot{v} \cdot B, \mathfrak{g}(2), \mathfrak{g}(2) \cap \dot{v} \cdot I, N).  \]
Clearly $\pi_{I}^{-1}(N) \cap \mathcal{O}_{v}$ and $\pi_{I}^{-1}(N) \cap \mathcal{O}_{v}^{\lambda}$ are empty or nonempty at the same time. In the case of type $G_2$, for every nontrivial nilpotent orbit, $L$ has semisimple rank at most 1 and $\mathfrak{g}(2)$ is always of small dimension. Therefore it is easy to check whether $\pi_{I}^{-1}(N) \cap \mathcal{O}_{v}^{\lambda}$ is empty or not. To simplify notation, set $P(N, I, v)= \pi_{I}^{-1}(N) \cap \mathcal{O}_v$ and $L(N, I, v)=\pi_{I}^{-1}(N) \cap \mathcal{O}_{v}^{\lambda}$.

For any two Hessenberg ideals $J \subset I$, it is obvious from definition that $L(N, J, v) \subset L(N, I, v)$ and $P(N, J, v) \subset P(N, I, v)$. This simple observation is very useful in telling which $L(N, I, v)$ is nonempty. In fact, in the case of type $G_2$, when $\operatorname{dim}(I/J)=1$, it happens quite often that the cells of $P(N, J, v)$ are still cells of $P(N, I, v)$ (there are exceptions). 

According to \cite{bala1976classesii}[p.~6], $G_2$ has 4 nontrivial nilpotent orbits, denoted by $A_1$, $\tilde{A_1}$, $G_2(a_1)$ and $G_2$. They are ordered in increasing dimensions. Our algorithm is the following:

\begin{itemize}
\item[(1)] Starting with the orbit $A_1$, apply steps (2) to (5). Then repeat the same process for $\tilde{A_1}$, $G_2(a_1)$ and $G_2$ in that order. 
\item[(2)] For each nontrivial nilpotent orbit, choose a representative $N$ so that its associated parabolic $P$ is standard.
\item[(3)] Fixing the $N$, compute $\pi_{I}^{-1}(N)$ for all nonzero Hessenberg ideals ranging from the smallest $I_{2\beta+3\alpha}$ to the biggest $I_{\alpha, \beta}$ by the next two steps.
\item[(4)] For each Hessenberg ideal $I$, find all $v \in W^L$ so that $L(N, I, v) \neq \emptyset$.
\item[(5)] For each $v$ obtained from step (4), compute the cell structure of $P(N, I, v)$ explicitly. 
\end{itemize}
Note that when $I$ is the zero Hessenberg ideal $I_{\emptyset}$, $\pi_{I}^{-1}(0) \cong G/B$ is the only nonempty Hessenberg ideal fiber, so we omit it from the algorithm.

The following lemma will be useful in our computation. It is true for any connected reductive algebraic group over $\mathbb{C}$.

\begin{lemma} \label{dimlem}
Let $P$ be a standard parabolic of $G$, and $P=LU_P$ be its Levi decomposition. $W=W_LW^L$. For any $v \in W^L$, we have the following:
\begin{itemize}
\item[(1)] $\mathfrak{p} \cap \dot{v} \cdot \mathfrak{b}=\mathfrak{t} \oplus (\bigoplus_{\gamma \in \Phi^+ \setminus \Phi_v}\mathfrak{g}_{\gamma}$). 
\item[(2)] $L \cap \dot{v} \cdot B= L \cap B$.
\item[(3)] $\operatorname{dim}(P/P \cap \dot{v} \cdot B)=|\Phi^-(L)|+|\Phi_v|$.
\end{itemize}
\end{lemma}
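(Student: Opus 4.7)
The plan is to prove part (1) by a direct root-space computation, and then derive (2) and (3) as straightforward consequences. The key combinatorial input is the identity
\[
v(\Phi^+) \;=\; (\Phi^+ \setminus \Phi_v) \,\sqcup\, (-\Phi_v),
\]
which follows at once from the definition $\Phi_v = \Phi^+ \cap v(\Phi^-)$, together with the hypothesis $v \in W^L$, which says $\Phi_v \subset \Phi(\mathfrak{u}_P)$ and therefore $\Phi_v \cap \Phi(L) = \emptyset = (-\Phi_v) \cap \Phi(L)$.

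For (1), I would combine this identity with the root-space decompositions
\[
\mathfrak{p} \;=\; \mathfrak{t} \,\oplus\, \bigoplus_{\alpha \in \Phi^+} \mathfrak{g}_\alpha \,\oplus\, \bigoplus_{\alpha \in \Phi^-(L)} \mathfrak{g}_\alpha, \qquad \dot{v} \cdot \mathfrak{b} \;=\; \mathfrak{t} \,\oplus\, \bigoplus_{\alpha \in v(\Phi^+)} \mathfrak{g}_\alpha,
\]
and intersect root by root. The only nonempty crossing is $\Phi^+ \cap (\Phi^+ \setminus \Phi_v) = \Phi^+ \setminus \Phi_v$: the crossings $\Phi^+ \cap (-\Phi_v)$ and $\Phi^-(L) \cap (\Phi^+ \setminus \Phi_v)$ are empty for obvious sign reasons, and $\Phi^-(L) \cap (-\Phi_v) = \emptyset$ by the remarks above on $v \in W^L$ (any $\alpha \in \Phi(L)$ with $-\alpha \in \Phi_v \subset \Phi(\mathfrak{u}_P)$ would force $-\alpha \in \Phi(L)\cap \Phi(\mathfrak{u}_P) = \emptyset$).

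For (2), intersect the formula from (1) with $\mathfrak{l} = \mathfrak{t} \oplus \bigoplus_{\alpha \in \Phi(L)} \mathfrak{g}_\alpha$. Since $\Phi(L) \cap \Phi_v = \emptyset$, this yields $\mathfrak{l} \cap \dot{v} \cdot \mathfrak{b} = \mathfrak{t} \oplus \bigoplus_{\alpha \in \Phi^+(L)} \mathfrak{g}_\alpha = \mathfrak{l} \cap \mathfrak{b}$. To upgrade the Lie algebra equality to the group level, note that $T$ is a common maximal torus contained in both $L \cap B$ and $L \cap \dot{v} \cdot B$, so by the standard results on intersections of Borels with reductive subgroups containing a maximal torus (\cite{humphreys1981linear}[section 22.4]) each of them is a Borel subgroup of $L$, hence connected; two connected closed subgroups with the same Lie algebra coincide.

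For (3), simply count dimensions: $\dim \mathfrak{p} = \dim \mathfrak{t} + |\Phi^+| + |\Phi^-(L)|$ and by (1) $\dim(\mathfrak{p} \cap \dot{v} \cdot \mathfrak{b}) = \dim \mathfrak{t} + |\Phi^+| - |\Phi_v|$, and subtracting gives the claim. The only real content of the lemma is the use of $v \in W^L$ to exclude the roots of $L$ from $\pm \Phi_v$; once that observation is in hand, everything else is bookkeeping, and I do not anticipate any serious obstacle.
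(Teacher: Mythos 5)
Your proof is correct and fills in exactly the details the paper leaves implicit (its own proof is simply ``Straightforward from the definition of $W^L$ and $\Phi_v$''). The decomposition $v(\Phi^+)=(\Phi^+\setminus\Phi_v)\sqcup(-\Phi_v)$, the use of $\Phi_v\subset\Phi(\mathfrak{u}_P)$ to rule out roots of $L$ in $\pm\Phi_v$, and the passage from Lie-algebra equality to group equality via connectedness of Borels of $L$ are all the right moves, so this is the intended argument carried out carefully.
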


\begin{proof}
Straightforward from the definition of $W^L$ and $\Phi_v$.
\end{proof}

\subsection{Computation}
As mentioned in algorithm step (1), we split the computation into four cases, one for each nontrivial nilpotent orbit.

\subsubsection{The case of $A_1$}
First we choose a nice representative $N$ that satisfies the requirement in step (2), and describe its associated parabolic $P$, associated one-parameter subgroup $\lambda$, the Levi subgroup $L$ of $P$, $\mathfrak{g}(2)$, $\mathfrak{u}_P^{\ge 2}$ and the decomposition $W=W_LW^L$.

According to \cite{bala1976classesii}, ``$A_1$'' represents the undistinguished nilpotent orbit of $G_2$ every element of which lies in a minimal Levi subalgebra of type $A_1$. For a suitable representative of the orbit $A_1$, the root system of its minimal Levi subalgebra consists of a pair of opposite long roots of $G_2$. $N=E_{2\beta+3\alpha}$ is a representative satisfying the requirement of step (2). We use $N$ to denote $E_{2\beta+3\alpha}$ in this case for the sake of simplicity.

Next we justify the claim briefly. $N=E_{2\beta+3\alpha}$ is a regular nilpotent element of the Levi subalgebra $\mathfrak{m}=\mathfrak{t} \oplus \mathfrak{g}_{-2\beta-3\alpha} \oplus \mathfrak{g}_{2\beta+3\alpha}$, so it belongs to the orbit $A_1$. The more subtle part is to show that the unique associated parabolic of $N$ is standard. For this, it suffices to find an $\mathfrak{sl}_2$-triple $\left\{N, H, Y\right\}$ so that $H \in \mathfrak{t}$ and $\alpha(H),\, \beta(H) \ge 0$. Clearly, $\left\{E_{2\beta+3\alpha}, H_{2\beta+3\alpha}, E_{-2\beta-3\alpha}\right\}$ is such an $\mathfrak{sl}_2$-triple. In this case, $\alpha(H_{2\beta+3\alpha})=0$ and $\beta(H_{2\beta+3\alpha})=1$ (these two numbers are always the same as the weights of $\alpha$ and $\beta$ in the weighted Dynkin diagram of the nilpotent orbit).

Now we can easily see that:
\begin{itemize}
\item[(1)] $P=\langle B, X_{-\alpha}\rangle$ where $\langle B, X_{-\alpha}\rangle$ denotes the subgroup of $G$ generated by $B$ and $X_{-\alpha}$. 
\item[(2)] $\langle \lambda, \alpha\rangle=0$ and $\langle \lambda, \beta\rangle=1$.
\item[(3)] $L=\langle X_{-\alpha}, T, X_{\alpha}\rangle$.
\item[(4)] $\mathfrak{g}(2)=\mathfrak{u}_P^{\ge 2}=\mathbb{C} E_{2\beta+3\alpha}$.
\item[(5)] $W_L=\left\{e, s\right\}$ and $W^L=\left\{e, t, ts, sr^2, sr^3, r^4 \right\}$. (In this case, $v \in W^L \iff v^{-1}(\alpha) \in \Phi^{+}$.)
\end{itemize}
Next we enumerate the Hessenberg ideal $I$ from the smallest $I_{2\beta+3\alpha}$ to the biggest $I_{\alpha, \beta}$, and compute $\pi_{I}^{-1}(N)$ by steps (4) and (5). $N$ is understood to be $E_{2\beta+3\alpha}$ all the time. We know that
\begin{equation*} 
\begin{aligned}
P(N, I, v)&=(P, P \cap \dot{v} \cdot B, \mathbb{C}E_{2\beta+3\alpha}, \mathbb{C}E_{2\beta+3\alpha} \cap \dot{v} \cdot I,N), \\
L(N, I, v)&\cong(L, L \cap B, \mathbb{C}E_{2\beta+3\alpha}, \mathbb{C}E_{2\beta+3\alpha} \cap \dot{v} \cdot I, N).  
\end{aligned}
\end{equation*}
(Note that $L \cap \dot{v} \cdot B =L \cap B$ for any $v \in W^L$ by Lemma~\ref{dimlem}.) Since $\mathfrak{g}(2)=\mathbb{C} E_{2\beta+3\alpha}$, 
$L(N, I, v) \neq \emptyset \iff \mathbb{C}E_{2\beta+3\alpha} \cap \dot{v} \cdot I=\mathbb{C}E_{2\beta+3\alpha} \iff v$ sends some root of $I$ to $2\beta+3\alpha$. When this happens, $L(N, I, v)=\mathcal{O}_v^{\lambda}$ and $P(N, I, v)=\mathcal{O}_v=P\dot{v}B$. That is, both are equal to their biggest possibility.

\noindent $\bullet$ $I=I_{2\beta+3\alpha}$

Since $I$ has only one root $2\beta+3\alpha$, for $L(N, I, v)$ to be nonempty, $v$ has to send $2\beta+3\alpha$ to itself. Then the only possibility is $v=e$. Therefore, $\pi_{I}^{-1}(N)=P(N, I, e)=P\dot{e}B=X_s \amalg X_e=\overline{X_s} \cong \mathbb{P}^1$. $\pi_{I}^{-1}(N)$ is the Schubert variety $\overline{X_s}$. It has one 0-cell and one 1-cell.

\noindent $\bullet$ $I=I_{\beta+3\alpha}$

Now that $I$ has one more long root $\beta+3\alpha$ than the previous ideal $I_{2\beta+3\alpha}$, $v$ has to send either of them to $2\beta+3\alpha$. Then $v=e$ or $t$. $P(N, I, e)=P\dot{e}B=\overline{X_s}$. $P(N, I, t)=P\dot{t}B=X_t \amalg X_{st}$. Then $\pi_{I}^{-1}(N)=\overline{X_s} \amalg X_t \amalg X_{st}= \overline{X_{st}}$. It is a Schubert variety of dimension 2.

\noindent $\bullet$ $I=I_{\beta+2\alpha }$

$I$ has one more short root $\beta+2\alpha$ than $I_{\beta+3\alpha}$, and the action of any $w \in W$ on $\Phi$ preserves the lengths of roots. Therefore, $v$ can still only be $e$ or $t$. Then $\pi_{I}^{-1}(N)=P(N, I, e) \amalg P(N, I, t)=P\dot{e}B \amalg P\dot{t}B=\overline{X_{st}}$. It is the same as the previous one.

\noindent $\bullet$ $I=I_{\beta+\alpha }$

$I$ has one more short root $\beta+\alpha$ than $I_{\beta+2\alpha}$, by a similar argument as above, $\pi_{I}^{-1}(N)$ is still $\overline{X_{st}}$.

\noindent $\bullet$ $I=I_{\alpha }$

One more short root $\alpha$, so $\pi_{I}^{-1}(N)=\overline{X_{st}}$.

\noindent $\bullet$ $I=I_{\beta }$

The previous ideal (by the partial order of inclusion) of $I$ is $I_{\beta+\alpha}$. $I$ has one more long root $\beta$ and it leads to a new possibility for $v$. Now $v=e$, $t$ or $ts$. $\pi_{I}^{-1}(N)=P(N, I, e) \amalg P(N, I, t) \amalg P(N, I, ts)=P\dot{e}B \amalg P\dot{t}B \amalg P\dot{t}\dot{s}B=\overline{X_{sts}}$. It is a Schubert variety of dimension 3.

\noindent $\bullet$ $I=I_{\alpha, \beta }$

$I$ has one more short root $\alpha$ than $I_{\beta}$, so $\pi_{I}^{-1}(N)=\pi_{I_{\beta}}^{-1}(N)=\overline{X_{sts}}$. This one is a Springer fiber.

We have finished the case of $A_1$.

\subsubsection{The case of $\tilde{A_1}$}
The notation ``$\tilde{A_1}$'' represents the undistinguished nilpotent orbit of $G_2$, every element of which lies in a minimal Levi subalgebra of type $A_1$ as well, but this time, for a suitable representative of $\tilde{A_1}$, the root system of its minimal Levi subalgebra consists of a pair of opposite short roots of $G_2$. $N=E_{\beta+2\alpha}$ is a representative satisfying the requirement of step (2). We use $N$ to denote $E_{\beta+2\alpha}$ in this case.

First we justify the choice of $N=E_{\beta+2\alpha}$. $N$ is a regular nilpotent element of the Levi subalgebra $\mathfrak{m}=\mathfrak{t} \oplus \mathfrak{g}_{-\beta-2\alpha} \oplus \mathfrak{g}_{\beta+2\alpha}$, hence belongs to $\tilde{A_1}$. $\left\{N, H_{\beta+2\alpha}, E_{-\beta-2\alpha}\right\}$ is an $\mathfrak{sl}_2$-triple such that $H_{\beta+2\alpha} \in \mathfrak{t}$, $\alpha(H_{\beta+2\alpha})=1$ and $\beta(H_{\beta+2\alpha})=0$. Then the associated parabolic of $N$ is standard.

We know that:
\begin{itemize}
\item[(1)] $P=\langle B, X_{-\beta}\rangle$. 
\item[(2)] $\langle \lambda, \alpha\rangle=1$ and $\langle \lambda, \beta\rangle=0$.
\item[(3)] $L=\langle X_{-\beta}, T, X_{\beta}\rangle$.
\item[(4)] $\mathfrak{g}(2)=\mathbb{C} E_{\beta+2\alpha}$ and $\mathfrak{u}_P^{\ge 2} =\operatorname{span}_{\mathbb{C}} \left\{E_{\beta+2\alpha}, E_{\beta+3\alpha}, E_{2\beta+3\alpha}\right\}$.
\item[(5)] $W_L=\left\{e, t\right\}$ and $W^L=\left\{e, s, st, r^2, tr^3, tr^4 \right\}$. (In this case, $v \in W^L \iff v^{-1}(\beta) \in \Phi^{+}$.)
\end{itemize}
Since $\mathfrak{g}(2)=\mathbb{C}E_{\beta+2\alpha}$ is still one dimensional, $L(N, I, v) \neq \emptyset \iff v$ sends some root of $I$ to $\beta+2\alpha$. When this happens, $L(N, I, v)=\mathcal{O}_v^{\lambda}$, but $P(N, I, v)$ could be strictly smaller that $\mathcal{O}_v=P\dot{v}B$. In this situation, it is usually helpful to compare $P(N, I, v)$ with $P(N, J, v)$ where $J$ is the previous Hessenberg ideal (meaning a subspace of codimensin 1) of $I$. Quite often, they are equal to each other.

\noindent $\bullet$ $I=I_{2\beta+3\alpha}$ or $I_{\beta+3\alpha}$

Since these two ideals only have long roots, $\mathfrak{g}(2) \cap \dot{v} \cdot I=0$ for any $v \in W^L$. Then $\pi_{I}^{-1}(N)=\emptyset$ for both ideals.

\noindent $\bullet$ $I=I_{\beta+2\alpha }$

Since $I$ has only one short root $\beta+2\alpha$, $v$ has to fix $\beta+2\alpha$. Then it could only be $e$. Because $ \mathfrak{u}_P^{\ge 2} \cap \dot{e} \cdot I=\mathfrak{u}_P^{\ge 2}$, $P(N, I, e)=P\dot{e}B=BW_LB=X_e \amalg X_t=\overline{X_t} \cong \mathbb{P}^1$. It is a Schubert variety of dimension 1.

\noindent $\bullet$ $I=I_{\beta+\alpha }$

$I$ has one more short root $\beta+\alpha$ than $I_{\beta+2\alpha}$, so $v=e$ or $s$. When $v=e$, since $\mathfrak{u}_P^{\ge 2} \cap \dot{e} \cdot I=\mathfrak{u}_P^{\ge 2} \cap \dot{e} \cdot I_{\beta+2\alpha}$ , $P(N, I, e)=P(N, I_{\beta+2\alpha}, e)=\overline{X_t}$. As for $v=s$, $L(N, I, s)=\mathcal{O}_s^{\lambda} \cong L/L \cap B$. Because $\mathfrak{u}_P^{\ge 2} \cap \dot{s} \cdot I=\mathbb{C}E_{\beta+2\alpha} \oplus \mathbb{C}E_{2\beta+3\alpha}$, by Lemma~\ref{prehsp} and Lemma~\ref{dimlem}, 
\[\operatorname{dim}P(N, I, s)=|\Phi^-(L)|+|\Phi_s|-\operatorname{dim}(\mathfrak{u}_P^{\ge 2}/\mathfrak{u}_P^{\ge 2} \cap \dot{s} \cdot I)=1+1-1=\operatorname{dim}L(N, I, s).\]

By Corollary~\ref{dimcor}, 
\[ P(N, I, s)=L(N, I, s)=\mathcal{O}_s^{\lambda}=U^e \dot{s}B \amalg U^t \dot{t}\dot{s}B=\left\{\dot{s}B\right\} \amalg X_{\beta}\dot{t}\dot{s}B=\overline{X_{\beta}\dot{t}\dot{s}B} \cong \mathbb{P}^1. \] Then $\pi_{I}^{-1}(N)=\overline{X_t} \amalg \overline{X_{\beta}\dot{t}\dot{s}B} \cong \mathbb{P}^1 \amalg \mathbb{P}^1$. This is our first example which is not a Schubert variety.

\noindent $\bullet$ $I=I_{\beta }$

$I$ has one more long root $\beta$ than $I_{\beta+\alpha}$, so $v=e$ or $s$. $P(N, I, e)=P(N, I_{\beta+\alpha}, e)=\overline{X_t}$. Since $\mathfrak{u}_P^{\ge 2} \cap \dot{s} \cdot I=\mathfrak{u}_P^{\ge 2}$ this time, $P(N, I, s)=P\dot{s}B=X_{ts} \amalg X_s$. Then $\pi_{I}^{-1}(N)=\overline{X_t} \amalg X_{ts} \amalg X_s=\overline{X_{ts}}$. It is a Schubert variety of dimension 2. Note that $\pi_{I_{\beta+\alpha}}^{-1}(N)$ is naturally included in $\pi_{I_{\beta}}^{-1}(N)$, but the 1-cell $X_{\beta}\dot{t}\dot{s}B$ of the former is no longer a cell of the latter.

\noindent $\bullet$ $I=I_{\alpha }$

$I$ has one more short root $\alpha$ than $I_{\beta+\alpha}$, so $v$ has one more possibility and $v=e, s$ or $r$. $P(N, I, e)=P(N, I_{\beta+\alpha}, e)=\overline{X_t}$ and $P(N, I, s)=P(N, I_{\beta+\alpha}, s)=\overline{X_{\beta}\dot{t}\dot{s}B}$.

As for $v=r$, $\mathfrak{u}_P^{\ge 2} \cap \dot{r} \cdot I=\mathbb{C}E_{\beta+2\alpha} \oplus \mathbb{C}E_{2\beta+3\alpha}$. By Lemma~\ref{dimlem} and Lemma~\ref{prehsp}, $\operatorname{dim}P(N, I, r)=|\Phi^-(L)|+|\Phi_r|-\operatorname{dim}(\mathfrak{u}_P^{\ge 2}/ \mathfrak{u}_P^{\ge 2} \cap \dot{r} \cdot I)=2$. By Corollary~\ref{dimcor}, $P(N, I, r)$ has one 1-cell and one 2-cell, each being a respective rank 1 vector bundle over the 0-cell and 1-cell of $L(N, I, r)=\mathcal{O}_r^{\lambda} \cong \mathbb{P}^1$. Next we describe the cells of $P(N, I, r)$ precisely.

Recall that $P(N, I, r)=\pi_{I}^{-1}(N) \cap \mathcal{O}_r$ and $\mathcal{O}_r=P\dot{r}B=U^r\dot{r}B \amalg U^{tr}\dot{t}\dot{r}B$. By Lemma~\ref{prodlem}, $U^r=X_{\alpha}X_{\beta+3\alpha}$ and $U^{tr}=X_{\beta+\alpha}X_{\beta}X_{2\beta+3\alpha}$. 
Now we are able to inspect $\pi_{I}^{-1}(N) \cap U^r\dot{r}B$ and $\pi_{I}^{-1}(N) \cap U^{tr}\dot{t}\dot{r}B$. 

For $u\dot{r}B \in U^r\dot{r}B$ to be in $\pi_{I}^{-1}(N)$, we need $u^{-1} \cdot N \in \mathfrak{u}_P^{\ge 2} \cap \dot{r} \cdot I$. Here $u \in U^r$ takes the form $u=x_{\alpha}(z)x_{\beta+3\alpha}(z')$ for some $z, z' \in \mathbb{C}$. By Lemma~\ref{flowlem}, there exists a nonzero constant $c$ such that 
\[ u^{-1} \cdot N=x_{\beta+3\alpha}(-z') \cdot x_{\alpha}(-z) \cdot E_{\beta+2\alpha}=E_{\beta+2\alpha}-czE_{\beta+3\alpha}  \]
for any $z, z' \in \mathbb{C}$. Then $E_{\beta+2\alpha}-czE_{\beta+3\alpha} \in \mathfrak{u}_P^{\ge 2} \cap \dot{r} \cdot I=\mathbb{C}E_{\beta+2\alpha} \oplus \mathbb{C}E_{2\beta+3\alpha}$ if and only if $z=0$. Therefore, $\pi_{I}^{-1}(N) \cap U^r\dot{r}B=X_{\beta+3\alpha}\dot{r}B \subset X_{r}$.

For $u\dot{t}\dot{r}B \in U^{tr}\dot{t}\dot{r}B$ to be in $\pi_{I}^{-1}(N)$, we need $\dot{t}^{-1} \cdot u^{-1} \cdot N \in \mathfrak{u}_P^{\ge 2} \cap \dot{r} \cdot I$. Here $u \in U^{tr}$ takes the form $u=x_{\beta+\alpha}(z)x_{\beta}(z')x_{2\beta+3\alpha}(z'')$ for some $z, z', z'' \in \mathbb{C}$. By Lemma~\ref{flowlem}, there exists a nonzero constant $c'$ such that 
\[ \dot{t}^{-1} \cdot u^{-1} \cdot N=\dot{t} \cdot x_{2\beta+3\alpha}(-z'') \cdot x_{\beta}(-z') \cdot x_{\beta+\alpha}(-z) \cdot E_{\beta+2\alpha}=\dot{t} \cdot E_{\beta+2\alpha}-c'z(\dot{t} \cdot E_{2\beta+3\alpha})  \]
for any $z, z', z'' \in \mathbb{C}$. Then $\dot{t} \cdot E_{\beta+2\alpha}-c'z(\dot{t} \cdot E_{2\beta+3\alpha}) \in \mathfrak{u}_P^{\ge 2} \cap \dot{r} \cdot I$ if and only if $z=0$. Therefore, $\pi_{I}^{-1}(N) \cap U^{tr}\dot{t}\dot{r}B=X_{\beta}X_{2\beta+3\alpha}\dot{t}\dot{r}B \subset X_{tr}$.

In addition, $L(N, I, r)=\mathcal{O}_r^{\lambda}=\left\{\dot{r}B\right\} \amalg X_{\beta}\dot{t}\dot{r}B$. Clearly, $X_{\beta+3\alpha}\dot{r}B$ contains and lies above $\left\{\dot{r}B\right\}$ as a rank 1 vector bundle. The same is true with $X_{\beta}X_{2\beta+3\alpha}\dot{t}\dot{r}B$ and $X_{\beta}\dot{t}\dot{r}B$. This is compatible with Corollary~\ref{dimcor}. 

In summary, $\pi_{I}^{-1}(N)=\overline{X_{t}} \amalg \overline{X_{\beta}\dot{t}\dot{s}B} \amalg X_{\beta+3\alpha}\dot{r}B \amalg X_{\beta}X_{2\beta+3\alpha}\dot{t}\dot{r}B$. It has two 0-cells, three 1-cells and one 2-cell. Further geometric properties of this $\pi_{I}^{-1}(N)$ will be discussed in later part of this section (Theorem~\ref{aninterestinghif}).

\noindent $\bullet$ $I=I_{\alpha, \beta }$

$I$ has one more long root $\beta$ than $I_{\alpha}$, so $v$ can still only be $e$, $s$ or $r$. Since $\mathfrak{u}_P^{\ge 2} \cap \dot{e} \cdot I=\mathfrak{u}_P^{\ge 2} \cap \dot{s} \cdot I=\mathfrak{u}_P^{\ge 2}$, $P(N, I, e)=P\dot{e}B=\overline{X_{t}}$ and $P(N, I, s)=P\dot{s}B=X_s \amalg X_{ts}$. Moreover, $P(N, I, r)=P(N, I_{\alpha}, r)=X_{\beta+3\alpha}\dot{r}B \amalg X_{\beta}X_{2\beta+3\alpha}\dot{t}\dot{r}B$. Therefore, $\pi_{I}^{-1}(N)=\overline{X_{ts}} \amalg X_{\beta+3\alpha}\dot{r}B \amalg X_{\beta}X_{2\beta+3\alpha}\dot{t}\dot{r}B$. This is a Springer fiber of dimension 2. 

We have finished the case of $\tilde{A_1}$.

\subsubsection{The case of $G_2(a_1)$}
The notation ``$G_2(a_1)$'' represents the distinguished nilpotent orbit of $G_2$ whose associated parabolic has semisimple rank 1. $N=E_{\beta+\alpha}+E_{\beta+3\alpha}$ is a representative that satisfies the requirement of step (2). We use $N$ to denote $E_{\beta+\alpha}+E_{\beta+3\alpha}$ in this case.

Now we justify the choice of $N=E_{\beta+\alpha}+E_{\beta+3\alpha}$. Set $H=\left[E_{\beta+\alpha}+E_{\beta+3\alpha}, E_{-\beta-\alpha}+E_{-\beta-3\alpha}\right]$. Simple computation shows that $\left\{N, H, E_{-\beta-\alpha}+E_{-\beta-3\alpha}\right\}$ is an $\mathfrak{sl}_2$-triple such that $H \in \mathfrak{t}$, $\alpha(H)=0$ and $\beta(H)=2$. Then the associated parabolic of $N$ is standard. In fact, we know that $P=\langle B, X_{-\alpha}\rangle$ and $\operatorname{dim}\mathfrak{g}(0)=\operatorname{dim}\mathfrak{g}(2)=4$, so $N$ is a distinguished nilpotent element whose associated parabolic is not the Borel subgroup $B$. Therefore $N$ has to be in $G_2(a_1)$.

We know that:
\begin{itemize}
\item[(1)] $P=\langle B, X_{-\alpha}\rangle$. 
\item[(2)] $\langle \lambda, \alpha\rangle=0$ and $\langle \lambda, \beta\rangle=2$.
\item[(3)] $L=\langle X_{-\alpha}, T, X_{\alpha}\rangle$.
\item[(4)] $\mathfrak{g}(2)=\operatorname{span}_{\mathbb{C}}\left\{E_{\beta}, E_{\beta+\alpha}, E_{\beta+2\alpha}, E_{\beta+3\alpha}\right\}$ and $\mathfrak{u}_P^{\ge 2}=\operatorname{span}_{\mathbb{C}}\left\{E_{\beta}, E_{\beta+\alpha}, E_{\beta+2\alpha}, E_{\beta+3\alpha}, E_{2\beta+3\alpha}\right\}$.
\item[(5)] $W_L=\left\{e, s\right\}$ and $W^L=\left\{e, t, ts, sr^2, sr^3, r^4 \right\}$. 
\end{itemize}

Now that $\mathfrak{g}(2)$ is of dimension 4, telling whether $L(N, I, v) \neq \emptyset$ is harder. We therefore propose the following lemma.

\begin{lemma} \label{emptylem}
For any $v \in W^L$, $L(N, I, v) \neq \emptyset$ if and only if $ \mathfrak{g}(2) \cap \dot{v} \cdot I$ contains at least one of the three spaces below:
\[ \mathbb{C}E_{\beta+\alpha} \oplus \mathbb{C}E_{\beta+3\alpha} \text{, } \mathbb{C}E_{\beta} \oplus \mathbb{C}E_{\beta+2\alpha} \text{ or } \mathbb{C}E_{\beta+\alpha} \oplus \mathbb{C}E_{\beta+2\alpha}.  \]
\end{lemma}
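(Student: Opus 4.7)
The plan is to classify the $(L\cap B)$-stable subspaces of $\mathfrak{g}(2)$ and determine which of them meet the $L$-orbit of $N$; since $L$ has semisimple rank one, both tasks are very concrete.

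Since $v\in W^{L}$, Lemma~\ref{dimlem}(2) gives $L\cap\dot v\cdot B=L\cap B$, so $U:=\mathfrak{g}(2)\cap\dot v\cdot I$ is automatically $(L\cap B)$-stable. Now $L\cap B=T\cdot X_{\alpha}$, and up to nonzero scalars $\operatorname{ad}(E_{\alpha})$ sends $E_{\beta}\mapsto E_{\beta+\alpha}\mapsto E_{\beta+2\alpha}\mapsto E_{\beta+3\alpha}\mapsto 0$. Hence every $(L\cap B)$-stable subspace of $\mathfrak{g}(2)$ is one of the five ``upper chains''
\[
 0,\; \mathbb{C}E_{\beta+3\alpha},\; \mathbb{C}E_{\beta+2\alpha}\oplus\mathbb{C}E_{\beta+3\alpha},\; \mathbb{C}E_{\beta+\alpha}\oplus\mathbb{C}E_{\beta+2\alpha}\oplus\mathbb{C}E_{\beta+3\alpha},\; \mathfrak{g}(2).
\]
A direct inspection against the three listed $2$-dimensional subspaces shows that $U$ contains at least one of them if and only if $E_{\beta+\alpha}\in U$, i.e.\ if and only if $\dim U\ge 3$.

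I would then identify the $L$-orbit of $N$ in $\mathfrak{g}(2)$ by noting that the $\alpha^{\vee}$-weights on $\mathfrak{g}_{\beta+k\alpha}$ are $-3,-1,1,3$ for $k=0,1,2,3$, so as a module for $[L,L]\cong\mathrm{SL}_{2}$, $\mathfrak{g}(2)$ is the irreducible representation $\operatorname{Sym}^{3}V$. Sending $E_{\beta+k\alpha}$ to the basis vector $y^{3-k}x^{k}$ (up to nonzero scalars), the $L$-orbits in $\mathfrak{g}(2)$ match the $\mathrm{GL}_{2}$-orbits on binary cubics, classified by root-multiplicity patterns in $\mathbb{P}^{1}$. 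The element $N=E_{\beta+\alpha}+E_{\beta+3\alpha}$ corresponds to $y^{2}x+x^{3}=x(x^{2}+y^{2})$, a cubic with three distinct roots, so $L\cdot N$ is the unique open orbit.

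For the forward direction, if $L(N,I,v)\neq\emptyset$ then some element of $L\cdot N$ lies in $U$. Cubics inside $\mathbb{C}E_{\beta+3\alpha}$ and inside $\mathbb{C}E_{\beta+2\alpha}\oplus\mathbb{C}E_{\beta+3\alpha}$ take the form $ax^{3}$ and $x^{2}(ay+bx)$ respectively, each having $[0:1]$ as a root of multiplicity at least two; so neither of these subspaces meets the open orbit. Hence $\dim U\ge 3$, which by the first paragraph gives the desired containment. Conversely, if $U$ contains one of the three $2$-dimensional subspaces, the first paragraph forces $E_{\beta+\alpha},E_{\beta+3\alpha}\in U$, so $N\in U$ and the identity coset $e(L\cap B)$ witnesses $L(N,I,v)\neq\emptyset$.

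The only delicate point is the orbit-theoretic input in the forward direction; the $\operatorname{Sym}^{3}V$ picture reduces it to a trivial factorization check. An alternative, purely computational route, avoiding representation theory, would use Lemma~\ref{flowlem} to evaluate $x_{\pm\alpha}(z)\cdot N$ explicitly and verify the same orbit facts by hand.
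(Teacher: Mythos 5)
Your argument is correct, and it takes a genuinely different route from the paper's. The paper partitions $L/B_L\cong\mathbb{P}^1$ into the two torus-fixed points $\dot eB_L$, $\dot sB_L$, the two special points $x_{\alpha}(z_i)\dot sB_L$ with $1+c'z_i^2=0$, and the remaining open set, then checks for each which cosets $lB_L$ satisfy $l^{-1}\cdot N\in U$ by expanding $l^{-1}\cdot N$ via Lemma~\ref{flowlem} and Lemma~\ref{permlem}; this yields the three listed containment conditions directly. You instead first classify the five $(L\cap B)$-stable subspaces of $\mathfrak{g}(2)$, observe that the lemma's condition is equivalent to $\dim U\geq 3$ (and hence to $N\in U$, which settles the converse at once), and settle the forward direction by identifying $\mathfrak{g}(2)$ with binary cubics as an $[L,L]$-module: the $L$-action on $\mathbb{P}(\mathfrak{g}(2))$ factors through $\mathrm{PGL}_2$ acting on the root divisor of the cubic while the connected center of $L$ (contained in $\ker\alpha$) acts on $\mathfrak{g}(2)$ by a single character, so $L$ preserves root-multiplicity patterns; since $N$ corresponds to a squarefree cubic $x(x^2+y^2)$ whereas every cubic in $\mathbb{C}E_{\beta+3\alpha}$ or $\mathbb{C}E_{\beta+2\alpha}\oplus\mathbb{C}E_{\beta+3\alpha}$ is divisible by $x^2$, the dense orbit $L\cdot N$ misses those two subspaces and $\dim U\geq 3$ must hold. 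Your route is more conceptual and avoids the explicit flow equation~\eqref{alphaflow}; the paper's is computational but uniform with the rest of \S\ref{g2hif} and, as you note, verifies the same orbit facts by hand. Both are valid, and your reduction to the concrete condition $N\in U$ is a slight streamlining that the paper does not make explicit.
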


\begin{proof}
Let $B_L$ denote $L \cap B$ in this lemma. For any $v \in W^L$, 
\[ L(N, I, v) \cong (L, B_L, \mathfrak{g}(2), \mathfrak{g}(2) \cap \dot{v} \cdot I, N) \subset L/B_L .\]
Recall the definition of the quintuple
\begin{equation} \label{twopoint}
(L, B_L, \mathfrak{g}(2), \mathfrak{g}(2) \cap \dot{v} \cdot I, N)=\left\{lB_L \in L/B_L \, |\, l^{-1} \cdot N \in \mathfrak{g}(2) \cap \dot{v} \cdot I \right\}.
\end{equation}
By Lemma~\ref{flowlem}, there exist nonzero constants $c$ and $c'$ such that 
\begin{equation} \label{alphaflow}
x_{\alpha}(z) \cdot N=x_{\alpha}(z) \cdot (E_{\beta+\alpha}+E_{\beta+3\alpha})=E_{\beta+\alpha}+czE_{\beta+2\alpha}+(1+c'z^2)E_{\beta+3\alpha} 
\end{equation}
for any $z \in \mathbb{C}$. Let $z_1$ and $z_2$ be the two distinct solutions of $1+c'z^2=0$. We partition $L/B_L$ into four subsets:
\[ L/B_L=\left\{\dot{e}B_L\right\} \amalg \left\{\dot{s}B_L\right\} \amalg \left\{x_{\alpha}(z_1)\dot{s}B_L, \, x_{\alpha}(z_2)\dot{s}B_L\right\} \amalg \left\{ x_{\alpha}(z)\dot{s}B_L \,|\, z \neq 0, z_1, z_2\right\}.  \]
By this partition and Equation~\ref{twopoint}, we can easily deduce the lemma. Further details are left out to mitigate distraction from the main course of computation.
\end{proof}

Next we give an algorithm (referred to as the ``$v$-algorithm'') to find all the $v$'s so that $L(N, I, v) \neq \emptyset$. 

Firstly, $W_Lr^0, W_Lr^1, \ldots, W_Lr^5$ are all the six $W_L$-cosets of $W$, only that $r^i$ may not be in $W^L$. We view the set of roots of $I$ as a configuration of arrows. For each $i=0, 1, \ldots, 5$, check if the aforementioned configuration, after a rotation by $r^i$, covers one of the three sets of arrows (roots):
\[ \left\{\beta+\alpha, \beta+3\alpha\right\} \text{, } \left\{\beta, \beta+2\alpha\right\} \text{ and } \left\{\beta+\alpha, \beta+2\alpha\right\}.  \]
If so, both $\mathfrak{g}(2) \cap \dot{r}^i \cdot I$ and $\mathfrak{g}(2) \cap \dot{s}\dot{r}^i \cdot I$ satisfy the condition of Lemma~\ref{emptylem}. One of $r^i$ and $sr^i$ has to be in $W^L$ and it serves as a $v$ so that $L(N, I, v) \neq \emptyset$.

\noindent $\bullet$ $I=I_{2\beta+3\alpha }$, $I_{\beta+3\alpha}$ or $I_{\beta+2\alpha}$

Using the $v$-algorithm, we deduce that $\pi_{I}^{-1}(N)=\emptyset$ for all three ideals. The same fact can be more easily established by comparing the dimension of $G \times^B I$ with the dimension of the nilpotent orbit $G_2(a_1)$, but the algorithm is necessary when we work with bigger ideals.

\noindent $\bullet$ $I=I_{\beta+\alpha }$

Using the $v$-algorithm, we see that $v=e$ is the only element so that $L(N, I, v) \neq \emptyset$. In this situation, 
\begin{equation*} 
\begin{aligned}
\mathfrak{g}(2) \cap \dot{e} \cdot I=&\operatorname{span}_{\mathbb{C}}\left\{E_{\beta+\alpha}, E_{\beta+2\alpha}, E_{\beta+3\alpha}\right\},  \\
 \mathfrak{u}_P^{\ge 2} \cap \dot{e} \cdot I=&\operatorname{span}_{\mathbb{C}}\left\{E_{\beta+\alpha}, E_{\beta+2\alpha}, E_{\beta+3\alpha}, E_{2\beta+3\alpha}\right\}.
\end{aligned}
\end{equation*}
Then, by Lemma~\ref{prehsp} and Lemma~\ref{dimlem}, $\operatorname{dim}P(N, I, e)=\operatorname{dim}L(N, I, e)=0$. By Corollary~\ref{dimcor}, $P(N, I, e)$ and $L(N, I, e)$ are the same finite set. Next we compute the set $L(N, I, e)$. 

To do this, we use the same setup as in the proof of Lemma~\ref{emptylem}. In particular, note that $L(N, I, e) \cong (L, B_L, \mathfrak{g}(2), \mathfrak{g}(2) \cap \dot{e} \cdot I, N) \subset L/B_L$ and that $L/B_L$ can be partitioned into the following four subsets.
\[ L/B_L=\left\{\dot{e}B_L\right\} \amalg \left\{\dot{s}B_L\right\} \amalg \left\{x_{\alpha}(z_1)\dot{s}B_L, \, x_{\alpha}(z_2)\dot{s}B_L\right\} \amalg \left\{ x_{\alpha}(z)\dot{s}B_L \,|\, z \neq 0, z_1, z_2\right\}.  \]
We then find the intersection of each subset with $L(N, I, e)$. 

Clearly, $\dot{e}B_L \in L(N, I, e)$.

For the other three subsets, note that they all consist of points in the form $x_{\alpha}(z)\dot{s}B_L$ for some $z \in \mathbb{C}$. By definition, $x_{\alpha}(z)\dot{s}B_L \in L(N, I, e) \iff \dot{s}^{-1} \cdot x_{\alpha}(z)^{-1} \cdot N \in \mathfrak{g}(2) \cap \dot{e} \cdot I \iff$ \linebreak 
$x_{\alpha}(-z) \cdot N \in \dot{s} \cdot ( \mathfrak{g}(2) \cap \dot{e} \cdot I)=\operatorname{span}_{\mathbb{C}}\left\{E_{\beta}, E_{\beta+\alpha}, E_{\beta+2\alpha}\right\}$. By Equation~\ref{alphaflow}, 
\[ x_{\alpha}(-z) \cdot N=E_{\beta+\alpha}-czE_{\beta+2\alpha}+(1+c'z^2)E_{\beta+3\alpha}.  \]
Then $x_{\alpha}(-z) \cdot N \in \operatorname{span}_{\mathbb{C}}\left\{E_{\beta}, E_{\beta+\alpha}, E_{\beta+2\alpha}\right\}$ if and only if $z=z_1$ or $z_2$, the two solutions of $1+c'z^2=0$.

In summary, $L(N, I, e) \cong (L, B_L, \mathfrak{g}(2), \mathfrak{g}(2) \cap \dot{e} \cdot I, N)=\left\{\dot{e}B_L, x_{\alpha}(z_1)\dot{s}B_L, \, x_{\alpha}(z_2)\dot{s}B_L\right\}$, hence 
\[ \pi_{I}^{-1}(N)=P(N, I, e)=L(N, I, e)=\left\{\dot{e}B, x_{\alpha}(z_1)\dot{s}B, x_{\alpha}(z_2)\dot{s}B\right\}. \]
It is the variety of 3 distinct points.

\noindent $\bullet$ $I=I_{\beta }$

$v=e$ is still the only possibility, but now $\mathfrak{g}(2) \cap \dot{e} \cdot I=\mathfrak{g}(2)$ and $\mathfrak{u}_P^{\ge 2} \cap \dot{e} \cdot I=\mathfrak{u}_P^{\ge 2}$. Therefore, $\pi_{I}^{-1}(N)=P(N, I, e)=P\dot{e}B=\overline{X_{s}}$. It is a Schubert variety. Note that the two 0-cells $\left\{x_{\alpha}(z_1)\dot{s}B\right\}$ and $\left\{x_{\alpha}(z_2)\dot{s}B\right\}$ of $\pi_{I_{\beta+\alpha}}^{-1}(N)$ are no longer cells of $\pi_{I}^{-1}(N)$.

\noindent $\bullet$ $I=I_{\alpha }$

Using the $v$-algorithm, we now have $v=e$ or $t$. 

When $v=e$, $P(N, I, e)=P(N, I_{\beta+\alpha}, e)=\left\{\dot{e}B, x_{\alpha}(z_1)\dot{s}B, x_{\alpha}(z_2)\dot{s}B\right\}$.

When $v=t$, $\mathfrak{u}_P^{\ge 2} \cap \dot{t} \cdot I=\operatorname{span}_{\mathbb{C}}\left\{E_{\beta+\alpha}, E_{\beta+2\alpha}, E_{\beta+3\alpha}, E_{2\beta+3\alpha}\right\}$. By Lemma~\ref{prehsp} and Lemma~\ref{dimlem}, $\operatorname{dim}P(N, I, t)=1$, so $P(N, I, t)$ is strictly smaller than $\mathcal{O}_t$. Recall that $P(N, I, t)=\pi_{I}^{-1}(N) \cap \mathcal{O}_{t}$ and $\mathcal{O}_t=P\dot{t}B=U^t\dot{t}B \amalg U^{st}\dot{s}\dot{t}B$. By Lemma~\ref{prodlem}, $U^t=X_{\beta}$ and $U^{st}=X_{\alpha}X_{\beta+3\alpha}$. Next we inspect $\pi_{I}^{-1}(N) \cap U^t\dot{t}B$ and $\pi_{I}^{-1}(N) \cap U^{st}\dot{s}\dot{t}B$.

For $u\dot{t}B \in U^t\dot{t}B$ to be in $\pi_{I}^{-1}(N)$, we need $u^{-1} \cdot N \in \mathfrak{u}_P^{\ge 2} \cap \dot{t} \cdot I$. Here $u \in U^t$ takes the form $u=x_{\beta}(z)$ for some $z \in \mathbb{C}$. By Lemma~\ref{flowlem}, there exists a nonzero constant $c$ such that 
\[ u^{-1} \cdot N=x_{\beta}(-z) \cdot (E_{\beta+\alpha}+E_{\beta+3\alpha})=E_{\beta+\alpha}+E_{\beta+3\alpha}-czE_{2\beta+3\alpha}  \]
for any $z \in \mathbb{C}$. Clearly, $E_{\beta+\alpha}+E_{\beta+3\alpha}-czE_{2\beta+3\alpha} \in \mathfrak{u}_P^{\ge 2} \cap \dot{t} \cdot I$ for any $z \in \mathbb{C}$. Therefore, $\pi_{I}^{-1}(N) \cap U^t\dot{t}B=X_{\beta}\dot{t}B=X_t$.

For $u\dot{s}\dot{t}B \in U^{st}\dot{s}\dot{t}B$ to be in $\pi_{I}^{-1}(N)$, we need $\dot{s}^{-1} \cdot u^{-1} \cdot N \in \mathfrak{u}_P^{\ge 2} \cap \dot{t} \cdot I$. Here $u \in U^{st}$ takes the form $u=x_{\alpha}(z)x_{\beta+3\alpha}(z')$ for some $z, z' \in \mathbb{C}$. The same nonzero constants $c$ and $c'$ from Equation~\ref{alphaflow} guarantee that 
\begin{equation*} 
\begin{aligned}
\dot{s}^{-1} \cdot u^{-1} \cdot N=&\dot{s} \cdot x_{\beta+3\alpha}(-z') \cdot x_{\alpha}(-z) \cdot (E_{\beta+\alpha}+E_{\beta+3\alpha}) \\
=&\dot{s} \cdot E_{\beta+\alpha}-cz(\dot{s} \cdot E_{\beta+2\alpha})+(1+c'z^2)(\dot{s} \cdot E_{\beta+3\alpha}) 
\end{aligned}
\end{equation*}
for any $z, z' \in \mathbb{C}$. Then $\dot{s} \cdot E_{\beta+\alpha}-cz(\dot{s} \cdot E_{\beta+2\alpha})+(1+c'z^2)(\dot{s} \cdot E_{\beta+3\alpha}) \in \mathfrak{u}_P^{\ge 2} \cap \dot{t} \cdot I$ if and only if $1+c'z^2=0$. That is, $z=z_1$ or $z_2$, the two solutions of $1+c'z^2=0$ and $z'$ could be any complex number. Therefore, $\pi_{I}^{-1}(N) \cap U^{st}\dot{s}\dot{t}B=x_{\alpha}(z_1)X_{\beta+3\alpha}\dot{s}\dot{t}B \amalg x_{\alpha}(z_2)X_{\beta+3\alpha}\dot{s}\dot{t}B \subset X_{st}$.

In summary, 
\[ \pi_{I}^{-1}(N)=\left\{\dot{e}B, x_{\alpha}(z_1)\dot{s}B, x_{\alpha}(z_2)\dot{s}B\right\} \amalg X_{\beta}\dot{t}B \amalg x_{\alpha}(z_1)X_{\beta+3\alpha}\dot{s}\dot{t}B \amalg x_{\alpha}(z_2)X_{\beta+3\alpha}\dot{s}\dot{t}B. \]

There is a simple description of $\pi_{I}^{-1}(N)$ as an algebraic variety. As will be shown in Lemma~\ref{closurelem2} and Lemma~\ref{closurelem4},
\begin{equation*} 
\begin{aligned}
&\lim_{z \rightarrow \infty} x_{\beta}(z)\dot{t}B =\dot{e}B, \\
&\lim_{z \rightarrow \infty} x_{\alpha}(z_1)x_{\beta+3\alpha}(z)\dot{s}\dot{t}B =x_{\alpha}(z_1)\dot{s}B, \\
&\lim_{z \rightarrow \infty} x_{\alpha}(z_2)x_{\beta+3\alpha}(z)\dot{s}\dot{t}B =x_{\alpha}(z_2)\dot{s}B,
\end{aligned}
\end{equation*}
where limits are taken in the flag variety $G/B$. Therefore, 
\[ \pi_{I}^{-1}(N)=\overline{X_{\beta}\dot{t}B} \amalg \overline{x_{\alpha}(z_1)X_{\beta+3\alpha}\dot{s}\dot{t}B} \amalg \overline{x_{\alpha}(z_2)X_{\beta+3\alpha}\dot{s}\dot{t}B} \cong \mathbb{P}^1 \amalg \mathbb{P}^1 \amalg \mathbb{P}^1. \]

\noindent $\bullet$ $I=I_{\alpha, \beta }$

Still, $v=e$ or $t$.

When $v=e$, since $ \mathfrak{u}_P^{\ge 2} \cap \dot{e} \cdot I=\mathfrak{u}_P^{\ge 2}$, $P(N, I, e)=P\dot{e}B=\overline{X_{s}}$.

When $v=t$, since $\mathfrak{u}_P^{\ge 2} \cap \dot{t} \cdot I= \mathfrak{u}_P^{\ge 2} \cap \dot{t} \cdot I_{\alpha}$, 
\[ P(N, I, t)=P(N, I_{\alpha}, t)=X_{\beta}\dot{t}B \amalg x_{\alpha}(z_1)X_{\beta+3\alpha}\dot{s}\dot{t}B \amalg x_{\alpha}(z_2)X_{\beta+3\alpha}\dot{s}\dot{t}B. \]

Putting these two parts together, we see that $\pi_{I}^{-1}(N)$ has 4 irreducible components, each isomorphic to $\mathbb{P}^1$. $\pi_{I}^{-1}(N)$ is obtained by attaching 3 copies of $\mathbb{P}^1$ to the 3 distinct points $\left\{\dot{e}B, x_{\alpha}(z_1)\dot{s}B, x_{\alpha}(z_2)\dot{s}B\right\}$ of $\overline{X_{s}}$ at their respective points at infinity. Explicitly,
\[ \pi_{I}^{-1}(N)=\overline{X_{s}} \amalg X_{\beta}\dot{t}B \amalg x_{\alpha}(z_1)X_{\beta+3\alpha}\dot{s}\dot{t}B \amalg x_{\alpha}(z_2)X_{\beta+3\alpha}\dot{s}\dot{t}B.  \]

We have finished the case of $G_2(a_1)$.

\begin{remark} \label{subregularSF}
In \cite{collingwood1993nilpotent}, $G_2(a_1)$ is called the subregular nilpotent orbit of $G_2$. In general, the subregular nilpotent orbit is the unique nilpotent orbit of codimension 2 in the nilpotent cone $\mathcal{N} \subset \mathfrak{g}$. Let $N$ be an element of the subregular orbit. It is known that the Springer fiber $\mathcal{B}_N$ is a union of $\mathbb{P}^1$'s whose configuration we now describe. We can form the dual graph of $\mathcal{B}_N$ so that its vertices are the irreducible components of $\mathcal{B}_N$ and two vertices are joined by an edge if the two corresponding components intersect (they always intersect at a single point). 

When $G$ is of type $G_2$ and $N$ is from the subregular orbit $G_2(a_1)$, the dual graph of $\mathcal{B}_N$ is the Dynkin diagram of $D_4$ (see \cite{steinberg1974conjugacy}[section 3.10]). This description of $\mathcal{B}_N$ matches exactly our result for $\pi_{I_{\alpha, \beta}}^{-1}(G_2(a_1))$. Let $A(N)=C_G(N)/C_G^{\circ}(N)$. Since $C_G(N)$ acts naturally on $\pi_{I}^{-1}(N) \subset G/B$ by left multiplication, $A(N)$ permutes the irreducible components of $\pi_{I}^{-1}(N)$. If $G$ is in addition the adjoint form of $G_2$, $A(N) \cong S_3$ (see \cite{carter1985finite}[p.~427]). Then $A(N)$ acts on $\pi_{I_{\alpha, \beta}}^{-1}(G_2(a_1))$ by naturally permuting three components and fixing the last one to which the other three are attached. Consequently, $A(N)$ acts by the natural permutation action on $\pi_{I_{\alpha}}^{-1}(G_2(a_1)) \cong \mathbb{P}^1 \amalg \mathbb{P}^1 \amalg \mathbb{P}^1$ and $\pi_{I_{\beta + \alpha}}^{-1}(G_2(a_1)) \cong \{3 \text{ distinct points}\}$ and trivially on $\pi_{I_{\beta}}^{-1}(G_2(a_1)) \cong \mathbb{P}^1$. This action will be used in our computation of the dot actions for type $G_2$ in section \ref{dotactiong2}.
\end{remark}

\subsubsection{The case of $G_2$}
The notation ``$G_2$'' represents the regular nilpotent orbit of the group $G_2$. It is well-known that the Springer fiber $\mathcal{B}_N$ is a single point when $N$ is regular nilpotent. By comparing the dimension of $G \times^B I$ with that of the regular nilpotent orbit, it is easy to see that the Springer fiber $\mathcal{B}_N$ is the only nonempty Hessenberg ideal fiber, so there is nothing to compute in this case. To show the scope of our algorithm, we give another proof of the two statements just made along the lines of this section.

Firstly, our representative of the regular nilpotent orbit is $N=E_{\alpha}+E_{\beta}$. Set $H=6H_{\alpha}+10H_{\beta}$. Simple computation shows that $\left\{N, H, 6E_{-\alpha}+10E_{-\beta}\right\}$ is an $\mathfrak{sl}_2$-triple such that $H \in \mathfrak{t}$, $\alpha(H)=2$ and $\beta(H)=2$. Therefore, the associated parabolic of $N$ is the Borel subgroup $B$ and we know the following:
\begin{itemize}
\item[(1)] $P=B$. 
\item[(2)] $\langle \lambda, \alpha\rangle=2$ and $\langle \lambda, \beta\rangle=2$.
\item[(3)] $L=T$.
\item[(4)] $\mathfrak{g}(2)=\mathbb{C}E_{\alpha} \oplus \mathbb{C}E_{\beta}$ and $\mathfrak{u}_P^{\ge 2}=\mathfrak{u}$.
\item[(5)] $W_L=\left\{e\right\}$ and $W^L=W$. 
\end{itemize}
For any $v \in W^L=W$ and any Hessenberg ideal $I$, $L(N, I, v) \cong (T, T, \mathfrak{g}(2),  \mathfrak{g}(2) \cap \dot{v} \cdot I, N)$. Then $L(N, I, v) \neq \emptyset \iff N \in \mathfrak{g}(2) \cap \dot{v} \cdot I$. Because $N=E_{\alpha}+E_{\beta}$ and $\mathfrak{g}(2) \cap \dot{v} \cdot I$ is $T$-stable, $N \in \mathfrak{g}(2) \cap \dot{v} \cdot I \iff \mathfrak{g}(2) \subset \mathfrak{g}(2) \cap \dot{v} \cdot I \iff \mathfrak{g}(2) \subset \dot{v} \cdot I$. The last condition is possible only when $v=e$ and $I=\mathfrak{u}$, so the Springer fiber $\mathcal{B}_N$ is the only nonempty Hessenberg ideal fiber. In that case, $P(N, \mathfrak{u}, e)=(B, B, \mathfrak{u}, \mathfrak{u}, N)$ and it is a single point.

We have now finished the computation of all Hessenberg ideal fibers when $G$ is of type $G_2$. The results can be summarized by Table~\ref{tab:listhifg2}. Note that the ways in which $\pi_{I_{\alpha}}^{-1}(\tilde{A}_1)$ and $\pi_{I_{\alpha, \beta}}^{-1}(\tilde{A}_1)$ are presented allude to the closure relationships of their cells. These relationships are proved in subsection \ref{subsectionaninterestinghif}. 

\begin{table}[h]
\def\arraystretch{1.4}
\caption{Hessenberg ideal fibers for type $G_2$} \label{tab:listhifg2}
\centering
%\begin{tabsize}
\begin{tabular}{|c|c|c|c|c|c|}
\hline
                     & $\{0\}$ & $A_1$                & $\tilde{A}_1$                                                                   & $G_2(a_1)$                                                           & $G_2$   \\ \hline
$I_{\emptyset}$      & $G/B$   &                      &                                                                                 &                                                                      &         \\ \hline
$I_{2\beta+3\alpha}$ & $G/B$   & $\overline{X_s}$     &                                                                                 &                                                                      &         \\ \hline
$I_{\beta+3\alpha}$  & $G/B$   & $\overline{X_{st}}$  &                                                                                 &                                                                      &         \\ \hline
$I_{\beta+2\alpha}$  & $G/B$   & $\overline{X_{st}}$  & $\overline{X_{t}}$                                                              &                                                                      &         \\ \hline
$I_{\beta+\alpha}$   & $G/B$   & $\overline{X_{st}}$  & $\mathbb{P}^1 \amalg \mathbb{P}^1$                                              & 3 distinct points                                                    &         \\ \hline
$I_{\alpha}$         & $G/B$   & $\overline{X_{st}}$  & $\overline{X_{t}} \amalg \overline{X_{\beta}X_{2\beta+3\alpha}\dot{t}\dot{r}B}$ & $\mathbb{P}^1 \amalg \mathbb{P}^1 \amalg \mathbb{P}^1$               &         \\ \hline
$I_{\beta}$          & $G/B$   & $\overline{X_{sts}}$ & $\overline{X_{ts}}$                                                             & $\overline{X_s}$                                                     &         \\ \hline
$I_{\alpha, \beta}$  & $G/B$   & $\overline{X_{sts}}$ & $\overline{X_{ts}} \cup \overline{X_{\beta}X_{2\beta+3\alpha}\dot{t}\dot{r}B}$  & $\mathbb{P}^1 \cup \mathbb{P}^1 \cup \mathbb{P}^1 \cup \mathbb{P}^1$ & 1 point \\ \hline
\end{tabular}
%\end{tabsize}
\end{table}

\subsection{An interesting Hessenberg ideal fiber} \label{subsectionaninterestinghif}
Now we study one of the Hessenberg ideal fibers computed in the last subsection---$\pi_{I}^{-1}(N)$ where $I=I_{\alpha}$ and $N=E_{\beta+2\alpha}$. Since $N$ is a representative of $\tilde{A_1}$, we denote this fiber by $\pi_{I_{\alpha}}^{-1}(\tilde{A_1})$ in the rest of this subsection. 

N. Spaltenstein proved in \cite{spaltenstein1982classes} that any Springer fiber $\mathcal{B}_N$ of a reducitve linear algebraic group over $\mathbb{C}$ is connected and that its irreducible components are of the same dimension. A natural question is whether the same is true for Hessenberg ideal fibers. The answer is no, because $\pi_{I_{\alpha}}^{-1}(\tilde{A_1})$ is a counterexample. In fact, we can prove the following.

\begin{theorem} \label{aninterestinghif}
$\pi_{I_{\alpha}}^{-1}(\tilde{A_1})$ has two connected components, each of which is irreducible as well. They are of dimension 1 and 2 respectively. 
\end{theorem}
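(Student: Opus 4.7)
The plan is to leverage the explicit cell decomposition of $\pi_{I_\alpha}^{-1}(\tilde{A}_1)$ just computed in the $\tilde{A}_1$ case with $I=I_\alpha$:
\[
\pi_{I_\alpha}^{-1}(\tilde{A}_1) = \overline{X_t} \,\sqcup\, \overline{X_\beta \dot{t}\dot{s}B} \,\sqcup\, X_{\beta+3\alpha}\dot{r}B \,\sqcup\, X_\beta X_{2\beta+3\alpha}\dot{t}\dot{r}B,
\]
which gives locally closed pieces of dimensions $1,1,1,2$. I claim the four pieces assemble into exactly two connected components: the Schubert variety $\overline{X_t}\cong\mathbb{P}^1$, and
\[
Y := \overline{X_\beta \dot{t}\dot{s}B} \cup X_{\beta+3\alpha}\dot{r}B \cup X_\beta X_{2\beta+3\alpha}\dot{t}\dot{r}B,
\]
which will turn out to equal the closure of the $2$-cell $X_\beta X_{2\beta+3\alpha}\dot{t}\dot{r}B$ in $G/B$ intersected with the fiber. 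Granted this, $\overline{X_t}$ is irreducible of dimension $1$, and $Y$, being the closure of an irreducible $2$-dimensional set, is irreducible of dimension $2$.

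Disjointness of the two components is immediate from the Bruhat decomposition: $\overline{X_t}=X_e\cup X_t$ is contained in the Schubert cells $X_e$ and $X_t$, whereas the pieces of $Y$ lie respectively inside $X_s$, $X_{ts}$, $X_r$ and $X_{tr}$, which are disjoint from $X_e$ and $X_t$. So the whole task reduces to computing the closure of the $2$-cell and showing it contains $\overline{X_\beta\dot{t}\dot{s}B}\cup X_{\beta+3\alpha}\dot{r}B$.

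To this end I would compute two one-parameter families of limits in $G/B$, in the spirit of the closure lemmas Lemma~\ref{closurelem2} and Lemma~\ref{closurelem4} invoked earlier in the paper. First, for each fixed $z_0\in\mathbb{C}$, show that $\lim_{z\to\infty} x_\beta(z_0)\,x_{2\beta+3\alpha}(z)\,\dot{t}\dot{r}B$ lies in $X_{\beta+3\alpha}\dot{r}B$. The standard move is to convert $x_{2\beta+3\alpha}(z)\dot{t}$, via the $\mathrm{SL}_2$-triple attached to the long root $2\beta+3\alpha$, into $\dot{s}_{2\beta+3\alpha}\dot{t}$ multiplied by a unipotent factor and a toral factor that tends to the identity; then Lemma~\ref{flowlem} and Lemma~\ref{permlem} identify the limiting Borel explicitly. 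Second, for each fixed $z_0\in\mathbb{C}$, show that $\lim_{z\to\infty} x_\beta(z)\,x_{2\beta+3\alpha}(z_0)\,\dot{t}\dot{r}B$ lies in $\overline{X_\beta\dot{t}\dot{s}B}$, with the limit being the point $\dot{s}B$ when $z_0=0$. This is the entirely parallel computation using the $\mathrm{SL}_2$-structure attached to $\beta$.

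Together these two families place all of $X_{\beta+3\alpha}\dot{r}B$ and all of $\overline{X_\beta\dot{t}\dot{s}B}$ inside the closure of the $2$-cell. Since that closure is irreducible of dimension exactly $2$ and $Y$ is the union of the $2$-cell with pieces of dimension at most $1$, a dimension count forces the closure to equal $Y$. The main technical obstacle is the bookkeeping in the two $\mathrm{SL}_2$-calculations, which is direct but requires careful use of the Chevalley relations for $G_2$.
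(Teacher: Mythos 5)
Your overall strategy is reasonable, but the execution has three genuine problems, two of which are fatal to the argument as written.

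First, both of your one-parameter limits are computed incorrectly. Fixing $z_0$ and letting $z\to\infty$ in the $x_{2\beta+3\alpha}$-factor, one gets
\[
\lim_{z\to\infty}x_\beta(z_0)x_{2\beta+3\alpha}(z)\dot t\dot r B = x_\beta(z_0)\dot t\dot s B,
\]
a point of $X_\beta\dot t\dot s B$, \emph{not} of $X_{\beta+3\alpha}\dot r B$. (Write $tr=(ts)t$, conjugate $x_{2\beta+3\alpha}$ by $\dot s\dot t$ to get $x_\beta(cz)$, and apply the $\lim_{z\to\infty}x_\beta(cz)\dot tB=\dot eB$ calculation.) Your first limit is really the one that captures $\overline{X_\beta\dot t\dot s B}$. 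Conversely, fixing $z_0$ and letting $z\to\infty$ in the $x_\beta$-factor gives $\lim_{z\to\infty}x_\beta(z)x_{2\beta+3\alpha}(z_0)\dot t\dot rB = x_{2\beta+3\alpha}(z_0)\dot r B$, and since $r^{-1}(2\beta+3\alpha)=\beta+3\alpha$ is positive, $x_{2\beta+3\alpha}(z_0)\dot rB=\dot rB$ for every $z_0$. So this limit collapses to the single point $\dot rB$ (not $\dot sB$) and cannot account for the whole $1$-cell $X_{\beta+3\alpha}\dot r B$. To capture $X_{\beta+3\alpha}\dot rB$ in the closure of the $2$-cell by degeneration you would need a genuine two-parameter limit with $z,z'$ tending to infinity at a correlated rate, which is a nontrivial $\mathrm{SL}_2$ calculation. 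The paper avoids this entirely by applying Spaltenstein's equidimensionality theorem to the Springer fiber $\pi_{I_{\alpha,\beta}}^{-1}(\tilde{A}_1)$, which forces $\overline{X_{\beta+3\alpha}\dot r B}\subset\overline{X_\beta X_{2\beta+3\alpha}\dot t\dot rB}$ without any limit computation.

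Second, your disjointness argument does not actually establish what you need. You observe that the four cells of $Y$ lie inside the Schubert cells $X_s,X_{ts},X_r,X_{tr}$, which are disjoint from $X_e,X_t$. That shows the cells are set-theoretically disjoint from $\overline{X_t}$, which was already clear from the original disjoint-union decomposition. But to conclude that $\overline{X_t}$ and $Y$ are separate connected components you must show that the \emph{closure in $G/B$} of the $2$-cell does not meet $\overline{X_t}$, and this is not automatic: $e$ and $t$ lie below $tr$ in Bruhat order, so $\overline{X_{tr}}\supset X_e\cup X_t$ and a priori the closure of the $2$-cell could touch $\overline{X_t}$. Your dimension count (``the closure is irreducible of dimension $2$, so it equals $Y$'') is circular here, because $Y$ is not known to be closed until you know the closure avoids $\overline{X_t}$. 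The paper closes this gap by a homological argument: the cell decomposition exhibits $\pi_{I_\alpha}^{-1}(\tilde A_1)$ as a compact CW complex, hence $H_0\cong\mathbb{Z}\oplus\mathbb{Z}$, hence two connected components; since the fiber is the union of the two irreducible closed sets $\overline{X_t}$ and $\overline{X_\beta X_{2\beta+3\alpha}\dot t\dot rB}$, they cannot intersect. You would need to supply either this $H_0$ argument or an explicit analysis of the closure of the $2$-cell to make your disjointness step rigorous.
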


To prove the theorem, we need to study the closure relationships between different cells of the Hessenberg ideal fiber $\pi_{I_{\alpha}}^{-1}(\tilde{A_1})$. These relationships can be deduced from the following 4 lemmas. It is well-known that, on algebraic varieties over $\mathbb{C}$, the closure of a locally closed subvariety in the Zariski topology is the same as in the ordinary (analytic) topology. Therefore, we mostly work with the ordinary topology in the rest of this subsection. All closures and limits are taken on the flag variety $G/B$, where $G$ is assumed to be a connected algebraic group of type $G_2$ over $\mathbb{C}$. 

From the last subsection we know that 
\[ \pi_{I_{\alpha}}^{-1}(\tilde{A_1})=\overline{X_{t}} \amalg \overline{X_{\beta}\dot{t}\dot{s}B} \amalg X_{\beta+3\alpha}\dot{r}B \amalg X_{\beta}X_{2\beta+3\alpha}\dot{t}\dot{r}B, \]
\[ \pi_{I_{\alpha, \beta}}^{-1}(\tilde{A_1})=\overline{X_{ts}} \amalg X_{\beta+3\alpha}\dot{r}B \amalg X_{\beta}X_{2\beta+3\alpha}\dot{t}\dot{r}B.  \]

\begin{lemma} \label{closurelem1}
$\overline{X_{\beta+3\alpha}\dot{r}B} \subset \overline{X_{\beta}X_{2\beta+3\alpha}\dot{t}\dot{r}B}$.
\end{lemma}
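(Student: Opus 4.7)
The plan is to show that every point of $X_{\beta+3\alpha}\dot r B$ lies in $\overline{X_\beta X_{2\beta+3\alpha}\dot t\dot r B}$; taking closures then yields the lemma. To produce such a point as a limit, I would start from a generic element $x_\beta(\mu)\, x_{2\beta+3\alpha}(z)\, \dot t\dot r B$ of the two-cell and let $\mu\to\infty$ while keeping $z$ fixed. The first step is to rewrite this element by moving $\dot t$ past $x_{2\beta+3\alpha}(z)$. Since $t(2\beta+3\alpha)=\beta+3\alpha$, Lemma~\ref{permlem} furnishes a nonzero $c_1\in\mathbb C$ with
\[
x_\beta(\mu)\, x_{2\beta+3\alpha}(z)\, \dot t\dot r B \;=\; x_\beta(\mu)\,\dot t\cdot y(z),\qquad y(z):=x_{\beta+3\alpha}(c_1 z)\,\dot r B.
\]

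Next, I would verify that $y(z)$ is $X_\beta$-fixed. Since $(st)^{-1}(\beta)=2\beta+3\alpha$ and $(st)^{-1}(2\beta+3\alpha)=\beta+3\alpha$ are both positive roots, Lemma~\ref{permlem} implies that both $X_\beta$ and $X_{2\beta+3\alpha}$ stabilize $\dot r B=\dot s\dot t B$. Moreover, in $G_2$ the only pair $(i,j)$ of positive integers with $i\beta+j(\beta+3\alpha)\in\Phi$ is $(1,1)$, contributing $2\beta+3\alpha$, so the Chevalley commutator formula yields
\[
x_\beta(u)\,x_{\beta+3\alpha}(c_1 z)\;=\;x_{\beta+3\alpha}(c_1 z)\,x_\beta(u)\,x_{2\beta+3\alpha}(c_2\, u z)
\]
for some constant $c_2$. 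Acting this identity on $\dot r B$ and invoking the two stabilization facts successively, I conclude $x_\beta(u)\cdot y(z)=y(z)$.

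The final step is the $\mathrm{SL}_2^\beta$-limit: for any $y\in G/B$ fixed by $X_\beta$, the $\mathrm{SL}_2^\beta$-orbit of $y$ is either $\{y\}$ or a $\mathbb P^1$ on which $y$ is the unique $X_\beta$-fixed point, and in either case the elementary $\mathbb P^1$-computation $[\mu:1]\to[1:0]$ as $\mu\to\infty$ gives
\[
\lim_{\mu\to\infty} x_\beta(\mu)\,\dot t\cdot y \;=\; y.
\]
Applied to $y(z)$ and combined with the first step, this shows $\lim_{\mu\to\infty} x_\beta(\mu)\,x_{2\beta+3\alpha}(z)\,\dot t\dot r B = x_{\beta+3\alpha}(c_1 z)\,\dot r B$, which exhausts $X_{\beta+3\alpha}\dot r B$ as $z$ varies over $\mathbb C$.

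The main obstacle is the $X_\beta$-invariance of $y(z)$: it does not follow directly from Lemma~\ref{commlem} because $\beta+(\beta+3\alpha)=2\beta+3\alpha$ is a root, so $x_\beta$ and $x_{\beta+3\alpha}$ do not literally commute. The argument is rescued by the $G_2$-specific combinatorial fact that the only extra root group appearing in the commutator is $X_{2\beta+3\alpha}$, which itself stabilizes $\dot r B$; this is what makes the nontrivial commutator term harmless and lets the rest of the argument close up.
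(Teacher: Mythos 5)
Your approach is genuinely different from the paper's, which deduces the inclusion from Spaltenstein's theorem that the Springer fiber $\pi_{I_{\alpha,\beta}}^{-1}(\tilde A_1)$ has equidimensional irreducible components. Your first two steps (the rewriting and the proof that $y(z)=x_{\beta+3\alpha}(c_1z)\dot r B$ is $X_\beta$-fixed, via the commutator identity and the stabilization of $\dot r B$ by $X_\beta$ and $X_{2\beta+3\alpha}$) are correct. The third step, however, contains a genuine error.

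The claim that the $\mathrm{SL}_2^\beta$-orbit of an $X_\beta$-fixed point is $\{y\}$ or a $\mathbb P^1$ requires $y$ to be fixed by the full Borel $B_{\mathrm{SL}_2^\beta}=T_\beta X_\beta$, not merely by $X_\beta$. For $z\neq 0$, the point $y(z)$ is not $T_\beta$-fixed: since $\langle\beta+3\alpha,\beta^\vee\rangle=-1$ and $T$ stabilizes $\dot r B$, one has $\beta^\vee(u)\cdot y(z)=y(u^{-1}z)$. Consequently the $\mathrm{SL}_2^\beta$-stabilizer of $y(z)$ has identity component $X_\beta$ only, the orbit is $2$-dimensional, and the limit you assert is false. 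Indeed, keeping $z$ fixed and letting $\mu\to\infty$ gives
\[
\lim_{\mu\to\infty} x_\beta(\mu)\,x_{2\beta+3\alpha}(z)\,\dot t\dot r B
= \lim_{\mu\to\infty} x_{2\beta+3\alpha}(z)\,x_\beta(\mu)\dot t\dot r B
= x_{2\beta+3\alpha}(z)\dot r B
= \dot r B,
\]
where the first equality is Lemma~\ref{commlem} (since $3\beta+3\alpha\notin\Phi$), the middle limit is Lemma~\ref{closurelem2} with $v=r$, and the last equality is the stabilization of $\dot r B$ by $X_{2\beta+3\alpha}$ that you already established. So every path you consider flows only to the single point $\dot r B$, which does not prove the lemma.

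The argument is salvageable if you let the two parameters scale together. Using the $\mathrm{SL}_2$ identity $x_\beta(\mu)\dot t = x_{-\beta}(1/\mu)\,\beta^\vee(-\mu)\,x_\beta(-1/\mu)$ in $\mathrm{SL}_2^\beta\subset G$, together with the $X_\beta$-invariance of $y$ and the scaling $\beta^\vee(u)\cdot y(z)=y(u^{-1}z)$, one gets $x_\beta(\mu)\dot t\cdot y(z) = x_{-\beta}(1/\mu)\,y(-z/\mu)$. Taking $z=\mu w$ with $w$ fixed and $\mu\to\infty$ yields $\lim_{\mu\to\infty}x_\beta(\mu)\,x_{2\beta+3\alpha}(\mu w')\,\dot t\dot r B = y(-w)$ for the appropriate constant relating $w'$ and $w$, and as $w$ ranges over $\mathbb C$ this exhausts $X_{\beta+3\alpha}\dot r B$. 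That corrected version gives a valid, more computational alternative to the paper's Spaltenstein argument, in the spirit of Lemmas~\ref{closurelem2}--\ref{closurelem4}.
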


\begin{proof}
$\pi_{I_{\alpha, \beta}}^{-1}(\tilde{A_1})$ is a Springer fiber. By Spaltenstein's result, its irreducible components are of the same dimension. We know that 
\[ \pi_{I_{\alpha, \beta}}^{-1}(\tilde{A_1})=\overline{\pi_{I_{\alpha, \beta}}^{-1}(\tilde{A_1}) }=\overline{X_{ts}} \cup \overline{X_{\beta+3\alpha}\dot{r}B} \cup \overline{X_{\beta}X_{2\beta+3\alpha}\dot{t}\dot{r}B}.  \]
Since $\operatorname{dim}\overline{X_{\beta+3\alpha}\dot{r}B}=1$ and $\operatorname{dim}\overline{X_{ts}}=\operatorname{dim}\overline{X_{\beta}X_{2\beta+3\alpha}\dot{t}\dot{r}B}=2$, the irreducible components of $\pi_{I_{\alpha, \beta}}^{-1}(\tilde{A_1})$ have to be $\overline{X_{ts}}$ and $\overline{X_{\beta}X_{2\beta+3\alpha}\dot{t}\dot{r}B}$. Because $\overline{X_{\beta+3\alpha}\dot{r}B} \cong \mathbb{P}^1$ hence irreducible, it has to lie in one of the irreducible components. Note that $X_{\beta+3\alpha}\dot{r}B$ lies in the Schubert cell $X_{r}=X_{st}$, so has no intersection with $\overline{X_{ts}}$. Therefore, $\overline{X_{\beta+3\alpha}\dot{r}B} \subset \overline{X_{\beta}X_{2\beta+3\alpha}\dot{t}\dot{r}B}$.
\end{proof}

$N=E_{\beta+2\alpha}$ is the representative of $\tilde{A_1}$ used in the computation of $\pi_{I_{\alpha}}^{-1}(\tilde{A_1})$. Recall that the Levi subgroup of the associated parabolic of $N$ is $L=\langle X_{-\beta}, T, X_{\beta}\rangle$, and that $W_L=\left\{e, t\right\}$ and $W=W_LW^L$. 

\begin{lemma} \label{closurelem2}
For any $v \in W^L$, 
\[ \lim_{z \rightarrow \infty}x_{\beta}(z)\dot{t}\dot{v}B=\dot{v}B.  \]
\end{lemma}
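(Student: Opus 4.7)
The plan is to reduce the claim to the well-known rank-one calculation inside the Levi subgroup $L=\langle X_{-\beta},T,X_\beta\rangle$, which is the rank-one reductive subgroup attached to the long root $\beta$. Since $t=s_\beta$, every relevant product $x_\beta(z)\dot t$ already lies in $L$, so I can carry out the essential manipulation inside $L$ and then quotient to $G/B$ at the end. The only subtlety will be to check that the boundary term acquired after the $SL_2$-factorisation vanishes on the relevant $B$-coset, and this is exactly where the hypothesis $v\in W^L$ will be used.

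First I would perform the $SL_2$ factorisation. Viewing $L$ as (a quotient of) $SL_2$ through the standard homomorphism sending the upper unipotent to $X_\beta$ and the lower unipotent to $X_{-\beta}$, the identity
\[
\begin{pmatrix}1&z\\0&1\end{pmatrix}\begin{pmatrix}0&-1\\1&0\end{pmatrix}
=\begin{pmatrix}1&0\\1/z&1\end{pmatrix}\begin{pmatrix}z&0\\0&1/z\end{pmatrix}\begin{pmatrix}1&-1/z\\0&1\end{pmatrix}\qquad(z\neq 0)
\]
translates, via the chosen Chevalley basis and the constant $c$ supplied by Lemma~\ref{permlem}, into an identity of the shape
\[
x_\beta(z)\,\dot t \;=\; x_{-\beta}(c/z)\,h(z)\,x_\beta(c'/z)\qquad(z\neq 0)
\]
in $L$, where $h(z)\in T$ and $c,c'$ are nonzero constants. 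The key feature is that the right-hand factor $b(z)\stackrel{\textup{\tiny def}}{=} h(z)\,x_\beta(c'/z)$ lies in $B$.

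Next I would push this identity across $\dot v$ to reach $G/B$. Multiplying on the right by $\dot v B$ gives
\[
x_\beta(z)\,\dot t\,\dot v B \;=\; x_{-\beta}(c/z)\,\dot v\,\bigl(\dot v^{-1}\,b(z)\,\dot v\bigr)\,B.
\]
Since $\dot v^{-1}T\dot v=T$ and, by Lemma~\ref{permlem}, $\dot v^{-1}x_\beta(c'/z)\dot v=x_{v^{-1}(\beta)}(c''/z)$ for some nonzero constant $c''$, the task reduces to checking $x_{v^{-1}(\beta)}(c''/z)\in B$. This is exactly where $v\in W^L$ enters: because $\beta\notin\Phi(\mathfrak u_P)$ (it has weight $0$ under $\lambda$), the condition $\Phi_v\subset\Phi(\mathfrak u_P)$ forces $\beta\notin\Phi_v$, i.e.\ $v^{-1}(\beta)\in\Phi^+$, so $x_{v^{-1}(\beta)}(c''/z)\in U\subset B$. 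Thus $\dot v^{-1}b(z)\dot v\in B$, and we obtain
\[
x_\beta(z)\,\dot t\,\dot v B \;=\; x_{-\beta}(c/z)\,\dot v B\qquad\text{for all }z\neq 0.
\]

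Finally, letting $z\to\infty$ sends $c/z\to 0$, so $x_{-\beta}(c/z)\to 1$ in $G$, and the continuity of left multiplication on $G/B$ yields
\[
\lim_{z\to\infty} x_\beta(z)\,\dot t\,\dot v B \;=\; \dot v B,
\]
as required. The main thing one has to be careful about in writing this up is the bookkeeping of the nonzero constants $c,c',c''$ supplied by Lemmas~\ref{flowlem} and~\ref{permlem}, and the observation that the $W^L$-hypothesis is used in precisely one place: to ensure that the $X_\beta$-factor of $b(z)$ is conjugated by $\dot v^{-1}$ into $U$ rather than $U^-$.
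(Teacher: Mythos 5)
Your proof is correct, but it takes a genuinely different route from the paper. The paper's proof is a one-line structural argument: by Equation~\ref{orbitfixedpoint}, the fixed-point set $\mathcal{O}_v^{\lambda}=\{\dot{v}B\}\amalg X_{\beta}\dot{t}\dot{v}B$ is isomorphic to the flag variety of the rank-one Levi $L$, i.e.\ to $\mathbb{P}^1$, with $X_{\beta}\dot{t}\dot{v}B$ as its $1$-cell and $\dot{v}B$ as the point at infinity; since this $\mathbb{P}^1$ is closed in $G/B$, the limit is immediate. You instead argue by hand: the rank-one Gauss/Bruhat factorization $x_{\beta}(z)\dot{t}=x_{-\beta}(c/z)\,h(z)\,x_{\beta}(c'/z)$ inside $L$, followed by conjugating the $B$-part past $\dot{v}$, where $v\in W^L$ forces $v^{-1}(\beta)\in\Phi^+$ (this is exactly Lemma~\ref{dimlem}(2), or the paper's remark that $\beta\in\Phi(L)$ cannot lie in $\Phi_v\subset\Phi(\mathfrak{u}_P)$), so the conjugated factor stays in $B$ and you get the exact identity $x_{\beta}(z)\dot{t}\dot{v}B=x_{-\beta}(c/z)\dot{v}B$ for $z\neq 0$, whence the limit. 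Your version is self-contained, avoids invoking the $\mathcal{O}_v^{\lambda}\cong\mathbb{P}^1$ identification, and actually proves something slightly stronger (an explicit identity of cosets, in effect reproving the relevant instance of that $\mathbb{P}^1$ structure); the paper's version is shorter given the machinery of subsection~\ref{apotfv} already in place. The only care needed in your write-up is the bookkeeping you already flag: the torus discrepancy between the chosen representative $\dot{t}$ and the $\operatorname{SL}_2$-Weyl element must be absorbed into $h(z)$, and the nonzero constants from Lemma~\ref{permlem} tracked, none of which affects the conclusion.
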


\begin{proof}
Let $\lambda$ be an associated one-parameter subgroup of $N$. Recall that
\[ \mathcal{O}_{v}^{\lambda}=U^e\dot{v}B \amalg U^t \dot{t}\dot{v}B=\left\{\dot{v}B\right\} \amalg X_{\beta}\dot{t}\dot{v}B \cong \mathbb{P}^1.  \]
$X_{\beta}\dot{t}\dot{v}B$ is the 1-cell of $\mathbb{P}^1$ and $\dot{v}B$ is the point at infinity. Then the limit is clearly true.
\end{proof}

\begin{lemma} \label{closurelem3}
For any $z \in \mathbb{C}$,
\[ \lim_{z' \rightarrow \infty} x_{\beta}(z)x_{2\beta+3\alpha}(z')\dot{t}\dot{r}B=x_{\beta}(z)\dot{t}\dot{s}B.  \]
As a result, $\overline{X_{\beta}\dot{t}\dot{s}B} \subset \overline{X_{\beta}X_{2\beta+3\alpha}\dot{t}\dot{r}B}$.
\end{lemma}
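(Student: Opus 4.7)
The plan is to reduce the limit to a basic $SL_2/B = \mathbb{P}^1$ calculation by conjugating $x_{2\beta+3\alpha}(z')$ into the rank-one subgroup attached to the simple root $\beta$. Since left multiplication by $x_\beta(z)$ is a regular automorphism of the flag variety, it commutes with the limit, so it suffices to prove the special case
\[ \lim_{z' \to \infty} x_{2\beta+3\alpha}(z') \dot{t}\dot{r}B = \dot{t}\dot{s}B, \]
and then act by $x_\beta(z)$ on the left.

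The key arithmetic is that the highest root $2\beta+3\alpha$ is $W$-conjugate to $\beta$; using $s(\alpha)=-\alpha$, $s(\beta)=\beta+3\alpha$, $t(\alpha)=\alpha+\beta$, $t(\beta)=-\beta$ one computes $st(2\beta+3\alpha)=\beta$. By Lemma~\ref{permlem}, this yields a nonzero constant $c$ with $(\dot{t}\dot{s})^{-1} x_{2\beta+3\alpha}(z')(\dot{t}\dot{s}) = x_\beta(cz')$. Since $r=st$, we also have $\dot{t}\dot{r}B = \dot{t}\dot{s}\dot{t}B$ in $G/B$ (any discrepancy between $\dot{r}$ and $\dot{s}\dot{t}$ lies in $T\subset B$), so $(\dot{t}\dot{s})^{-1}\dot{t}\dot{r}B = \dot{t}B$. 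Combining,
\[ (\dot{t}\dot{s})^{-1} x_{2\beta+3\alpha}(z')\dot{t}\dot{r}B \;=\; x_\beta(cz')\,\dot{t}B. \]

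I would then compute $\lim_{z'\to\infty} x_\beta(cz')\dot{t}B$ inside the rank-one subgroup $H_\beta\subset G$ generated by $X_\beta$ and $X_{-\beta}$. The $H_\beta$-orbit of $B$ in $G/B$ is isomorphic to $H_\beta/(H_\beta\cap B)\cong \mathbb{P}^1$, and inside this $\mathbb{P}^1$ the map $z\mapsto x_\beta(z)\dot{t}B$ is the standard affine embedding $\mathbb{A}^1\hookrightarrow \mathbb{P}^1$ whose point at infinity is $B$, the fixed point of $X_\beta$ on $\mathbb{P}^1$. Hence the limit is $B$, and multiplying back by $\dot{t}\dot{s}$ gives the desired identity $\lim_{z'\to\infty} x_{2\beta+3\alpha}(z')\dot{t}\dot{r}B = \dot{t}\dot{s}B$.

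For the closure consequence, the already proved limit shows that every point $x_\beta(z)\dot{t}\dot{s}B$ of $X_\beta\dot{t}\dot{s}B$ is a limit of points from $X_\beta X_{2\beta+3\alpha}\dot{t}\dot{r}B$, hence lies in $\overline{X_\beta X_{2\beta+3\alpha}\dot{t}\dot{r}B}$; taking closure yields $\overline{X_\beta\dot{t}\dot{s}B}\subset \overline{X_\beta X_{2\beta+3\alpha}\dot{t}\dot{r}B}$. The main obstacle is essentially bookkeeping---verifying $st(2\beta+3\alpha)=\beta$ and the $G/B$-identity $(\dot{t}\dot{s})^{-1}\dot{t}\dot{r}B=\dot{t}B$---after which the conclusion drops out of the standard $\mathbb{P}^1$ picture.
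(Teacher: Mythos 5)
Your proposal is correct and takes essentially the same route as the paper: both rewrite $\dot{t}\dot{r}B$ as $\dot{t}\dot{s}\dot{t}B$, conjugate $x_{2\beta+3\alpha}(z')$ by a representative of $st$ (using $st(2\beta+3\alpha)=\beta$) to land in $X_\beta$, and then take the limit in the $\mathbb{P}^1$ given by the rank-one subgroup attached to $\beta$. The only cosmetic difference is that the paper invokes its Lemma~\ref{closurelem2} for the final $\mathbb{P}^1$ limit $\lim_{z'\to\infty}x_\beta(cz')\dot{t}B=\dot{e}B$, whereas you re-derive it directly from the $H_\beta/(H_\beta\cap B)\cong\mathbb{P}^1$ picture.
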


\begin{proof}
Since $tr=t(st)=(ts)t$, we have
\[ x_{\beta}(z)x_{2\beta+3\alpha}(z')\dot{t}\dot{r}B=x_{\beta}(z)\dot{t}\dot{s}(\dot{s}\dot{t}x_{2\beta+3\alpha}(z')\dot{t}\dot{s})\dot{t}B. \]
By Lemma~\ref{permlem}, there exists a nonzero constant $c$ such that $\dot{s}\dot{t}x_{2\beta+3\alpha}(z')\dot{t}\dot{s}=x_{\beta}(cz')$ for any $z' \in \mathbb{C}$. Since $e \in W^L$, by Lemma~\ref{closurelem2},\[ \lim_{z' \rightarrow \infty}x_{\beta}(cz')\dot{t}B=\dot{e}B. \]
Therefore, for any $z \in \mathbb{C}$,
\[ \lim_{z' \rightarrow \infty} x_{\beta}(z)x_{2\beta+3\alpha}(z')\dot{t}\dot{r}B= x_{\beta}(z)\dot{t}\dot{s}(\lim_{z' \rightarrow \infty}x_{\beta}(cz')\dot{t}B)=x_{\beta}(z)\dot{t}\dot{s}B. \]

$\overline{X_{\beta}\dot{t}\dot{s}B} \subset \overline{X_{\beta}X_{2\beta+3\alpha}\dot{t}\dot{r}B}$ follows easily from the limit above.
\end{proof}

\begin{lemma} \label{closurelem4}
\[ \lim_{z \rightarrow \infty} x_{\beta}(z)\dot{t}\dot{s}B=\lim_{z \rightarrow \infty}x_{\beta+3\alpha}(z)\dot{r}B=\dot{s}B.  \]
\end{lemma}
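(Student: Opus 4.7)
My plan is to reduce both limits to the preceding Lemma~\ref{closurelem2}, which already evaluates the generic $SL_2$-type limit inside the rank-one Levi $L = \langle X_{-\beta}, T, X_\beta\rangle$.

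For the first limit, recall that in the $\tilde A_1$ case we have $W_L = \{e,t\}$ and $W^L = \{e, s, st, r^2, tr^3, tr^4\}$. Since $s^{-1}(\beta) = s(\beta) = \beta + 3\alpha \in \Phi^+$, the element $s$ lies in $W^L$. Therefore the first limit is nothing but Lemma~\ref{closurelem2} applied with $v = s$, yielding $\lim_{z \to \infty} x_\beta(z)\dot{t}\dot{s}B = \dot{s}B$ at once.

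For the second limit, my plan is to conjugate $x_{\beta+3\alpha}(z)$ past $\dot{s}$ so as to reduce to the $v = e$ instance of Lemma~\ref{closurelem2}. Writing $\dot{r} = \dot{s}\dot{t}$ and noting that $s(\beta+3\alpha) = (\beta+3\alpha) - 3\alpha = \beta$, Lemma~\ref{permlem} gives
\[
x_{\beta+3\alpha}(z)\dot{r}B \;=\; \dot{s}\bigl(\dot{s}^{-1} x_{\beta+3\alpha}(z)\dot{s}\bigr)\dot{t}B \;=\; \dot{s}\,x_\beta(cz)\,\dot{t}B
\]
for some nonzero constant $c$. Applying Lemma~\ref{closurelem2} with $v = e$ to the inner limit $\lim_{z\to\infty} x_\beta(cz)\dot{t}B = \dot{e}B = B$, I conclude that $\lim_{z \to \infty} x_{\beta+3\alpha}(z)\dot{r}B = \dot{s}B$.

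I do not anticipate any real obstacle here; the only mild subtlety is that the Weyl representatives $\dot{s}, \dot{t}, \dot{r}$ are defined only up to $T$, but any such torus ambiguity is absorbed into the coset $\dot{s}B$. The computation essentially reduces to recognizing that $s \in W^L$ for the first limit and exploiting the identity $s(\beta+3\alpha) = \beta$ for the second.
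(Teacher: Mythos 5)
Your proof is correct and follows essentially the same route as the paper: recognize $s \in W^L$ and invoke Lemma~\ref{closurelem2} with $v=s$ for the first limit, then conjugate $x_{\beta+3\alpha}(z)$ past $\dot{s}$ via Lemma~\ref{permlem} and the identity $s(\beta+3\alpha)=\beta$ to reduce the second limit to Lemma~\ref{closurelem2} with $v=e$. The only cosmetic deviation is your use of $\dot{s}^{-1}x_{\beta+3\alpha}(z)\dot{s}$ where the paper writes $\dot{s}x_{\beta+3\alpha}(z)\dot{s}$; since $s^2=e$ these differ by an element of $T$, which (as you note) merely rescales the constant $c$.
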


\begin{proof}
Since $s \in W^L$, by Lemma~\ref{closurelem2},
\[ \lim_{z \rightarrow \infty}x_{\beta}(z)\dot{t}\dot{s}B=\dot{s}B.  \]

By Lemma~\ref{permlem}, there exists a nonzero constant $c$ such that for any $z \in \mathbb{C}$,
\[ x_{\beta+3\alpha}(z)\dot{r}B=\dot{s}(\dot{s}x_{\beta+3\alpha}(z)\dot{s})\dot{t}B=\dot{s}(x_{\beta}(cz)\dot{t}B).  \]
Therefore, by Lemma~\ref{closurelem2},
\[ \lim_{z \rightarrow \infty} x_{\beta+3\alpha}(z)\dot{r}B=\dot{s}(\lim_{z \rightarrow \infty}x_{\beta}(cz)\dot{t}B)=\dot{s}B.\]
\end{proof}

Now we can prove Theorem~\ref{aninterestinghif}.

\begin{proof}
The explicit description
\[ \pi_{I_{\alpha}}^{-1}(\tilde{A_1})=\overline{X_{t}} \amalg \overline{X_{\beta}\dot{t}\dot{s}B} \amalg X_{\beta+3\alpha}\dot{r}B \amalg X_{\beta}X_{2\beta+3\alpha}\dot{t}\dot{r}B  \]
shows that $\pi_{I_{\alpha}}^{-1}(\tilde{A_1})$ has two 0-cells, three 1-cells and one 2-cell. Since $\pi_{I_{\alpha}}^{-1}(\tilde{A_1})$ is projective, it is compact in the ordinary topology. Therefore the cell structure gives us the singular homology of $\pi_{I_{\alpha}}^{-1}(\tilde{A_1})$. In particular, $H_0(\pi_{I_{\alpha}}^{-1}(\tilde{A_1}); \mathbb{Z})=\mathbb{Z} \oplus \mathbb{Z}$, which means $\pi_{I_{\alpha}}^{-1}(\tilde{A_1})$ has two connected components. 

Lemma~\ref{closurelem1} and Lemma~\ref{closurelem3} imply that both $\overline{X_{\beta}\dot{t}\dot{s}B}$ and $\overline{X_{\beta+3\alpha}\dot{r}B}$ lie in $\overline{X_{\beta}X_{2\beta+3\alpha}\dot{t}\dot{r}B}$. Therefore, $\pi_{I_{\alpha}}^{-1}(\tilde{A_1})=\overline{X_{t}} \cup \overline{X_{\beta}X_{2\beta+3\alpha}\dot{t}\dot{r}B}$. Note that $\overline{X_{\beta}X_{2\beta+3\alpha}\dot{t}\dot{r}B}$ is irreducible and connected in the Zariski topology, because it is the Zariski closure of a 2-cell. Therefore, it is also connected in the ordinary topology (\cite{hartshorne1997algebraic}[Appendix B]). Then we must have $\overline{X_{t}} \cap \overline{X_{\beta}X_{2\beta+3\alpha}\dot{t}\dot{r}B}=\emptyset$. Otherwise $\pi_{I_{\alpha}}^{-1}(\tilde{A_1})$ is connected in the ordinary topology, contradictory to $H_0(\pi_{I_{\alpha}}^{-1}(\tilde{A_1}); \mathbb{Z})=\mathbb{Z} \oplus \mathbb{Z}$. In summary, $\overline{X_{t}}$ and $\overline{X_{\beta}X_{2\beta+3\alpha}\dot{t}\dot{r}B}$ are both connected and irreducible and they are of dimension 1 and 2 respectively. We have now proved the theorem.
\end{proof}

\section{Dot Actions for Type \texorpdfstring{$G_2$}{}} \label{dotactiong2}
As an application of Hessenberg ideal fibers, we use the results from section \ref{g2hif} to classify Tymoczko's dot actions for type $G_2$. This section is joint work with the author's advisor Dr. Patrick Brosnan.

\subsection{Background and motivation}
Let $G$ be a connected reductive algebraic group over $\mathbb{C}$ and $B$ a Borel subgroup of $G$. Let $\mathfrak{g}$ and $\mathfrak{b}$ denote their respective Lie algebras. Recall that a Hessenberg subspace is an $\operatorname{ad}(\mathfrak{b})$-stable subspace $M$ of $\mathfrak{g}$ containing $\mathfrak{b}$. Let $y \in \mathfrak{g}$ be a regular semisimple element. The Hessenberg variety $\operatorname{Hess}(M, y)$ is defined to be the fiber over $y$ of the following map
\begin{equation} \label{piM}
\begin{array}{llll}
\pi_M: & G \times^B M & \longrightarrow & \mathfrak{g} \\
 & (g, x) &  \longmapsto & g \cdot x
\end{array}
\end{equation}
where $g \cdot x$ denotes the adjoint action $\operatorname{Ad}(g)(x)$. According to \cite{de1992hessenberg}, $\operatorname{Hess}(M,y)$ is nonsingular for any regular semisimple $y \in \mathfrak{g}$, and, if $T=C_G(y)$ denotes the maximal torus in $G$ centralizing $y$, we have $\operatorname{Hess}(M,y)^T=(G/B)^T$. By \cite{bialynicki1973some}, it follows that $\operatorname{Hess}(M,y)$ is cellular with cells in one-one correspondence with elements of the Weyl group $W$ of $G$. Tymoczko \cite{tymoczko2007permutation} used these facts to define the dot actions of $W$ on both the $T$-equivariant and the ordinary cohomology groups with coefficient $\mathbb{C}$ of the (regular semisimple) Hessenberg variety $\operatorname{Hess}(M, y)$. The point is that $W$ does not act on the underlying Hessenberg variety in any interesting way.

The Shareshian-Wachs conjecture expresses Tymoczko's dot action in type A ($G=\mathbf{GL}_n$) in terms of a symmetric function attached to colorings of a certain graph $\Gamma_f$. When $G=\mathbf{GL}_n$, the Hessenberg subspace $M$ is uniquely determined by a non-decreasing function $f: \left\{1, \ldots, n\right\} \longrightarrow \left\{1, \ldots, n\right\}$ called a Hessenberg function (we require $f(i ) \ge i$ for all $i$). Then $\Gamma_f$ is the graph with vertex set $V=\left\{1, \ldots, n\right\}$ and edge set $E=\left\{ \left\{i, j\right\} \subset V \ | \  i < j \le f(i) \right\}$. Let $\Lambda$ denote the ring of symmetric functions, a subring of the power series ring in infinitely many variables $\mathbb{C}\left[ [x_1, x_2, \ldots]  \right]$. The ring $\Lambda=\bigoplus_{n \ge 0 } \Lambda_n$ is graded in an obvious way and the Frobenius character $\operatorname{ch}: \operatorname{Rep}S_n \otimes_{\mathbb{Z}}\mathbb{C} \longrightarrow \Lambda_n$ is an isomorphism from the representation ring (viewed as an abelian group) $\operatorname{Rep}S_n$ to $\Lambda_n$. Modifying a construction of Stanley, Shareshian and Wachs \cite{shareshian2016chromatic} defined a polynomial $X_{\Gamma_f}(t) \in \Lambda[t]$ given by
\begin{equation} \label{chrqf}
X_{\Gamma_f}(t)=\sum_{\kappa: V \rightarrow \mathbb{Z}_{+}} t^{\operatorname{asc}(\kappa)}x_{\kappa(1)}x_{\kappa(2)}\cdots x_{\kappa(n)}  
\end{equation}
where $\kappa$ runs over all colorings of $\Gamma_f$ and $\operatorname{asc}(\kappa)=|\left\{(i, j ) \in E \ | \  i <j, \kappa(i) < \kappa(j)\right\}|$. The Shareshian-Wachs conjecture is then 
\begin{equation} \label{ShWaconj}
\omega X_{\Gamma_f}(t)=\sum_{i \ge 0} \operatorname{ch}(S_n, H^{2i}(\operatorname{Hess}(M, y)))t^i
\end{equation}
where $\omega$ is the involution on $\Lambda$ corresponding to tensoring with the sign representation and $y \in \mathfrak{g}$ is regular semisimple.

This conjecture has already been proved by Brosnan and Chow in \cite{brosnan2018unit}. Therefore, Equation~\ref{ShWaconj} gives us a formula for Tymoczko's dot action. Moreover, it implies that the Stanley-Stembridge conjecture, which states that $X_{\Gamma_f}(1)$ is a positive linear combination of elementary symmetric functions, has the following representation theoretic formulation: when $G=\mathbf{GL}_n$, the dot action of $S_n$ on $H^*(\operatorname{Hess}(M, y))$ is a direct sum of representations of the form $\operatorname{Ind}_{S_{\lambda}}^{S_n} \mathbf{1}$, where $y \in \mathfrak{g}$ is regular semisimple and $S_{\lambda}$ denotes the Young subgroup of $S_n$ for the partition $\lambda$ of $n$. The Stanley-Stembridge conjecture has not been proved yet, which means the dot action on $H^*(\operatorname{Hess}(M, y))$ still needs to be further investigated. Naturally, if a representation theoretic proof of the Stanley-Stembridge conjecture is desired, we should find for it a representation theoretic formulation in all types via the dot action. As one step in the attempt to generalize the Shareshian-Wachs and the Stanley-Stembridge conjectures, Brosnan and the author classified all the dot actions for type $G_2$. In particular, Theorem~\ref{thmBX} Table~\ref{tab:listdotg2} shows that not every $H^*(\operatorname{Hess}(M, y))$ is a direct sum of induced representations of the form $\operatorname{Ind}_{W_J}^W \mathbf{1}$, where $W_J$ is the Weyl group of a Levi subalgebra of $\mathfrak{g}$. Therefore, at least the ``naive'' generalization of the Stanley-Stembridge conjecture for type $A$ is not true for type $G_2$.

\subsection{Computational techniques} \label{comptech}
In this subsection, we present the main ideas of the computation and gather all the relevant techniques.

Let $G$ denote a connected reductive algebraic group over $\mathbb{C}$ of any type. For a Hessenberg subspace $M$, consider the map $\pi_M: G \times^B M \longrightarrow \mathfrak{g}$ again. Let $\mathfrak{g}^{rs}$ denote the Zariski open dense subset of $\mathfrak{g}$ consisting of regular semisimple elements and $\underline{\mathbb{C}}$ denote the constant sheaf over $G \times^B M$. Results from \cite{de1992hessenberg} show that the direct push-forward complex $R\pi_{M*}\underline{\mathbb{C}}|_{\mathfrak{g}^{rs}}$, considered as an object of the constructible bounded derived category $D_c^b(\mathfrak{g}^{rs})$, is equivalent to a local system over $\mathfrak{g}^{rs}$. The local system corresponds to a monodromy action of $\pi_1(\mathfrak{g}^{rs},y)$ on $H^{*}(\operatorname{Hess}(M, y))$ after picking a base-point $y \in \mathfrak{g}^{rs}$. The following theorem was first stated and proved for type $A$ by Brosnan and Chow, and generalized to all other types by B\u{a}libanu and Crooks.

\begin{theorem}[{\cite{brosnan2018unit}[Theorem 73]}, {\cite{bualibanu2022perverse}[Corollary 1.14]}] \label{BrChmonodromy}
The monodromy action of $\pi_1(\mathfrak{g}^{rs}, y)$ on $H^*(\operatorname{Hess}(M, y))$ factors through the Weyl group $W$ and coincides with Tymoczko's dot action.
\end{theorem}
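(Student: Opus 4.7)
The plan is to exhibit the monodromy geometrically via the Grothendieck--Springer resolution and then identify it with Tymoczko's combinatorial action at the level of $T$-fixed points. Consider the Grothendieck--Springer map $\nu: \widetilde{\mathfrak{g}} = G \times^B \mathfrak{b} \longrightarrow \mathfrak{g}$. Its restriction $\nu^{rs}$ over $\mathfrak{g}^{rs}$ is a finite \'etale Galois cover with Galois group $W$: the fiber over $y$ is the set of Borel subalgebras containing $y$, freely and transitively permuted by $W$ once one chooses the base point coming from the maximal torus $C_G(y)$. This gives a canonical surjection $\pi_1(\mathfrak{g}^{rs}, y) \twoheadrightarrow W$.

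Next, since $\mathfrak{b} \subset M$, the inclusion induces a closed embedding $\widetilde{\mathfrak{g}} \hookrightarrow G \times^B M$ compatible with $\nu$ and $\pi_M$. I would base-change $\pi_M$ along $\nu^{rs}$ to form
\[
\widetilde{\pi}_M : \widetilde{\mathfrak{g}^{rs}} \times_{\mathfrak{g}^{rs}} (G \times^B M) \longrightarrow \widetilde{\mathfrak{g}^{rs}}.
\]
Over each point $(y, \mathfrak{b}') \in \widetilde{\mathfrak{g}^{rs}}$, the fiber is canonically $\operatorname{Hess}(M, y)$ equipped with a distinguished point $\mathfrak{b}'$. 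Using left translation by $G$ to move Borels, one can write down an explicit trivialization of $\widetilde{\pi}_M$ as a product bundle over $\widetilde{\mathfrak{g}^{rs}}$ (this is essentially the fact that on a Galois cover, pulled-back fiber bundles become trivial once they come equipped with a section). Consequently $(\nu^{rs})^* R\pi_{M*}\underline{\mathbb{C}}|_{\mathfrak{g}^{rs}}$ is constant, which means the original local system is pulled back from the classifying space of the quotient group, and hence the monodromy representation of $\pi_1(\mathfrak{g}^{rs}, y)$ factors through $W$.

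To identify this induced $W$-action with Tymoczko's dot action, I would compare the two at the level of $T$-fixed points. Fix $y \in \mathfrak{t}$ regular semisimple so that $T = C_G(y)$; then $\operatorname{Hess}(M, y)^T = (G/B)^T$ is canonically indexed by $W$. Tymoczko's dot action is defined combinatorially through the GKM presentation of $H^*_T(\operatorname{Hess}(M, y))$, where $w \in W$ permutes fixed points by $u \mapsto wu$ and acts on edge labels through the natural $W$-action on $\mathfrak{t}^*$. On the geometric side, following a loop in $\mathfrak{g}^{rs}$ that lifts to a deck transformation $w$ of $\widetilde{\mathfrak{g}^{rs}}$, parallel transport carries the fixed point indexed by $u$ to the one indexed by $wu$, because the $W$-action on the cover corresponds (via the identification of fibers of $\nu^{rs}$ with $W$) to left translation. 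Matching equivariant classes then reduces to the statement that Springer-theoretic parallel transport on $\widetilde{\mathfrak{g}}$ realizes the standard $W$-action, a classical fact.

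The main obstacle is the last step: carefully checking that the geometric monodromy agrees with Tymoczko's GKM-combinatorial formulas, not just on $T$-fixed points but on the full equivariant cohomology ring (and, via localization, on ordinary cohomology). In type $A$, Brosnan--Chow handled this by relating $R\pi_{M*}\underline{\mathbb{C}}[\dim]$ to simple summands of Springer's sheaf and invoking Ginzburg's identification of the endomorphism algebra of $R\nu_*\underline{\mathbb{C}}[\dim]$ with $\mathbb{C}[W]$; in general types, B\u{a}libanu--Crooks formalized this through a perverse-sheaf argument on $\mathfrak{g}$, and a clean proof should proceed similarly by showing $R\pi_{M*}\underline{\mathbb{C}}$ is a direct summand of $R\nu_*\underline{\mathbb{C}}$ in an appropriate equivariant derived category, so that the full $W$-equivariance structure is inherited from the Grothendieck--Springer side.
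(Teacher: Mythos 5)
The paper does not prove this theorem; it cites \cite{brosnan2018unit} and \cite{balibanu2020perverse}. So there is no in-paper argument to compare against, and I can only assess your sketch on its own terms, where there is a genuine gap in the second step.

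The first step --- that $\nu^{rs}$ is a $W$-Galois \'etale cover and hence gives a canonical $\pi_1(\mathfrak{g}^{rs},y)\twoheadrightarrow W$ --- is correct. The gap is in the passage to triviality of the pulled-back family. The parenthetical ``on a Galois cover, pulled-back fiber bundles become trivial once they come equipped with a section'' is false in general (the tangent bundle of $S^2$ has the zero section and is nontrivial); having a section trivializes a finite \'etale family, but the Hessenberg family is not finite \'etale. More to the point, the explicit trivialization you propose via left $G$-translation over $\widetilde{\mathfrak{g}^{rs}}\cong G\times^T\mathfrak{t}^{rs}$ would require $\operatorname{Hess}(M,h)\subset G/B$ to be a \emph{fixed} subvariety as $h$ ranges over $\mathfrak{t}^{rs}$, since $g$ only handles the $G$-direction. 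That is not the case. In $\mathfrak{sl}_3$ with $M=\mathfrak{b}\oplus\mathfrak{g}_{-\alpha_1}\oplus\mathfrak{g}_{-\alpha_2}$, writing $p=\alpha_1(h)$, $q=\alpha_2(h)$ and parametrizing the big cell by $(a,b,c)\mapsto \exp(aE_{\alpha_1}+bE_{\alpha_2}+cE_{\alpha_1+\alpha_2})\dot{w}_0 B$, one computes
\[
\operatorname{Hess}(M,h)\cap X_{w_0}=\bigl\{(a,b,c) : c(p+q)+\tfrac12\,ab\,(p-q)=0\bigr\},
\]
which is a plane when $p=q$ and a genuine quadric otherwise. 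So the family over $\mathfrak{t}^{rs}$ is not a constant family of subvarieties, and the vanishing of the pulled-back monodromy is the substance of the theorem rather than a formal consequence of a marked point.

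Your final paragraph gestures at the right machinery but misstates it: $R\pi_{M*}\underline{\mathbb{C}}$ is \emph{not} a direct summand of $R\nu_*\underline{\mathbb{C}}$. Already for $M=\mathfrak{g}$, the complex $R\pi_{M*}\underline{\mathbb{C}}[d]$ is $H^*(G/B)\otimes\underline{\mathbb{C}}_{\mathfrak{g}}[\dim\mathfrak{g}]$, which contains $|W|$ shifted copies of the constant IC-sheaf, whereas $R\nu_*\underline{\mathbb{C}}[\dim\mathfrak{g}]$ contains the constant summand only once. What the cited proofs do, and what section~\ref{dotactiong2} relies on, is apply the decomposition theorem to $R\pi_{M*}\underline{\mathbb{C}}[d]$ together with the Fourier--Sato transform: each simple summand with full support $\mathfrak{g}$ is forced to be $\operatorname{IC}(\mathfrak{g},\mathcal{V})$ for an irreducible local system $\mathcal{V}$ on $\mathfrak{g}^{rs}$ arising in the Springer correspondence, whose monodromy factors through $W$ by construction; summands with smaller support vanish at regular semisimple $y$ and so do not contribute to $H^*(\operatorname{Hess}(M,y))$. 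Your third paragraph on matching the $W$-action with Tymoczko's dot action via $T$-fixed points and the GKM description is a reasonable outline of the identification step, assuming the factorization through $W$ has been established by the sheaf-theoretic argument.
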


From now on, we will not distinguish between the monodromy action above and Tymoczko's original definition via moment graph, and will refer to both of them simply as the dot action of $W$ on $H^*(\operatorname{Hess}(M, y))$. In particular, the theorem above implies that the dot action on $H^*(\operatorname{Hess}(M,y))$ does not depend on the choice of the regular semisimple element $y$.

Let $d$ denote the complex dimension of $G \times^B M$. Since $G \times^B M$ is a vector bundle over $G/B$, it is nonsingular and we can therefore apply the decomposition theorem of Beilinson, Bernstein and Deligne to $R\pi_{M*}\underline{\mathbb{C}}[d]$. Brosnan has the following conjecture.

\begin{conjecture}[Brosnan] \label{Brconj} 
$R\pi_{M*}\underline{\mathbb{C}}[d]$ is a direct sum of shifted intermediate extensions of irreducible local systems on $\mathfrak{g}^{rs}$. That is, we have the following decomposition,
\begin{equation} \label{Brdecomp}
R\pi_{M*}\underline{\mathbb{C}}[d]=\bigoplus \operatorname{IC}(\overline{\mathfrak{g}^{rs}}, \mathcal{V}_i)[a_i]
\end{equation}
where each $\mathcal{V}_i$ is an irreducible local system on $\mathfrak{g}^{rs}$ and $\operatorname{IC}(\overline{\mathfrak{g}^{rs}}, \mathcal{V}_i)$ is the intermediate extension of $\mathcal{V}_i[\operatorname{dim \mathfrak{g}}]$ from $\mathfrak{g}^{rs}$ to $\mathfrak{g}=\overline{\mathfrak{g}^{rs}}$. Moreover, for any $y \in \mathfrak{g}^{rs}$, the monodromy action of $\pi_1(\mathfrak{g}^{rs}, y)$ on the stalk $\mathcal{V}_{i, y}$ factors through the Weyl group $W$.
\end{conjecture}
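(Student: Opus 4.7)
The plan is to exploit the Fourier-Sato duality $F(R\pi_{M*}\underline{\mathbb{C}}[d]) = R\pi_{I*}\underline{\mathbb{C}}[d^{\vee}]$ described in the introduction, combining the decomposition theorem of Beilinson-Bernstein-Deligne with the affine paving of Hessenberg ideal fibers established as Theorem~\ref{mymainthm}. First, apply the decomposition theorem to $R\pi_{M*}\underline{\mathbb{C}}[d]$: the source $G \times^B M$ is smooth and $\pi_M$ is proper, so using $G \times \mathbb{G}_m$-equivariance one obtains
$$R\pi_{M*}\underline{\mathbb{C}}[d] = \bigoplus_j \operatorname{IC}(\overline{S_j}, \mathcal{L}_j)[a_j],$$
with each $S_j$ a $G \times \mathbb{G}_m$-stable, smooth locally closed subvariety of $\mathfrak{g}$ and each $\mathcal{L}_j$ an irreducible local system on $S_j$. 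The conjecture reduces to showing $\overline{S_j} = \mathfrak{g}$ for every $j$ (so $S_j = \mathfrak{g}^{rs}$ up to passing to a smaller open inside), and that the monodromy of each $\mathcal{L}_j|_{\mathfrak{g}^{rs}}$ factors through $W$.

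Next, transport the decomposition to the $I$-side via $F$:
$$R\pi_{I*}\underline{\mathbb{C}}[d^{\vee}] = \bigoplus_j F\bigl(\operatorname{IC}(\overline{S_j}, \mathcal{L}_j)\bigr)[a_j].$$
By Sommers (cited in the introduction), the image of $\pi_I$ is the closure of a single nilpotent orbit $\mathcal{O}_I$, so every summand on the right is a simple $G \times \mathbb{G}_m$-equivariant perverse sheaf supported in $\overline{\mathcal{O}_I} \subset \mathcal{N}$. By Theorem~\ref{mymainthm}, every fiber $\pi_I^{-1}(N)$ is paved by affines, hence $H^*(\pi_I^{-1}(N); \mathbb{C})$ is pure and concentrated in even degrees; this forces each stalk of every summand at nilpotent points to be pure of the expected weight and parity. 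The key claim on the $I$-side is that each such summand is of the form $F(\operatorname{IC}(\mathfrak{g}, \mathcal{V}_j))$ for some irreducible local system $\mathcal{V}_j$ on $\mathfrak{g}^{rs}$, which by involutivity of $F$ is equivalent to the original conjecture. The factoring of monodromy through $W$ would then follow from the $M = \mathfrak{b}$ case, namely Grothendieck's simultaneous resolution (whose direct image is already the IC extension of the regular $W$-local system), by observing that over $\mathfrak{g}^{rs}$ every map $\pi_M$ factors through the common finite $W$-Galois cover $\widetilde{\mathfrak{g}^{rs}} \to \mathfrak{g}^{rs}$.

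The hardest step is identifying the Fourier-Sato duals of IC sheaves supported on nilpotent orbit closures precisely enough to rule out IC sheaves with support on \emph{intermediate} $G \times \mathbb{G}_m$-stable strata of $\mathfrak{g}$ (those corresponding to a mixed Jordan decomposition with both nonzero semisimple and nonzero nilpotent part). One natural route is via Lusztig's generalized Springer correspondence, which controls the category of $G$-equivariant perverse sheaves on $\mathcal{N}$ and whose Fourier counterparts on $\mathfrak{g}$ are well-studied. Alternatively, one can proceed by induction on Levi subalgebras using the construction in subsection~\ref{undist}: the minimal Levi containing an undistinguished nilpotent element reduces the analysis to the distinguished case, where $\mathfrak{g}(2)$ is tightly constrained and the summands can be attacked directly through the quintuple descriptions of Lemma~\ref{dist1} and Lemma~\ref{dist2}. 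In type $G_2$ this strategy succeeds through the explicit cell computations of section~\ref{g2hif}, and the corresponding dot actions are classified in section~\ref{dotactiong2}; a type-free proof would parallel this case analysis while leveraging the full Fourier-Sato machinery.
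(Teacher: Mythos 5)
The statement you were asked to prove is, in the paper, genuinely a conjecture: the paper itself does not prove it in general, only in the special case of type $G_2$ (Lemma~\ref{Brconjg2}). Your sketch correctly identifies the structural reduction via Fourier--Sato duality: apply the decomposition theorem to both $R\pi_{M*}\underline{\mathbb{C}}[d]$ and $R\pi_{I*}\underline{\mathbb{C}}[d^{\vee}]$, and note that proving the conjecture is equivalent to showing that every simple summand of $R\pi_{I*}\underline{\mathbb{C}}[d^{\vee}]$ is the Fourier--Sato transform of an $\operatorname{IC}$-sheaf with open support --- equivalently, that no ``cuspidal'' summand appears, i.e.\ no $\operatorname{IC}(\overline{C(N)},\mathcal L)$ with $(N,\mathcal L)$ outside the image of the Springer correspondence (equivalently, whose Fourier transform has support a proper closed subset of $\mathfrak g$). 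This is exactly the reduction the paper makes in the proof of Lemma~\ref{Brconjg2}.

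However, at the crucial step you wave your hands. The affine paving of $\pi_I^{-1}(N)$ gives you parity and purity of $H^*(\pi_I^{-1}(N))$, but this does \emph{not} rule out cuspidal summands: the cuspidal $\operatorname{IC}$-sheaf $\operatorname{IC}(G_2(a_1),\psi_{111})$ is just as pure and has the same parity behavior as the non-cuspidal ones, so the purity argument does not distinguish them. What actually excludes it in the paper is the $A(N)$-equivariance of the stalk isomorphism (Lemma~\ref{ANactioncompatible}) combined with the explicit description of the $A(N)\cong S_3$-action on $H^*(\pi_I^{-1}(N))$ for $N$ subregular (Remark~\ref{subregularSF}): since that action is always either trivial or the $3$-dimensional permutation representation, and $\psi_{111}$ is the sign, the sign local system can never contribute. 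You mention neither the $A(N)$-equivariance nor the necessity of knowing the component-group action; ``the explicit cell computations'' alone do not produce the argument. For the general conjecture, your two suggested routes (Lusztig's generalized Springer correspondence; induction on Levi subalgebras via subsection~\ref{undist}) are plausible directions, but neither is carried out, and the conjecture remains a conjecture in the paper precisely because this hard step is open beyond small cases. So the proposal is not a proof: it correctly sets up the reduction but leaves the decisive step --- excluding cuspidal pieces --- both ungrounded in general and, even in the $G_2$ case, missing the specific equivariance mechanism that makes it work.
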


We are going to prove the conjecture independently for type $G_2$ later in this subsection, which will be sufficient for our computation of dot actions. 

\begin{remark}
This conjecture has been proved for type $A$ by B\u{a}libanu and Crooks \cite{bualibanu2022perverse}, and for all types by Precup and Sommers \cite{precup2022perverse}. Vilonen and Xue proved a similar but weaker statement in \cite{vilonen2021note}.
\end{remark}

When the conjecture above is true, taking cohomology of both sides of Equation~\ref{Brdecomp} at $y \in \mathfrak{g}^{rs}$ and ignoring shifting, we get the decomposition of the dot action of $W$ on $H^*(\operatorname{Hess}(M, y))$ as a direct sum of irreducible $W$ representations: $H^*(\operatorname{Hess}(M,y)) \cong \bigoplus \mathcal{V}_{i, y}$. Therefore, Tymoczko's dot action is determined by the decomposition of the complex $R\pi_{M*}\underline{\mathbb{C}}[d]$.

On the other hand, Hessenberg subspace is the natural ``dual'' notion of Hessenberg ideal. Recall that a Hessenberg ideal is an $\operatorname{ad}(\mathfrak{b})$-stable subspace $I$ of $\mathfrak{u}$, where $\mathfrak{u}$ is the Lie algebra of the unipotent radical $U$ of $B$. There is a one-one correspondence between Hessenberg subspaces and Hessenberg ideals denoted by $M \mapsto I=M^{\perp}$. To understand the correspondence, choose a maximal torus $T$ in $B$. Let $\Phi$ denote the set of roots and $\mathfrak{g}_{\alpha}$ denote the root space corresponding to $\alpha \in \Phi$ as before. It is easy to see that $M$ and $I$ have root space decompositions for certain subsets $\Phi(M)$ and $\Phi(I)$ of $\Phi$:
\[ M \cong \mathfrak{t} \oplus \bigoplus_{\alpha \in \Phi(M)} \mathfrak{g}_{\alpha} \, \text{ and } \, I \cong \bigoplus_{\alpha \in \Phi(I)} \mathfrak{g}_{\alpha},  \]
where $\mathfrak{t}$ is the Lie algebra of the maximal torus $T$. Then $I=M^{\perp}$ if and only if $\Phi(I)=-(\Phi \setminus \Phi(M))$. In the rest of this section, unless otherwise specified, $M$ and $I$ are always assumed to satisfy the relation $I=M^{\perp}$ as explained above.

Recall the following map defined in the very beginning of section \ref{introduction}.
\begin{equation*} 
\begin{array}{llll}
\pi_I: & G \times^B I & \longrightarrow & \mathfrak{g} \\
 & (g, x) &  \longmapsto & g \cdot x
\end{array}
\end{equation*}
Let $N \in \mathcal{N}$ be an element in the image of $\pi_I$. The fiber of $\pi_I$ over $N$, $\pi_{I}^{-1}(N)$, is a Hessenberg ideal fiber. 
Let $G \times \mathbb{G}_m$ act on $\mathfrak{g}$ with $G$ acting via the adjoint action and $\mathbb{G}_m$ acting by scaling. By fixing a Killing form, we get an autoequivalence $F$, the Fourier-Sato transform, from the category $\operatorname{Perv}_{G \times \mathbb{G}_m} (\mathfrak{g})$ of $G \times \mathbb{G}_m$-equivariant perverse sheaves on $\mathfrak{g}$ to itself. Let $d^{\vee}$ denote the complex dimension of $G \times^B I$. We know that $F(R\pi_{M*}\underline{\mathbb{C}}[d])=R\pi_{I*}\underline{\mathbb{C}}[d^{\vee}]$ and $F$ maps a simple summand of $R\pi_{M*}\underline{\mathbb{C}}[d]$ to a simple summand of $R\pi_{I*}\underline{\mathbb{C}}[d^{\vee}]$ (see \cite{bualibanu2022perverse}[section 2.2]). $R\pi_{I*}\underline{\mathbb{C}}[d^{\vee}]$ is supported on the nilpotent cone $\mathcal{N} \subset \mathfrak{g}$, and the $G \times \mathbb{G}_m$-orbits in $\mathcal{N}$ and the equivariant perverse sheaves on these nilpotent orbits are very well-understood (as they are the main subject of Springer theory). Since $G \times^B I$ is nonsingular, we can apply the decomposition theorem to $R\pi_{I*}\underline{\mathbb{C}}[d^{\vee}]$ and get 
\begin{equation} \label{Brdecomp2}
R\pi_{I*}\underline{\mathbb{C}}[d^{\vee}]=\bigoplus_{(N, \mathcal{L})} \operatorname{IC}(\overline{C(N)}, \mathcal{L})[b_{N, \mathcal{L}}]
\end{equation}
where $N \in \mathcal{N}$ is a nilpotent element, $C(N)$ is the conjugacy class (nilpotent orbit) of $N$, $\mathcal{L}$ is an irreducible $G$-equivariant local system on $C(N)$, $\operatorname{IC}(\overline{C(N)}, \mathcal{L})$ is the intermediate extension of $\mathcal{L}[\operatorname{dim}C(N)]$ from $C(N)$ to $\overline{C(N)}$ and $b_{N, \mathcal{L}}$ is an integer for shifting. The pairs $(N, \mathcal{L})$ appearing in the sum are determined by $\mathfrak{g}$ and $I$. Applying the Fourier-Sato transform $F$ (which is an autoequivalence) to both sides of Equation~\ref{Brdecomp2} and comparing it to Equation~\ref{Brdecomp}, we see that for each summand $\operatorname{IC}(\overline{C(N)}, \mathcal{L})$ of $R\pi_{I*}\underline{\mathbb{C}}[d^{\vee}]$, $F(\operatorname{IC}(\overline{C(N)}, \mathcal{L}))=\operatorname{IC}(\overline{\mathfrak{g}^{rs}}, \mathcal{V}_i)$, where the right hand side is a certain summand of $R\pi_{M*}\underline{\mathbb{C}}[d]$. Moreover, the correspondence 
\[ \mathcal{V}_{i, y} \mapsto \operatorname{IC}(\overline{C(N)}, \mathcal{L}) \text{ such that } F(\operatorname{IC}(\overline{C(N)}, \mathcal{L}))=\operatorname{IC}(\overline{\mathfrak{g}^{rs}}, \mathcal{V}_i)  \]
is exactly the Springer correspondence that sends the trivial representation of $W$ to $\operatorname{IC}(\left\{0\right\}, \underline{\mathbb{C}})$. Now we have reduced the problem of computing the dot action of $W$ on $H^*(\operatorname{Hess}(M, y))$ to the decomposition of $R\pi_{I*}\underline{\mathbb{C}}[d^{\vee}]$ into simple summands. That is, if we know the decomposition of $R\pi_{I*}\underline{\mathbb{C}}[d^{\vee}]$, applying the Springer correspondence (whose $G_2$ case will be explicitly given shortly), we get the decomposition of the dot action on $H^*(\operatorname{Hess}(M, y))$ as a direct sum of irreducible $W$ representations. Note that if a nilpotent element $N \in \mathcal{N}$ is in the image of $\pi_I$, $H^*(R\pi_{I*}\underline{\mathbb{C}}|_N) = H^*(\pi_{I}^{-1}(N))$. This is where the knowledge of Hessenberg ideal fibers, in particular the results from section \ref{g2hif}, enters the computation of dot actions.

We briefly recall the theory of Springer correspondence. For simplicity, assume that $G$ is a connected simple algebraic group over $\mathbb{C}$ of the adjoint form. Since both Hessenberg varieties and Hessenberg ideal fibers are subvarieties of $G/B$ defined via subspaces of the Lie algebra $\mathfrak{g}$, different choices of the group $G$ does not affect them as long as $G$ is of the same type. The theory of Springer correspondence states that:

\begin{theorem*}[{\cite{borho1983partial}}, p.~48, section 2.2]
Let $W$ be the Weyl group of $G$, $\operatorname{Irr}(W)$ be the set of isomorphic classes of irreducible $W$ representations and $\{(N, \psi)\}/G$ be the $G$-conjugacy classes of pairs $(N, \psi)$ where $N$ is a nilpotent element and $\psi$ is an irreducible character of $A(N)=C_G(N)/C_G^{\circ}(N)$. Then there is a meaningful injection $\operatorname{Irr}(W) \longrightarrow \{(N, \psi)\}/G$.
\end{theorem*}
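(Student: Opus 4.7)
The plan is to construct the Springer correspondence geometrically via the BBD decomposition theorem applied to the Springer resolution, following the approach of Borho--MacPherson. Let $\mathcal{N} \subset \mathfrak{g}$ denote the nilpotent cone and consider the Springer resolution $\mu: \tilde{\mathcal{N}} = G \times^B \mathfrak{u} \longrightarrow \mathcal{N}$ sending $(g, X)$ to $\operatorname{Ad}(g)(X)$. This is a proper morphism from the smooth variety $\tilde{\mathcal{N}}$ of dimension $\dim \mathcal{N}$, and a theorem of Spaltenstein shows that $\mu$ is semismall: for each nilpotent orbit $C$ of codimension $2d$ in $\mathcal{N}$, every irreducible component of the Springer fiber $\mu^{-1}(x)$, $x \in C$, has dimension $d$. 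Consequently $R\mu_* \underline{\mathbb{C}}[\dim \mathcal{N}]$ is a semisimple perverse sheaf on $\mathcal{N}$, and the decomposition theorem yields
\[ R\mu_* \underline{\mathbb{C}}[\dim \mathcal{N}] \cong \bigoplus_{(C, \mathcal{L})} V_{C, \mathcal{L}} \otimes \operatorname{IC}(\overline{C}, \mathcal{L}), \]
where the sum runs over pairs $(C, \mathcal{L})$ with $C$ a nilpotent orbit and $\mathcal{L}$ an irreducible $G$-equivariant local system on $C$, and each $V_{C, \mathcal{L}}$ is a finite-dimensional multiplicity space.

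The next step is to equip $R\mu_* \underline{\mathbb{C}}[\dim \mathcal{N}]$ with an action of $\mathbb{C}[W]$ by endomorphisms in the perverse category. One way is to use the Steinberg variety $Z = \tilde{\mathcal{N}} \times_{\mathcal{N}} \tilde{\mathcal{N}}$: by proper base change, the endomorphism algebra $\operatorname{End}(R\mu_* \underline{\mathbb{C}}[\dim \mathcal{N}])$ is isomorphic to the top Borel--Moore homology $H^{\operatorname{top}}(Z)$, which carries a natural convolution algebra structure. Kazhdan--Lusztig proved that this convolution algebra is isomorphic to $\mathbb{C}[W]$. Since distinct $\operatorname{IC}$ sheaves are non-isomorphic simple objects of the perverse category, Schur's lemma applied to the displayed decomposition gives
\[ \operatorname{End}\bigl(R\mu_* \underline{\mathbb{C}}[\dim \mathcal{N}]\bigr) \cong \bigoplus_{(C, \mathcal{L})} \operatorname{End}(V_{C, \mathcal{L}}), \]
so the convolution algebra breaks up into matrix blocks indexed by the pairs $(C, \mathcal{L})$ for which $V_{C, \mathcal{L}} \neq 0$, and each such $V_{C, \mathcal{L}}$ becomes an irreducible $\mathbb{C}[W]$-module.

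Comparing this with the Wedderburn decomposition $\mathbb{C}[W] \cong \bigoplus_{V \in \operatorname{Irr}(W)} \operatorname{End}(V)$ forces the assignment $V_{C, \mathcal{L}} \mapsto (C, \mathcal{L})$ to be an injection from a subset of $\operatorname{Irr}(W)$ into the set of pairs $(C, \mathcal{L})$. To conclude, we identify irreducible $G$-equivariant local systems on the orbit $C = C(N) \cong G/C_G(N)$ with irreducible representations of the component group $\pi_0(C_G(N)) = A(N)$, equivalently with irreducible characters $\psi$ of $A(N)$, yielding the desired injection $\operatorname{Irr}(W) \hookrightarrow \{(N, \psi)\}/G$. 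The main obstacle is the identification of the convolution algebra $H^{\operatorname{top}}(Z)$ with the group algebra $\mathbb{C}[W]$; this is the content of Springer's original construction (realizing the $W$-action via monodromy of the Grothendieck--Springer deformation over the regular semisimple locus) and its perverse-sheaf reformulations. A secondary subtlety is verifying that every irreducible $W$-representation actually appears, i.e.\ that the map is defined on all of $\operatorname{Irr}(W)$ rather than only a proper subset; this follows from a dimension count using the fact that $H^{\operatorname{top}}(\mathcal{B})$ already realizes the regular representation of $W$ when we take $N = 0$.
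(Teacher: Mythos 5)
The paper does not prove this theorem; it cites it from Borho--MacPherson, so the relevant comparison is with the cited source rather than with an argument in the paper itself. Your reconstruction is essentially correct and follows the standard geometric construction of the Springer correspondence, though with a Chriss--Ginzburg/Kazhdan--Lusztig flavor: you realize $\mathbb{C}[W]$ as the convolution algebra $H^{\operatorname{top}}_{\operatorname{BM}}(Z)$ of the Steinberg variety, whereas Borho--MacPherson themselves obtain the $W$-action on $R\mu_*\underline{\mathbb{C}}$ by restriction from the Grothendieck alteration $\widetilde{\mathfrak{g}} \to \mathfrak{g}$ (where $R\pi_*\underline{\mathbb{C}}$ over $\mathfrak{g}^{rs}$ is the $W$-torsor local system and one takes the intermediate extension). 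Both routes give the same endomorphism algebra, so this is a genuine but immaterial difference of presentation.

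One small issue with the final paragraph: the claim that ``$H^{\operatorname{top}}(\mathcal{B})$ already realizes the regular representation when $N = 0$'' is incorrect --- the Springer fiber over $0$ is the full flag variety $G/B$, whose top cohomology is one-dimensional (it carries the sign, or trivial, character depending on normalization); it is the \emph{total} cohomology $H^*(G/B)$, as the coinvariant algebra, that carries the regular representation. But this incorrect sentence is also unnecessary: once you have $\operatorname{End}\bigl(R\mu_*\underline{\mathbb{C}}[\dim\mathcal{N}]\bigr) \cong \mathbb{C}[W]$ and, by Schur's lemma, $\operatorname{End}\bigl(R\mu_*\underline{\mathbb{C}}[\dim\mathcal{N}]\bigr) \cong \bigoplus_{(C,\mathcal{L}):\, V_{C,\mathcal{L}}\neq 0}\operatorname{End}(V_{C,\mathcal{L}})$, the Wedderburn structure theorem for the semisimple algebra $\mathbb{C}[W]$ already forces the nonzero multiplicity spaces $V_{C,\mathcal{L}}$ to run over \emph{exactly} $\operatorname{Irr}(W)$ once, with no repeats. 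So injectivity and well-definedness on all of $\operatorname{Irr}(W)$ both fall out of the algebra isomorphism; no separate dimension count is needed. Dropping that last sentence leaves a correct proof.
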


Since Springer's original paper \cite{springer1976trigonometric}, several different constructions of the Springer correspondence have arisen. They result in two different versions of the map $\operatorname{Irr}(W) \longrightarrow \{(N, \psi)\}/G$, which differ from each other by tensoring with the sign representation of $W$. The one constructed via the Fourier-Sato transform, which is used here, is characterized by sending the trivial representation of $W$ to the pair $(0, \mathbf{1})$, where $0 \in \mathfrak{g}$ is the nilpotent element and $\mathbf{1}$ is the trivial representation of the trivial group. For any nilpotent element $N$, the set of irreducible $G$-equivariant local systems over the conjugacy class $C(N)$ is classified by the set of irreducible characters of $A(N)$. Therefore, there is a bijection between the set $\{(N, \psi)\}/G$ and the set of simple $G \times \mathbb{G}_m$-equivariant $\operatorname{IC}$-sheaves 
\[ \{\operatorname{IC}(\overline{C(N)}, \mathcal{L}) \ | \ N \text{ is nilpotent}, \mathcal{L} \text{ is a } G\text{-equivariant} \text{ irreducible local system over } C(N)\}. \]
In particular, the pair $(0, \mathbf{1})$ corresponds to $\operatorname{IC}(\{0\}, \underline{\mathbb{C}})$ under this bijection.

Next we explicitly describe the Springer correspondence for type $G_2$. In the rest of this section, let $G$ denote the adjoint form of type $G_2$. We assume the same presentation of $G$ as in subsection \ref{strofg2}. The character table of the Weyl group $W$ of $G_2$ is given in Table~\ref{tab:chartableg2}, which is taken from \cite{carter1985finite}[p.~412] with new character names added in the first column. The nilpotent orbits and their respective dimensions are recalled in Table~\ref{tab:g2nilporbits}. Their closure relationship is simple: the closure of any nilpotent orbit is the union of itself and all the other lower dimensional orbits. The Springer correspondence for type $G_2$ is given in Table~\ref{tab:sprcorg2}, which is obtained from \cite{carter1985finite}[p.~427] after tensoring with the sign representation $\epsilon$ and adding two additional columns for the $\operatorname{IC}$-sheaves. The original table from \cite{carter1985finite}[p.~427] gives the version of Springer correspondence that sends the sign representation $\epsilon$ of $W$ to the pair $(0, \mathbf{1})$, and that is why we tensor its last column with the sign representation in order to get Table~\ref{tab:sprcorg2}.

\begin{table}%[h]
\caption{Character table of $W(G_2)$} \label{tab:chartableg2}
\centering
\begin{tabular}{|c|c|c|c|c|c|c|}
\hline
 & $1$ & $s$ & $t$  & $st$ & $(st)^2$ & $(st)^3$ \\ \hline
 $\mathbf{1}=\phi_{1, 0}$ & 1  & 1  & 1  & 1  & 1  & 1  \\ \hline
 $\epsilon_1=\phi_{1, 3}'$ & 1  & 1  & -1  & -1  & 1  & -1  \\ \hline
 $\epsilon_2=\phi_{1, 3}''$ & 1  & -1 & 1 & -1  & 1 & -1 \\ \hline
 $\epsilon=\phi_{1, 6}$   & 1  & -1 & -1 & 1 & 1  & 1  \\ \hline
 $\chi_1=\phi_{2, 1}$ & 2 & 0  & 0 & 1 & -1  & -2 \\ \hline
 $\chi_2=\phi_{2,2}$ & 2 & 0  & 0 & -1 & -1 & 2 \\ \hline
\end{tabular}
\end{table}

\begin{table}%[h]
\def\arraystretch{1.4}
\caption{Nilpotent orbits for type $G_2$} \label{tab:g2nilporbits}
\centering
\begin{tabular}{|c|c|c|c|c|c|}
\hline
orbit     & $\{0\}$ & $A_1$  & $\tilde{A}_1$  & $G_2(a_1)$  & $G_2$  \\ \hline
dimension & 0  & 6  & 8  & 10  & 12  \\ \hline
\end{tabular}
\end{table}

\begin{table}%[h]
\def\arraystretch{1.4}
\caption{Springer correspondence for type $G_2$} \label{tab:sprcorg2}
\centering
\begin{tabular}{|c|c|c|c|c|c|}
\hline
\begin{tabular}[c]{@{}c@{}}nilpotent\\ orbit \end{tabular} &      $A(N)$              & \begin{tabular}[c]{@{}c@{}}character\\ of $A(N)$ \end{tabular} & \begin{tabular}[c]{@{}c@{}}character\\ of $W$ \end{tabular} & \begin{tabular}[c]{@{}c@{}}$\operatorname{IC}$-sheaf \\ over $\mathcal{N}$ \end{tabular} & \begin{tabular}[c]{@{}c@{}}$\operatorname{IC}$-sheaf \\ over $\mathfrak{g}$ \end{tabular} \\ \hline
\{0\} &  1  &  $\mathbf{1}$ &  $\mathbf{1}$ & $\operatorname{IC}(\{0\}, \underline{\mathbb{C}})$  &   $\operatorname{IC}(\overline{\mathfrak{g}^{rs}}, \mathbf{1})$  \\ \hline
$A_1$ &  1  &  $\mathbf{1}$ &  $\epsilon_1$ & $\operatorname{IC}(A_1, \underline{\mathbb{C}})$    &   $\operatorname{IC}(\overline{\mathfrak{g}^{rs}}, \epsilon_1)$  \\ \hline
$\tilde{A}_1$  & 1 & $\mathbf{1}$     &  $\chi_2$   &  $\operatorname{IC}(\tilde{A}_1, \underline{\mathbb{C}})$  & $ \operatorname{IC}(\overline{\mathfrak{g}^{rs}}, \chi_2)$  \\ \hline
\multirow{3}{*}{$G_2(a_1)$} & \multirow{3}{*}{$S_3$} & $\psi_3$ & $\chi_1$  & $\operatorname{IC}(G_2(a_1), \psi_3)$   &      $\operatorname{IC}(\overline{\mathfrak{g}^{rs}}, \chi_1)$ \\ \cline{3-6} 
&       & $\psi_{21}$  & $\epsilon_2$   & $\operatorname{IC}(G_2(a_1), \psi_{21})$    &    $\operatorname{IC}(\overline{\mathfrak{g}^{rs}}, \epsilon_2)$          \\ \cline{3-6} 
&       & $\psi_{111}$ &                &        $\operatorname{IC}(G_2(a_1), \psi_{111})$                                     &                                                 \\ \hline
$G_2$ & 1   & $\mathbf{1}$  &  $\epsilon$   & $\operatorname{IC}(G_2, \underline{\mathbb{C}})$    &   $\operatorname{IC}(\overline{\mathfrak{g}^{rs}}, \epsilon)$    \\ \hline
\end{tabular}
\end{table}

A few words on the reading of Table~\ref{tab:sprcorg2}. From every row of the table, we get a pair $(N, \psi)$ from the 1st and 3rd column. The character $\chi$ of the irreducible $W$ representation corresponding to the pair is in the 4th column. The $\operatorname{IC}$-sheaf supported over $\mathcal{N}$ in the 5th column corresponds to the pair $(N, \psi)$ as previously explained. The $\operatorname{IC}$-sheaf supported over $\mathfrak{g}=\overline{\mathfrak{g}^{rs}}$ in the 6th column corresponds to $\chi$ after picking a base point (see Conjecture~\ref{Brconj}). In particular, the Fourier-Sato transform $F$ always takes the 5th column to the 6th column and vice versa. $A(N)$ is the trivial group except for the orbit $G_2(a_1)$, for which the irreducible characters $\psi_{3}$, $\psi_{21}$ and $\psi_{111}$ are respectively indexed by the partions $(3)$, $(2, 1)$ and $(1, 1, 1)$ of $3$. $\psi_{3}$ is the trivial representation, $\psi_{111}$ is the sign representation and $\psi_3 + \psi_{21}$ is the natural 3-dimensional permutation representation of $S_3$. By abuse of notation, we also use $\psi_{3}$, $\psi_{21}$ and $\psi_{111}$ to denote their corresponding $G$-equivariant irreducible local systems over the nilpotent orbit $G_2(a_1)$ in the 5th column. The two blank boxes in the table means that the pair $(G_2(a_1), \psi_{111})$ does not correspond to any irreducible $W$ representation under the Springer correspondence, and that $F(\operatorname{IC}(G_2(a_1), \psi_{111}))$ is an $\operatorname{IC}$-sheaf supported on a proper closed subset of $\mathfrak{g}$, that is to say, it is not the intermediate extension of an irreducible local system on $\mathfrak{g}^{rs}$.

Besides the techniques of Fourier-Sato transform and Springer correspondence summarized above, we need the following results for our computation. Except for Lemma~\ref{Brconjg2}, they are true for connected reductive algebraic group $G$ in general.

\begin{lemma} \label{ANactioncompatible}
Let $I$ be a Hessenberg ideal and $N$ be an element from the maximal nilpotent orbit contained in the image of $\pi_I: G \times^B I \longrightarrow  \mathfrak{g}$. Restricting both sides of Equation~\ref{Brdecomp2} to the point $N$, taking cohomology of both sides and ignoring the shiftings, we get the isomorphism $H^*(\pi_{I}^{-1}(N)) \cong \bigoplus \mathcal{L}_N$, where the direct sum is taken over those $\mathcal{L}$'s supported on the very orbit of $C(N)$ (not on a smaller orbit than $C(N)$). Then the isomorphsim is $A(N)$-equivariant with respect to the natural $A(N)$ actions on both sides. 
\end{lemma}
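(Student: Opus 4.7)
The plan is to promote the BBD decomposition~\eqref{Brdecomp2} to a $G$-equivariant isomorphism in the equivariant derived category, restrict it to the point $N$, and then descend the resulting $C_G(N)$-equivariance to $A(N)$-equivariance.

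First I would observe that $\pi_I$ is a proper $G$-equivariant morphism: it factors as the closed immersion $G\times^B I \hookrightarrow G/B\times\mathfrak{g}$ followed by the projection to $\mathfrak{g}$, and $G/B$ is projective. By $G$-equivariance, $R\pi_{I*}\underline{\mathbb{C}}[d^\vee]$ is naturally a $G$-equivariant complex on $\mathfrak{g}$, and proper base change identifies its stalk at $N$ with $R\Gamma(\pi_I^{-1}(N),\mathbb{C})[d^\vee]$ as a $C_G(N)$-complex. Because $C_G^\circ(N)$ is connected and acts trivially on cohomology, the induced action on $H^*(\pi_I^{-1}(N))$ factors through $A(N)$ and coincides with the natural action described immediately before the lemma.

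Next I would upgrade the decomposition to a $G$-equivariant isomorphism. Each summand $\operatorname{IC}(\overline{C(N')},\mathcal{L})$ is a simple $G$-equivariant perverse sheaf, and the decomposition theorem applied in the $G$-equivariant derived category $D^b_G(\mathfrak{g})$ produces a $G$-equivariant realization of~\eqref{Brdecomp2}. A hands-on alternative is to exploit canonical isotypic parts: the multiplicity space
\[
\operatorname{Hom}_{D^b_G(\mathfrak{g})}\!\bigl(\operatorname{IC}(\overline{C(N')},\mathcal{L})[b_{N',\mathcal{L}}],\, R\pi_{I*}\underline{\mathbb{C}}[d^\vee]\bigr)
\]
is intrinsic, and the associated isotypic summand is automatically $G$-stable; splitting these isotypic pieces into simples is unobstructed because the endomorphism rings of the simples are scalars. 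Either way one obtains a $G$-equivariant isomorphism realizing~\eqref{Brdecomp2}.

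Restricting this $G$-equivariant isomorphism to $N$ gives a $C_G(N)$-equivariant isomorphism of stalks. By Sommers' result the image of $\pi_I$ is the closure of a single nilpotent orbit, which must be $C(N)$ by the maximality hypothesis. Hence every summand $\operatorname{IC}(\overline{C(N')},\mathcal{L})$ with $C(N')\neq C(N)$ has $C(N')<C(N)$, so $N\notin\overline{C(N')}$ and the corresponding stalk at $N$ vanishes. The surviving summands, indexed by the irreducible $G$-equivariant local systems $\mathcal{L}$ on $C(N)$, contribute stalks $\mathcal{L}_N$ at $N$ (up to the shifts we ignore) with the tautological $C_G(N)$-action coming from the equivariant structure of $\mathcal{L}$; this action factors through $A(N)$ by the standard correspondence between $G$-equivariant local systems on $C(N)\cong G/C_G(N)$ and representations of $A(N)$. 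Combining the two identifications yields the claimed $A(N)$-equivariant isomorphism. The main obstacle is the equivariantification step: this is standard but somewhat technical, and in practice one leans on the observation that the connected group $G$ acts trivially on the finite set of isomorphism classes of simple summands, so any splitting can be averaged (or reorganized via isotypic components) into a $G$-equivariant one.
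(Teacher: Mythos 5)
Your proof is correct and, if anything, more careful than the paper's. The paper's argument for this lemma is terse: it observes that the $A(N)$-actions on the two sides of $H^*(\pi_I^{-1}(N)) \cong \bigoplus \mathcal{L}_N$ are both induced by $C_G(N)$-actions on the underlying varieties and then asserts ``hence they respect the isomorphism.'' That inference is not automatic, since the decomposition theorem a priori produces an isomorphism with no equivariance built in. You correctly identify this as the substantive step and supply the missing argument: upgrade the decomposition to a $G$-equivariant one (either via the decomposition theorem in $D^b_G(\mathfrak{g})$ applied to the $G$-equivariant semisimple perverse sheaf $R\pi_{I*}\underline{\mathbb{C}}[d^\vee]$, or via the canonical isotypic-component decomposition with scalar endomorphism rings), so that restriction at $N$ together with proper base change yields a genuinely $C_G(N)$-equivariant, hence $A(N)$-equivariant, identification. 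Your use of Sommers' result (image of $\pi_I$ is a single orbit closure) plus the maximality hypothesis to discard the lower-orbit summands, whose stalks at $N$ vanish, is also correct and needed to match the statement. In short, you follow the same route as the paper but flesh out the equivariantification that the paper leaves implicit; your version is the better proof.
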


\begin{proof}
If shiftings are ignored, the isomorphism of vector spaces $H^*(\pi_{I}^{-1}(N)) \cong \bigoplus \mathcal{L}_N$ follows directly from the isomorphism of complexes in Equation~\ref{Brdecomp2}. By the definition of $\pi_{I}^{-1}(N)$ in Equation~\ref{hif1}, it is clear that $C_G(N)$ acts on $\pi_{I}^{-1}(N) \subset G/B$ by left multiplication. The actions of $A(N)=C_G(N)/C_G^{\circ}(N)$ on both sides of the isomorphism $H^*(\pi_{I}^{-1}(N)) \cong \bigoplus \mathcal{L}_N$ are induced by actions of $C_G(N)$ on the underlying varieties $\pi_{I}^{-1}(N)$ and $C(N)$, hence they respect the isomorphism. In addition, in the case of type $G_2$, the only nontrivial case is when $N$ comes from the orbit $G_2(a_1)$, where the actions of $A(N) \cong S_3$ on the $H^*(\pi_{I}^{-1}(N))$'s are explicitly described in Remark~\ref{subregularSF}.
\end{proof}

\begin{lemma}[Xue] \label{Brconjg2}
Conjecture~\ref{Brconj} is true when $G$ is the adjoint form of type $G_2$.
\end{lemma}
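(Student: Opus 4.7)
The plan is to prove Conjecture~\ref{Brconj} for $G_2$ by passing to the Fourier--Sato dual. Since $F$ is an autoequivalence of $\operatorname{Perv}_{G \times \mathbb{G}_m}(\mathfrak{g})$ sending $R\pi_{M*}\underline{\mathbb{C}}[d]$ to $R\pi_{I*}\underline{\mathbb{C}}[d^{\vee}]$ and carrying simple summands bijectively, Conjecture~\ref{Brconj} is equivalent to the assertion that every simple summand in the decomposition
\[ R\pi_{I*}\underline{\mathbb{C}}[d^{\vee}]=\bigoplus_{(N, \mathcal{L})} \operatorname{IC}(\overline{C(N)}, \mathcal{L})[b_{N, \mathcal{L}}] \]
is sent by $F$ to an intermediate extension of an irreducible local system on $\mathfrak{g}^{rs}$ with monodromy factoring through $W$. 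Inspecting the $G_2$ Springer correspondence in Table~\ref{tab:sprcorg2}, the only simple $G \times \mathbb{G}_m$-equivariant $\operatorname{IC}$-sheaf on $\mathcal{N}$ whose Fourier transform is \emph{not} of this form is $\operatorname{IC}(\overline{G_2(a_1)}, \psi_{111})$. Thus the problem reduces to the single statement: for every Hessenberg ideal $I$, the summand $\operatorname{IC}(\overline{G_2(a_1)}, \psi_{111})$ does not appear in the decomposition of $R\pi_{I*}\underline{\mathbb{C}}[d^{\vee}]$.

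I would then split into cases according to the image of $\pi_I$, which by Sommers' theorem is the closure of a single nilpotent orbit, readable from Table~\ref{tab:listhifg2} as the rightmost nonempty column. For $I \in \{I_{\emptyset}, I_{2\beta+3\alpha}, I_{\beta+3\alpha}, I_{\beta+2\alpha}\}$ the image is contained in $\overline{\tilde{A}_1}$, so no summand can have support meeting $G_2(a_1)$ and there is nothing to check. For $I = I_{\alpha,\beta} = \mathfrak{u}$, the map $\pi_{\mathfrak{u}}$ is the Springer resolution and $R\pi_{\mathfrak{u}*}\underline{\mathbb{C}}[d^{\vee}]$ is, up to shift, the Springer sheaf, whose simple summands are exactly the $\operatorname{IC}(\overline{C(N)}, \mathcal{L})$ corresponding to pairs in the image of the Springer correspondence; this excludes $(G_2(a_1),\psi_{111})$.

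This leaves the three cases $I \in \{I_{\beta+\alpha}, I_{\alpha}, I_{\beta}\}$, in which $G_2(a_1)$ is the maximal orbit in the image. Here Lemma~\ref{ANactioncompatible} applies and identifies the total multiplicity of $\operatorname{IC}(\overline{G_2(a_1)}, \psi_{111})$ in the decomposition with the multiplicity of $\psi_{111}$ in the $A(N) \cong S_3$-module $H^*(\pi_I^{-1}(N))$ for $N \in G_2(a_1)$. Combining the explicit fibers from Table~\ref{tab:listhifg2} with the $S_3$-action described in Remark~\ref{subregularSF}, the required cohomologies are the natural $3$-dimensional permutation representation $\psi_3 \oplus \psi_{21}$ for $I_{\beta+\alpha}$ (fiber: three points permuted), two copies of $\psi_3 \oplus \psi_{21}$ for $I_{\alpha}$ (fiber: $\mathbb{P}^1 \amalg \mathbb{P}^1 \amalg \mathbb{P}^1$ permuted naturally, with both $H^0$ and $H^2$ contributing), and $2\cdot \mathbf{1}$ for $I_{\beta}$ (fiber: one $\mathbb{P}^1$ with trivial action). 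In each case $\psi_{111}$ has multiplicity zero, finishing the proof. The one delicate point is the invocation of Lemma~\ref{ANactioncompatible}: it relies on $N$ lying in the \emph{maximal} orbit of the image so that $\operatorname{IC}$-sheaves supported on smaller strata contribute no stalk at $N$, which is why the case split by image is carried out first.
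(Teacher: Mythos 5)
Your proposal is correct and follows essentially the same route as the paper: reduce via the Fourier--Sato transform to showing $\operatorname{IC}(G_2(a_1), \psi_{111})$ never appears in $R\pi_{I*}\underline{\mathbb{C}}[d^{\vee}]$, dispose of $I=\mathfrak{u}$ by the known Springer-resolution decomposition, and rule out the remaining ideals $I_{\beta+\alpha}, I_{\alpha}, I_{\beta}$ using Lemma~\ref{ANactioncompatible} together with the explicit $A(N)\cong S_3$-actions on the fibers from Table~\ref{tab:listhifg2} and Remark~\ref{subregularSF}, since $\psi_{111}$ never occurs in trivial or permutation representations. The only cosmetic difference is that you spell out the exact $S_3$-modules in each case, where the paper only notes they are trivial or permutation representations.
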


\begin{proof}
Since $F(R\pi_{M*}\underline{\mathbb{C}}[d])=R\pi_{I*}\underline{\mathbb{C}}[d^{\vee}]$, Conjecture~\ref{Brconj} is equivalent to the claim that every summand $\operatorname{IC}(\overline{C(N)}, \mathcal{L})$ in the decomposition $R\pi_{I*}\underline{\mathbb{C}}[d^{\vee}]=\bigoplus \operatorname{IC}(\overline{C(N)}, \mathcal{L})[b_{N, \mathcal{L}}]
$ comes from some irreducible $W$ representation via the Springer correspondence (shiftings ignored). In the case of type $G_2$, according to Table~\ref{tab:sprcorg2}, it means that $\operatorname{IC}(G_2(a_1), \psi_{111})$ is not a summand of any $R\pi_{I*}\underline{\mathbb{C}}[d^{\vee}]$.

For $\operatorname{IC}(G_2(a_1), \psi_{111})$ to be a summand of $R\pi_{I*}\underline{\mathbb{C}}[d^{\vee}]$ at all, the image of $\pi_I$ has to contain the orbit $G_2(a_1)$ in the first place. According to Table~\ref{tab:listhifg2}, such an $I$ can only be $I_{\beta+\alpha}$, $I_{\beta}$, $I_{\alpha}$ or $I_{\alpha, \beta}$.

When $I=I_{\alpha, \beta}=\mathfrak{u}$, $\pi_I: G \times^B I \longrightarrow \mathcal{N}$ is the Springer resolution of the nilpotent cone $\mathcal{N}$, and the decomposition of $R\pi_{I*}\underline{\mathbb{C}}[d^{\vee}]$ is well-known: every summand of it does come from some irreducible $W$ representation via the Springer correspondence. In fact, Borho and MacPherson constructed Springer correspondence via this decomposition (see \cite{borho1983partial}).

When $I=I_{\beta+\alpha}$,  $I_{\beta}$ or $I_{\alpha}$, $G_2(a_1)$ is the maximal nilpotent orbit contained in the image of $\pi_I$. Let $N$ be an element of $G_2(a_1)$. From Remark~\ref{subregularSF}, we deduce that the actions of $A(N) \cong S_3$ on the $H^i(\pi_{I}^{-1}(N))$'s ($i=0, 2$) are either the trivial representation or the 3-dimensional permutation representation. In the light of Lemma~\ref{ANactioncompatible}, $\operatorname{IC}(G_2(a_1), \psi_{111})$ can never be a summand of $R\pi_{I*}\underline{\mathbb{C}}[d^{\vee}]$, because $\psi_{111}$, being the sign representation, can never be a subrepresentation of either the trivial or the permutation representation. 

We have now finished the proof of the lemma.
\end{proof}

\begin{proposition}[Brosnan] \label{Brletterlem}
\hspace{1cm} 
\begin{itemize}
\item[(1)] Let $M$ be a Hessenberg subspace, $y \in \mathfrak{g}^{rs}$ a regular semisimple element, $n=\operatorname{dim}_{\mathbb{C}}\operatorname{Hess}(M,y)$ and $\eta \in H^2(G/B)$ be the first Chern class of a hyperplane line bundle over $G/B$ that is invariant under the dot action by $W$. Then the cup product map $\eta \wedge: H^i(\operatorname{Hess}(M, y)) \longrightarrow H^{i+2}(\operatorname{Hess}(M, y))$ respects the dot action. As a result, $\eta^i \wedge: H^{n-i}(\operatorname{Hess}(M, y)) \longrightarrow  H^{n+i}(\operatorname{Hess}(M,y))$ is an isomorphism of dot actions. In short, the dot action is compatible with the Hard Lefschetz theorem.
\item[(2)] The action of $W$ on $H^0(\operatorname{Hess}(M, y))$ is a permutation representation for any Hessenberg subspace $M$ and regular semisimple element $y$. As a result, if $\operatorname{dim}_{\mathbb{C}}H^0(\operatorname{Hess}(M,y))=1$, it is the trivial representation.
\item[(3)] Let $N$ and $M$ be two Hessenberg subspaces with $N \subset M$. If $\operatorname{Hess}(N, y)$ is connected, the restriction map $H^2(\operatorname{Hess}(M, y)) \longrightarrow  H^2(\operatorname{Hess}(N, y))$ is injective and respects the dot action.
\end{itemize}
\end{proposition}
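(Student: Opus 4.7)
The plan is to reduce all three parts to Theorem~\ref{BrChmonodromy}, which identifies the dot action with the monodromy action of $\pi_1(\mathfrak{g}^{rs}, y)$ on the stalks of the local system $R\pi_{M*}\underline{\mathbb{C}}|_{\mathfrak{g}^{rs}}$.  The guiding principle is that monodromy, arising from continuous self-homeomorphisms of fibers, respects any cohomological structure defined globally on the total space $G \times^B M$---in particular cup products and classes pulled back from $G/B$.

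For part (1), I would consider the vector bundle projection $p: G \times^B M \to G/B$, $(g,x) \mapsto gB$, together with $\tilde\eta = p^*\eta \in H^2(G \times^B M)$.  The restriction of $\tilde\eta$ to the fiber $\operatorname{Hess}(M,y) \hookrightarrow G \times^B M$ is $\eta|_{\operatorname{Hess}(M,y)}$, and because $\tilde\eta$ is defined globally on the total space, its fiberwise restriction is a section of the local system $R^2\pi_{M*}\underline{\mathbb{C}}|_{\mathfrak{g}^{rs}}$ and is therefore automatically monodromy-invariant.  Since cup products are similarly preserved by monodromy, $\eta \wedge (\cdot)$ commutes with $W$.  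Hard Lefschetz then applies, since $\operatorname{Hess}(M,y)$ is smooth projective by \cite{de1992hessenberg} and $\eta$ restricts to an ample class, yielding the asserted $W$-equivariant isomorphism $\eta^i \wedge: H^{n-i} \to H^{n+i}$.

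Part (2) follows by noting that the stalk at $y$ of $R^0\pi_{M*}\underline{\mathbb{C}}|_{\mathfrak{g}^{rs}}$ is $H^0(\operatorname{Hess}(M,y)) = \mathbb{C}^{\pi_0(\operatorname{Hess}(M,y))}$, with natural basis given by characteristic functions of connected components, and monodromy permutes these.  Hence $H^0(\operatorname{Hess}(M,y))$ is a permutation representation of $W$, and the one-dimensional case forces the trivial representation.

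For part (3), $W$-equivariance of the restriction map is naturality applied to the inclusion of families $G \times^B N \hookrightarrow G \times^B M$ over $\mathfrak{g}$.  For injectivity, my plan is to pass to $T$-equivariant cohomology with $T = C_G(y)$.  Both varieties share $(G/B)^T$ as their $T$-fixed point set, and Tymoczko's GKM theorem \cite{tymoczko2007permutation} compatibly embeds $H^*_T(\operatorname{Hess}(M,y))$ and $H^*_T(\operatorname{Hess}(N,y))$ into $\bigoplus_{w \in W} H^*_T(\mathrm{pt})$; since $N \subset M$ imposes fewer GKM edge relations on $N$, restriction on equivariant cohomology realizes as the inclusion of one GKM-subspace into the other, hence is injective.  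The connectedness of $\operatorname{Hess}(N,y)$ propagates upward to $\operatorname{Hess}(M,y)$ because the $M$-cells of the affine paving contain the corresponding $N$-cells.  With both varieties connected and having vanishing odd cohomology (even-dimensional cells only), the Serre spectral sequence of $X \to X_T \to BT$ degenerates and identifies $\ker(H^2_T(X) \twoheadrightarrow H^2(X))$ with the diagonal image of $H^2_T(\mathrm{pt})$ for both $X = \operatorname{Hess}(M, y)$ and $X = \operatorname{Hess}(N, y)$.  A diagram chase then shows that this kernel in $H^2_T(\operatorname{Hess}(M,y))$ is precisely the preimage of the corresponding kernel in $H^2_T(\operatorname{Hess}(N,y))$, yielding injectivity on ordinary $H^2$.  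The main obstacle will be this final diagram chase, where the connectedness of both varieties is indispensable for identifying the kernels as one-parameter diagonal subspaces; without it the descent from equivariant to ordinary cohomology would fail.
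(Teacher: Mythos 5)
Your proposal is correct, but it runs along a genuinely different track from the paper's (very short) proof. The paper works entirely from Tymoczko's moment-graph/equivariant-cohomology definition: for (1) it constructs the $W$-invariant class by averaging, $\eta=\sum_{\sigma\in W}\sigma\cdot\eta_0$, observes that the dot action respects cup products directly from the GKM description in \cite{tymoczko2007permutation}, and then quotes Hard Lefschetz; claims (2) and (3) are dismissed as following ``easily from Tymoczko's definition,'' with no details given. You instead route everything through the monodromy description of Theorem~\ref{BrChmonodromy}: for (1) you note that $p^*\eta$ on the total space $G\times^B M$ gives a global section of $R^2\pi_{M*}\underline{\mathbb{C}}|_{\mathfrak{g}^{rs}}$, hence a monodromy-invariant class, so cupping with it is $W$-equivariant (in fact this shows $W$-invariance of $\eta|_{\operatorname{Hess}(M,y)}$ is automatic, so you never need the averaging); for (2) monodromy permutes components, giving a permutation representation; and for (3) you combine naturality of the families for equivariance with a GKM argument for injectivity on $H^2_T$ followed by the descent to ordinary $H^2$ via equivariant formality, connectedness, and the diagram chase identifying both kernels with the pullback of $H^2_T(\mathrm{pt})$. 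The descent step, including propagating connectedness from $\operatorname{Hess}(N,y)$ to $\operatorname{Hess}(M,y)$ through the common $T$-fixed points and nested cells, is exactly the content the paper leaves implicit, and your treatment of it is sound. What each approach buys: the paper's GKM argument is self-contained at the level of Tymoczko's combinatorial definition and is essentially one line per claim, while your sheaf-theoretic argument is more structural, works uniformly for any class pulled back from $G/B$, and fills in the verification of (3) that the paper omits; the only mild caveats are that Hard Lefschetz in (1) should be invoked componentwise on the smooth equidimensional projective variety $\operatorname{Hess}(M,y)$, and that injectivity of $H^*_T(X)\to H^*_T(X^T)$ used in (3) needs equivariant formality, which you correctly get from the affine paving.
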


\begin{proof}
Let $\eta_0$ be the first Chern class of an arbitrary hyperplane line bundle over $G/B$. Since the Weyl group $W$ is finite, we can define $\eta=\sum_{\sigma \in W} \sigma \cdot \eta_0$, which is clearly a $W$-invariant first Chern class. From Tymoczko's definition of the dot action in terms of moment graphs and equivariant cohomology (see \cite{tymoczko2007permutation}), it is clear that the dot action respects cup product. Therefore, claim (1) follows from the usual Hard Lefschetz theorem.

Claim (2) and (3) also follow easily from Tymoczko's definition.
\end{proof}

\begin{theorem}[{\cite{brosnan2018unit}}, Theorem 76] \label{genlichpin}
Let $G$ be a connected reductive algebraic group over $\mathbb{C}$ and $x_J \in \mathfrak{g}$ be a regular element. Take the Jordan decomposition $x_J=s_J+n_J$ where $s_J$ is semisimple and $n_J$ is nilpotent. Then the centralizer $C_{\mathfrak{g}}(s_J)$ is a Levi subalgebra of $\mathfrak{g}$ and $n_J$ is a regular nilpotent element of $C_{\mathfrak{g}}(s_J)$. Let $W_J$ denote the Weyl group of $C_{\mathfrak{g}}(s_J)$. Let $M$ be a Hessenberg subspace of $\mathfrak{g}$ and $y \in \mathfrak{g}$ be a regular semisimple element. We have
\[ \operatorname{dim}_{\mathbb{C}}H^i(\operatorname{Hess}(M,x_J))=\operatorname{dim}_{\mathbb{C}}H^i(\operatorname{Hess}(M, y))^{W_J}  \]
for all $i$.
\end{theorem}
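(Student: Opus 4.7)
The plan is to derive both sides from the proper map $\pi_M\colon G \times^B M \to \mathfrak{g}$ defined in (\ref{piM}), using proper base change to write $H^i(\operatorname{Hess}(M, x)) \cong (R^i\pi_{M*}\underline{\mathbb{C}})_x$ for every $x \in \mathfrak{g}$. The strategy is then to compare the two stalks, at the regular element $x_J$ and at a nearby regular semisimple $y$, by a local monodromy argument on the open subset $\mathfrak{g}^{\mathrm{reg}} \subset \mathfrak{g}$ of all regular elements.

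First I would establish the structural input that $R\pi_{M*}\underline{\mathbb{C}}[d]$ restricted to $\mathfrak{g}^{\mathrm{reg}}$ is a direct sum of shifts of the intermediate extension of the local system $R\pi_{M*}\underline{\mathbb{C}}[d]|_{\mathfrak{g}^{rs}}$ from $\mathfrak{g}^{rs}$. Concretely, this amounts to showing that $\pi_M$, restricted to the preimage of $\mathfrak{g}^{\mathrm{reg}}$, is a small morphism onto $\mathfrak{g}^{\mathrm{reg}}$. The main inputs are: (i) $\operatorname{Hess}(M, x)$ is paved by affines for every regular $x \in \mathfrak{g}$ (a theorem of De~Mari--Procesi--Shayman and Tymoczko), and the combinatorics of the paving is controlled by the $W_J$-orbits on $W$; and (ii) the dimensions of these fibers satisfy the required codimension bound relative to the stratification of $\mathfrak{g}^{\mathrm{reg}}$ by centralizer type.

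Next I would identify the local monodromy at $x_J$ using the adjoint quotient $\chi\colon \mathfrak{g} \to \mathfrak{t}/W$, which restricts to a Galois $W$-cover $\mathfrak{g}^{rs} \to (\mathfrak{t}/W)^{rs}$. Because $x_J$ is regular, $\chi(x_J) = [s_J]$; since the stabilizer of $[s_J]$ in $W$ is precisely $W_J$, a small analytic ball $B_\varepsilon(x_J) \subset \mathfrak{g}$ intersected with $\mathfrak{g}^{rs}$ has local fundamental group whose image in $W$ (under Tymoczko's monodromy representation) is exactly $W_J$; this is the standard Grothendieck--Springer local picture at a regular non-semisimple point. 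Combined with the intermediate extension property from the previous paragraph, this yields
\begin{equation*}
(R^i\pi_{M*}\underline{\mathbb{C}})_{x_J} \;\cong\; \bigl((R^i\pi_{M*}\underline{\mathbb{C}})_{y}\bigr)^{W_J}
\end{equation*}
for any $y \in B_\varepsilon(x_J) \cap \mathfrak{g}^{rs}$. By Theorem~\ref{BrChmonodromy}, the $W$-action on the right is Tymoczko's dot action and is independent of the choice of $y$, so the desired equality of dimensions follows via proper base change.

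The main obstacle I expect is the intermediate extension claim in the second paragraph: establishing an affine paving of $\operatorname{Hess}(M, x)$ for arbitrary regular (not merely regular semisimple) $x$, together with the codimension estimate witnessing smallness of $\pi_M$ over $\mathfrak{g}^{\mathrm{reg}}$. This is a different direction of generalization than Theorem~\ref{mymainthm}, which addresses nilpotent fibers of $\pi_I$ rather than regular fibers of $\pi_M$, so the paving result would have to be imported from the Hessenberg variety literature. Once the structural input is in hand, the identification of local monodromy via the adjoint quotient is classical, and the conclusion is a formal consequence of proper base change together with Theorem~\ref{BrChmonodromy}.
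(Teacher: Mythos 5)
You are attempting more than the paper does here: the paper gives no proof of Theorem~\ref{genlichpin}, but quotes it from \cite{brosnan2018unit} (Theorem 76, proved there in type $A$) with the remark that the argument generalizes; an all-types perverse-sheaf proof is in \cite{balibanu2020perverse}. Your skeleton (proper base change applied to the map of Equation~\ref{piM}, identification of the image of the local fundamental group at $x_J$ with $W_J$ via the adjoint quotient, and independence of $y$ via Theorem~\ref{BrChmonodromy}) is indeed the right skeleton. The genuine gap is the structural claim in your second paragraph. The morphism $\pi_M$ is \emph{not} small over $\mathfrak{g}^{\mathrm{reg}}$ for any $M \neq \mathfrak{b}$: already over the dense stratum $\mathfrak{g}^{rs}$ every fiber has dimension $\dim_{\mathbb{C}} M/\mathfrak{b} > 0$ (\cite{de1992hessenberg}), whereas smallness (even semismallness) forces the map to be generically finite; no codimension estimate on the smaller strata can repair the failure at the dense stratum. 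Consequently the assertion that $R\pi_{M*}\underline{\mathbb{C}}[d]|_{\mathfrak{g}^{\mathrm{reg}}}$ is a direct sum of shifted intermediate extensions of local systems from $\mathfrak{g}^{rs}$ does not follow from your inputs (i)--(ii); it is precisely the restriction to the regular locus of Conjecture~\ref{Brconj}, which this paper establishes only for $G_2$ (Lemma~\ref{Brconjg2}), and in general it is the substantive content of \cite{brosnan2018unit} (purity of the special fiber coming from its affine paving, the local invariant cycle theorem, and a dimension count via \cite{precup2013affine}) and of \cite{balibanu2020perverse} (the module structure over the group scheme of regular centralizers). An affine paving of $\operatorname{Hess}(M,x_J)$ by itself does not exclude direct summands of the decomposition theorem supported on closures of the smaller strata of $\mathfrak{g}^{\mathrm{reg}}$.

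There is also a secondary gap in the last step: even granting that decomposition, knowing that the local monodromy image at $x_J$ is $W_J$ does not compute the stalk of $\operatorname{IC}(\overline{\mathfrak{g}^{rs}}, \mathcal{V}_\chi)$ at $x_J$. Outside codimension one the stalk of an intermediate extension is governed by the cohomology of the link, and monodromy invariants only account for the lowest-degree piece. For the local systems occurring here (monodromy factoring through $W$) the needed statement --- stalk concentrated in the minimal degree, of dimension $\dim \chi^{W_J}$ --- is true, but the standard way to see it is via the smallness of the Grothendieck--Springer map $\pi_{\mathfrak{b}}$: its fiber at $x_J$ is the finite set of $[W:W_J]$ Borel subalgebras containing $x_J$, carrying the permutation module $\operatorname{Ind}_{W_J}^W \mathbf{1}$, and every $\operatorname{IC}(\overline{\mathfrak{g}^{rs}}, \mathcal{V}_\chi)$ occurs as a summand of $R\pi_{\mathfrak{b}*}\underline{\mathbb{C}}[\dim \mathfrak{g}]$. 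That is a property of $\pi_{\mathfrak{b}}$, not of $\pi_M$. With this substitution, and with the regular-locus decomposition imported from \cite{brosnan2018unit} or \cite{balibanu2020perverse} rather than deduced from a (false) smallness of $\pi_M$, your outline assembles into a correct proof; as written, both the decomposition claim and the stalk identification are unjustified.
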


Theorem~\ref{genlichpin} is proved in \cite{brosnan2018unit} only for type $A$, but it can be generalized to other types without difficulty. Note that $\operatorname{Hess}(M,x_J)$ is the Hessenberg variety associated to a regular element $x_J$, so it is different from our main object of study---regular semisimple Hessenberg variety $\operatorname{Hess}(M, y)$. In general, $\operatorname{Hess}(M,x_J)$ is not smooth, but Precup shows that it is still paved by affines and gives formula for $H^*(\operatorname{Hess}(M,x_J))$ in \cite{precup2013affine}.

\subsection{Computation} \label{dotactioncomputation}
In this subsection, we fix $G$ to be the adjoint form of type $G_2$ and compute the dot action of $W$ on $H^*(\operatorname{Hess}(M, y))$ for any Hessenberg subspace $M$. To be specific, we decompose the character of each $H^{2i}(\operatorname{Hess}(M, y))$ as a direct sum of irreducible characters of $W$ (listed in Table~\ref{tab:chartableg2}). Note that $H^*(\operatorname{Hess}(M, y))$ are supported on the even degrees because it is paved by affines. Besides the same setups for $G_2$ as in subsection \ref{strofg2}, we introduce the following notational conventions. 

Let $M \subset \mathfrak{g}$ denote a Hessenberg subspace in general and $I \subset \mathfrak{u}$ denote the Hessenberg ideal ``dual'' to $M$. That is, $I=M^{\perp}$ and
$M \cong \mathfrak{t} \oplus \bigoplus_{\alpha \in \Phi(M)} \mathfrak{g}_{\alpha}$, $I \cong \bigoplus_{\alpha \in \Phi(I)} \mathfrak{g}_{\alpha}$, where $\Phi(I)=-(\Phi \setminus \Phi(M))$. Let $y \in \mathfrak{g}$ be a regular semisimple element. We define the Poincar\'e polynomial of $\operatorname{Hess}(M, y)$ to be 
\[ P_{M}(q)=\sum_{i=0}^n \chi_{H^{2i}(\operatorname{Hess}(M,y))}q^i, \ \operatorname{deg}(q)=2,  \]
where $n=\operatorname{dim}_{\mathbb{C}}\operatorname{Hess}(M, y)$ and the coefficient $\chi_{H^{2i}(\operatorname{Hess}(M,y))}$ is the character of $H^{2i}(\operatorname{Hess}(M,y))$ as a $W$ representation.

We can easily enumerate all the Hessenberg ideals. Using the same notation as in subsection \ref{strofg2}, they are $I_{\emptyset}$, $I_{2\beta+3\alpha}$, $I_{\beta+3\alpha}$, $I_{\beta+2\alpha}$, $I_{\beta+\alpha}$, $I_{\alpha}$, $I_{\beta}$ and $I_{\alpha, \beta}$. 

To state the plan for computation in more details, we need the following simple lemma.

\begin{lemma} \label{dotcomplem}
Let $G$ be the adjoint form of type $G_2$.
\begin{itemize}
\item[(1)] $\operatorname{dim}_{\mathbb{C}}\mathfrak{g}=14$, $\operatorname{dim}_{\mathbb{C}}\mathfrak{b}=8$ and $\operatorname{dim}_{\mathbb{C}}\mathfrak{u}=6$.
\item[(2)] $d^{\vee}=\operatorname{dim}_{\mathbb{C}}(G \times^B I)=6+ |\Phi(I)|$ and $d=\operatorname{dim}_{\mathbb{C}}(G \times^B M)=20- |\Phi(I)|$, where $|\Phi(I)|$ is the order of $\Phi(I)$.
\item[(3)] $n=\operatorname{dim}_{\mathbb{C}}\operatorname{Hess}(M, y)=6-|\Phi(I)|$.
\item[(4)] Let $\chi$ be an irreducible character of $W$. If $\operatorname{IC}(\overline{C(N)}, \mathcal{L})=F(\operatorname{IC}(\overline{\mathfrak{g}^{rs}}, \chi))$ and $\operatorname{IC}(\overline{C(N)}, \mathcal{L})[b]$ is a direct summand of $R\pi_{I*}\underline{\mathbb{C}}[d^{\vee}]$, then $\chi q^{(d-b-14)/2}$ is a summand of $P_M(q)$.
\item[(5)] $\sum_{i=0}^n \operatorname{dim}_{\mathbb{C}}H^{2i}(\operatorname{Hess}(M, y))=12$.
\end{itemize}
\end{lemma}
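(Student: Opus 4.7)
The plan is to handle the five items in order; all are essentially dimension bookkeeping once the $G_2$ setup is in place, and only (4) requires care with sheaf-theoretic conventions.

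Item (1) follows directly from the structure of the root system $G_2$, which has rank $2$ and $12$ roots, so $\dim \mathfrak{u}=6$, $\dim \mathfrak{b}=2+6=8$, and $\dim \mathfrak{g}=2+12=14$. For (2), I will use that $G\times^B I$ and $G\times^B M$ are vector bundles over $G/B$ (of complex dimension $6$ by (1)), with fibers of dimensions $|\Phi(I)|$ and $2+|\Phi(M)|=2+(12-|\Phi(I)|)=14-|\Phi(I)|$ respectively, which yields $d^\vee=6+|\Phi(I)|$ and $d=6+(14-|\Phi(I)|)=20-|\Phi(I)|$. For (3), since $\pi_M$ restricted to $\pi_M^{-1}(\mathfrak{g}^{rs})$ is a smooth fibration by \cite{de1992hessenberg}, the fiber $\operatorname{Hess}(M,y)$ has complex dimension $d-\dim\mathfrak{g}=6-|\Phi(I)|$.

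For (4) the idea is to track a summand through $F$ and then restrict to a regular semisimple stalk. Because $F$ is a triangulated autoequivalence commuting with shifts, the hypothesis that $\operatorname{IC}(\overline{C(N)},\mathcal{L})[b]$ is a summand of $R\pi_{I*}\underline{\mathbb{C}}[d^\vee]$ combined with $F(\operatorname{IC}(\overline{\mathfrak{g}^{rs}},\chi))=\operatorname{IC}(\overline{C(N)},\mathcal{L})$ forces $\operatorname{IC}(\overline{\mathfrak{g}^{rs}},\chi)[b]$ to occur as a summand of $R\pi_{M*}\underline{\mathbb{C}}[d]$. Since $\mathfrak{g}^{rs}$ is smooth of dimension $14$ and the IC sheaf restricted there is the local system placed in cohomological degree $-14$, the stalk at $y\in\mathfrak{g}^{rs}$ of this summand is $\chi$ in degree $-14-b$. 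On the other hand, the stalk of $R\pi_{M*}\underline{\mathbb{C}}[d]$ at $y$ places $H^k(\operatorname{Hess}(M,y))$ in degree $k-d$. Equating $k-d=-14-b$ gives $k=d-14-b$, so $\chi$ appears in $H^{d-14-b}(\operatorname{Hess}(M,y))$ and contributes $\chi\,q^{(d-14-b)/2}$ to $P_M(q)$, as claimed. The main subtlety here is simply keeping the IC-shift conventions straight, since no deep input beyond the equivalence $F$ is needed.

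Finally, for (5), as recalled at the start of section \ref{dotactiong2}, $\operatorname{Hess}(M,y)$ is smooth with $\operatorname{Hess}(M,y)^T=(G/B)^T$ for $T=C_G(y)$, and Bia{\l}ynicki-Birula then supplies an affine cell decomposition of $\operatorname{Hess}(M,y)$ indexed by the $T$-fixed points, that is by $W$. For $G_2$ one has $|W|=|D_6|=12$, and since the cell structure puts all cohomology in even degrees, $\sum_i\dim H^{2i}(\operatorname{Hess}(M,y))=12$. The only part of the lemma requiring any genuine thought is the degree tracking in (4); everything else is a direct consequence of the root-system arithmetic of $G_2$ together with the cited results of de Mari--Procesi--Shayman and Bia{\l}ynicki-Birula.
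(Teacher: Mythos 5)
Your proposal is correct and follows essentially the same route as the paper: items (1), (2), (5) are the same root-system and cell-count bookkeeping, and your degree-tracking in (4) — transporting the summand through $F$, restricting $\operatorname{IC}(\overline{\mathfrak{g}^{rs}},\chi)[b]$ to the stalk at $y$ where it sits in degree $-14-b$, and matching against $H^k(\operatorname{Hess}(M,y))$ in degree $k-d$ — is exactly the paper's computation giving $i=(d-b-14)/2$. The only cosmetic difference is in (3), where you compute the fiber dimension of the smooth map over $\mathfrak{g}^{rs}$ as $d-\dim\mathfrak{g}$ while the paper quotes $\dim\operatorname{Hess}(M,y)=\dim M/\mathfrak{b}$ from de Mari--Procesi--Shayman; both rest on the same reference and give $6-|\Phi(I)|$.
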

\begin{proof}
(1) and (2) are straightforward. 

For (3), by \cite{de1992hessenberg}[Theorem 6], $\operatorname{dim}_{\mathbb{C}}\operatorname{Hess}(M, y)=\operatorname{dim}_{\mathbb{C}}M/\mathfrak{b}=\operatorname{dim}_{\mathbb{C}}\mathfrak{g}-\operatorname{dim}_{\mathbb{C}}I-\operatorname{dim}_{\mathbb{C}}\mathfrak{b}=14-|\Phi(I)|-8=6-|\Phi(I)|$.

For (4), since $R\pi_{M*}\underline{\mathbb{C}}[d]=F(R\pi_{I*}\underline{\mathbb{C}}[d^{\vee}])$, if $\operatorname{IC}(\overline{C(N)}, \mathcal{L})[b]$ is a direct summand of $R\pi_{I*}\underline{\mathbb{C}}[d^{\vee}]$, $\operatorname{IC}(\overline{\mathfrak{g}^{rs}}, \chi)[b]=F(\operatorname{IC}(\overline{C(N)}, \mathcal{L})[b])$ is a direct summand of $R\pi_{M*}\underline{\mathbb{C}}[d]$. Hence $\operatorname{IC}(\overline{\mathfrak{g}^{rs}}, \chi)[b-d]$ is a direct summand of $R\pi_{M*}\underline{\mathbb{C}}$. Restricting both to the point $y$ and taking cohomology, we deduce that $H^{2i+b-d}(\operatorname{IC}(\overline{\mathfrak{g}^{rs}}, \chi)|_y)$ is a $W$ subrepresentation of $H^{2i}(\operatorname{Hess}(M, y))$ for any $i \in \mathbb{Z}$. By the definition of $\operatorname{IC}$-sheaves, $\operatorname{IC}(\overline{\mathfrak{g}^{rs}}, \chi)|_{\mathfrak{g}^{rs}} \cong \chi[\operatorname{dim}_{\mathbb{C}}\mathfrak{g}] = \chi[14]$, so $H^{2i+b-d}(\operatorname{IC}(\overline{\mathfrak{g}^{rs}}, \chi)|_y)$ is nonzero only when $2i+b-d=-14$. In this case, $i=(d-b-14)/2$ and $\chi \cong H^{-14}(\operatorname{IC}(\overline{\mathfrak{g}^{rs}}, \chi)|_y)$ is a subrepresentation of $H^{2i}(\operatorname{Hess}(M, y))$. Therefore $\chi q^{i}=\chi q^{(d-b-14)/2}$ is a summand of $P_M(q)$. By abuse of notation, we also used $\chi$ to denote its corresponding irreducible local system over $\mathfrak{g}^{rs}$ and irreducible $W$ representation.

For (5), according to \cite{de1992hessenberg}, $\operatorname{Hess}(M, y)$ is paved by affines with the set of cells in bijection with $W$. Hence $\sum_{i=0}^n \operatorname{dim}_{\mathbb{C}}H^{2i}(\operatorname{Hess}(M, y))=|W|=12$.
\end{proof}

Our plan for computation is the following:
\begin{itemize}
\item[(1)] Start from the smallest Hessenberg ideal $I=I_{\emptyset}$ and compute $P_M(q)$ in the increasing order of $\operatorname{dim}_{\mathbb{C}}I$. 
\item[(2)] The previous results, combined with Proposition~\ref{Brletterlem}, provide certain summands of the new $P_M(q)$ immediately.
\item[(3)] Let $N$ be an element from the maximal nilpotent orbit contained in the image of $\pi_I: G \times^B I \longrightarrow  \mathfrak{g}$. In the light of Lemma~\ref{ANactioncompatible} and Lemma~\ref{dotcomplem} (4), we obtain some new summands of $P_M(q)$ by inspecting $H^*(\pi_{I}^{-1}(N))$.
\item[(4)] If the total dimension of the summands above is already 12, according to Lemma~\ref{dotcomplem} (5), we have the complete $P_M(q)$. Otherwise we bring in Theorem~\ref{genlichpin} to figure out the rest of the summands.
\end{itemize}

\noindent $\bullet$ $I=I_{\emptyset }$

In this case, $M=\mathfrak{g}$ and the map $\pi_M: G \times^B M \longrightarrow \mathfrak{g}$ is isomorphic to the projection $\operatorname{pr}_2: G/B \times \mathfrak{g} \longrightarrow \mathfrak{g}$. Then $\operatorname{Hess}(M, y) \cong G/B$ and $R\pi_{M*}\underline{\mathbb{C}}[d]|_{\mathfrak{g}^{rs}}$ is a constant sheaf with fibers isomorphic to $H^*(G/B)$. Therefore, every $H^{2i}(\operatorname{Hess}(M, y))$ is the trivial representation of $W$ and we can figure out $P_M(q)$ from the cell structure of $G/B$. In fact, $P_M(q)=\mathbf{1}+ \mathbf{2}q + \mathbf{2}q^2 + \mathbf{2}q^3 + \mathbf{2}q^4 + \mathbf{2}q^5 + \mathbf{1}q^6$, where $\mathbf{1}$ denotes the trivial representation of $W$ and $\mathbf{2}$ denotes $\mathbf{1} \oplus \mathbf{1}$.

\noindent $\bullet$ $I=I_{2\beta+3\alpha }$

In this case, $|\Phi(I)|=1$. By Lemma~\ref{dotcomplem}, $d=19$, $d^{\vee}=7$, $n=\operatorname{dim}_{\mathbb{C}}\operatorname{Hess}(M, y)=5$. From \cite{de1992hessenberg}[Corollary 9], it follows easily that $\operatorname{Hess}(M, y)$ is connected. (In fact, $\operatorname{Hess}(M, y)$ is connected for $I=I_{\emptyset}$, $I_{2\beta+3\alpha}$, $I_{\beta+3\alpha}$, $I_{\beta+2\alpha}$, $I_{\beta+\alpha}$). By Proposition~\ref{Brletterlem} (2), $H^0(\operatorname{Hess}(M, y)) \cong \mathbf{1}$. Using the $H^2$ term of the previous case (when $I=I_{\emptyset}$), we deduce by Proposition~\ref{Brletterlem} (3) that $H^2(\operatorname{Hess}(M, y))$ has $\mathbf{2}$ as a subrepresentation. Now Proposition~\ref{Brletterlem} (1) implies that $P_M(q)$ must have $\mathbf{1}+\mathbf{2}q+\mathbf{2}q^2+\mathbf{2}q^3+\mathbf{2}q^4+\mathbf{1}q^5$ as summands. By Lemma~\ref{dotcomplem} (5), there are only 2 additional dimensions that remain to be figured out. 

We know from Table~\ref{tab:listhifg2} that $A_1$ is the maximal nilpotent orbit contained in the image of $\pi_I: G \times^B I \longrightarrow  \mathfrak{g}$. Let $N$ be an element from $A_1$. Then $\pi_{I}^{-1}(N)=\overline{X_{s}} \cong \mathbb{P}^1$. Hence we have $H^*(\pi_{I}^{-1}(N)) = \mathbb{C}[0] \oplus \mathbb{C}[-2]$ and $H^*(R\pi_{I*}\underline{\mathbb{C}}[7]|_N) = \mathbb{C}[7] \oplus \mathbb{C}[5]$. Since $A_1$ is the maximal orbit contained in the image of $\pi_I$, $R\pi_{I*}\underline{\mathbb{C}}[7]$ is the direct sum of shifted copies of $\operatorname{IC}(\{0\}, \underline{\mathbb{C}})$ and $\operatorname{IC}(A_1, \underline{\mathbb{C}})$. We know that $\operatorname{IC}(A_1, \underline{\mathbb{C}})|_N \cong \mathbb{C}[6]$ and $\operatorname{IC}(\{0\}, \underline{\mathbb{C}})|_N \cong 0$. Because the decomposition of $R\pi_{I*}\underline{\mathbb{C}}[7]$ satisfies the Hard Lefschetz theorem (see \cite{deligne1982faisceaux}[Th\'eor\`eme 6.2.10]), $\operatorname{IC}(A_1, \underline{\mathbb{C}})[1]$ and $\operatorname{IC}(A_1, \underline{\mathbb{C}})[-1]$ must be the only summands of $R\pi_{I*}\underline{\mathbb{C}}[7]$ supported on $A_1$. All the other summands are shifted copies of $\operatorname{IC}(\{0\}, \underline{\mathbb{C}})$. By Lemma~\ref{dotcomplem} (4) and Table~\ref{tab:sprcorg2}, $\operatorname{IC}(A_1, \underline{\mathbb{C}})[1]$ and $\operatorname{IC}(A_1, \underline{\mathbb{C}})[-1]$ correspond to the summands $\epsilon_1 q^2 + \epsilon_1 q^3$ of $P_M(q)$, which are the 2 additional dimensions expected from the previous paragraph. As a result, $P_M(q)=\mathbf{1}+\mathbf{2}q+(\mathbf{2}+\epsilon_1) q^2+(\mathbf{2}+\epsilon_1) q^3+\mathbf{2}q^4+\mathbf{1}q^5$.

\noindent $\bullet$ $I=I_{\beta+3\alpha }$

In this case, $|\Phi(I)|=2$, $d=18$, $d^{\vee}=8$, $n=4$. $\operatorname{Hess}(M, y)$ is still connected. Arguing with Proposition~\ref{Brletterlem} in the same manner as the previous case, we deduce that $P_M(q)$ must have summands $\mathbf{1}+\mathbf{2}q+\mathbf{2}q^2+\mathbf{2}q^3+\mathbf{1}q^4$. There are 4 additional dimensions to be figured out.

According to Table~\ref{tab:listhifg2}, $A_1$ is still the maximal nilpotent orbit contained in the image of $\pi_I$. Let $N$ be an element from $A_1$. Then $\pi_{I}^{-1}(N)=\overline{X_{st}}$. Hence we have $H^*(\pi_{I}^{-1}(N))=\mathbb{C}[0] \oplus \mathbb{C}^2[-2] \oplus \mathbb{C}[-4]$ and $H^*(R\pi_{I*}\underline{\mathbb{C}}[8]|_N)=\mathbb{C}[8] \oplus \mathbb{C}^2[6] \oplus \mathbb{C}[4]$. $R\pi_{I*}\underline{\mathbb{C}}[8]$ is still the direct sum of shifted copies of $\operatorname{IC}(A_1, \underline{\mathbb{C}})$ and $\operatorname{IC}(\{0\}, \underline{\mathbb{C}})$, and $\operatorname{IC}(A_1, \underline{\mathbb{C}})|_N \cong \mathbb{C}[6]$, $\operatorname{IC}(\{0\}, \underline{\mathbb{C}})|_N \cong 0$. By the same Hard Lefschetz argument, $R\pi_{I*}\underline{\mathbb{C}}[8]$ must have summands $\operatorname{IC}(A_1, \underline{\mathbb{C}})[2]$, $\operatorname{IC}(A_1,\underline{\mathbb{C}} )[-2]$ and $\operatorname{IC}(A_1, \underline{\mathbb{C}})^{\oplus 2}$. By Lemma~\ref{dotcomplem} (4) and Table~\ref{tab:sprcorg2}, they correspond to the summands $\epsilon_1 q + \epsilon_1 q^3 +2\epsilon_1 q^2$, which are exactly the 4 additional dimensions. Hence $P_M(q)=\mathbf{1}+(\mathbf{2}+\epsilon_1) q+(\mathbf{2}+2\epsilon_1) q^2+(\mathbf{2}+\epsilon_1) q^3+\mathbf{1}q^4$, where $2 \epsilon_1$ means $\epsilon_1 \oplus \epsilon_1$.

\noindent $\bullet$ $I=I_{\beta+2\alpha }$

In this case, $|\Phi(I)|=3$, $d=17$, $d^{\vee}=9$, $n=3$. $\operatorname{Hess}(M, y)$ is connected. Similarly, $P_M(q)$ must have summands $\mathbf{1}+(\mathbf{2}+\epsilon_1) q+(\mathbf{2}+\epsilon_1) q^2+\mathbf{1} q^3$. There are 4 additional dimensions to be figured out.

Now $\tilde{A}_1$ is the maximal orbit contained in the image of $\pi_I$. Let $N$ be an element from $\tilde{A}_1$. $\pi_{I}^{-1}(N)=\overline{X_{t}} \cong \mathbb{P}^1$. Hence we have $H^*(\pi_{I}^{-1}(N))=\mathbb{C}[0] \oplus \mathbb{C}[-2]$ and $H^*(R\pi_{I*}\underline{\mathbb{C}}[9]|_N)=\mathbb{C}[9] \oplus \mathbb{C}[7]$. $R\pi_{I*}\underline{\mathbb{C}}[9]$ is the direct sum of shifted copies of $\operatorname{IC}(\tilde{A}_1, \underline{\mathbb{C}})$, $\operatorname{IC}(A_1, \underline{\mathbb{C}})$ and $\operatorname{IC}(\{0\}, \underline{\mathbb{C}})$, and $\operatorname{IC}(\tilde{A}_1, \underline{\mathbb{C}})|_N \cong \mathbb{C}[8]$. Therefore, $\operatorname{IC}(\tilde{A}_1, \underline{\mathbb{C}})[1]$ and $\operatorname{IC}(\tilde{A}_1, \underline{\mathbb{C}})[-1]$ are the only summands of $R\pi_{I*}\underline{\mathbb{C}}[9]$ supported on $\tilde{A}_1$. They correspond to the summands $\chi_2 q + \chi_2 q^2$, the 4 additional dimensions. Then $P_M(q)=\mathbf{1}+(\mathbf{2}+\epsilon_1+ \chi_2) q+(\mathbf{2}+\epsilon_1+\chi_2) q^2+\mathbf{1} q^3$.

\noindent $\bullet$ $I=I_{\beta+\alpha }$

In this case, $|\Phi(I)|=4$, $d=16$, $d^{\vee}=10$, $n=2$. $\operatorname{Hess}(M, y)$ is connected. $P_M(q)$ must have summands $\mathbf{1}+(\mathbf{2}+\epsilon_1+ \chi_2) q +\mathbf{1}q^2$. There are 5 additional dimensions.

$G_2(a_1)$ is the maximal orbit contained in the image of $\pi_I$. Let $N$ be an element from $G_2(a_1)$. $\pi_{I}^{-1}(N)=\{\text{3 distinct points}\}$ and $A(N) \cong S_3$ acts on $\pi_{I}^{-1}(N)$ by the natural permutation action (see Remark~\ref{subregularSF}). Hence $H^*(\pi_{I}^{-1}(N))=\mathbb{C}^3[0]$ and $H^*(R\pi_{I*}\underline{\mathbb{C}}[10]|_N) = \mathbb{C}^3 [10]$. $A(N)$ acts on $H^*(R\pi_{I*}\underline{\mathbb{C}}[10]|_N) = \mathbb{C}^3 [10]$ by the permutation action (see Lemma~\ref{ANactioncompatible}). We know that $\operatorname{IC}(G_2(a_1), \psi_3)|_N \cong \mathbb{C}[10]$, $\operatorname{IC}(G_2(a_1), \psi_{21})|_N \cong \mathbb{C}^2[10]$ and that $\psi_3 + \psi_{21}$ is the 3-dimensional permutation representation of $A(N)$. Therefore, $\operatorname{IC}(G_2(a_1), \psi_3)$ and $\operatorname{IC}(G_2(a_1), \psi_{21})$ must be the only summands of $R\pi_{I*}\underline{\mathbb{C}}[10]$ supported on $G_2(a_1)$. They correspond to summands $(\chi_1 + \epsilon_2)q$. Up to now, $P_M(q)$ have summands $\mathbf{1}+(\mathbf{2}+\epsilon_1+ \chi_2+\chi_1 + \epsilon_2) q +\mathbf{1}q^2$. There remains a 2-dimensional subrepresentation of $H^2(\operatorname{Hess}(M, y))$ to be figured out, which we denote by $\chi$. Then $P_M(q)=\mathbf{1}+(\mathbf{2}+\epsilon_1+ \chi_2+\chi_1 + \epsilon_2 + \chi) q +\mathbf{1}q^2$. 

Since $\chi$ comes from $\operatorname{IC}$-sheaves supported on nilpotent orbits strictly smaller than $G_2(a_1)$, it can be written as an integral combination $\chi=a\mathbf{1}+b \epsilon_1 +c \chi_2$. We will make a linear system of equations to solve for $a$, $b$ and $c$. 

Let $J$ be any subset of the set of simple roots $\Delta=\{\alpha, \beta\}$ of $G_2$. There always exists a semisimple element $s_J \in \mathfrak{g}$ such that $C_{\mathfrak{g}}(s_J)$ is a Levi subalgebra of $\mathfrak{g}$ whose Weyl group is $W_J$. Let $n_J \in C_{\mathfrak{g}}(s_J)$ be a regular nilpotent element and define $x_J=s_J+n_J$. Then $x_J$ is a regular element of $\mathfrak{g}$. According to Theorem~\ref{genlichpin}, $\operatorname{dim}_{\mathbb{C}}H^2(\operatorname{Hess}(M,x_J))=\operatorname{dim}_{\mathbb{C}}H^2(\operatorname{Hess}(M, y))^{W_J}$. \cite{precup2013affine}[Corollary 5.8] tells us how to compute $\operatorname{dim}_{\mathbb{C}}H^*(\operatorname{Hess}(M,x_J))$ by inspecting the intersection of $\operatorname{Hess}(M, x_J)$ with every Schubert cell of $G/B$, and the inspections are carried out in Appendix~\ref{appendix:bettinumber}. By Table~\ref{tab:betti1} and Table~\ref{tab:betti2}, we have the following. 
\[
\begin{array}{c | c | c  }
  &  J=\{\alpha, \beta\}, \   W_J=W & J=\{\beta\}, \  W_J=\langle t \rangle \\
      \hline
    \operatorname{dim}_{\mathbb{C}}H^2(\operatorname{Hess}(M, x_J)) &  2 &          6 \\
  \end{array}
\]

On the other hand, $\operatorname{dim}_{\mathbb{C}}H^2(\operatorname{Hess}(M, y))^{W_J}=\langle \chi_{H^2(\operatorname{Hess}(M, y))}, \operatorname{Ind}_{W_J}^W \mathbf{1} \rangle$, where $\chi_{H^2(\operatorname{Hess}(M, y))}$ is the character of $H^2(\operatorname{Hess}(M, y))$ as a $W$ representation and $\langle \  , \  \rangle$ is the inner product of characters. So far we have the following information,
\[
\left\{
      \begin{array}{l}
      \chi_{H^2(\operatorname{Hess}(M, y))}= \mathbf{2}+\epsilon_1+ \chi_2+\chi_1 + \epsilon_2 +  \chi          \\
      \operatorname{Ind}_W^W \mathbf{1}=\mathbf{1} \\
      \operatorname{Ind}_{\langle t \rangle}^W \mathbf{1}= \mathbf{1} + \epsilon_2 + \chi_1 +\chi_2 \\
      \langle \chi_{H^2(\operatorname{Hess}(M, y))}, \operatorname{Ind}_{W}^W \mathbf{1} \rangle =2 \\
      \langle \chi_{H^2(\operatorname{Hess}(M, y))}, \operatorname{Ind}_{\langle t \rangle}^W \mathbf{1} \rangle =6 \\
      \chi= a\mathbf{1}+b \epsilon_1 +c \chi_2 \\
      \operatorname{dim}_{\mathbb{C}} \chi=2
      \end{array}
              \right.
\]
which can be turned into the following linear system of equations.
\[ 
\left\{
      \begin{array}{l}
      a+2 = 2  \\
      a+2+1+1+1+c=6 \\
      a+b+2c=2
                \end{array}
              \right.
\]
We get $a=0$, $b=0$, $c=1$. Therefore, $P_M(q)=\mathbf{1}+(\mathbf{2}+\epsilon_1+ 2 \chi_2+\chi_1 + \epsilon_2) q +\mathbf{1}q^2$.

\noindent $\bullet$ $I=I_{\alpha }$

In this case, $|\Phi(I)|=5$, $d=15$, $d^{\vee}=11$, $n=1$. $\operatorname{Hess}(M, y)$ is no longer connected, so Proposition~\ref{Brletterlem} (3) does not apply. However, Proposition~\ref{Brletterlem} (1) implies that $H^0(\operatorname{Hess}(M, y))$ and $H^2(\operatorname{Hess}(M, y))$ are isomorphic as $W$ representations, hence $P_M(q)$ is divisible by $(1+q)$.

$G_2(a_1)$ is the maximal orbit contained in the image of $\pi_I$. Let $N$ be an element from $G_2(a_1)$. $\pi_{I}^{-1}(N) \cong \mathbb{P}^1 \amalg \mathbb{P}^1 \amalg \mathbb{P}^1$ and $A(N) \cong S_3$ acts on the set of irreducible components of $\pi_{I}^{-1}(N)$ by the natural permutation action (see Remark~\ref{subregularSF}). Hence $H^*(\pi_{I}^{-1}(N))=\mathbb{C}^3[0] \oplus \mathbb{C}^3[-2]$ and $H^*(R\pi_{I*}\underline{\mathbb{C}}[11]|_N) = \mathbb{C}^3[11] \oplus \mathbb{C}^3[9]$, where $A(N)$ acts on both $\mathbb{C}^3[11]$ and $\mathbb{C}^3[9]$ by the permutation action. Since $\operatorname{IC}(G_2(a_1), \psi_3)|_N \cong \mathbb{C}[10]$, $\operatorname{IC}(G_2(a_1), \psi_{21})|_N \cong \mathbb{C}^2[10]$ and $\psi_3 + \psi_{21}$ is the 3-dimensional permutation representation of $A(N)$, $\operatorname{IC}(G_2(a_1), \psi_{3})[1]$, $\operatorname{IC}(G_2(a_1), \psi_{21})[1]$, $\operatorname{IC}(G_2(a_1), \psi_{3})[-1]$ and $\operatorname{IC}(G_2(a_1), \psi_{21})[-1]$ are the only summands of $R\pi_{I*}\underline{\mathbb{C}}[11]$ supported on $G_2(a_1)$. They corresponds to summands $(\chi_1 +\epsilon_2)(1+q)$. Then $P_M(q)=(\chi_1+\epsilon_2+\chi)(1+q)$, where $\chi$ is a 3-dimensional subrepresentation of $H^2(\operatorname{Hess}(M, y))$ still to be figured out. Because $\chi$ comes from $\operatorname{IC}$-sheaves supported on nilpotent orbits strictly smaller than $G_2(a_1)$, we write $\chi=a\mathbf{1}+b \epsilon_1 +c \chi_2$ and solve for $a$, $b$ and $c$.

Now we use Theorem~\ref{genlichpin} for $J=\{\alpha\}$ and $J=\{\beta\}$. By Table~\ref{tab:betti3} and Table~\ref{tab:betti4}, we have
\[
\begin{array}{c | c | c  }
  &  J=\{\alpha\}, \   W_J=\langle s \rangle & J=\{\beta\}, \  W_J=\langle t \rangle \\
      \hline
    \operatorname{dim}_{\mathbb{C}}H^2(\operatorname{Hess}(M, x_J)) &  3 &          4 \\
  \end{array}
\]
Combining the information,
\[
\left\{
      \begin{array}{l}
      \chi_{H^2(\operatorname{Hess}(M, y))}= \chi_1+\epsilon_2+\chi          \\
      \operatorname{Ind}_{\langle s \rangle}^W \mathbf{1}=\mathbf{1}+ \epsilon_1 +  \chi_1 +\chi_2 \\
      \operatorname{Ind}_{\langle t \rangle}^W \mathbf{1}= \mathbf{1} + \epsilon_2 + \chi_1 +\chi_2 \\
      \langle \chi_{H^2(\operatorname{Hess}(M, y))}, \operatorname{Ind}_{\langle s \rangle}^W \mathbf{1} \rangle =3 \\
      \langle \chi_{H^2(\operatorname{Hess}(M, y))}, \operatorname{Ind}_{\langle t \rangle}^W \mathbf{1} \rangle =4 \\
      \chi= a\mathbf{1}+b \epsilon_1 +c \chi_2 \\
      \operatorname{dim}_{\mathbb{C}} \chi=3
      \end{array}
              \right.
\]
we get the following system.
\[ 
\left\{
      \begin{array}{l}
      1+a+b+c = 3  \\
      1+1+a+c=4 \\
      a+b+2c=3
                \end{array}
              \right.
\]
Then $a=1$, $b=0$ and $c=1$. As a result, $P_M(q)=(\mathbf{1}+\chi_2 +\chi_1+\epsilon_2)(1+q)$.

\noindent $\bullet$ $I=I_{\beta }$

Still, $|\Phi(I)|=5$, $d=15$, $d^{\vee}=11$, $n=1$. By an argument similar to the previous case, $P_M(q)$ is divisible by $(1+q)$.

$G_2(a_1)$ is the maximal orbit contained in the image of $\pi_I$. Let $N$ be an element from $G_2(a_1)$. $\pi_{I}^{-1}(N) = \overline{X_{s}} \cong \mathbb{P}^1$ and $A(N) \cong S_3$ fixes the unique irreducible component of $\pi_{I}^{-1}(N)$ (see Remark~\ref{subregularSF}). Therefore, $H^*(\pi_{I}^{-1}(N)) = \mathbb{C}[0] \oplus \mathbb{C}[-2]$ and $H^*(R\pi_{I*}\underline{\mathbb{C}}[11]|_N)=\mathbb{C}[11] \oplus \mathbb{C}[9]$, where $A(N)$ acts on both $\mathbb{C}[11]$ and $\mathbb{C}[9]$ trivially. Hence $\operatorname{IC}(G_2(a_1), \psi_3)[1]$ and $\operatorname{IC}(G_2(a_1), \psi_3)[-1]$ are the only summands of $R\pi_{I*}\underline{\mathbb{C}}[11]$ supported on $G_2(a_1)$. They correspond to the summands $\chi_1 (1+q)$ and $P_M(q)=(\chi_1 + \chi)(1+q)$ where $\chi=a\mathbf{1}+b \epsilon_1 +c \chi_2$ is a 4-dimensional subrepresentation of $H^2(\operatorname{Hess}(M, y))$ to be figured out.

Use Theorem~\ref{genlichpin} for $J=\{\alpha\}$ and $J=\{\beta\}$ again. By Table~\ref{tab:betti5} and Table~\ref{tab:betti6}, we have
\[
\begin{array}{c | c | c  }
  &  J=\{\alpha\}, \   W_J=\langle s \rangle & J=\{\beta\}, \  W_J=\langle t \rangle \\
      \hline
    \operatorname{dim}_{\mathbb{C}}H^2(\operatorname{Hess}(M, x_J)) &  4 &          3 \\
  \end{array}
\]
Combining the information,
\[
\left\{
      \begin{array}{l}
      \chi_{H^2(\operatorname{Hess}(M, y))}= \chi_1+\chi          \\
      \operatorname{Ind}_{\langle s \rangle}^W \mathbf{1}=\mathbf{1}+ \epsilon_1 +  \chi_1 +\chi_2 \\
      \operatorname{Ind}_{\langle t \rangle}^W \mathbf{1}= \mathbf{1} + \epsilon_2 + \chi_1 +\chi_2 \\
      \langle \chi_{H^2(\operatorname{Hess}(M, y))}, \operatorname{Ind}_{\langle s \rangle}^W \mathbf{1} \rangle =4 \\
      \langle \chi_{H^2(\operatorname{Hess}(M, y))}, \operatorname{Ind}_{\langle t \rangle}^W \mathbf{1} \rangle =3 \\
      \chi= a\mathbf{1}+b \epsilon_1 +c \chi_2 \\
      \operatorname{dim}_{\mathbb{C}} \chi=4
      \end{array}
              \right.
\]
we get the following system.
\[ 
\left\{
      \begin{array}{l}
      a+b+c+1=4  \\
      a+c+1=3 \\
      a+b+2c=4
                \end{array}
              \right.
\]
Then $a=b=c=1$ and $P_M(q)=(\mathbf{1}+\epsilon_1+\chi_2 +\chi_1)(1+q)$.

\noindent $\bullet$ $I=I_{\alpha, \beta }$

In this case, $I=\mathfrak{u}$, $M=\mathfrak{b}$, and the map $\pi_M: G \times^B M \longrightarrow \mathfrak{g}$ is isomorphic to the Grothendieck simultaneous resolution. Let $\tilde{\mathfrak{g}}^{rs}=\pi_M^{-1}(\mathfrak{g}^{rs})$. Then the restriction $\pi_M: \tilde{\mathfrak{g}}^{rs} \longrightarrow \mathfrak{g}^{rs}$ is a $W$-torsor. Therefore, $H^*(\operatorname{Hess}(M, y))$ is the regular representation of $W$ and $P_M(q)=\mathbf{1}+ \epsilon_1 + 2\chi_2 + 2\chi_1 + \epsilon_2$.

Now we have finished the classification of all dot actions for type $G_2$.

In summary, we have the following theorem.

\begin{theorem}[Brosnan, Xue] \label{thmBX} 
Table~\ref{tab:listdotg2} gives the complete list of Tymoczko's dot actions on the cohomology of regular semisimple Hessenberg varieties when $G$ is of type $G_2$.
\begin{table}[h]
\caption{Dot actions for type $G_2$} \label{tab:listdotg2}
\centering
\begin{tabular}{|l|l|}
\hline
$I$ & $P_M(q)$ \\ \hline
$I_{\emptyset}$ & $\mathbf{1}+ \mathbf{2}q + \mathbf{2}q^2 + \mathbf{2}q^3 + \mathbf{2}q^4 + \mathbf{2}q^5 + \mathbf{1}q^6$  \\ \hline
$I_{2\beta+3\alpha}$ & $\mathbf{1}+\mathbf{2}q+(\mathbf{2}+\epsilon_1) q^2+(\mathbf{2}+\epsilon_1) q^3+\mathbf{2}q^4+\mathbf{1}q^5
$  \\ \hline
$I_{\beta+3\alpha}$ & $\mathbf{1}+(\mathbf{2}+\epsilon_1) q+(\mathbf{2}+2\epsilon_1) q^2+(\mathbf{2}+\epsilon_1) q^3+\mathbf{1}q^4$ \\ \hline
$I_{\beta+2\alpha}$ & $\mathbf{1}+(\mathbf{2}+\epsilon_1+ \chi_2) q+(\mathbf{2}+\epsilon_1+\chi_2) q^2+\mathbf{1} q^3$  \\ \hline
$I_{\beta+\alpha}$ & $\mathbf{1}+(\mathbf{2}+\epsilon_1+ 2 \chi_2+\chi_1 + \epsilon_2) q +\mathbf{1}q^2$ \\ \hline
$I_{\alpha}$ & $(\mathbf{1}+\chi_2 +\chi_1+\epsilon_2)(1+q)$ \\ \hline
$I_{\beta}$ & $(\mathbf{1}+\epsilon_1+\chi_2 +\chi_1)(1+q)$ \\ \hline
$I_{\alpha, \beta}$ & $\mathbf{1}+ \epsilon_1 + 2\chi_2 + 2\chi_1 + \epsilon_2$ \\ \hline
\end{tabular}
\end{table}
\end{theorem}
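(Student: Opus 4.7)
The plan is to prove the theorem by a case-by-case computation, one Hessenberg ideal $I$ at a time, starting from $I_{\emptyset}$ and working up to $I_{\alpha,\beta}$ in order of increasing dimension. The two extreme cases can be disposed of immediately: when $I = I_{\emptyset}$ the map $\pi_M$ is the projection $G/B \times \mathfrak{g} \to \mathfrak{g}$ so $P_M(q)$ is just the Poincar\'e polynomial of $G/B$ with trivial $W$-action, and when $I = \mathfrak{u}$ the map $\pi_M$ is the Grothendieck simultaneous resolution, which restricts over $\mathfrak{g}^{rs}$ to a $W$-torsor, forcing $H^*(\operatorname{Hess}(M,y))$ to be the regular representation.

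For each intermediate ideal, the main strategy is to decompose $R\pi_{I*}\underline{\mathbb{C}}[d^{\vee}]$ into simple summands $\operatorname{IC}(\overline{C(N)},\mathcal{L})[b]$, then transport via the Fourier--Sato equivalence $F$ and the Springer correspondence of Table~\ref{tab:sprcorg2} to extract the corresponding $W$-representation summands of $P_M(q)$, using the bookkeeping of Lemma~\ref{dotcomplem}(4). The top nilpotent orbit $C(N)$ in the image of $\pi_I$ is identified from Table~\ref{tab:listhifg2}, and the multiplicity of each $\operatorname{IC}(\overline{C(N)},\mathcal{L})$ on that top orbit is pinned down by computing $H^*(\pi_I^{-1}(N))$ from the explicit cell structure given in section~\ref{g2hif}, together with the $A(N)$-equivariance of Lemma~\ref{ANactioncompatible}. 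In particular, for $N$ in the subregular orbit $G_2(a_1)$ we use the natural permutation action of $A(N) \cong S_3$ described in Remark~\ref{subregularSF} to split $H^*(\pi_I^{-1}(N))$ into $\psi_3$ and $\psi_{21}$ isotypic pieces, and the Hard Lefschetz argument (combined with Lemma~\ref{Brconjg2}) forces the shifts so that the symmetric pair of summands $\operatorname{IC}(\overline{C(N)},\mathcal{L})[\pm k]$ is the only possibility.

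Once the contribution from the top orbit is accounted for, the remaining summands of $P_M(q)$ come from $\operatorname{IC}$-sheaves supported on strictly smaller nilpotent orbits. Several of these are obtained for free by propagating information forward from the previously computed case: Proposition~\ref{Brletterlem}(3) says the restriction $H^2(\operatorname{Hess}(M,y)) \to H^2(\operatorname{Hess}(N,y))$ is an injective map of $W$-representations whenever $N \subset M$ and $\operatorname{Hess}(N,y)$ is connected, and Hard Lefschetz (Proposition~\ref{Brletterlem}(1)) then symmetrizes the Poincar\'e polynomial. For the cases $I_{\beta+\alpha}$, $I_{\alpha}$, $I_{\beta}$, where this still leaves an unknown subrepresentation $\chi = a\mathbf{1} + b\epsilon_1 + c\chi_2$ (the only characters that can arise from the smaller orbits $\{0\}$, $A_1$, $\tilde A_1$, since $\operatorname{IC}(G_2(a_1),\psi_{111})$ is excluded by Lemma~\ref{Brconjg2}), I invoke Theorem~\ref{genlichpin} at one or two judiciously chosen subsets $J \subset \Delta$. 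This converts Precup's Betti number formula for the non-semisimple regular Hessenberg variety $\operatorname{Hess}(M,x_J)$ (computed in the appendix) into inner products $\langle \chi_{H^2}, \operatorname{Ind}_{W_J}^W \mathbf{1} \rangle$, producing a small linear system in $a,b,c$ that is easily solved.

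The principal obstacle is not any single deep step but rather the coordination of these pieces: one must verify that for every ideal the Hard Lefschetz pairing and the $A(N)$-action together pin down the top-orbit contribution uniquely, and that the two choices $J = \{\alpha\}$ and $J = \{\beta\}$ (together with the total dimension constraint of Lemma~\ref{dotcomplem}(5)) give enough independent linear equations to determine $\chi$. The subtlest spot is the case $I = I_{\alpha}$, where $\operatorname{Hess}(M,y)$ is disconnected so Proposition~\ref{Brletterlem}(3) no longer seeds $H^0$ and $H^2$ from the previous case; here one must rely on Hard Lefschetz to force $P_M(q)$ to be divisible by $(1+q)$ and then use two applications of Theorem~\ref{genlichpin} to close the system.
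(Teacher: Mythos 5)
Your proposal follows exactly the paper's own strategy in subsection~\ref{dotactioncomputation}: induct on $|\Phi(I)|$, handle $I_\emptyset$ and $I_{\alpha,\beta}$ directly, pin down top-orbit $\operatorname{IC}$-summands via the cell structure of $\pi_I^{-1}(N)$, $A(N)$-equivariance (Remark~\ref{subregularSF}, Lemma~\ref{ANactioncompatible}), and Hard Lefschetz, propagate low-degree information via Proposition~\ref{Brletterlem}, and close the remaining unknowns $a\mathbf{1}+b\epsilon_1+c\chi_2$ with the linear systems coming from Theorem~\ref{genlichpin} and Precup's Betti numbers, noting that $I_\alpha$ requires the $(1+q)$-divisibility trick in place of the connectivity argument. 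This matches the paper's proof essentially step for step.
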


%\pagebreak

\section{Type \texorpdfstring{$F_4$}{} and \texorpdfstring{$E_6$}{}} \label{F4E6pf}
In this section we complete the proof of Theorem~\ref{dclpthm}. To be more specific, all that remains is the following proposition.

\begin{proposition} \label{F4E6}
Let $G$ be a connected simple algebraic group over $\mathbb{C}$ of type $F_4$ or $E_6$. Let $B$ be a Borel subgroup of $G$ and $N \in \mathfrak{g}$ be a distinguished nilpotent element whose associated parabolic $P$ contains $B$. Following the notation in subsection \ref{assparab}, let $P=LU_P$ be the Levi decomposition of $P$ so that $\mathfrak{l}=\mathfrak{g}(0)$ and $\mathfrak{u}_P=\bigoplus_{i > 0}\mathfrak{g}(i)$. $L \cap B$ is a Borel subgroup of $L$. Then for any $(L \cap B)$-stable subspace $U \subset \mathfrak{g}(2)$, the variety $X_U=(L, L \cap B, \mathfrak{g}(2), U, N)$ is paved by affines whenever it is not empty.
\end{proposition}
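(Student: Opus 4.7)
The plan is to proceed case-by-case through the distinguished nilpotent orbits of $F_4$ and $E_6$, as classified by Bala--Carter. For each such orbit, we identify the Levi $L$ of the associated parabolic, use the weight grading $\mathfrak{g}=\bigoplus\mathfrak{g}(i)$ to describe $\mathfrak{g}(2)$ as an $L$-module, and enumerate all $(L\cap B)$-stable subspaces $U\subset \mathfrak{g}(2)$ via their root space decompositions together with the Borel stability condition (as in Lemma~\ref{hi1}, applied inside $\mathfrak{l}$-modules). Because the $L$-orbit of $N$ is dense in $\mathfrak{g}(2)$, Lemma~\ref{prehsp} immediately pins down $\dim X_U = \dim L/(L\cap B) - \dim \mathfrak{g}(2)/U$ and gives a transitive action of the finite group $C_L(N)/C_L(N)^\circ$ on the set of connected components. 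These two pieces of data, together with the embedding $X_U\hookrightarrow L/(L\cap B)$, will be enough to identify $X_U$ as a familiar variety.

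For $F_4$ the four distinguished orbits are $F_4$, $F_4(a_1)$, $F_4(a_2)$, $F_4(a_3)$. Two of these require no serious work: for the regular orbit $F_4$ we have $L=T$, so $L/(L\cap B)$ is a point; for $F_4(a_1)$ the Levi has semisimple rank one, so $L/(L\cap B)\cong \mathbb{P}^1$ and any closed subvariety is either finitely many points or all of $\mathbb{P}^1$. The orbit $F_4(a_3)$ is treated in \cite{de1988homology}[section 4.2]. The remaining orbit $F_4(a_2)$ has Levi of semisimple type $A_1\times A_1$, so $L/(L\cap B)\cong \mathbb{P}^1\times\mathbb{P}^1$; after listing the root spaces of $\mathfrak{g}(2)$ and the $\operatorname{ad}(\mathfrak{l}\cap\mathfrak{b})$-stable subspaces, one verifies directly that every nonempty $X_U$ is a finite set of points, a disjoint union of $\mathbb{P}^1$'s, or the whole $\mathbb{P}^1\times\mathbb{P}^1$, each of which is obviously paved by affines.

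For $E_6$, the regular orbit $E_6$ has $L=T$ and the subregular orbit $E_6(a_1)$ has Levi of semisimple rank at most one, so both are trivial in the sense above. The only orbit requiring a genuine argument is $E_6(a_3)$, whose Levi has semisimple type $A_1\times A_1\times A_1$, giving $L/(L\cap B)\cong (\mathbb{P}^1)^3$. Again enumerating the $(L\cap B)$-stable subspaces of $\mathfrak{g}(2)$ by root weights, and applying Lemma~\ref{prehsp} to compute dimensions, the resulting $X_U$'s are forced to be a finite set of points, a disjoint union of $\mathbb{P}^1$'s, the full $(\mathbb{P}^1)^3$, or an irreducible smooth surface embedded in $(\mathbb{P}^1)^3$. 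For the top-dimensional and one-dimensional cases, affine pavings are standard; the substantive work lies in the surface case.

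The main obstacle, therefore, is identifying concretely the smooth rational surfaces $X_U\subset (\mathbb{P}^1)^3$ that arise in the analysis of $E_6(a_3)$. The plan to surmount this is to exploit additional symmetry: there is a residual one-parameter subgroup inside $C_L(N)^\circ$ acting on $X_U$, and we can apply Lemma~\ref{3rdtechaffinepaving} (the Bia\l{}ynicki-Birula paving for smooth projective varieties) after verifying the fixed-point subvariety is itself paved by affines. Combined with the three projections $X_U\to \mathbb{P}^1$ coming from the factors of $(\mathbb{P}^1)^3$, this should identify each such surface as either $\mathbb{P}^1\times\mathbb{P}^1$, a Hirzebruch surface, or the blow-up of $\mathbb{P}^2$ at a small number of torus-fixed points, and in each case produce the required affine paving. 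Once this final case is handled, all pieces assemble to prove Proposition~\ref{F4E6}.
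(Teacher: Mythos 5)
Your overall outline — running through the Bala--Carter list of distinguished orbits, disposing of the regular and subregular cases by rank considerations, and reducing the substantive work to $F_4(a_2)$ and $E_6(a_3)$ where $L/(L\cap B)$ is $\mathbb{P}^1\times\mathbb{P}^1$ and $(\mathbb{P}^1)^3$ respectively — matches the paper's plan, and your use of Lemma~\ref{prehsp} to compute $\dim X_U$ and control connected components is exactly what the paper does. The problem is in the one genuinely hard case you correctly isolate: the smooth rational surface $X_U\subset(\mathbb{P}^1)^3$ occurring for $E_6(a_3)$ when $\operatorname{codim}U=1$.

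Your proposed tool there does not exist. You want a ``residual one-parameter subgroup inside $C_L(N)^{\circ}$'' to run Bia\l{}ynicki-Birula on $X_U$. But $N$ is distinguished, and for a distinguished nilpotent element the connected reductive part $C$ of $C_G^{\circ}(N)$ contains no nontrivial torus — by the very definition of distinguished (see subsection \ref{undist}: a maximal torus $S$ of $C$ gives the minimal Levi $C_{\mathfrak g}(\mathfrak s)$ containing $N$, which for distinguished $N$ is all of $\mathfrak g$, forcing $S$ to be central and hence trivial in a simple group); since $C$ is connected reductive with no torus it is trivial. As $C_L^{\circ}(N)\subset C_G^{\circ}(N)\cap L = C_F(D)=C=\{1\}$, there is no one-parameter subgroup to use. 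Nor can you substitute another torus in $L$: the only $1$-dimensional torus in $T$ that scales $N$ is (up to finite index) $D$ itself, which lies in $Z(L)\subset L\cap B$ and therefore acts trivially on $L/(L\cap B)$. So the Bia\l{}ynicki-Birula route is closed. The paper avoids this by a different mechanism: it computes explicitly that $X_U$ meets the big cell of $(\mathbb{P}^1)^3$ in a smooth quadric $\{1+az_1z_2+bz_1z_3+cz_2z_3=0\}$, concludes $X_U$ is a smooth projective rational surface, and then invokes a separate result (Lemma~\ref{surfacelem}) that every smooth projective rational surface admits an affine paving — proved by induction on the number of blowups from a minimal rational surface. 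Some ingredient of that kind is essential here, and your proposal is missing it. If you retain your framework, you should replace the $\mathbb{C}^{\times}$-argument by either (a) proving a rational-surface paving lemma directly, or (b) pinning down the surface $X_U$ concretely (it is a $(1,1,1)$-divisor in $(\mathbb{P}^1)^3$, hence a degree-$6$ del Pezzo) and exhibiting its paving by hand.
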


We prove the statement above by giving sufficiently concrete descriptions of all the nonempty $(L, L \cap B, \mathfrak{g}(2), U, N)$'s for every distinguished nilpotent orbit of $F_4$ and $E_6$. The proposition should be evident towards the end of this section. 

\subsection{Computational setups} \label{compsetup}
Let $G$ denote a connected simple algebraic group over $\mathbb{C}$ of type $F_4$ or $E_6$. Fix a Borel subgroup $B$ of $G$ and choose a maximal torus $T \subset B$. Let $\Phi$ denote the set of roots with respect to $T$ and $B$ and $\Delta$ denote the set of simple roots. By abuse of notation, $\Phi$ also denotes the Dynkin diagram of $\mathfrak{g}$.

For any root $\gamma \in \Phi$, let $X_{\gamma}$ be the 1-dimensional unipotent subgroup of $G$ whose Lie algebra is the root space $\mathfrak{g}_{\gamma}$. Let $x_{\gamma}: \mathbb{C} \longrightarrow X_{\gamma}$ be the group isomorphism defined via the exponential map and the choice of a Chevalley basis element in $\mathfrak{g}_{\gamma}$. For each positive root $\gamma$, choose a nonzero vector $E_{\gamma} \in \mathfrak{g}_{\gamma}$. Let $E_{-\gamma}$ be the unique vector in $\mathfrak{g}_{-\gamma}$ so that $\left\{E_{\gamma}, \left[E_{\gamma}, E_{-\gamma}\right], E_{-\gamma}\right\}$ is an $\mathfrak{sl}_2$-triple. Let $H_{\gamma}= \left[E_{\gamma}, E_{-\gamma}\right]$. The four lemmas from Lemma~\ref{flowlem} to Lemma~\ref{prodlem} are still true for type $F_4$ and $E_6$.

In our presentation of $G$ above, we have chosen a Borel subgroup $B$ and a maximal torus $T \subset B$ to begin with. Therefore, for each distinguished nilpotent orbit, we need to pick a representative $N$ so that its associated parabolic $P$ contains the preselected $B$. To facilitate the choice, we recall some information regarding the classification of nilpotent orbits of a simple Lie algebra.

There are at least three different ways of classifying nilpotent orbits: weighted Dynkin diagrams, the Bala-Carter theory and pseudo-Levi subalgebras.

\noindent $\bullet$ weighted Dynkin diagrams   

By the Jacobson-Morozov theorem, every nilpotent element $N \in \mathfrak{g}$ can be embedded in a $\mathfrak{sl}_2$-triple. By considering $\mathfrak{g}$ as an $\mathfrak{sl}_2$-representation, we thereby obtain the weighted Dynkin diagram associated to $N$, which is the Dynkin diagram of $\mathfrak{g}$ with every node labeled by a number from $\{0, 1, 2  \}$. It is known that the weighted Dynkin diagram is uniquely determined by the conjugacy class of $N$, and that the diagram is even (labeled only by $0$ and $2$) if $N$ is distinguished. The main drawback of this method is that not all of the $3^l$ weighted Dynkin diagrams of an algebra $\mathfrak{g}$ of rank $l$ correspond to nilpotent orbits, and there is no simple algorithm to determine which diagram does. For type $F_4$ and $E_6$, the weighted Dynkin diagrams of all their distinguished nilpotent orbits are listed in \cite{bala1976classes}[p.~416].

\noindent $\bullet$ Bala-Carter theory 

The Bala-Carter theorem \cite{bala1976classesii}[Theorem 6.1] states that there is a bijection between the set of conjugacy classes of nilpotent elements of $\mathfrak{g}$ and $G$-conjugacy classes of pairs $(R, P_R)$, where $R$ is a semisimple subgroup of parabolic type in $G$ and $P_R$ is a distinguished parabolic subgroup of $R$. As a consequence, (as we have recalled in subsection \ref{undist}) for every nilpotent element $N \in \mathfrak{g}$, there exists a minimal Levi subalgebra $\mathfrak{m}$ of $\mathfrak{g}$ containing $N$ so that $N$ is distinguished in $\mathfrak{m}$. Under this scheme, the distinguished nilpotent orbits of type $F_4$ and $E_6$ are denoted by symbols $X_i(a_j)$, where $X_i$ is either $F_4$ or $E_6$ (meaning the minimal Levi subalgebra is $\mathfrak{g}$ itself) and $j$ is the semisimple rank of the associated distinguished parabolic subgroup. According to \cite{bala1976classes}[p.~416], $F_4$ has 4 distinguished nilpotent orbits: $F_4$ (the regular orbit), $F_4(a_1)$, $F_4(a_2)$ and $F_4(a_3)$. $E_6$ has 3 distinguished orbits: $E_6$, $E_6(a_1)$ and $E_6(a_3)$.

\noindent $\bullet$ pseudo-Levi subalgebras

\begin{definition} \label{dfnpseudoLevi}
A pseudo-Levi subalgebra of $\mathfrak{g}$ is a subalgebra that is $G$-conjugate to a subalgebra of the form 
\[ \mathfrak{g}'=\mathfrak{t} \oplus \bigoplus_{\alpha \in \Psi} \mathfrak{g}_{\alpha},  \]
where $\Psi$ is an additively closed subrootsystem of $\Phi$.
\end{definition}

Pseudo-Levi subalgebras turn out to be rather tractable. 

In the first place, we have the following result.

\begin{proposition}[{\cite{sommers1998generalization}}, Proposition 2] \label{sommersprop}
Pseudo-Levi subalgebras are the subalgebras of $\mathfrak{g}$ of the form $C_{\mathfrak{g}}(t)$ where $t$ is a semisimple element of $G$.
\end{proposition}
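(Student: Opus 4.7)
I would prove the two inclusions separately. The direction that every centralizer $C_{\mathfrak{g}}(t)$ of a semisimple element is a pseudo-Levi is essentially formal: after replacing $t$ by a $G$-conjugate we may assume $t \in T$, and then $t$ acts on each root space $\mathfrak{g}_{\alpha}$ by the scalar $\alpha(t) \in \mathbb{C}^{\times}$. Consequently
\[
C_{\mathfrak{g}}(t) \;=\; \mathfrak{t} \;\oplus\; \bigoplus_{\alpha(t)=1} \mathfrak{g}_{\alpha},
\]
and the set $\Psi := \{\alpha \in \Phi : \alpha(t) = 1\}$ is visibly symmetric and additively closed, since $\alpha,\beta \in \Psi$ with $\alpha+\beta \in \Phi$ forces $(\alpha+\beta)(t) = \alpha(t)\beta(t) = 1$. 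Hence $C_{\mathfrak{g}}(t)$ is a pseudo-Levi in the sense of Definition~\ref{dfnpseudoLevi}.

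For the reverse direction, given an additively closed subrootsystem $\Psi \subset \Phi$, the task is to realise $\Psi$ as the set of roots trivial on some $t \in T$. The key structural input is (a version of) the Borel--de Siebenthal classification: every additively closed subrootsystem of $\Phi$ is $W$-conjugate to one spanned by a proper subset $\Pi' \subsetneq \widetilde{\Pi}$ of the extended simple roots $\widetilde{\Pi} = \Pi \cup \{-\tilde\alpha\}$, where $\tilde\alpha = \sum_i n_i \alpha_i$ is the highest root. Given such a $\Pi'$, one constructs $t$ as an explicit torsion element of $T$ of the form $\exp(2\pi i \xi)$, with $\xi$ a rational combination of the fundamental coweights $\omega_i^\vee$ chosen so that $\alpha(\xi) \in \mathbb{Z}$ precisely for $\alpha \in \Pi'$. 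The relation $\tilde\alpha = \sum_i n_i \alpha_i$ (which becomes a genuine constraint exactly when $-\tilde\alpha \in \Pi'$) makes the choice of $\xi$ a matter of solving a solvable linear system over $\mathbb{Q}$, and a direct verification then shows that $\{\alpha \in \Phi : \alpha(t)=1\}$ equals the subsystem generated by $\Pi'$, which is what we wanted.

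The principal obstacle is the Borel--de Siebenthal step itself: proving that every additively closed subrootsystem is $W$-conjugate to one cut out by a subdiagram of the extended Dynkin diagram. I would establish this by induction on rank. One picks a base $\Pi_{\Psi}$ of $\Psi$ compatible with some regular functional, observes that $\Pi_{\Psi}$ is linearly independent, and then enlarges it to a subset of $\widetilde{\Pi}$ after a suitable $W$-translation, using the highest root of $\Phi$ with respect to an ordering compatible with $\Pi_{\Psi}$. A secondary subtlety, which must be checked to ensure that the $t$ constructed above yields \emph{exactly} $\Psi$ and not some strictly larger subsystem, is the saturation property that no root $\alpha \in \Phi \setminus \Psi$ has a positive integer multiple lying in the $\mathbb{Z}$-span of $\Psi$; this saturation is automatic from the extended-Dynkin description of $\Pi'$ and completes the argument.
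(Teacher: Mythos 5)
The paper does not give its own proof of this statement — it is quoted verbatim from Sommers's 1998 paper — so you are supplying a proof where the paper defers to a reference. Your forward direction (that $C_{\mathfrak{g}}(t)$ is a pseudo-Levi for $t\in T$) is correct and is the standard argument.

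The gap is in the converse, and specifically in your statement of Borel--de Siebenthal. You assert that every symmetric, additively closed subrootsystem of $\Phi$ is $W$-conjugate to one spanned by a subset $\Pi'\subsetneq\widetilde\Pi$ of the \emph{extended} simple roots. That is false. One-step deletion from $\widetilde\Pi$ produces only the \emph{maximal} such subsystems; the general ones require \emph{iterating} the extend-and-delete process on the components. For example $4A_2\subset E_8$ is a symmetric additively closed subsystem: $\widetilde{E}_8$ has nine nodes, and no single deletion leaves an induced subdiagram of type $4A_2$ (the possible single-deletion results are $E_8$, $D_8$, $A_8$, $A_1+E_7$, $A_2+E_6$, $A_3+D_5$, $A_4+A_4$, $A_5+A_2+A_1$, $A_1+D_7$); one only reaches $4A_2$ by first deleting a node to get $A_2+E_6$ and then deleting the branch node of $\widetilde{E}_6$. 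The paper itself flags the need to iterate in the paragraph following the proposition (``start with $\tilde\Phi$, remove certain nodes, and possibly repeat the same process on the connected components''). Your construction of $t$ could be salvaged by induction along the iteration, but as written your proof rests on a false classification lemma, and the ``automatic'' saturation you invoke at the end (which is proved by looking at a single $\Pi'\subset\widetilde\Pi$) no longer applies to subsystems that require iteration.

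There is a route that avoids Borel--de Siebenthal entirely and isolates the real content. Given $\Psi$ symmetric and additively closed, choose any embedding of the finitely generated abelian group $Q/\mathbb{Z}\Psi$ into $\mathbb{C}^{\times}$ (here $Q$ is the root lattice), extend to a homomorphism $X(T)\to\mathbb{C}^{\times}$ using divisibility of $\mathbb{C}^{\times}$, and let $t\in T$ be the resulting point. Then $\{\alpha\in\Phi:\alpha(t)=1\}=\mathbb{Z}\Psi\cap\Phi$. Thus the whole converse reduces to one lattice-theoretic assertion: $\mathbb{Z}\Psi\cap\Phi=\Psi$ for symmetric additively closed $\Psi$. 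That is precisely the ``saturation'' you labelled a secondary subtlety; in fact it \emph{is} the converse, and it needs its own argument.
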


In the second place, the Dynkin diagrams of all the additively closed subrootsystems $\Psi$ can be obtained from the extended Dynkin diagram $\tilde{\Phi}$ of $\mathfrak{g}$ by the Borel-de Siebenthal theory: start with $\tilde{\Phi}$, remove certain nodes, and possibly repeat the same process on the connected components of the resulting diagram. In particular, the Dynkin diagrams of the pseudo-Levi subalgebras of the same rank as $\mathfrak{g}$ are those obtained from $\tilde{\Phi}$ by removing exactly one node. 

In the third place, for type $A_n$, $C_n$, $G_2$, $F_4$ and $E_6$, the $G$-conjugacy class of the pseudo-Levi subalgebra $\mathfrak{g}'$ is determined by the isomorphism type of its root system $\Psi$ and the lengths of the simple roots of $\Psi$.

These facts, together with the following theorem, can help us find a representative for each distinguished nilpotent orbit of type $F_4$ and $E_6$.

\begin{theorem}[{\cite{sommers1998generalization}}, Theorem 13] \label{sommersthm}
When $G$ is of the adjoint type, there is a bijection between: $G$-conjugacy classes of pairs $(\mathfrak{l}, N)$, where $\mathfrak{l}$ is a pseudo-Levi subalgebra of $\mathfrak{g}$ and $N$ is a distinguished nilpotent element in $\mathfrak{l}$; and $G$-conjugacy classes of pairs $(N, C)$, where $N$ is a nilpotent element of $\mathfrak{g}$ and $C$ is a conjugacy class in the component group $A(N)=C_G(N)/C_G^{\circ}(N)$.
\end{theorem}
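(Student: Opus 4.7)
The plan is to construct the bijection explicitly in both directions and then verify the two maps are mutually inverse and $G$-equivariant. For the forward direction, given a pair $(\mathfrak{l}, N)$ on the LHS, I would invoke Proposition~\ref{sommersprop} to write $\mathfrak{l} = C_{\mathfrak{g}}(t)$ for some semisimple $t \in G$; such $t$ automatically lies in the center $Z(L)$ of the corresponding connected subgroup $L$ with Lie algebra $\mathfrak{l}$, and a generic element of $Z(L)$ suffices. Since $N \in \mathfrak{l}$, the element $t$ commutes with $N$, so $t \in C_G(N)$ and it has a well-defined image $\bar{t} \in A(N) = C_G(N)/C_G^\circ(N)$. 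Define $C$ to be the $A(N)$-conjugacy class of $\bar{t}$.

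To check this is well-defined, two choices $t, t'$ with $C_{\mathfrak{g}}(t) = C_{\mathfrak{g}}(t') = \mathfrak{l}$ lie in $Z(L)$ and differ by an element of $Z(L)$. The connected torus $Z(L)^\circ$ is contained in $C_G^\circ(N)$, so it is killed by the quotient to $A(N)$, and one reduces to the finite group $Z(L)/Z(L)^\circ$. Here I would use that $N$ is distinguished in $\mathfrak{l}$: this forces any further ambiguity in $\bar{t}$ to be absorbed by the action of $N_G(L) \cap C_G(N)$ on $A(N)$ by conjugation, so the $A(N)$-conjugacy class $C$ is canonical. $G$-equivariance is essentially tautological: replacing $(\mathfrak{l}, N)$ by $(g\mathfrak{l}, gN)$ replaces $t$ by $gtg^{-1} \in C_G(gN)$, and the standard identification $A(N) \cong A(gN)$ by conjugation sends $[t]$ to $[gtg^{-1}]$.

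For the inverse, given $(N, C)$ on the RHS, pick a representative $c \in C \subset A(N)$ and lift it to an element $t \in C_G(N)$. The hypothesis that $G$ is adjoint enters here: for adjoint $G$, any torsion element of $A(N)$ admits a semisimple lift in a maximal reductive subgroup of $C_G(N)$, because the preimage in this reductive quotient contains a semisimple representative. Among all semisimple lifts within the coset $c \cdot C_G^\circ(N)$, choose $t$ so that $\dim C_{\mathfrak{g}}(t)$ is minimal; concretely, replace $t$ by $ts$ where $s$ runs over a maximal torus of $C_{C_G(N)}^\circ(t)$ and pick a generic $s$. Set $\mathfrak{l} := C_{\mathfrak{g}}(t)$. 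Then $\mathfrak{l}$ is a pseudo-Levi containing $N$, and minimality of $C_{\mathfrak{g}}(t)$ guarantees that $N$ is distinguished in $\mathfrak{l}$: any torus $S \subset C_{\mathfrak{l}}(N)$ not lying in the Lie algebra of $Z(L)$ would enlarge $\overline{\langle t, S\rangle}$ to a diagonalizable subgroup with strictly smaller centralizer still containing $N$, contradicting the minimal choice.

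The main obstacle, and the technical heart of the proof, is precisely this last step: establishing existence of a semisimple lift that simultaneously (a) represents the prescribed class $c \in A(N)$ and (b) has minimal centralizer so that $N$ becomes distinguished in $C_{\mathfrak{g}}(t)$. This requires a careful analysis of the reductive quotient of $C_G(N)$ and its interaction with maximal tori, and it is genuinely where the ``adjoint'' hypothesis is used (a non-adjoint $G$ can fail to furnish such semisimple lifts). Once both constructions are in place, checking they are mutual inverses is largely bookkeeping: the round-trip LHS$\to$RHS$\to$LHS returns $\mathfrak{l}$ because the $t$ produced in the forward direction already satisfies $C_{\mathfrak{g}}(t) = \mathfrak{l}$ with minimal centralizer (the distinguished hypothesis on $N$ in $\mathfrak{l}$ guarantees minimality), and the round-trip RHS$\to$LHS$\to$RHS returns $C$ by construction. $G$-equivariance of both maps makes them descend to $G$-conjugacy classes, completing the bijection.
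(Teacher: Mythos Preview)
The paper does not prove this theorem; it is quoted verbatim from \cite{sommers1998generalization} (Theorem~13 there) and invoked only as a tool in subsection~\ref{compsetup} to locate explicit representatives of the distinguished orbits $F_4(a_2)$ and $E_6(a_3)$. There is therefore no in-paper proof to compare your proposal against.

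For what it is worth, your outline is broadly the architecture of Sommers' own argument: pass from $(\mathfrak{l},N)$ to the image in $A(N)$ of a semisimple $t$ with $C_{\mathfrak{g}}(t)=\mathfrak{l}$, and conversely lift a class in $A(N)$ to a semisimple element and minimize its centralizer. The one place where your sketch is genuinely thin is the well-definedness of the forward map. Two semisimple $t,t'$ with $C_{\mathfrak{g}}(t)=C_{\mathfrak{g}}(t')=\mathfrak{l}$ need not differ merely by an element of $Z(L)^\circ$, and your claim that the residual ambiguity in $Z(L)/Z(L)^\circ$ is ``absorbed by the action of $N_G(L)\cap C_G(N)$'' is asserted rather than argued; in Sommers' treatment this step requires a careful analysis of which components of $Z(L)$ actually have $\mathfrak{l}$ as full centralizer and how $C_G(N)$ acts on them. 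But since the present paper treats the result as a black box, none of this bears on what the paper itself does.
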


Furthermore, to make sure that the associated parabolic subgroup of the chosen representative does contain the preselected Borel subgroup $B$, we need the following lemma.

\begin{lemma} \label{reprlem}
Let $\mathfrak{g}$ be a reductive Lie algebra over $\mathbb{C}$ of semisimple rank $l$ with a decomposition
\[ \mathfrak{g}=\mathfrak{h} \oplus \bigoplus_{\alpha \in \Phi} \mathfrak{g}_{\alpha},  \]
where $\mathfrak{h}$ is a Cartan subalgebra and $\Phi$ is the root system of $\mathfrak{g}$. Choose a set of simple roots $\Delta=\{\alpha_1, \alpha_2, \ldots, \alpha_l \}$. For each positive root $\gamma \in \Phi$, choose a nonzero vector $E_{\gamma} \in \mathfrak{g}_{\gamma}$. Let $E_{-\gamma}$ be the unique vector in $\mathfrak{g}_{-\gamma}$ so that $\left\{E_{\gamma}, \left[E_{\gamma}, E_{-\gamma}\right], E_{-\gamma}\right\}$ is an $\mathfrak{sl}_2$-triple. Let $H_{\gamma}= \left[E_{\gamma}, E_{-\gamma}\right]$. Then $N=\sum_{i=1}^l E_{\alpha_i}$ is a regular nilpotent element of $\mathfrak{g}$ and there exist a semisimple element $H \in \mathfrak{h}$ and a nilpotent element $Y \in \mathfrak{g}$ so that $\{ N, H, Y \}$ forms an $\mathfrak{sl}_2$-triple. 
\end{lemma}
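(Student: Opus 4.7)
The plan is to prove the two assertions separately: first exhibit an explicit $\mathfrak{sl}_2$-triple $\{N, H, Y\}$ with $H \in \mathfrak{h}$, and then deduce regularity of $N$ from the structure of this triple.

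First I would construct $H$. Since the simple roots $\alpha_1, \ldots, \alpha_l$ are linearly independent as functionals on $\mathfrak{h}/\mathfrak{z}(\mathfrak{g})$, there exists an element $H \in \mathfrak{h}$ satisfying $\alpha_i(H) = 2$ for all $i = 1, \ldots, l$; concretely, one may take $H = 2\sum_i \omega_i^\vee$ where $\omega_1^\vee, \ldots, \omega_l^\vee$ are the fundamental coweights of the semisimple part $[\mathfrak{g}, \mathfrak{g}]$. From the defining conditions,
\[
[H, N] = \sum_{i=1}^l \alpha_i(H)\, E_{\alpha_i} = 2N.
\]

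Next I would construct $Y$ in the form $Y = \sum_{i=1}^l c_i E_{-\alpha_i}$ for suitable scalars $c_i \in \mathbb{C}$. The critical combinatorial input is that for two distinct simple roots $\alpha_i, \alpha_j$, the difference $\alpha_j - \alpha_i$ is not a root; hence $[E_{\alpha_j}, E_{-\alpha_i}] = \delta_{ij} H_{\alpha_i}$. This gives $[N, Y] = \sum_i c_i H_{\alpha_i}$, so the equation $[N, Y] = H$ becomes a linear system for $(c_1, \ldots, c_l)$ whose coefficient matrix is (a nonzero rescaling of) the Cartan matrix of $\mathfrak{g}$. Since the Cartan matrix is invertible, the $c_i$ exist uniquely. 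The remaining relation $[H, Y] = -2Y$ is verified directly: $[H, Y] = \sum_i c_i(-\alpha_i(H)) E_{-\alpha_i} = -2Y$. Thus $\{N, H, Y\}$ is an $\mathfrak{sl}_2$-triple with $H \in \mathfrak{h}$.

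Finally, to show $N$ is regular, I would invoke Kostant's classical theorem on the principal $\mathfrak{sl}_2$: once $\mathfrak{g}$ is decomposed under the adjoint action of the $\mathfrak{sl}_2$-subalgebra spanned by $\{N, H, Y\}$, the semisimple part $[\mathfrak{g}, \mathfrak{g}]$ breaks into precisely $l$ irreducible summands whose highest weights are twice the exponents of $\mathfrak{g}$; consequently $\dim \ker(\operatorname{ad} N|_{[\mathfrak{g},\mathfrak{g}]}) = l$. Adding the center gives $\dim C_{\mathfrak{g}}(N) = l + \dim \mathfrak{z}(\mathfrak{g}) = \operatorname{rank}(\mathfrak{g})$, so $N$ is regular.

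The main obstacle is the regularity step, where one must either cite Kostant's theorem or reprove the centralizer dimension of the principal nilpotent; the preceding construction of the triple is an essentially formal linear-algebra calculation enabled by the fact that differences of simple roots are never roots.
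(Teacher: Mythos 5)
Your construction of the triple is essentially the paper's own argument: the paper sets $[x_1,\ldots,x_l]=[2,\ldots,2]A^{-1}$, puts $H=\sum x_iH_{\alpha_i}$, $Y=\sum x_iE_{-\alpha_i}$, and verifies the $\mathfrak{sl}_2$ relations using exactly your key observation that $\alpha_i-\alpha_j$ is never a root, which is the same computation as your ``fix $H$ with $\alpha_i(H)=2$, then solve the Cartan-matrix system for the $c_i$.'' The only difference is the last step, where the paper concludes regularity by noting that the weighted Dynkin diagram of $N$ is labeled entirely by $2$'s, while you cite Kostant's principal-$\mathfrak{sl}_2$/centralizer-dimension theorem; both are standard citations of the same classical fact (and if you want to avoid any appearance of presupposing the triple is principal, note directly that all $H$-weights are even and $\mathfrak{g}(0)=\mathfrak{h}$, so $\dim C_{\mathfrak{g}}(N)=\dim\mathfrak{g}(0)+\dim\mathfrak{g}(1)=\operatorname{rank}\mathfrak{g}$).
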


\begin{proof}
By the Jacobson-Morozov theorem, we can always embed $N$ in an $\mathfrak{sl}_2$-triple. The point of the lemma is to show that there exists such an $\mathfrak{sl}_2$-triple so that the semisimple element $H$ lies in the preselected Cartan subalgebra $\mathfrak{h}$.

Let $A=[A_{ij}]_{l \times l}$ be the Cartan matrix of $\Phi$, where $A_{ij}=\langle \alpha_j, \alpha_i \rangle=\alpha_j(H_{\alpha_i})$. Define the $1 \times l$ row vector $[x_1, x_2, \ldots, x_l]=[2, 2, \ldots, 2]A^{-1}$. Set $H=\sum_{i=1}^l x_i H_{\alpha_i}$ and $Y=\sum_{i=1}^l x_i E_{-\alpha_i}$. 

For every $j \in \{1, 2, \ldots, l \}$, $\alpha_j(H)=\sum_{i=1}^l x_i \alpha_j(H_{\alpha_i})=\sum_{i=1}^l x_i A_{ij}=2$. Therefore, $[H, N]=\sum_{i=1}^l [H, E_{\alpha_i}]=\sum_{i=1}^l \alpha_i(H) E_{\alpha_i}=2N$. Similarly, $[H, Y]=-2Y$. For distinct simple roots $\alpha_i$ and $\alpha_j$, $\alpha_i - \alpha_j$ is not a root. Hence $[N, Y]=\sum_{i=1}^l x_i [E_{\alpha_i}, E_{-\alpha_i}]=\sum_{i=1}^l x_i H_{\alpha_i}=H$. Therefore, $\{ N, H, Y \}$ is an $\mathfrak{sl}_2$-triple. This also means that the weighted Dynkin diagram associated to $N$ has every node labeled by 2, so $N$ must be a regular nilpotent element.
\end{proof}

As will become clear later, the following lemma is the last piece of the puzzle to prove Proposition~\ref{F4E6}.

\begin{lemma} \label{surfacelem}
For every smooth projective rational surface over $\mathbb{C}$ with finitely many marked points, there exists an affine paving of it whose unique 2-cell contains all the marked points. In particular, every smooth projective rational surface is paved by affines.
\end{lemma}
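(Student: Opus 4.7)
The plan is to induct on the number of point blow-ups needed to reach $X$ from a minimal rational surface. By the classification of smooth projective rational surfaces, every such $X$ is obtained from $\mathbb{P}^2$ or a Hirzebruch surface $\mathbb{F}_n$ (for some $n \geq 0$) by a finite sequence of blow-ups at points, so it suffices to treat the minimal cases as the base and a single blow-up $\pi : \tilde{X} \to X$ at a point $q \in X$ as the inductive step.

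For the base case $X = \mathbb{P}^2$: since lines form a two-parameter family, one can choose a line $L \subset \mathbb{P}^2$ disjoint from the finite marked set $P$, and then $\mathbb{P}^2 \setminus L \cong \mathbb{A}^2$ contains $P$, while $L = \mathbb{A}^1 \sqcup \{\mathrm{pt}\}$ supplies an affine paving of the complement. For $X = \mathbb{F}_n$ with its ruling over $\mathbb{P}^1$, choose a fiber $F$ and a section $S$ (drawn from the positive-dimensional linear system of positive sections when $n \geq 1$, or from either ruling when $n=0$) so that $F \cup S$ misses $P$; then $\mathbb{F}_n \setminus (F \cup S) \cong \mathbb{A}^2$ contains $P$, and $F \cup S$, a pair of rational curves meeting transversally in one point, is easily paved by affines.

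For the inductive step, let $E \cong \mathbb{P}^1$ denote the exceptional divisor of $\pi : \tilde{X} \to X$. Apply the inductive hypothesis to $X$ with marked set $\pi(P) \cup \{q\}$ to obtain an affine paving of $X$ whose open $\mathbb{A}^2$-cell $U$ contains $\pi(P) \cup \{q\}$. Choose a line $L \subset U$ through $q$ that avoids the finite set $\pi(P) \setminus \{q\}$ and whose direction $[L] \in E$ avoids $P \cap E$; both constraints exclude only finitely many members of the $\mathbb{P}^1$-family of lines through $q$, so such an $L$ exists. The standard decomposition $\pi^{-1}(U) = \mathrm{Bl}_q \mathbb{A}^2 = \mathbb{A}^2 \sqcup \tilde{L}$, in which the open $\mathbb{A}^2$ is the complement of the strict transform $\tilde{L}$ of $L$, then places every point of $P$ into the open $\mathbb{A}^2$-cell.

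Assembling the global paving of $\tilde{X}$ amounts to listing the strata of $X \setminus U$ (unchanged by the blow-up and closed in $\tilde{X}$) in their original order, adjoining $\tilde{L}$ as an $\mathbb{A}^1$-stratum, and ending with the open $\mathbb{A}^2$-cell. The essential bookkeeping check is that $(X \setminus U) \cup \tilde{L}$ is closed in $\tilde{X}$: the closure of $\tilde{L}$ in $\tilde{X}$ is the strict transform of the closure $\bar{L} \subset X$, which equals $\tilde{L} \cup (\bar{L} \cap (X \setminus U))$ and is therefore contained in $\tilde{L} \cup (X \setminus U)$. The main obstacle is precisely this closure compatibility between strata, and it is resolved by the observation that any new boundary of $\tilde{L}$ in $\tilde{X}$ lies in the already-stratified locus $X \setminus U$. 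The ``in particular'' assertion is the special case $P = \emptyset$.
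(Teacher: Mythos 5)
Your proof is correct and follows essentially the same route as the paper: induction on the number of blow-ups from a minimal rational surface, with the inductive step splitting $\operatorname{Bl}_q\mathbb{A}^2$ into an open $\mathbb{A}^2$ and an $\mathbb{A}^1$ (your strict transform $\tilde{L}$ of a generic line through $q$ is exactly the paper's fiber $\operatorname{pr}_2^{-1}(x)$ of the line-bundle projection to the exceptional $\mathbb{P}^1$). The only difference is that you spell out the base cases and the closure bookkeeping that the paper dismisses as clear, which is fine.
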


\begin{proof}
It is well-known that any smooth projective rational surface $S$ can be obtained from successive blowups of a minimal rational surface. To be precise, there exists a chain of morphisms $\pi_k: S_k \longrightarrow S_{k-1}$ ($k=1, 2, \ldots, n$) so that: each $S_k$ is a smooth projective rational surface, $S_n \cong S$ and $S_0$ is a minimal rational surface; each $\pi_k$ is the blowup of $S_{k-1}$ at a single point. We prove the lemma by induction on the number of blowups in the chain. 

When the number of blowups is 0, the surface $S$ itself is a minimal rational surface. That is, $S$ is either $\mathbb{P}^2$ or a Hirzebruch surface. Then the lemma is clear from the explicit descriptions of these two types of minimal surfaces.

Assume the lemma is true when the number of blowups is less than or equal to $n-1$. For any surface $S$ obtained by $n$ blowups, consider the last morphism $\pi_n: S \longrightarrow S_{n-1}$. Let $\{q_1, q_2, \ldots, q_m \}$ be the set of marked points on $S$ and let $p \in S_{n-1}$ be the blown-up point. Consider $S_{n-1}$ with a set of marked points $\{ \pi_n(q_1), \pi_n(q_2), \ldots, \pi_n(q_m) \} \cup \{p\}$. By the inductive hypothesis, there exists an affine paving of $S_{n-1}$ whose unique 2-cell $\mathbb{A}^2$ contains $\{ \pi_n(q_1), \pi_n(q_2), \ldots, \pi_n(q_m) \} \cup \{p\}$. Then the blowup $\operatorname{Bl}_p \mathbb{A}^2$, as an open subset of $S$, contains all the points $q_1, q_2, \ldots, q_m$. The classic definition of the blowup of $\mathbb{C}^2$ at the origin is $\operatorname{Bl}_{(0, 0)}\mathbb{C}^2=\{( (x,y), [u:v] ) \in \mathbb{C}^2 \times \mathbb{P}^1 \ |\ xv-yu=0 \}$. Hence $\operatorname{Bl}_p \mathbb{A}^2$ is a line bundle over the exceptional divisor $\mathbb{P}^1$, and we use $\operatorname{pr}_2: \operatorname{Bl}_p \mathbb{A}^2 \longrightarrow \mathbb{P}^1$ to denote the projection. Picking $x \in \mathbb{P}^1$ which is not equal to any $\operatorname{pr}_2(q_i)$ ($i=1, 2, \ldots, m$), we can decompose $\operatorname{Bl}_p \mathbb{A}^2$ into a 1-cell $\operatorname{pr}_2^{-1}(x) \cong \mathbb{A}^1$ and a 2-cell $\operatorname{pr}_2^{-1}(\mathbb{P}^1 \setminus \{x\}) \cong \mathbb{A}^2$. It is clear that $\operatorname{pr}_2^{-1}(\mathbb{P}^1 \setminus \{x\})$, $\operatorname{pr}_2^{-1}(x)$ and the inverse images under $\pi_n$ of the 1-cells and 0-cells of $S_{n-1}$ form an affine paving of $S$, and that $q_1, q_2, \ldots, q_m$ all lie in the unique 2-cell $\operatorname{pr}_2^{-1}(\mathbb{P}^1 \setminus \{x\}) \cong \mathbb{A}^2$. The induction step is now complete and we have finished the proof.
\end{proof}

\begin{remark}
De Concini and Maffei \cite{de2022paving} used this lemma to show that Springer fibers for $G$ of type $E_7$ are paved by affines.
\end{remark}

\subsection{The case of $F_4$}
We now prove Proposition~\ref{F4E6} when the group $G$ is of type $F_4$. We use the presentation of $G$ outlined in the beginning of subsection \ref{compsetup}. That is, we fix a Borel subgroup $B$ and a maximal torus $T \subset B$, by which we obtain the decomposition
\[ \mathfrak{g}=\mathfrak{t} \oplus \bigoplus_{\alpha \in \Phi} \mathfrak{g}_{\alpha},  \]
a 1-dimensional unipotent subgroup $X_{\gamma}$ for each root $\gamma \in \Phi$, and an $\mathfrak{sl}_2$-triple $\{ E_{\gamma}, H_{\gamma}, E_{-\gamma} \}$ for each positive root $\gamma \in \Phi^+$.

As mentioned earlier, $F_4$ has 4 distinguished nilpotent orbits: $F_4$, $F_4(a_1)$, $F_4(a_2)$ and $F_4(a_3)$. Note that $(L, L \cap B, \mathfrak{g}(2), U, N)$ is a closed subvariety of the flag variety $L/L \cap B$. For the regular nilpotent orbit $F_4$, $L/L \cap B$ is a single point, so the proposition is automatically true. For $F_4(a_1)$, since the semisimple rank of the associated parabolic subgroup of a representative is 1, $L / L \cap B$ is $\mathbb{P}^1$ and there is nothing more to prove either. For $F_4(a_3)$, the nonempty $(L, L \cap B, \mathfrak{g}(2), U, N)$'s are described concretely in \cite{de1988homology}[section 4.2]. In the light of Lemma~\ref{surfacelem}, they are clearly all paved by affines. Therefore, the only orbit that needs to be dealt with is $F_4(a_2)$. We start the computation by finding a good representative of this orbit. 

We give the Dynkin diagram of $F_4$ the Bourbaki labeling.
\[ F_4 :\quad \begin{dynkinDiagram}[root radius=.12cm, edge length=1.0cm]{F}{4}
\dynkinLabelRoots{ \alpha_1,\alpha_2,\alpha_3, \alpha_4}
\end{dynkinDiagram}  \]

Adding the lowest root $\alpha_0=-(2\alpha_1 + 3\alpha_2 + 4\alpha_3 + 2\alpha_4)$, we have the extended Dynkin diagram of $F_4$.
\[ F_4 :\quad \begin{dynkinDiagram}[extended, root radius=.12cm, edge length=1.0cm]{F}{4}
\dynkinLabelRoots{\alpha_0, \alpha_1,\alpha_2,\alpha_3, \alpha_4}
\end{dynkinDiagram}  \]

From \cite{bala1976classes}[p.~417], we obtain the weighted Dynkin diagram corresponding to $F_4(a_2)$.
\[ F_4 :\quad \begin{dynkinDiagram}[root radius=.12cm, edge length=1.0cm]{F}{4}
\dynkinLabelRoots{0, 2, 0, 2}
\end{dynkinDiagram}  \]

We expect a good representative $N$ of $F_4(a_2)$ to satisfy the following three requirements:
\begin{itemize}
\item[(1)] $N$ can be embedded into an $\mathfrak{sl}_2$-triple $\{N, H, Y\}$ so that $H \in \mathfrak{t}$ and $\gamma(H) \ge 0$ for every positive root $\gamma \in \Phi^+$. As a consequence, the associated parabolic subgroup $P$ of $N$ contains the preselected Borel subgroup $B$.
\item[(2)] Let $\mathfrak{g}=\bigoplus_{i \in \mathbb{Z}} \mathfrak{g}(i)$ be the weight space decomposition of $\mathfrak{g}$ via the $\mathfrak{sl}_2$-triple  $\{N, H, Y\}$. We need to have 
$\mathfrak{g}(0)=\mathfrak{t} \oplus \operatorname{span}_{\mathbb{C}} \{E_{\alpha_1}, E_{-\alpha_1}, E_{\alpha_3}, E_{-\alpha_3}\}$ and 
$\mathfrak{g}(2)=\operatorname{span}_{\mathbb{C}} \{E_{\alpha_2}, E_{\alpha_4}, E_{\alpha_{1}+\alpha_{2}}, \linebreak E_{\alpha_{2}+\alpha_{3}}, E_{\alpha_{3}+\alpha_{4}}, E_{\alpha_{1}+\alpha_{2}+\alpha_{3}}, E_{\alpha_{2}+2\alpha_{3}}, E_{\alpha_{1}+\alpha_{2}+2\alpha_{3}}\}$. This makes sure the weighted Dynkin diagram associated to $N$ is indeed the one given above.
\item[(3)] $N \in \mathfrak{g}(2)$. This is always a consequence of (1).
\end{itemize}
The idea of finding such an $N$ comes from Theorem~\ref{sommersthm}. In particular, the first table in \cite{sommers1998generalization}[p.~557] tells us that a representative of $F_4(a_2)$ can be a regular nilpotent element of a pseudo-Levi subalgebra of type $A_1 + C_3$. From the eight roots belonging to $\mathfrak{g}(2)$, we pick 4 and rename them as $\beta_0=\alpha_{1}+\alpha_{2}+2\alpha_{3}$, $\beta_2=\alpha_{1}+\alpha_{2}$, $\beta_3=\alpha_{3}+\alpha_{4}$ and $\beta_4=\alpha_{2}+\alpha_{3}$. We can draw a diagram by treating these 4 roots as nodes and connecting $\beta_i$ and $\beta_j$ by $\langle \beta_i, \beta_j \rangle \langle \beta_j, \beta_i \rangle$ bonds ($i, j \in \{0, 2, 3, 4\}$), with arrows pointing from long roots to short roots. The resulting diagram happens to be the same as the Dynkin diagram of type $A_1 + C_3$.
\[ A_1 + C_3: \quad \begin{dynkinDiagram}[root radius=.12cm, edge length=1.0cm]{A}{1}
\dynkinLabelRoots{\beta_0}
\end{dynkinDiagram}  \quad \quad
\begin{dynkinDiagram}[root radius=.12cm, edge length=1.0cm]{C}{3}
\dynkinLabelRoots{\beta_4, \beta_3, \beta_2}
\end{dynkinDiagram}  \]
Let $\Psi$ denote the additively closed subrootsystem of $\Phi$ generated by $\{\beta_0, \beta_2, \beta_3, \beta_4\}$. We claim that $\Psi$ is root system of type $A_1 + C_3$ and $\mathfrak{g}'=\mathfrak{t} \oplus \bigoplus_{\alpha \in \Psi}\mathfrak{g}_{\alpha}$ is a pseudo-Levi subalgebra of the same type. To see this, note that there is only one $G$-conjugacy class of pseudo-Levi subalgebras of type $A_1 + C_3$, whose ``standard'' Dynkin diagram is obtained from the extended Dynkin diagram of $F_4$ by removing the node $\alpha_1$ (the Borel-de Siebenthal theory).
\[ A_1 + C_3: \quad \begin{dynkinDiagram}[root radius=.12cm, edge length=1.0cm]{A}{1}
\dynkinLabelRoots{\alpha_0}
\end{dynkinDiagram}  \quad \quad
\begin{dynkinDiagram}[root radius=.12cm, edge length=1.0cm]{C}{3}
\dynkinLabelRoots{\alpha_4, \alpha_3, \alpha_2}
\end{dynkinDiagram}   \]
The removed node $\alpha_1$ is related to the rest by $\alpha_1=(1/2)(-\alpha_0-3\alpha_{2}-4\alpha_{3}-2\alpha_{4})$. Define $\beta_1$ to be the same linear combination of $\beta_0$, $\beta_2$, $\beta_3$ and $\beta_4$: $\beta_1=(1/2)(-\beta_0-3\beta_{2}-4\beta_{3}-2\beta_{4})=-2\alpha_{1}-3\alpha_{2}-4\alpha_{3}-2\alpha_{4}=\alpha_0$. Now that $\beta_1$, $\beta_2$, $\beta_3$, $\beta_4$ are 4 distinct roots of $\Phi$ and the diagram formed by them is exactly the Dynkin diagram of $F_4$,
\[ F_4 :\quad \begin{dynkinDiagram}[root radius=.12cm, edge length=1.0cm]{F}{4}
\dynkinLabelRoots{ \beta_1,\beta_2,\beta_3, \beta_4}
\end{dynkinDiagram}  \]
$\{\beta_1, \beta_2, \beta_4, \beta_4\}$ must be another set of simple roots of $\Phi$ with $\beta_0$ being the corresponding lowest root. Since $\Psi$ is additively generated by $\{\beta_0, \beta_2, \beta_3, \beta_4\}$ (the new set of nodes with $\beta_1$ removed), it must be a subrootsystem of $\Phi$ of type $A_1 + C_3$. Hence $\mathfrak{g}'=\mathfrak{t} \oplus \bigoplus_{\alpha \in \Psi}\mathfrak{g}_{\alpha}$ is a pseudo-Levi subalgebra of the same type. Let $N=E_{\beta_0}+E_{\beta_2}+E_{\beta_3}+E_{\beta_4}=E_{\alpha_{1}+\alpha_{2}+2\alpha_{3}}+E_{\alpha_{1}+\alpha_{2}}+E_{\alpha_{3}+\alpha_{4}}+E_{\alpha_{2}+\alpha_{3}}$. By Lemma~\ref{reprlem}, $N$ is a regular nilpotent element of $\mathfrak{g}'$ and there exists a semisimple element $H \in \mathfrak{t}$ and a nilpotent element $Y \in \mathfrak{g}'$ so that $\{ N, H, Y \}$ forms an $\mathfrak{sl}_2$-triple. As a consequence, $\beta_i(H)=2$ for $i=0, 2, 3, 4$. Solving the linear system of equations,
\[     \left\{
      \begin{array}{llll}
                  \beta_0(H)=\alpha_1(H)+\alpha_2(H)+2\alpha_3(H)=2 \\
                  \beta_2(H)=\alpha_1(H)+\alpha_2(H)=2 \\
                  \beta_3(H)=\alpha_3(H)+\alpha_4(H)=2 \\
		  \beta_4(H)=\alpha_2(H)+\alpha_3(H)=2
                \end{array}
              \right.  \]
we get $\alpha_1(H)=0$, $\alpha_2(H)=2$, $\alpha_3(H)=0$ and $\alpha_4(H)=2$. This means that the weighted Dynkin diagram associated to $N$ is the one we started with.
\[ F_4: \quad \begin{dynkinDiagram}[root radius=.12cm, edge length=1.0cm]{F}{4}
\dynkinLabelRoots{0, 2, 0, 2}
\end{dynkinDiagram}  \]
Therefore, $N=E_{\alpha_{1}+\alpha_{2}+2\alpha_{3}}+E_{\alpha_{1}+\alpha_{2}}+E_{\alpha_{3}+\alpha_{4}}+E_{\alpha_{2}+\alpha_{3}}$ is indeed a representative of $F_4(a_2)$ and does satisfy all 3 requirements mentioned earlier.

Let $P$ be the associated parabolic of $N$ and $P=LU_P$ be the Levi decomposition. We compute the nonempty $(L, L \cap B, \mathfrak{g}(2), U, N)$'s and show that they are all paved by affines. 

In this case, $L=\langle T, X_{\alpha_1}, X_{-\alpha_1}, X_{\alpha_3}, X_{-\alpha_3} \rangle$ and $L \cap B = \langle T, X_{\alpha_1}, X_{\alpha_3} \rangle$, where the angle brackets stand for group generation. As a consequence, $L/ L \cap B \cong \mathbb{P}^1 \times \mathbb{P}^1$. By Lemma~\ref{prehsp}, for any $(L, L \cap B, \mathfrak{g}(2), U, N)$ to be nonempty, $U$ can have codimension at most 2 in $\mathfrak{g}(2)$. When $\operatorname{codim}U$ is 0 or 2, the corresponding $(L, L \cap B, \mathfrak{g}(2), U, N)$ is respectively $\mathbb{P}^1 \times \mathbb{P}^1$ or a finite set of points, hence paved by affines. The only nontrivial case is when $\operatorname{codim}U=1$. There are only two such $(L \cap B)$-stable subspace $U$ of $\mathfrak{g}(2)$: 
\[ U=\bigoplus_{\alpha \in \Phi(U)} \mathfrak{g}_{\alpha}, \quad \Phi(U)=\Phi(\mathfrak{g}(2)) \setminus \{\alpha_2\}, \text{ or }  \]
\[ U=\bigoplus_{\alpha \in \Phi(U)} \mathfrak{g}_{\alpha}, \quad \Phi(U)=\Phi(\mathfrak{g}(2)) \setminus \{\alpha_4\}, \]
where $\Phi(U)$ and $\Phi(\mathfrak{g}(2))$ denote the sets of roots belonging to $U$ and $\mathfrak{g}(2)$ respectively. Recall that $\Phi(\mathfrak{g}(2))=\{\alpha_2, \alpha_4, \alpha_{1}+\alpha_{2}, \alpha_{2}+\alpha_{3}, \alpha_{3}+\alpha_{4}, \alpha_{1}+\alpha_{2}+\alpha_{3}, \alpha_{2}+2\alpha_{3}, \alpha_{1}+\alpha_{2}+2\alpha_{3}\}$, hence we can only remove $\alpha_2$ or $\alpha_4$ in order for $U$ to be an $(L \cap B)$-stable subspace of codimension 1.

%editinglabel1

\noindent $\bullet$ the case of $\Phi(U)=\Phi(\mathfrak{g}(2)) \setminus \{\alpha_2\}$

Let $s_1$ and $s_3$ be the simple reflections associated to $\alpha_1$ and $\alpha_3$ respectively. Let $x_{\alpha_1}: \mathbb{C} \longrightarrow X_{\alpha_1}$ and $x_{\alpha_3}: \mathbb{C} \longrightarrow  X_{\alpha_3}$ be the group isomorphisms mentioned in subsection \ref{compsetup}. Let $\mathbb{C} \times \mathbb{C}$ denote the unique 2-cell of $L/ L \cap B \cong \mathbb{P}^1 \times \mathbb{P}^1$, and we can present it as the following set:
\[ \mathbb{C} \times \mathbb{C}=\{x_{\alpha_1}(z_1)\dot{s}_1x_{\alpha_3}(z_3)\dot{s}_3(L \cap B) \in L/L \cap B \ |\ z_1, z_3 \in \mathbb{C}\}.  \]
Here $x_{\alpha_1}(z_1)\dot{s}_1x_{\alpha_3}(z_3)\dot{s}_3(L \cap B)$ is considered a left $(L \cap B)$-coset, hence a point of the flag variety $L/ L \cap B$ (for details of this presentation, see Equation~\ref{orbitfixedpoint} and Lemma~\ref{dimlem} (2)). 

Consider the intersection of $X_U=(L, L \cap B, \mathfrak{g}(2), U, N)$ with the 2-cell $\mathbb{C} \times \mathbb{C}$ of $L/L \cap B$. By definition, 
\[ X_U \cap (\mathbb{C} \times \mathbb{C}) \cong \{ (z_1, z_3) \in \mathbb{C}^2  \ |\ \dot{s}_3 \cdot x_{\alpha_3}(-z_3) \cdot \dot{s}_1 \cdot x_{\alpha_1}(-z_1) \cdot N \in U \}. \]
By Lemma~\ref{flowlem} and Lemma~\ref{permlem}, we know that 
\[ \dot{s}_3 \cdot x_{\alpha_3}(-z_3) \cdot \dot{s}_1 \cdot x_{\alpha_1}(-z_1) \cdot N=\sum_{\gamma \in \Phi(\mathfrak{g}(2))} f_{\gamma}(z_1, z_3) E_{\gamma}, \text{ where } f_{\gamma} \in \mathbb{C}[z_1, z_3].  \]
Since $\Phi(U)=\Phi(\mathfrak{g}(2)) \setminus \{\alpha_2\}$, in order for $\dot{s}_3 \cdot x_{\alpha_3}(-z_3) \cdot \dot{s}_1 \cdot x_{\alpha_1}(-z_1) \cdot N$ to be in $U$, it is necessary and sufficient that $f_{\alpha_2}(z_1, z_3)=0$. Using the same two lemmas and computing carefully with the representative $N=E_{\alpha_{1}+\alpha_{2}+2\alpha_{3}}+E_{\alpha_{1}+\alpha_{2}}+E_{\alpha_{3}+\alpha_{4}}+E_{\alpha_{2}+\alpha_{3}}$, we get that 
\[ f_{\alpha_2}(z_1, z_3)=a z_3^2+b z_1z_3+1,  \]
where $a$ and $b$ are some nonzero constants. Therefore,
\[ X_U \cap (\mathbb{C} \times \mathbb{C}) \cong \{ (z_1, z_3) \in \mathbb{C}^2 \ |\ a z_3^2+b z_1z_3+1=0 \}.  \]
There is an isomorphism between $\mathbb{C}^{\times}$ and $X_U \cap (\mathbb{C} \times \mathbb{C})$:
\begin{displaymath} 
\begin{array}{llll}
\mathbb{C}^{\times} & \longrightarrow & X_U \cap (\mathbb{C} \times \mathbb{C}) \\
z &  \longmapsto & (-\frac{a}{b}z-\frac{1}{b}z^{-1},z)
\end{array}
\end{displaymath}

Note that $\overline{X_U \cap (\mathbb{C} \times \mathbb{C})}$ is a connected component of $X_U$. By Lemma~\ref{prehsp} and the isomorphism above, $\overline{X_U \cap (\mathbb{C} \times \mathbb{C})}$ is a smooth projective rational curve, that is, $\mathbb{P}^1$. As a result, $X_U$ is either $\mathbb{P}^1$ or a disjoint union thereof, hence paved by affines.

%editinglabel2

\noindent $\bullet$ the case of $\Phi(U)=\Phi(\mathfrak{g}(2)) \setminus \{\alpha_4\}$

Let $s_1$, $s_3$, $x_{\alpha_1}$, $x_{\alpha_3}$, $X_{\alpha_1}$ and $X_{\alpha_3}$ be the same as above. Let $\mathbb{C} \times \{\infty\}$ denote the 1-cell $X_{\alpha_1}\dot{s}_1 (L \cap B)$ of $L/ L \cap B \cong \mathbb{P}^1 \times \mathbb{P}^1$, which we present as:
\[ \mathbb{C} \times \{\infty\}=\{x_{\alpha_1}(z_1)\dot{s}_1(L \cap B) \in L/L \cap B \ |\ z_1\in \mathbb{C}\}.  \]
Consider the intersection $X_U \cap (\mathbb{C} \times \{\infty\}) \cong \{ z_1 \in \mathbb{C}  \ |\ \dot{s}_1 \cdot x_{\alpha_1}(-z_1) \cdot N \in U \}$. Using a similar method as above, it is easy to show that
\[ \dot{s}_1 \cdot x_{\alpha_1}(-z_1) \cdot N=\sum_{\gamma \in \Phi(\mathfrak{g}(2))} f_{\gamma}(z_1) E_{\gamma} \text{ and } f_{\alpha_4}(z_1)=0. \]
As a result, $ X_U \cap (\mathbb{C} \times \{\infty\})= \mathbb{C} \times \{\infty\}$ and $X_U$ is still either $\mathbb{P}^1$ or a disjoint union thereof, hence paved by affines.

We have finished the case of $F_4$.

\subsection{The case of $E_6$}
We prove Proposition~\ref{F4E6} when $G$ is of type $E_6$. In this case, the increased dimension of $L / L \cap B$ makes the computation more complicated, but the idea is exactly the same. Therefore, we omit the details of those arguments that have exact counterparts in the case of $F_4$.

$E_6$ has 3 distinguished nilpotent orbits: $E_6$, $E_6(a_1)$ and $E_6(a_3)$. The flag varieties $L / L \cap B$ for the orbits $E_6$ and $E_6(a_1)$ are $\mathbb{P}^0$ and $\mathbb{P}^1$ respectively, so there is nothing to prove. We only need to deal with $E_6(a_3)$. Similar to type $F_4$, we start by finding a good representative of the orbit $E_6(a_3)$.

We give the Dynkin diagram of $E_6$ the following nonstandard labeling (neither Bourbaki nor GAP).
\[ E_6 :\quad \begin{dynkinDiagram}[root radius=.12cm, edge length=1.0cm]{E}{6}
\dynkinLabelRoots{\alpha_1, \alpha_6, \alpha_2, \alpha_3, \alpha_4, \alpha_5}
\end{dynkinDiagram}  \]

Adding the lowest root $\alpha_0=-(\alpha_{1}+2\alpha_{2}+3\alpha_{3}+2\alpha_{4}+\alpha_{5}+2\alpha_{6} )$, we have the extended Dynkin diagram of $E_6$.
\[ E_6 :\quad \begin{dynkinDiagram}[extended, root radius=.12cm, edge length=1.0cm]{E}{6}
\dynkinLabelRoots{\alpha_0, \alpha_1, \alpha_6, \alpha_2, \alpha_3, \alpha_4, \alpha_5}
\end{dynkinDiagram}  \]

From \cite{bala1976classes}[p.~417], we obtain the weighted Dynkin diagram corresponding to $E_6(a_3)$.
\[ E_6 :\quad \begin{dynkinDiagram}[root radius=.12cm, edge length=1.0cm]{E}{6}
\dynkinLabelRoots{2, 0, 0, 2, 0, 2}
\end{dynkinDiagram}  \]

The reason for choosing the nonstandard labeling is the notational symmetry that odd-numbered simple roots $\alpha_1$, $\alpha_3$ and $\alpha_5$ are of weight 2 and even-numbered simple roots $\alpha_2$, $\alpha_4$ and $\alpha_6$ are of weight 0 in the weighted Dynkin diagram above.

We expect a good representative $N$ of $E_6(a_3)$ to satisfy the following three requirements:
\begin{itemize}
\item[(1)] $N$ can be embedded into an $\mathfrak{sl}_2$-triple $\{N, H, Y\}$ so that $H \in \mathfrak{t}$ and $\gamma(H) \ge 0$ for every positive root $\alpha \in \Phi$.
\item[(2)] Let $\mathfrak{g}=\bigoplus_{i \in \mathbb{Z}} \mathfrak{g}(i)$ be the weight space decomposition of $\mathfrak{g}$ via the $\mathfrak{sl}_2$-triple  $\{N, H, Y\}$. We need $\mathfrak{g}(0)=\mathfrak{t} \oplus \operatorname{span}_{\mathbb{C}}\{E_{\alpha_2}, E_{-\alpha_2}, E_{\alpha_4}, E_{-\alpha_4}, E_{\alpha_6}, E_{-\alpha_6}\}$ and $\mathfrak{g}(2)=\operatorname{span}_{\mathbb{C}}\{E_{\alpha_1}, E_{\alpha_3}, E_{\alpha_5}, \linebreak E_{\alpha_{1}+\alpha_{2}}, E_{\alpha_{2}+\alpha_{3} }, E_{\alpha_{3}+\alpha_{4} }, E_{\alpha_{3}+\alpha_{6} }, E_{\alpha_{4}+\alpha_{5} }, E_{\alpha_{2}+\alpha_{3}+\alpha_{4} }, E_{\alpha_{2}+\alpha_{3}+\alpha_{6} }, E_{\alpha_{3}+\alpha_{4}+\alpha_{6} }, E_{\alpha_{2}+\alpha_{3}+\alpha_{4}+\alpha_{6} }\}$.
\item[(3)] $N \in \mathfrak{g}(2)$.
\end{itemize}
The second table in \cite{sommers1998generalization}[p.~557] tells us that a representative of $E_6(a_3)$ can be a regular nilpotent element of a pseudo-Levi subalgebra of type $A_5 + A_1$, whose ``standard'' Dynkin diagram can be obtained from the extended Dynkin diagram of $E_6$ by removing the node $\alpha_2$.
\[ A_5 + A_1: \quad \begin{dynkinDiagram}[root radius=.12cm, edge length=1.0cm]{A}{5}
\dynkinLabelRoots{\alpha_0, \alpha_6, \alpha_3, \alpha_4, \alpha_5}
\end{dynkinDiagram}  \quad \quad
\begin{dynkinDiagram}[root radius=.12cm, edge length=1.0cm]{A}{1}
\dynkinLabelRoots{\alpha_1}
\end{dynkinDiagram}  \]
From the twelve roots belonging to $\mathfrak{g}(2)$, we pick 6 and rename them as $\beta_0=\alpha_{2}+\alpha_{3} $, $\beta_6=\alpha_{4}+\alpha_{5} $, $\beta_3=\alpha_{3}+\alpha_{6} $, $\beta_4=\alpha_{1}+\alpha_{2} $, $\beta_5=\alpha_{3}+\alpha_{4} $ and $\beta_1=\alpha_{2}+\alpha_{3}+\alpha_{4}+\alpha_{6} $. Draw a diagram by treating the 6 roots as nodes and connecting $\beta_i$ and $\beta_j$ by $\langle \beta_i, \beta_j \rangle \langle \beta_j, \beta_i \rangle$ bonds ($i, j \in \{0, 1, 3, 4, 5, 6\}$), we get exactly the Dynkin diagram of type $A_5 + A_1$.
\[ A_5 + A_1: \quad \begin{dynkinDiagram}[root radius=.12cm, edge length=1.0cm]{A}{5}
\dynkinLabelRoots{\beta_0, \beta_6, \beta_3, \beta_4, \beta_5}
\end{dynkinDiagram}  \quad \quad
\begin{dynkinDiagram}[root radius=.12cm, edge length=1.0cm]{A}{1}
\dynkinLabelRoots{\beta_1}
\end{dynkinDiagram}  \]
Let $N=E_{\alpha_{2}+\alpha_{3}} +E_{\alpha_{4}+\alpha_{5} } +E_{\alpha_{3}+\alpha_{6}} +E_{\alpha_{1}+\alpha_{2} } +E_{\alpha_{3}+\alpha_{4} } +E_{\alpha_{2}+\alpha_{3}+\alpha_{4}+\alpha_{6} }$. Arguing in exactly the same way as for type $F_4$, we can show that $N$ is a representative of $E_6(a_3)$ that satisfies all 3 requirements mentioned earlier.

Let $P$ be the associated parabolic of $N$ and $P=LU_P$ be the Levi decomposition. We compute all the $(L, L \cap B, \mathfrak{g}(2), U, N)$'s that are possibly nonempty for dimension reason and show that they are paved by affines.

In this case, $L=\langle T, X_{\alpha_2}, X_{-\alpha_2}, X_{\alpha_4}, X_{-\alpha_4}, X_{\alpha_6}, X_{-\alpha_6} \rangle$ and $L \cap B =\langle T, X_{\alpha_2}, X_{\alpha_4}, X_{\alpha_6} \rangle$. As a consequence, $L / L \cap B \cong \mathbb{P}^1 \times \mathbb{P}^1 \times \mathbb{P}^1$. Let $s_2$, $s_4$ and $s_6$ be the simple reflections associated to $\alpha_2$, $\alpha_4$ and $\alpha_6$ respectively. Let $x_{\alpha_2}: \mathbb{C} \longrightarrow  X_{\alpha_2}$, $x_{\alpha_4}: \mathbb{C} \longrightarrow  X_{\alpha_4}$ and $x_{\alpha_6}: \mathbb{C} \longrightarrow  X_{\alpha_6}$ be the corresponding group isomorphisms. We present all 8 cells of $L / L \cap B \cong \mathbb{P}^1 \times \mathbb{P}^1 \times \mathbb{P}^1$ set-theoretically in Table~\ref{tab:cellstructure}.

By Lemma~\ref{prehsp}, for any $X_U=(L, L \cap B, \mathfrak{g}(2), U, N)$ to be possibly nonempty, $U$ can have codimension at most 3 in $\mathfrak{g}(2)$. When $\operatorname{codim}U$ is 0 or 3, the corresponding $X_U$ is respectively $\mathbb{P}^1 \times \mathbb{P}^1 \times \mathbb{P}^1$ or a finite set of points, hence paved by affines. The nontrivial cases are $\operatorname{codim}U=1, 2$. There are altogether 11 such $(L \cap B)$-stable subspaces $U$ of $\mathfrak{g}(2)$. They are divided into 6 groups and listed in Table~\ref{tab:grouping}. The computation of $X_U$ is almost the same within each group. 

We compute one example from each group with details and merely list the results for the rest. We will intersect $X_U$ with as many cells as necessary and give sufficiently concrete descriptions of the intersections, from which we deduce that $X_U$ is paved by affines. The various intersections will be displayed in tables of the same shape as Table~\ref{tab:locationofcells}, which shows the location of each cell in the table. The equations needed for computation are listed in Appendix~\ref{appendix:equations}.

\begin{table}%[h] 
\caption{Cells of $L/ L \cap B$} \label{tab:cellstructure}
\centering
\begin{tabular}{|c|l|}
\hline
notation of cell & presentation of cell  \\ \hline
$\mathbb{C} \times \mathbb{C} \times \mathbb{C}$ & $\{\ x_{\alpha_2}(z_1) \dot{s}_2 x_{\alpha_4}(z_2) \dot{s}_4 x_{\alpha_6}(z_3) \dot{s}_6 (L \cap B) \in L / L \cap B\ |\ z_1, z_2, z_3 \in \mathbb{C}\}$  \\ \hline
$\mathbb{C} \times \mathbb{C} \times \{ \infty \}$ & $\{\ x_{\alpha_2}(z_1) \dot{s}_2 x_{\alpha_4}(z_2) \dot{s}_4 (L \cap B) \in L / L \cap B\ |\ z_1, z_2 \in \mathbb{C}\}$ \\ \hline
$\mathbb{C} \times \{ \infty \} \times \mathbb{C}$ & $\{\ x_{\alpha_2}(z_1) \dot{s}_2 x_{\alpha_6}(z_3) \dot{s}_6 (L \cap B) \in L / L \cap B\ |\ z_1, z_3 \in \mathbb{C}\}$   \\ \hline
$\{ \infty \} \times \mathbb{C} \times \mathbb{C}$ & $\{\ x_{\alpha_4}(z_2) \dot{s}_4 x_{\alpha_6}(z_3) \dot{s}_6 (L \cap B) \in L / L \cap B\ |\ z_2, z_3 \in \mathbb{C}\} $ \\ \hline
$\mathbb{C} \times \{ \infty \} \times \{ \infty \} $ & $\{\ x_{\alpha_2}(z_1) \dot{s}_2 (L \cap B) \in L / L \cap B\ |\ z_1 \in \mathbb{C}\}$  \\ \hline
$\{ \infty \} \times \mathbb{C} \times \{ \infty \} $ & $\{\ x_{\alpha_4}(z_2) \dot{s}_4 (L \cap B) \in L / L \cap B\ |\ z_2 \in \mathbb{C}\}$ \\ \hline
$\{ \infty \} \times \{ \infty \} \times \mathbb{C} $ & $\{\ x_{\alpha_6}(z_3) \dot{s}_6 (L \cap B) \in L / L \cap B\ |\ z_3 \in \mathbb{C}\} $ \\ \hline
$\{ \infty \} \times \{ \infty \} \times \{ \infty \}$ & $\{\dot{e}(L \cap B)\}$  \\ \hline
\end{tabular}
\end{table}

\begin{table}%[h]
\caption{The $U$'s of codimension 1 and 2} \label{tab:grouping}
\centering
\begin{tabular}{|c|c|c|c|c|}
\hline
 group    &   $\operatorname{codim}U$    &  \multicolumn{3}{c|}{$\Phi(U)$}   \\ \hline
 1st      &   1 & $\Phi(\mathfrak{g}(2)) \setminus \{ \alpha_3 \}$ & & \\ \hline
 2nd      &   1 & $\Phi(\mathfrak{g}(2)) \setminus \{ \alpha_1 \}$ & $\Phi(\mathfrak{g}(2)) \setminus \{ \alpha_5 \}$ & \\ \hline
 3rd      &   2 & $\Phi(\mathfrak{g}(2)) \setminus \{ \alpha_1, \alpha_3 \}$ & $\Phi(\mathfrak{g}(2)) \setminus \{ \alpha_3, \alpha_5 \}$ &   \\ \hline
 4th      &   2 & $\Phi(\mathfrak{g}(2)) \setminus \{ \alpha_1, \alpha_5 \}$ & &   \\ \hline
 5th      &   2 & $\Phi(\mathfrak{g}(2)) \setminus \{ \alpha_1, \alpha_{1}+\alpha_{2} \}$ & $\Phi(\mathfrak{g}(2)) \setminus \{ \alpha_5, \alpha_{4}+\alpha_{5}  \}$ &  \\ \hline
 6th      &   2 & $\Phi(\mathfrak{g}(2)) \setminus \{ \alpha_3, \alpha_{2}+\alpha_{3} \}$ & $\Phi(\mathfrak{g}(2)) \setminus \{ \alpha_3, \alpha_{3}+\alpha_{4} \} $ & $\Phi(\mathfrak{g}(2)) \setminus \{ \alpha_3, \alpha_{3}+\alpha_{6} \}$    \\ \hline
\end{tabular}
\end{table}

\begin{table}%[h] 
\caption{The location of cells} \label{tab:locationofcells}
\centering
\begin{tabular}{|c|c|c|c|}
\hline
dimension of cell & \multicolumn{3}{c|}{location of each cell in the table} \\ \hline
3         & \multicolumn{3}{c|}{$\mathbb{C }\times \mathbb{C} \times \mathbb{C}$}     \\ \hline
2         & $\mathbb{C} \times \mathbb{C} \times \{ \infty \}$       & $\mathbb{C} \times \{ \infty \} \times \mathbb{C}$      & $\{ \infty \} \times \mathbb{C} \times \mathbb{C}$     \\ \hline
1         & $\mathbb{C} \times \{ \infty \} \times \{ \infty \}$       & $\{ \infty \} \times \mathbb{C} \times \{ \infty \}$       & $\{ \infty \} \times \{ \infty \} \times \mathbb{C}$     \\ \hline
0         & \multicolumn{3}{c|}{ $\{ \infty \} \times \{ \infty \} \times \{ \infty \}$ }     \\ \hline
\end{tabular}
\end{table}

%\newpage

\subsubsection{The 1st group}
We compute the only example where $\Phi(U)=\Phi(\mathfrak{g}(2)) \setminus \{ \alpha_3 \} $.

Intersect $X_U$ with the 3-cell $\mathbb{C} \times \mathbb{C} \times \mathbb{C} $. By definition of the quintuple and set-theoretic presentation of the cell, we know that 
\[ X_U \cap (\mathbb{C} \times \mathbb{C} \times \mathbb{C}) \cong \{(z_1, z_2, z_3) \in \mathbb{C}^3 \ |\ \dot{s}_6 \cdot x_{\alpha_6}(-z_3) \cdot \dot{s}_4 \cdot x_{\alpha_4}(-z_2) \cdot \dot{s}_2 \cdot x_{\alpha_2}(-z_1) \cdot N  \in U  \}.  \]
\pagebreak
Recall that $\dot{s}_6 \cdot x_{\alpha_6}(-z_3) \cdot \dot{s}_4 \cdot x_{\alpha_4}(-z_2) \cdot \dot{s}_2 \cdot x_{\alpha_2}(-z_1) \cdot N=\sum_{\gamma \in \Phi(\mathfrak{g}(2))} f_{\gamma}(z_1, z_2, z_3) E_{\gamma}$, where $f_{\gamma}(z_1, z_2, z_3) \in \mathbb{C}[z_1, z_2, z_3]$. Since $\Phi(U)=\Phi(\mathfrak{g}(2)) \setminus \{ \alpha_3 \} $, in order for $\dot{s}_6 \cdot x_{\alpha_6}(-z_3) \cdot \dot{s}_4 \cdot x_{\alpha_4}(-z_2) \cdot \dot{s}_2 \cdot x_{\alpha_2}(-z_1) \cdot N$ to be in $U$, we only need $f_{\alpha_3}(z_1, z_2, z_3)=0$. According to Equation~\ref{CCC}, $f_{\alpha_3}(z_1, z_2, z_3)=1+az_1z_2+bz_1z_3+cz_2z_3$, where $a$, $b$ and $c$ are some nonzero constants. Therefore, 
\[ X_U \cap (\mathbb{C} \times \mathbb{C} \times \mathbb{C}) \cong \{\ (z_1, z_2, z_3) \in \mathbb{C}^3\ |\ 1+az_1z_2+bz_1z_3+cz_2z_3=0 \}  \]
and it is a smooth quadratic hypersurface of $\mathbb{C}^3$. Such a hypersurface is known to be birationally equivalent to $\mathbb{C}^2$ via a stereographic projection, hence $X_U$ must be a smooth projective rational surface. By Lemma~\ref{surfacelem}, $X_U$ is paved by affines.

\subsubsection{The 2nd group}
We demonstrate the example where $\Phi(U)=\Phi(\mathfrak{g}(2)) \setminus \{ \alpha_1 \} $. 

The intersection of $X_U$ with each cell is given in Table~\ref{tab:codim1_1}. We explain the computation with more details. We know that 
\[ X_U \cap (\{ \infty \} \times \mathbb{C} \times \mathbb{C}) \cong \{\ (z_2, z_3) \in \mathbb{C}^2\ |\ \dot{s}_6 \cdot x_{\alpha_6}(-z_3) \cdot \dot{s}_4 \cdot x_{\alpha_4}(-z_2) \cdot N \in U\}.  \]
In order for $\dot{s}_6 \cdot x_{\alpha_6}(-z_3) \cdot \dot{s}_4 \cdot x_{\alpha_4}(-z_2) \cdot N$ to be in $U$, we only need the coefficient $f_{\alpha_1}(z_2, z_3)$ of $E_{\alpha_1}$ to be 0. According to Equation~\ref{ICC}, $f_{\alpha_1}(z_2, z_3)=0$. Therefore, $X_U \cap (\{ \infty \} \times \mathbb{C} \times \mathbb{C})$ is the entire cell. The other intersections are determined in exactly the same way, and we deduce that $X_U \cong \{ \infty \} \times \mathbb{P}^1 \times \mathbb{P}^1$, which is clearly paved by affines.

\begin{table}%[h]
\caption{ $\Phi(U)=\Phi(\mathfrak{g}(2)) \setminus \{ \alpha_1 \} $ } \label{tab:codim1_1}
\centering
\begin{tabular}{|c|c|c|c|}
\hline
dimension of cells  & \multicolumn{3}{c|}{intersection of $X_U$ with cells} \\ \hline
3          & \multicolumn{3}{c|}{ $\emptyset$ }      \\ \hline
2          &   \hspace{0.6cm} $\emptyset $ \hspace{0.6cm}    &    $\emptyset$     &  entire cell     \\ \hline
1          &   $\emptyset$      &     entire cell     &  entire cell     \\ \hline
0          & \multicolumn{3}{c|}{ entire cell}      \\ \hline
\end{tabular}
\end{table}

When $\Phi(U)=\Phi(\mathfrak{g}(2)) \setminus \{ \alpha_5 \} $, the intersections are given in Table~\ref{tab:codim1_5}, and we deduce that $X_U \cong \mathbb{P}^1 \times \{ \infty \} \times \mathbb{P}^1$.

\begin{table}%[h]
\caption{ $\Phi(U)=\Phi(\mathfrak{g}(2)) \setminus \{ \alpha_5 \} $ } \label{tab:codim1_5}
\centering
\begin{tabular}{|c|c|c|c|}
\hline
dimension of cells  & \multicolumn{3}{c|}{intersection of $X_U$ with cells} \\ \hline
3          & \multicolumn{3}{c|}{ $\emptyset$ }      \\ \hline
2          &    $\emptyset $       &    entire cell     &   $\emptyset $       \\ \hline
1          &    entire cell     &     $\emptyset $      &  entire cell     \\ \hline
0          & \multicolumn{3}{c|}{ entire cell }      \\ \hline
\end{tabular}
\end{table}

\subsubsection{The 3rd group}
We demonstrate the example where $\Phi(U)=\Phi(\mathfrak{g}(2)) \setminus \{ \alpha_1, \alpha_3 \} $. 

The intersections are given in Table~\ref{tab:codim2_13}. To compute $X_U \cap (\{ \infty \} \times \mathbb{C} \times \mathbb{C})$, note that 
\[ X_U \cap (\{ \infty \} \times \mathbb{C} \times \mathbb{C}) \cong \{\ (z_2, z_3) \in \mathbb{C}^2\ |\ \dot{s}_6 \cdot x_{\alpha_6}(-z_3) \cdot \dot{s}_4 \cdot x_{\alpha_4}(-z_2) \cdot N \in U\} \]
and we need $f_{\alpha_1}=f_{\alpha_3}=0$. According to Equation~\ref{ICC}, $f_{\alpha_1}(z_2, z_3)=0$ and $f_{\alpha_3}(z_2, z_3)=az_3 + bz_2$, where $a$ and $b$ are nonzero. Therefore, 
\[  X_U \cap (\{ \infty \} \times \mathbb{C} \times \mathbb{C}) \cong \{\ (z_2, z_3) \in \mathbb{C}^2\ |\ az_3 + bz_2 =0\}  \]
and it is clearly isomorphic to $\mathbb{C}$. As a result, $X_U \cong \mathbb{P}^1$ and it is paved by affines.

When $\Phi(U)=\Phi(\mathfrak{g}(2)) \setminus \{ \alpha_3, \alpha_5 \} $, the intersections are given in Table~\ref{tab:codim2_35} and we have $X_U \cong \mathbb{P}^1$ as well.

\begin{table}%[h] 
\caption{ $\Phi(U)=\Phi(\mathfrak{g}(2)) \setminus \{ \alpha_1, \alpha_3 \} $ } \label{tab:codim2_13}
\centering
\begin{tabular}{|c|c|c|c|}
\hline
dimension of cells  & \multicolumn{3}{c|}{intersection of $X_U$ with cells} \\ \hline
3          & \multicolumn{3}{c|}{ $\emptyset$ }      \\ \hline
2          &   \hspace{0.4cm} $\emptyset $ \hspace{0.4cm}      &    \hspace{0.4cm} $\emptyset $ \hspace{0.4cm}     &   \hspace{0.4cm} $\mathbb{C} $ \hspace{0.4cm}     \\ \hline
1          &   \hspace{0.4cm} $\emptyset $ \hspace{0.4cm}      &    \hspace{0.4cm} $\emptyset $ \hspace{0.4cm}     &  \hspace{0.4cm} $\emptyset $ \hspace{0.4cm}      \\ \hline
0          & \multicolumn{3}{c|}{ entire cell }      \\ \hline
\end{tabular}
\end{table}

\begin{table}%[h] 
\caption{ $\Phi(U)=\Phi(\mathfrak{g}(2)) \setminus \{ \alpha_3, \alpha_5 \} $ } \label{tab:codim2_35}
\centering
\begin{tabular}{|c|c|c|c|}
\hline
dimension of cells  & \multicolumn{3}{c|}{intersection of $X_U$ with cells} \\ \hline
3          & \multicolumn{3}{c|}{ $\emptyset$ }      \\ \hline
2          &   \hspace{0.4cm} $\emptyset $ \hspace{0.4cm}      &    \hspace{0.4cm} $\mathbb{C} $ \hspace{0.4cm}     &  \hspace{0.4cm} $\emptyset $ \hspace{0.4cm}      \\ \hline
1          &   \hspace{0.4cm} $\emptyset $ \hspace{0.4cm}      &    \hspace{0.4cm} $\emptyset $ \hspace{0.4cm}     &  \hspace{0.4cm} $\emptyset $ \hspace{0.4cm}      \\ \hline
0          & \multicolumn{3}{c|}{ entire cell }      \\ \hline
\end{tabular}
\end{table}

\subsubsection{The 4th group}
We compute the only example where $\Phi(U)=\Phi(\mathfrak{g}(2)) \setminus \{ \alpha_1, \alpha_5 \} $.

The intersections are given in Table~\ref{tab:codim2_15}. To compute $X_U \cap (\{ \infty \} \times \{ \infty \} \times \mathbb{C})$, note that
\[ X_U \cap (\{ \infty \} \times \{ \infty \} \times \mathbb{C}) \cong \{\ z_3 \in \mathbb{C}\ |\ \dot{s}_6 \cdot x_{\alpha_6}(-z_3) \cdot N \in U\}  \]
and we need $f_{\alpha_1}=f_{\alpha_5}=0$. According to Equation~\ref{IIC}, $f_{\alpha_1}(z_3)=f_{\alpha_5}(z_3)=0$, so $X_U \cap (\{ \infty \} \times \{ \infty \} \times \mathbb{C})$ is the entire cell and $X_U \cong \mathbb{P}^1$.

\begin{table}%[h]
\caption{ $\Phi(U)=\Phi(\mathfrak{g}(2)) \setminus \{ \alpha_1, \alpha_5 \} $ } \label{tab:codim2_15}
\centering
\begin{tabular}{|c|c|c|c|}
\hline
dimension of cells  & \multicolumn{3}{c|}{intersection of $X_U$ with cells} \\ \hline
3          & \multicolumn{3}{c|}{ $\emptyset$ }      \\ \hline
2          &   \hspace{0.5cm} $\emptyset $ \hspace{0.5cm}      &    \hspace{0.5cm} $\emptyset $ \hspace{0.5cm}     &   $\emptyset $       \\ \hline
1          &   \hspace{0.5cm} $\emptyset $ \hspace{0.5cm}      &    \hspace{0.5cm} $\emptyset $ \hspace{0.5cm}     &  entire cell    \\ \hline
0          & \multicolumn{3}{c|}{ entire cell }      \\ \hline
\end{tabular}
\end{table}

\subsubsection{The 5th group}
When $\Phi(U)=\Phi(\mathfrak{g}(2)) \setminus \{ \alpha_1, \alpha_{1}+\alpha_{2}  \} $, the intersection of $X_U$ with every cell is the empty set. This is because every equation from Equation~\ref{CCC} to Equation~\ref{III} has either $E_{\alpha_1}$ or $E_{\alpha_{1}+\alpha_{2} }$ as a summand (note that $N=E_{\alpha_{2}+\alpha_{3}} +E_{\alpha_{4}+\alpha_{5} } +E_{\alpha_{3}+\alpha_{6}} +E_{\alpha_{1}+\alpha_{2} } +E_{\alpha_{3}+\alpha_{4} } +E_{\alpha_{2}+\alpha_{3}+\alpha_{4}+\alpha_{6} }$, so it has $E_{\alpha_{1}+\alpha_{2} }$ as a summand as well). Hence $X_U = \emptyset$ and there is nothing to prove. 

When $\Phi(U)=\Phi(\mathfrak{g}(2)) \setminus \{ \alpha_5, \alpha_{4}+\alpha_{5}  \} $, $X_U = \emptyset$ by the same argument.

\subsubsection{The 6th group}
We demonstrate the example where $\Phi(U)=\Phi(\mathfrak{g}(2)) \setminus \{ \alpha_3, \alpha_{2}+\alpha_{3}  \} $. 

The intersections are given in Table~\ref{tab:codim2_323}. To compute $X_U \cap (\mathbb{C} \times \mathbb{C} \times \mathbb{C})$, note that 
\[ X_U \cap (\mathbb{C} \times \mathbb{C} \times \mathbb{C}) \cong \{(z_1, z_2, z_3) \in \mathbb{C}^3 \ |\ \dot{s}_6 \cdot x_{\alpha_6}(-z_3) \cdot \dot{s}_4 \cdot x_{\alpha_4}(-z_2) \cdot \dot{s}_2 \cdot x_{\alpha_2}(-z_1) \cdot N  \in U  \}  \]
and we need $f_{\alpha_3}=f_{\alpha_{2}+\alpha_{3} }=0$. According to Equation~\ref{CCC}, $f_{\alpha_3}(z_1, z_2, z_3)=1+az_1z_2+bz_1z_3+cz_2z_3$ and $f_{\alpha_{2}+\alpha_{3}}(z_1, z_2, z_3)=dz_3 + ez_2$, where $a$, $b$, $c$, $d$, $e$ are all nonzero. Therefore, 
\[  X_U \cap (\mathbb{C} \times \mathbb{C} \times \mathbb{C}) \cong \{(z_1, z_2, z_3) \in \mathbb{C}^3 \ |\ 1+az_1z_2+bz_1z_3+cz_2z_3=0, dz_3 + ez_2=0  \}.  \]
Combining the two equations to eliminate $z_3$, we get 
\[ X_U \cap (\mathbb{C} \times \mathbb{C} \times \mathbb{C}) \cong \{\ (z_1, z_2) \in \mathbb{C}^2\ |\ d+ (ad-be)z_1z_2 - cez_2^2=0 \}.  \]
Now it is clear that 
\begin{equation*}
X_U \cap (\mathbb{C} \times \mathbb{C} \times \mathbb{C}) \cong
\left\{
      \begin{array}{ll}
      \mathbb{C}^{\times} &  \text{ if } ad-be \neq 0 \\
      \mathbb{C} \amalg \mathbb{C} & \text{ if }  ad-be=0
                \end{array}
\right.
\end{equation*}

To compute $X_U \cap (\{ \infty \} \times \mathbb{C} \times \mathbb{C})$, note that
\[ X_U \cap (\{ \infty \} \times \mathbb{C} \times \mathbb{C}) \cong \{\ (z_2, z_3) \in \mathbb{C}^2\ |\ \dot{s}_6 \cdot x_{\alpha_6}(-z_3) \cdot \dot{s}_4 \cdot x_{\alpha_4}(-z_2) \cdot N \in U\}  \]
and we need  $f_{\alpha_3}=f_{\alpha_{2}+\alpha_{3} }=0$ as well. According to Equation~\ref{ICC}, $f_{\alpha_3}(z_2, z_3)=a z_3 + b z_2$ and $f_{\alpha_{2}+\alpha_{3}}(z_2, z_3)=1 + cz_2z_3$, where $a$, $b$, $c$ are nonzero. Hence
\begin{equation*} 
\begin{aligned}
 X_U \cap (\{ \infty \} \times \mathbb{C} \times \mathbb{C}) & \cong & \{\ (z_2, z_3) \in \mathbb{C}^2\ |\ a z_3 + b z_2=0, 1 + cz_2z_3=0 \} \\
 & = & \{(\sqrt{a/(bc)}, -\sqrt{b/(ac)}), (-\sqrt{a/(bc)}, \sqrt{b/(ac))}\}
\end{aligned}
\end{equation*}
Combining the intersections of $X_U$ with the two cells, we see that $X_U$ is either $\mathbb{P}^1$ or $\mathbb{P}^1 \amalg \mathbb{P}^1$, hence paved by affines.

\begin{table}%[t] 
\caption{ $\Phi(U)=\Phi(\mathfrak{g}(2)) \setminus \{ \alpha_3, \alpha_{2}+\alpha_{3}  \} $ } \label{tab:codim2_323}
\centering
\begin{tabular}{|c|c|c|c|}
\hline
dimension of cells  & \multicolumn{3}{c|}{intersection of $X_U$ with cells} \\ \hline
3          & \multicolumn{3}{c|}{ $\mathbb{C}^{\times} \, $ or $\, \mathbb{C} \amalg \mathbb{C}$ }      \\ \hline
2          &   \hspace{0.4cm} $\emptyset $ \hspace{0.4cm}      &    \hspace{0.4cm} $\emptyset $ \hspace{0.4cm}     &  2 points      \\ \hline
1          &   \hspace{0.4cm} $\emptyset $ \hspace{0.4cm}      &    \hspace{0.4cm} $\emptyset $ \hspace{0.4cm}     &  \hspace{0.4cm} $\emptyset $ \hspace{0.4cm}      \\ \hline
0          & \multicolumn{3}{c|}{ \hspace{0.4cm} $\emptyset $ \hspace{0.4cm} }      \\ \hline
\end{tabular}
\end{table}

\begin{table}%[t] 
\caption{ $\Phi(U)=\Phi(\mathfrak{g}(2)) \setminus \{ \alpha_3, \alpha_{3}+\alpha_{4}  \} $ } \label{tab:codim2_334}
\centering
\begin{tabular}{|c|c|c|c|}
\hline
dimension of cells  & \multicolumn{3}{c|}{intersection of $X_U$ with cells} \\ \hline
3          & \multicolumn{3}{c|}{ $\mathbb{C}^{\times} \, $ or $\, \mathbb{C} \amalg \mathbb{C}$ }      \\ \hline
2          &   \hspace{0.4cm} $\emptyset $ \hspace{0.4cm}      &    2 points     &  \hspace{0.4cm} $\emptyset $ \hspace{0.4cm}      \\ \hline
1          &   \hspace{0.4cm} $\emptyset $ \hspace{0.4cm}      &    \hspace{0.4cm} $\emptyset $ \hspace{0.4cm}     &  \hspace{0.4cm} $\emptyset $ \hspace{0.4cm}      \\ \hline
0          & \multicolumn{3}{c|}{ \hspace{0.4cm} $\emptyset $ \hspace{0.4cm} }      \\ \hline
\end{tabular}
\end{table}

\begin{table}%[t] 
\caption{ $\Phi(U)=\Phi(\mathfrak{g}(2)) \setminus \{ \alpha_3, \alpha_{3}+\alpha_{6} \} $ } \label{tab:codim2_336}
\centering
\begin{tabular}{|c|c|c|c|}
\hline
dimension of cells  & \multicolumn{3}{c|}{intersection of $X_U$ with cells} \\ \hline
3          & \multicolumn{3}{c|}{ $\mathbb{C}^{\times} \, $ or $\, \mathbb{C} \amalg \mathbb{C}$ }      \\ \hline
2          &   2 points      &    \hspace{0.4cm} $\emptyset $ \hspace{0.4cm}     &  \hspace{0.4cm} $\emptyset $ \hspace{0.4cm}      \\ \hline
1          &   \hspace{0.4cm} $\emptyset $ \hspace{0.4cm}      &    \hspace{0.4cm} $\emptyset $ \hspace{0.4cm}     &  \hspace{0.4cm} $\emptyset $ \hspace{0.4cm}      \\ \hline
0          & \multicolumn{3}{c|}{ \hspace{0.4cm} $\emptyset $ \hspace{0.4cm} }      \\ \hline
\end{tabular}
\end{table}

When $\Phi(U)=\Phi(\mathfrak{g}(2)) \setminus \{ \alpha_3, \alpha_{3}+\alpha_{4}  \} $ or $\Phi(U)=\Phi(\mathfrak{g}(2)) \setminus \{ \alpha_3, \alpha_{3}+\alpha_{6}  \} $, the intersections are given in Table~\ref{tab:codim2_334} and Table~\ref{tab:codim2_336} respectively. By a similar argument, they are both either $\mathbb{P}^1$ or $\mathbb{P}^1 \amalg \mathbb{P}^1$. We have now finished the proof for the case of $E_6$.

\appendix
\section{Betti Numbers of \texorpdfstring{$\operatorname{Hess}(M, x_J)$}{}} \label{appendix:bettinumber}
We compute $\operatorname{dim}_{\mathbb{C}}H^*(\operatorname{Hess}(M, x_J))$ for those regular Hessenberg varieties $\operatorname{Hess}(M, x_J)$ involved in subsection \ref{dotactioncomputation}. 

Let $G$ be a connected algebraic group over $\mathbb{C}$ of type $G_2$. We use the same setups as in subsection \ref{strofg2}. Let $M$ be a Hessenberg subspace of $\mathfrak{g}$ and $J$ be a subset of the set of simple roots $\Delta=\{\alpha, \beta\}$. There always exists a semisimple element $s_J \in \mathfrak{g}$ such that $C_{\mathfrak{g}}(s_J)$ is a Levi subalgebra of $\mathfrak{g}$ whose Weyl group is $W_J$. Let $n_J \in C_{\mathfrak{g}}(s_J)$ be a regular nilpotent element and define $x_J=s_J+n_J$. Then $x_J$ is a regular element of $\mathfrak{g}$ and $\operatorname{Hess}(M, x_J)$ is a regular Hessenberg variety. For simplicity of notation, let $L$ denote the Levi subgroup of $G$ with Lie algebra $\mathfrak{l}=C_{\mathfrak{g}}(s_J)$. As a result, $W_L=W_J$. By Lemma~\ref{wl1}, each $w \in W$ can be written uniquely as $w=yv$ with $y \in W_L$ and $v \in W^L$, where $W^L= \{ v \in W \  | \  \Phi_v \subset \Phi(\mathfrak{u}_P) \}$. Define $M_v=\dot{v} \cdot M \cap \mathfrak{l}$. The following theorem is a consequence of \cite{precup2013affine}[Theorem 4.10] and \cite{precup2013affine}[Corollary 5.8].

\begin{theorem*}[Precup] \label{Prbetticomp}
The regular Hessenberg variety $\operatorname{Hess}(M, x_J)$ is paved by affines. Moreover:
\begin{itemize}
\item[(1)] Every nonempty intersection $X_w \cap \operatorname{Hess}(M, x_J)$ is an affine space.
\item[(2)] $X_w \cap \operatorname{Hess}(M, x_J)$ is nonempty if and only if $J \subset y(\Phi(M_v))$.
\item[(3)] When $X_w \cap \operatorname{Hess}(M, x_J) \neq \emptyset$, 
\[ \operatorname{dim}_{\mathbb{C}} X_w \cap \operatorname{Hess}(M, x_J) = |\Phi_y \cap y(\Phi^-(M_v))| + |y(\Phi_v) \cap w(\Phi^-(M))|.  \]
\end{itemize}
\end{theorem*}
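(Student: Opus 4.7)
The plan is to derive this statement as a direct consequence of two results in Precup's paper \cite{precup2013affine}, by matching the regular element $x_J = s_J + n_J$ used here to the setup of her results and translating the combinatorial indices. First I would verify that the situation at hand fits Precup's framework: $s_J$ is a semisimple element whose centralizer is the standard Levi subgroup $L$ with Weyl group $W_J = W_L$, and $n_J \in \mathfrak{l}$ is a regular nilpotent element of $\mathfrak{l}$. So $x_J$ is the canonical representative of the $G$-orbit of regular elements whose semisimple part has centralizer $\mathfrak{l}$, to which Precup's theorems apply.

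Next, I would invoke \cite{precup2013affine}[Theorem 4.10] to obtain the affine paving: the intersections $X_w \cap \operatorname{Hess}(M, x_J)$ as $w$ ranges over $W$ form an affine paving of $\operatorname{Hess}(M, x_J)$, and each nonempty piece is an affine space. This yields claim (1) and the paving part of the theorem directly.

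Then I would apply \cite{precup2013affine}[Corollary 5.8] to extract the nonemptiness criterion and the dimension formula. For this the key translation is the unique factorization $w = yv$ with $y \in W_L$ and $v \in W^L$ (Lemma~\ref{wl1}), together with the subspace $M_v = \dot{v} \cdot M \cap \mathfrak{l}$ and its root system $\Phi(M_v)$. The nonemptiness condition $J \subset y(\Phi(M_v))$ in claim (2) essentially records when the part of $\dot{v} \cdot M$ living in $\mathfrak{l}$ is large enough, after $y$-twisting, to contain the simple roots indexing the Levi. The dimension formula in (3) splits as a sum of two contributions mirroring the decomposition $\Phi_w = y(\Phi_v) \amalg \Phi_y$ from Lemma~\ref{wl2}: the term $|\Phi_y \cap y(\Phi^-(M_v))|$ comes from the Schubert cell of the flag variety $\mathcal{B}(L)$ (the ``Levi direction''), and the term $|y(\Phi_v) \cap w(\Phi^-(M))|$ comes from the $\mathfrak{u}_P$ direction transverse to $\mathcal{B}(L)$.

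The main obstacle will be the bookkeeping: carefully checking that Precup's notation for the combinatorial data (the root subsets $\Phi_y$, $\Phi_v$, $\Phi^-(M_v)$, $\Phi^-(M)$ and the $y$- and $w$-twists on them) lines up with the conventions fixed in subsection~\ref{not} and subsection~\ref{apotfv}, so that the dimension formula in (3) emerges without extra shifts. Once this dictionary is established, the theorem is a formal consequence of Precup's two results, and no additional geometric input is needed.
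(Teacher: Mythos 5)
Your proposal matches the paper's treatment exactly: the paper does not give an independent argument but states this theorem as a direct consequence of \cite{precup2013affine}[Theorem 4.10] (affine paving by the intersections $X_w \cap \operatorname{Hess}(M, x_J)$) and \cite{precup2013affine}[Corollary 5.8] (nonemptiness criterion and dimension formula), using the same dictionary $w = yv$, $M_v = \dot{v}\cdot M \cap \mathfrak{l}$, $W_J = W_L$ that you describe. Your plan is correct and is essentially the same citation-plus-translation route the paper takes.
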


Clearly, the theorem above gives us a way of computing $\operatorname{dim}_{\mathbb{C}}H^*(\operatorname{Hess}(M, x_J))$ by inspecting the intersection $X_w \cap \operatorname{Hess}(M, x_J)$ for each $w \in W$. The inspections are carried out for all the $\operatorname{Hess}(M, x_J)$'s involved in subsection \ref{dotactioncomputation}, and the results are summarized in the following tables. A blank entry in the table means the corresponding intersection $X_w \cap \operatorname{Hess}(M, x_J)$ is empty.

\begin{table}[ht]
\def\arraystretch{1.2}
\caption{$I=I_{\beta+\alpha }$, $M=I^{\perp}$, $J=\{\alpha, \beta\}$, $W_J=W$} \label{tab:betti1}
\centering
\begin{tabular}{|c|c|c|c|c|c|c|}
\hline
                $w \in W$                                                                                                                         & $e$ & $r^{-1}$  & $r^{-2}$  & $r^{-3}$  & $r^{-4}$  & $r^{-5}$  \\ \hline
$\operatorname{dim}_{\mathbb{C}} X_w \cap \operatorname{Hess}(M, x_J)$                                                                      & 0   &           &           & 2         &           &           \\ \hline
       $w \in W$                                                                                                                                  & $t$ & $tr^{-1}$ & $tr^{-2}$ & $tr^{-3}$ & $tr^{-4}$ & $tr^{-5}$ \\ \hline
$\operatorname{dim}_{\mathbb{C}} X_w \cap \operatorname{Hess}(M, x_J)$ & 1   & 1         &           &           &           &           \\ \hline
\end{tabular}
\end{table}

\begin{table}[ht]
\def\arraystretch{1.2}
\caption{$I=I_{\beta+\alpha }$, $M=I^{\perp}$, $J=\{\beta\}$, $W_J=\langle t \rangle$} \label{tab:betti2}
\centering
\begin{tabular}{|c|c|c|c|c|c|c|}
\hline
$w \in W^L$                                                            & $e$ & $tr^{-1}$ & $tr^{-2}$ & $tr^{-3}$ & $r^{-4}$  & $r^{-5}$  \\ \hline
$\operatorname{dim}_{\mathbb{C}} X_w \cap \operatorname{Hess}(M, x_J)$ & 0   & 1         & 1         & 1         & 1         & 1         \\ \hline
$w \in tW^L$                                                           & $t$ & $r^{-1}$  & $r^{-2}$  & $r^{-3}$  & $tr^{-4}$ & $tr^{-5}$ \\ \hline
$\operatorname{dim}_{\mathbb{C}} X_w \cap \operatorname{Hess}(M, x_J)$ & 1   &           &           & 2         &           &           \\ \hline
\end{tabular}
\end{table}

\begin{table}[ht]
\def\arraystretch{1.2}
\caption{$I=I_{\alpha }$, $M=I^{\perp}$, $J=\{\alpha\}$, $W_J=\langle s \rangle$} \label{tab:betti3}
\centering
\begin{tabular}{|c|c|c|c|c|c|c|}
\hline
$w \in W^L$                                                            & $e$ & $t$ & $sr^2$  & $sr^3$  & $r^{4}$  & $r^{5}$  \\ \hline
$\operatorname{dim}_{\mathbb{C}} X_w \cap \operatorname{Hess}(M, x_J)$ & 0   & 1   & 1       & 1       & 0        & 0        \\ \hline
$w \in sW^L$                                                           & $s$ & $r$ & $r^{2}$ & $r^{3}$ & $sr^{4}$ & $sr^{5}$ \\ \hline
$\operatorname{dim}_{\mathbb{C}} X_w \cap \operatorname{Hess}(M, x_J)$ &     &     &         &         &          &          \\ \hline
\end{tabular}
\end{table}

\begin{table}[ht]
\def\arraystretch{1.2}
\caption{$I=I_{\alpha }$, $M=I^{\perp}$, $J=\{\beta\}$, $W_J=\langle t \rangle$} \label{tab:betti4}
\centering
\begin{tabular}{|c|c|c|c|c|c|c|}
\hline
$w \in W^L$                                                            & $e$ & $tr^{-1}$ & $tr^{-2}$ & $tr^{-3}$ & $r^{-4}$  & $r^{-5}$  \\ \hline
$\operatorname{dim}_{\mathbb{C}} X_w \cap \operatorname{Hess}(M, x_J)$ & 0   & 0         & 0         & 0         & 1         & 1         \\ \hline
$w \in tW^L$                                                           & $t$ & $r^{-1}$  & $r^{-2}$  & $r^{-3}$  & $tr^{-4}$ & $tr^{-5}$ \\ \hline
$\operatorname{dim}_{\mathbb{C}} X_w \cap \operatorname{Hess}(M, x_J)$ & 1   &           &           & 1         &           &           \\ \hline
\end{tabular}
\end{table}

\begin{table}[ht]
\def\arraystretch{1.2}
\caption{$I=I_{\beta }$, $M=I^{\perp}$, $J=\{\alpha\}$, $W_J=\langle s \rangle$} \label{tab:betti5}
\centering
\begin{tabular}{|c|c|c|c|c|c|c|}
\hline
$w \in W^L$                                                            & $e$ & $t$ & $sr^2$  & $sr^3$  & $r^{4}$  & $r^{5}$  \\ \hline
$\operatorname{dim}_{\mathbb{C}} X_w \cap \operatorname{Hess}(M, x_J)$ & 0   & 0   & 0       & 0       & 1        & 1        \\ \hline
$w \in sW^L$                                                           & $s$ & $r$ & $r^{2}$ & $r^{3}$ & $sr^{4}$ & $sr^{5}$ \\ \hline
$\operatorname{dim}_{\mathbb{C}} X_w \cap \operatorname{Hess}(M, x_J)$ & 1   &     &         & 1       &          &          \\ \hline
\end{tabular}
\end{table}

\begin{table}[ht]
\def\arraystretch{1.2}
\caption{$I=I_{\beta }$, $M=I^{\perp}$, $J=\{\beta\}$, $W_J=\langle t \rangle$} \label{tab:betti6}
\centering
\begin{tabular}{|c|c|c|c|c|c|c|}
\hline
$w \in W^L$                                                            & $e$ & $tr^{-1}$ & $tr^{-2}$ & $tr^{-3}$ & $r^{-4}$  & $r^{-5}$  \\ \hline
$\operatorname{dim}_{\mathbb{C}} X_w \cap \operatorname{Hess}(M, x_J)$ & 0   & 1         & 1         & 1         & 0         & 0         \\ \hline
$w \in tW^L$                                                           & $t$ & $r^{-1}$  & $r^{-2}$  & $r^{-3}$  & $tr^{-4}$ & $tr^{-5}$ \\ \hline
$\operatorname{dim}_{\mathbb{C}} X_w \cap \operatorname{Hess}(M, x_J)$ &     &           &           &           &           &           \\ \hline
\end{tabular}
\end{table}

\clearpage

\section{Equations for Type \texorpdfstring{$E_6$}{}} \label{appendix:equations}
This appendix contains all the equations necessary for the proof of Proposition~\ref{F4E6} for type $E_6$. The wildcard symbol $*$ represents a random nonzero number, whose exact value is not needed for our purpose.

$X_U \cap (\mathbb{C} \times \mathbb{C} \times \mathbb{C}) \cong \{(z_1, z_2, z_3) \in \mathbb{C}^3 \ |\ \dot{s}_6 \cdot x_{\alpha_6}(-z_3) \cdot \dot{s}_4 \cdot x_{\alpha_4}(-z_2) \cdot \dot{s}_2 \cdot x_{\alpha_2}(-z_1) \cdot N  \in U  \}$
\begin{equation} \label{CCC}
\begin{aligned}
& \dot{s}_6 \cdot x_{\alpha_6}(-z_3) \cdot \dot{s}_4 \cdot x_{\alpha_4}(-z_2) \cdot \dot{s}_2 \cdot x_{\alpha_2}(-z_1) \cdot N  \\
 = &(*z_1+*z_2)E_{\alpha_{3}+\alpha_{6}}+(1+*z_1z_2+*z_1z_3+*z_2z_3)E_{\alpha_3}+E_{\alpha_{3}+\alpha_{4}+\alpha_{6}} \\
 + & (*z_3+*z_1)E_{\alpha_{3}+\alpha_{4}}+E_{\alpha_5}+E_{\alpha_1}+E_{\alpha_{2}+\alpha_{3}+\alpha_{6}} \\
 + & (*z_3+*z_2)E_{\alpha_{2}+\alpha_{3}}+E_{\alpha_{2}+\alpha_{3}+\alpha_{4}}
\end{aligned}
\end{equation}

$X_U \cap (\mathbb{C} \times \mathbb{C} \times \{ \infty \} ) \cong \{\ (z_1, z_2) \in \mathbb{C}^2 \ |\ \dot{s}_4 \cdot x_{\alpha_4}(-z_2) \cdot \dot{s}_2 \cdot x_{\alpha_2}(-z_1) \cdot N \in U\}$
\begin{equation} \label{CCI}
\begin{aligned}
& \dot{s}_4 \cdot x_{\alpha_4}(-z_2) \cdot \dot{s}_2 \cdot x_{\alpha_2}(-z_1) \cdot N \\
=& (*z_1+*z_2)E_{\alpha_3} + E_{\alpha_{3}+\alpha_{4}} + E_{\alpha_5} + E_{\alpha_1} +E_{\alpha_{2}+\alpha_{3}} \\
+& (1+*z_1z_2)E_{\alpha_{3}+\alpha_{6}}+ E_{\alpha_{2}+\alpha_{3}+\alpha_{4}+\alpha_6}+*z_2 E_{\alpha_{2}+\alpha_{3}+\alpha_{6}} + *z_1E_{\alpha_{3}+\alpha_{4}+\alpha_{6}}
\end{aligned}
\end{equation}

$X_U \cap (\mathbb{C} \times \{ \infty \} \times \mathbb{C}) \cong \{\ (z_1, z_3) \in \mathbb{C}^2 \ |\ \dot{s}_6 \cdot x_{\alpha_6}(-z_3) \cdot \dot{s}_2 \cdot x_{\alpha_2}(-z_1) \cdot N \in U\}$
\begin{equation} \label{CIC}
\begin{aligned}
& \dot{s}_6 \cdot x_{\alpha_6}(-z_3) \cdot \dot{s}_2 \cdot x_{\alpha_2}(-z_1) \cdot N \\
= & E_{\alpha_{3}+\alpha_{6} } + (*z_3 + *z_1)E_{\alpha_3}+E_{\alpha_{4}+\alpha_{5} } +E_{\alpha_{2}+\alpha_{3} } + E_{\alpha_1} \\
+ & E_{\alpha_{2}+\alpha_{3}+\alpha_{4}+\alpha_{6} } + *z_3 E_{\alpha_{2}+\alpha_{3}+\alpha_{4} } + (1+ *z_1z_3)E_{\alpha_{3}+\alpha_{4} } + *z_1 E_{\alpha_{3}+\alpha_{4}+\alpha_{6} }
\end{aligned}
\end{equation}

$X_U \cap (\{ \infty \} \times \mathbb{C} \times \mathbb{C}) \cong \{\ (z_2, z_3) \in \mathbb{C}^2\ |\ \dot{s}_6 \cdot x_{\alpha_6}(-z_3) \cdot \dot{s}_4 \cdot x_{\alpha_4}(-z_2) \cdot N \in U\}$
\begin{equation} \label{ICC}
\begin{aligned}
& \dot{s}_6 \cdot x_{\alpha_6}(-z_3) \cdot \dot{s}_4 \cdot x_{\alpha_4}(-z_2) \cdot N \\
=& E_{\alpha_{2}+\alpha_{3}+\alpha_{4}+\alpha_{6} } + *z_3 E_{\alpha_{2}+\alpha_{3}+\alpha_{4} } + *z_2 E_{\alpha_{2}+\alpha_{3}+\alpha_{6} }+  (1+ *z_2z_3)E_{\alpha_{2}+\alpha_{3} } \\
+& E_{\alpha_{3}+\alpha_{6} } +(*z_3 +*z_2)E_{\alpha_3} +E_{\alpha_{1}+\alpha_{2} } + E_{\alpha_5} + E_{\alpha_{3}+\alpha_{4} }
\end{aligned}
\end{equation}

$X_U \cap (\mathbb{C} \times \{ \infty \} \times \{ \infty \} ) \cong \{\ z_1 \in \mathbb{C}\ |\ \dot{s}_2 \cdot x_{\alpha_2}(-z_1) \cdot N \in U\}$
\begin{equation} \label{CII}
\begin{aligned}
& \dot{s}_2 \cdot x_{\alpha_2}(-z_1) \cdot N \\
=& E_{\alpha_3}+E_{\alpha_{4}+\alpha_{5} } +E_{\alpha_{2}+\alpha_{3}+\alpha_{6} } + E_{\alpha_1} + E_{\alpha_{2}+\alpha_{3}+\alpha_{4} } \\
+& E_{\alpha_{3}+\alpha_{4}+\alpha_{6} } + *z_1 E_{\alpha_{3}+\alpha_{6} } + *z_1 E_{\alpha_{3}+\alpha_{4} }
\end{aligned}
\end{equation}

$X_U \cap (\{ \infty \} \times \mathbb{C} \times \{ \infty \} ) \cong \{\ z_2 \in \mathbb{C}\ |\ \dot{s}_4 \cdot x_{\alpha_4}(-z_2) \cdot N \in U\}$
\begin{equation} \label{ICI}
\begin{aligned}
& \dot{s}_4 \cdot x_{\alpha_4}(-z_2) \cdot N \\
=& E_{\alpha_{2}+\alpha_{3}+\alpha_{4} } + *z_2 E_{\alpha_{2}+\alpha_{3} } + E_{\alpha_3} + E_{\alpha_{1}+\alpha_{2} } + E_{\alpha_5} \\
+& E_{\alpha_{2}+\alpha_{3}+\alpha_{6} } + E_{\alpha_{3}+\alpha_{4}+\alpha_{6} } + *z_2 E_{\alpha_{3}+\alpha_{6} }
\end{aligned}
\end{equation}

$X_U \cap (\{ \infty \} \times \{ \infty \} \times \mathbb{C}) \cong \{\ z_3 \in \mathbb{C}\ |\ \dot{s}_6 \cdot x_{\alpha_6}(-z_3) \cdot N \in U\}$
\begin{equation} \label{IIC}
\begin{aligned}
& \dot{s}_6 \cdot x_{\alpha_6}(-z_3) \cdot N \\
=& E_{\alpha_{2}+\alpha_{3}+\alpha_{6} } +*z_3 E_{\alpha_{2}+\alpha_{3} } +E_{\alpha_{3}+\alpha_{4}+\alpha_{6} } + *z_3 E_{\alpha_{3}+\alpha_{4} } \\
+& E_{\alpha_{1}+\alpha_{2} } + E_{\alpha_{4}+\alpha_{5} } + E_{\alpha_3} + E_{\alpha_{2}+\alpha_{3}+\alpha_{4} }
\end{aligned}
\end{equation}

\begin{equation} \label{III}
X_U \cap (\{ \infty \} \times \{ \infty \} \times \{ \infty \} )=
\left\{
      \begin{array}{ll}
      \{ \infty \} \times \{ \infty \} \times \{ \infty \} &  \text{ if } N \in U \\
      \emptyset & \text{ if } N \notin U
                \end{array}
\right.
\end{equation}

\end{document}